\theoremstyle{plain}
\newtheorem{theorem}{Theorem}
\newtheorem{corollary}[theorem]{Corollary}
\newtheorem{lemma}[theorem]{Lemma}
\newtheorem{axiom}{Axiom}
\newtheorem{remark}{Remark}
\newtheorem{definition}{Definition}
\author{Wim Veldman}
\address{Institute for Mathematics, Astrophysics and Particle Physics, Faculty of Science, Radboud University,
Postbus 9010, 6500 GL Nijmegen, the Netherlands}
\email{W.Veldman@science.ru.nl}
\begin{document}
\title{On some of Brouwer's axioms}
\maketitle

\section{Introduction} \subsection{ Bishop's disagreement with Brouwer}\label{SS:begin} E. Bishop, who founded {\it constructive analysis},  has an ambivalent attitude towards L.E.J. Brouwer, who, a generation earlier, began {\it intuitionistic mathematics}. \footnote{This paper has been written as a contribution for the {\it Handbook of Constructive Mathematics}, soon to appear with Springer Verlag. Unfortunately, it was completed so late that the editors were unable to consider it for inclusion in this handbook.} 

On the one hand, Bishop recognizes that Brouwer was the first to raise his voice against the disturbing fact that many mathematical theorems lack constructive content.  He also judges  that Brouwer  made a good beginning with the necessary reconstruction of parts of mathematics. He agrees with him that disjunction and the existential quantifier should be interpreted constructively and that, as a consequence, the principle of the excluded third $X\;\vee\;\neg X$ should be rejected.

On the other hand, he thinks Brouwer went the wrong way by introducing  `{\it semi-mystical elements}' into mathematics in order to prove the theorem that every (effectively total) function from $[0,1]$ to $\mathcal{R}$ is uniformly continuous. 

This theorem may be split up into two statements:
\begin{enumerate}[\upshape 1.]\item Every function from $[0,1]$ to $\mathcal{R}$ is pointwise continuous. \item Every pointwise continuous function from $[0,1]$ to $\mathcal{R}$ is uniformly continuous. \end{enumerate} 

The first statement is a consequence of {\it Brouwer's Continuity Principle} and the second one follows from his {\it Fan Theorem}.

Bishop rejects Brouwer's argument  for the first conclusion. He says that a set like `{\it the set  of all functions from $[0,1]$ to $\mathcal{R}$}' seems to have little practical interest, \cite[p. 67]{bishopbridges85}, meaning probably, that one does not need general statements about such functions like the statement 1. 

In addition, he decides not to use   the notion of a {\it pointwise} continuous function, see \cite[page 66]{bishopbridges85}.
He {\it defines} a function from $\mathcal{R}$ to $\mathcal{R}$ to be {\it continuous} if and only if  the function is {\it uniformly} continuous on every closed interval $[a,b]$, see \cite[Chapter 2, Definition 4.5]{bishopbridges85}.\footnote{This strategy may work for a locally compact space like $\mathcal{R}$, but it does not help if one starts thinking on, for instance, continuous functions from Baire space $\mathcal{N}$ to $\omega$.} 

The statement 2 then becomes a tautology.

Brouwer's Fan Theorem also implies that, given a closed interval $[a,b]$ and a sequence $f_0, f_1, \ldots$ of functions from $[a,b]$ to $\mathcal{R}$ that converges pointwise to a a function $f$ from $[a,b]$ to $\mathcal{R}$ will converge {\it uniformly} to $f$. Bishop avoids the notion of pointwise convergence, see  \cite[Chapter 2, Definition 4.7]{bishopbridges85}. 

Brouwer derived his Fan Theorem from a much stronger statement: the Bar Theorem. Bishop does not discuss this stronger statement.

\subsection{Going back to Brouwer's basic assumptions} 

Brouwer's arguments for the statements 1 and 2 necessarily are of a philosophical rather  than a mathematical nature. One should not put them aside  as {\it non-mathematical} and, {\it therefore,  not worth a mathematician's attention}. 

Besides, these arguments might have  consequences that go further than the statements 1 and 2 in Subsection \ref{SS:begin} and are possibly important for the development of constructive mathematics.

 Brouwer is trying  to redefine the  {\it game of mathematics} and make it a better game than it has been up to now.   The starting points of the game have to be agreed upon, and  are a topic of ongoing discussion. They should be called {\it axioms}, although Brouwer avoids this expression.  Brouwer, somewhat misleadingly, presents his axioms as being beyond doubt and speaking for themselves, and he does not distinguish the arguments supporting them from his more mathematical arguments.  

\subsection{The three varieties} Constructive mathematics is often dexcribed as having three varieties: $BISH$, i.e. Bishop style constructive mathematics, $INT$, i.e. intuitionistic mathematics and $RUSS$, i.e. Russian style constructive mathematics, where the notion of a computable function sets the scene, see \cite{bridgesrichman87}. This division is problematic.
 
 It is  difficult to make sense of the slogan: \begin{quote} {\it In essence, BISH is simply mathematics with intuitionistic logic} \end{quote}
 
 explained and defended in the preface to \cite{bridgesvita06}.

The slogan suggests that, for a classical mathematician who decides to join the constructive enterprise, mathematical objects remain the same in spite of the fact that he is changing  the language he uses for describing them. This suggestion is wrong. It sounds as if the objects are and remain there, like animals in a zoo, while we, the visitors, start  babbling about them in a foreign tongue.

More fundamentally, it does not seem to make sense to say: \begin{quote} `{\it We may prove statement $X$ classically as well as intuitionistically}' \end{quote}
 
 as the statement $X$ does not {\it mean} the same intuitionistically as it does classically. Classical and intuitionistic mathematicians do not speak the same language. 
 
 Of course, if we {\it formalize} mathematics we may prove combinatorial facts of the form:\begin{quote} Formula $\varphi$ is provable in the classical as well as in the intuitionistic system. \end{quote}
 
 But the {\it meaning} of the formula would change with the user of the formalisms. 
 
 It probably would be better to say\begin{quote} {\it $BISH$ is part of $INT$. Only, an intuitionistic  result is reckoned to belong to $BISH$ if it can be proven without making use of either the Continuity Principle, the Fan Theorem or the Bar Theorem.} \end{quote}
 
The third variety,  $RUSS$, arises from $BISH$ by adding the assumption that every real is given by an algorithm in the Church-Turing sense. The study of computable functions, however, is part of  intuitionistic mathematics, in fact a part of intuitionistic number theory,  and not an alternative for intuitionistic mathematics, see \cite{veldman2011b}. Unfortunately, the theory of computable functions, up to now,  is mostly done from a classical point of view. 
  
  So, we propose the following picture:\begin{center}
  
  $RUSS\subseteq BISH\subseteq INT$ \end{center}
  
  where each of the three $RUSS, BISH$ and $INT$ is considered as a body of proven intuitionistically meaningful results.\footnote{The picture is slightly inaccurate as the intuitionistic mathematian does not want to use Markov's Principle, unlike some memebers of the Russian school.}
  
  $CLASS$, the collection of results obtained classically, does not occur in the picture. The constructive mathematician has no immediate understanding of results in $CLASS$. The inclusion $BISH\subseteq CLASS$ makes no sense. 
 
\subsection{The need for axioms} The constructive rebuilding of mathematics forces one to rethink radically the meaning of mathematical statements. One should keep in mind that the meaning of a mathematical statement is ultimately given by  its proof. The proof should be seen as an {\it explication} or {\it unfolding} of the meaning of the statement.

In the proof we may refer to constructions we did earlier but, sometimes,  we come to invoke `axioms'. An axiom might be seen as a stipulation on the meaning of some of the expressions we are using in our language. As such, axioms come very close to `definitions'. 

Axioms arise from situations we want to consider as {\it canonical}, as setting an example.

For instance, once we have seen Euclid's proof that there exist infinitely many primes, we see how  we want to prove such a thing, and we {\it define}: `$X\subseteq \omega$ is {\it infinite} if one is able to indicate an algorithm  providing, given any finite list $(n_0, n_1, \ldots, n_{k-1})$ of natural numbers, an element of $X$ not occurring in the list.'

Or, observing our own use of the disjunction, we may decide to lay down: `a proof of $X \vee Y$ should consist either in a proof of $X$ or in a proof of $Y$.'

Setting up our common mathematical discourse, we have to discuss carefully the question   which principles  deserve the status of a {\it canonical starting point} for our arguments, i.e., the status of an \textit{axiom}.

An axiom  is not a truth solid as a rock that is beyond doubt and discussion. On the contrary, it is a proposal that invites and shapes discussion. It might be compared to an {\it hypothesis} or a {\it thought experiment}. Like G\"odel suggested in the context of axiomatic  set theory, an axiom may prove its value when we decide to follow its lead and try to see what we find when using it. 

It is an illusion however that one can do without axioms, and it is wrong to condemn arguments defending  axioms as philosophical and unmathematical and therefore not relevant.

Brouwer's Continuity Principle, the Fan Theorem and the Bar Theorem, to be discussed later in this paper, may be seen as  agreements on the meaning of certain statements of the form $\forall x \exists y[xRy]$.

\subsection{The contents of the paper} In this paper, we  introduce three main axioms of intuitionistic mathematics: the Continuity Principle, the Fan Theorem and the Thesis on Bars in $\mathcal{N}$.  We briefly discuss their plausibility and then show some of their applications in intuitionistic mathematics.

Apart from this introductory Section, the paper contains eight Sections.

In Section \ref{S:cp}, we explain {\it Brouwer's Continuity Principle}. We show its famous consequence: the pointwise continuity of real functions. We also introduce axioms of countable choice.

In Section \ref{S:bht}, we give a more sophisticated application of Brouwer's Continuity Principle: the proof of the {\it Borel Hierarchy Theorem},  see \cite[Chapter 9]{veldman1981} and \cite[Section 7]{veldman2008a}.   The Continuity Principle is also important at other points in the development of  intuitionistic {\it descriptive set theory}.   It is crucial for proving the fine structure of the hierarchy, see \cite{veldman2009b},  and a strong formulation of the  Principle leads to the collapse of the projective hierarchy, see \cite[Chapter 14]{veldman1981} and \cite[Section 7]{veldman2019}.

In Section \ref{S:ft}, we explain the {\it Fan Theorem}. We give its first and most famous application: functions with domain $[0,1]$ that are pointwise continuous are also uniformly continuous.

In Section \ref{S:measure} we sketch the intuitionistic development of the theory of measure and integration, as begun by Brouwer and developed further by some students of Heyting, see \cite[Chapter VI]{heyting56}. Bishop, see \cite[Chapter 6]{bishopbridges85} decided not to follow  Brouwer's lead in this field, probably out of fear of the Fan Theorem.

In Section \ref{S:barth}, we explain {\it Brouwer's Thesis on bars in $\mathcal{N}$}. Brouwer introduced it for proving the Fan Theorem but it has stronger consequences than that. As an example, we prove the equivalence of two definitions of the class of decidable and well-founded subsets of the set of the rationals.  

In Section \ref{S:aft}, we explain the {\it Almost-Fan Theorem}. Like the Fan Theorem itself, the Almost-Fan Theorem follows from the Bar Theorem. For a classical spectator, the Almost-Fan Theorem is difficult to distinguish from the Fan Theorem itself. The Almost-Fan Theorem implies the Fan Theorem but it is a stronger statement and  we will see it has other important consequences too. 

In Section \ref{S:notat}, we explain our notations. 

\section{Axioms of  Continuity and Choice}\label{S:cp}

\subsection{The Continuity Principle}\begin{axiom}[Brouwer's Continuity Principle]\label{ax:bcp} For every relation $R\subseteq \omega^\omega\times\omega$, \\if $\forall \alpha \exists n[\alpha Rn]$, then $\forall \alpha\exists m\exists n \forall \beta[\overline\alpha m \sqsubset \beta\rightarrow \beta Rn]$. \end{axiom}

The Principle came on the scene in 1918. In \cite[p. 13]{brouwer18}, Brouwer explains  that there can not exist an injective function from the set $\mathcal{N}$ of all infinite sequences of natural numbers  to the set $\omega$ of the natural numbers. He  says: if we should have such a function, say $f$, and an element $\alpha$ of $\mathcal{N}$ is given, then $f(\alpha)$, the value of $f$ at $\alpha$, would have to be decided upon at a point of time at which  only finitely many values of $\alpha$, say, $ \alpha(0), \alpha(1), \ldots, \alpha(m-1),$ {\it have become known}. So all infinite sequences $\beta$ that take the same values as $\alpha$ on the arguments $0,1, \ldots, m-1$ would be allotted the same value by $f$ as $\alpha$, and $f$ would be non-injective. 

The principle Brouwer is using  here, for the first time, is  what we now call his Continuity Principle. In our  formulation of the principle,   the starting point seems to be  a little bit more general: $\forall \alpha\exists n[\alpha R n]$, but we interpret this as: we  have a {\it method} to assign to {\it any} given $\alpha$ a suitable $n$, i.e.   there exists a {\it function} $f$ from $\mathcal{N}$ to $\omega$ such that $\forall \alpha[\alpha R f(\alpha)]$. 

Brouwer is clearly imagining that the sequence \begin{quote} $\alpha=\alpha(0), \alpha(1), \ldots$ \end{quote} is given to us only step by step.  We learn its values one by one and have no information on the development of the sequence as a whole.  

 The first and foremost example of   an infinite sequence is the  sequence \begin{quote} $\alpha(0)=0, \alpha(1)=1, \alpha(2)=2,\ldots$ \end{quote} of the natural numbers themselves. Even this infinite sequence is growing {\it step by step} and never fully realized. Its construction  is a job that we started to carry out and always will work on without ever finishing it. As one sometimes says, it is a {\it project} rather than an {\it object}.

Another infinite sequence that grows step by step is the sequence \begin{quote} $\alpha(0)= 1, \alpha(1)= 4, \alpha(2) =1, \ldots$ \end{quote} of the decimals of $\pi$. 

These two examples do not give us a complete picture. 
Although we construct the natural numbers one by one and  also calculate the decimals of $\pi$ one by one, we  in both cases have a key for finding  all  values that precludes surprises. Brouwer calls such algorithmic infinite sequences {\it lawlike} sequences.

We want to make room for other infinite sequences too.
We admit the possibility that the values of $\alpha$ are disclosed to us, or chosen by us, one by one, and that we do not know any finite algorithm that determines the values of $\alpha$.

Every  individual member of  the {\it set} $\mathcal{N}$ of the infinite sequences of natural numbers may be imagined to be  always under construction and never completed.

Brouwer thus saw that `{\it sets}' like $\mathcal{N}$ deserve careful treatment.

Cantor's idea that a set is the result of collecting certain already existing  objects, to be called its elements, into a new whole, is wrong. In this picture, the elements of the set are `earlier there' than the set itself. The intuitionistic mathematician proposes to view a set like $\mathcal{N}$ as a realm of possibilities. A set is like a musical instrument, on which many tunes will be played in the future.

One should keep in mind that it does not make sense, intuitionistically, that something is the case without our knowing so. The meaning of  a statement $$\forall \alpha\exists n[\alpha Rn]$$ must be that I see that I am able to {\it effectively} find a suitable $n$ to {\it any possible} $\alpha$.  $\alpha$ may be given to me, or created by myself making free choices,  value by value without,  at any point of time, any further information on the whole of its course, and, what is very important, even if $\alpha$ is not given in this way but, somehow, at one stroke, {\it $\alpha$ might have been given to me in this way}.  

In this way, we defend the claim that, if $\forall \alpha\exists n[\alpha Rn]$,  then, given any $\alpha$, one must be able to come up with a suitable $n$ for $\alpha$ knowing only finitely many  values of $\alpha$.

 Note that    an infinite sequence that I am creating freely, value by value,   may turn out, {\it in the end}, to be a `simple' one that admits of a finite description. The decimal expansion of $\pi$, for instance,  may be the result of an infinite sequence of free choices.
 
 \subsection{A first application}
\begin{theorem}\label{T:realcont} $\;$\\For every relation $R\subseteq \mathcal{R}\times \omega$, if $\forall x \exists n[x Rn]$, then $\forall x\exists m  \exists n \forall y[|x-y|<\frac{1}{2^m}\rightarrow yRn]$. \end{theorem}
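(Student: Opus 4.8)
The plan is to derive this from Brouwer's Continuity Principle (Axiom~\ref{ax:bcp}) by coding real numbers as elements of $\mathcal{N}$. Recall that a real number may be presented by an infinite sequence of rationals $q_0,q_1,\ldots$ that is Cauchy with a built-in modulus, say $\forall n[\,|q_n-q_{n+1}|\le 2^{-n}\,]$, with $x=\lim_n q_n$; via a fixed coding of the rationals by natural numbers such a sequence is itself coded by some $\alpha\in\mathcal{N}$, and I write $x_\alpha$ for the real it determines. As usual, a relation $R\subseteq\mathcal{R}\times\omega$ is understood to respect equality of reals. The first step is to fix a \emph{continuous} retraction $\rho\colon\mathcal{N}\to\mathcal{N}$ that sends every $\alpha$ to a genuine real number code $\rho(\alpha)$ and satisfies $x_{\rho(\alpha)}=x_\alpha$ whenever $\alpha$ is already such a code; one obtains $\rho(\alpha)$ by reading $\alpha$ value by value and overwriting any entry that would violate the modulus condition, so $\rho(\alpha)(n)$ depends only on $\alpha(0),\ldots,\alpha(n)$ and continuity is built in. This handles the ``junk'' sequences in $\mathcal{N}$ that code no real.

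Given $R$ with $\forall x\exists n[xRn]$, define $S\subseteq\mathcal{N}\times\omega$ by $\alpha\,S\,n:\leftrightarrow x_{\rho(\alpha)}\,R\,n$. Then $\forall\alpha\exists n[\alpha\,S\,n]$ (no choice is needed: $x_{\rho(\alpha)}$ is always a real, and $R$-relatedness is inherited from the hypothesis), so the Continuity Principle yields $\forall\alpha\exists p\exists n\,\forall\beta[\,\overline\alpha p\sqsubset\beta\rightarrow\beta\,S\,n\,]$. Now fix $x\in\mathcal{R}$, pick a code $\alpha$ of $x$, and take the corresponding $p$ and $n$, so that $x_{\rho(\beta)}\,R\,n$ for every $\beta$ extending $\overline\alpha p$. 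To finish I want an $m$ such that every $y$ with $|x-y|<2^{-m}$ possesses a code $\beta$ with $\overline\alpha p\sqsubset\beta$: for then $\beta\,S\,n$, and since $\beta$ is a valid code we have $\rho(\beta)=\beta$, hence $y=x_\beta=x_{\rho(\beta)}\,R\,n$. Such a $\beta$ is built by keeping the first $p$ rationals $q_0,\ldots,q_{p-1}$ of $\alpha$ and continuing with a modulus-respecting sequence that closes down on $y$ rather than on $x$; the first $p$ values leave room for this because they constrain the limit only to an open ball about $x$ of radius comparable to $2^{-p}$.

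The main obstacle is exactly this last ``leaving room'' claim: a priori the partial information $\overline\alpha p$ might pin $x$ to the boundary of the region it determines, and then a $y$ lying just on the wrong side of $x$, however close, admits no code extending $\overline\alpha p$. The fix is to choose the code $\alpha$ of $x$ non-committally at the outset --- for instance a code in which every $q_n$ satisfies $|x-q_n|\le 2^{-n-2}$, so that knowing $q_0,\ldots,q_{p-1}$ confines the limit of any admissible continuation to an \emph{open} ball about $q_{p-1}$ of radius $\sum_{k\ge p-1}2^{-k}$, which contains $x$ well in its interior. Taking $m$ correspondingly large (e.g.\ $m:=p+2$) then does the job. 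The remaining points --- that every real has such a non-committal code, that the ``continue towards $y$'' construction genuinely produces a real number code extending $\overline\alpha p$, and the arithmetic with the powers of $2$ --- are routine and I would not carry them out in detail.
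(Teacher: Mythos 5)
Your proof is correct and follows essentially the same route as the paper's: transport the relation along a continuous retraction of $\mathcal{N}$ onto real-number codes, apply the Continuity Principle at a chosen code of $x$, and verify that every real sufficiently close to $x$ possesses a code extending the selected initial segment. The paper merely streamlines by treating only $x=0$ (``the general case is proven similarly''), using an enumeration with $q_0=0$ and a retraction $\varphi$ that forces the first entry to be $0$, so that the constant-zero sequence is automatically the kind of non-committal code you had to construct explicitly for a general $x$.
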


\begin{proof} Assume $\forall x\in \mathcal{R}\exists n[x Rn]$.  We prove the promised conclusion for the case $x=0$, i.e. $\exists m \exists n[\forall y[|y|<\frac{1}{2^m}\rightarrow yRn]$. The general case is proven similarly.

\smallskip
We first define a function $\varphi$ from $\mathcal{N}$ to $\mathcal{N}$.

\smallskip
Let $q_0:=0, q_1, q_2, \ldots$ be an enumeration of the rationals.

Let $\alpha$ be given. \\We have to define the infinite sequence $\varphi|\alpha= (\varphi|\alpha)(0), (\varphi|\alpha)(1), \ldots$ and we do so as follows.

We define $(\varphi|\alpha)(0)=0$. \\ For each $n$,  {\it if}  $|q_{\alpha(n+1)}-q_{(\varphi|\alpha)(n)}|\le \frac{1}{2^n}$, we define $(\varphi|\alpha)(n+1)= \alpha(n+1)$, and,  \\{\it if not}, we define $(\varphi|\alpha)(n+1)=(\varphi|\alpha)(n)$. 

Note that, for each $\alpha$, the sequence $q_{(\varphi|\alpha)(0)}, q_{(\varphi|\alpha)(1)}, \ldots$ converges.

Also note that, if, for all $n$, $|q_{\alpha(n+1)}-q_{\alpha(n)}|\le \frac{1}{2^n}$, then $\varphi|\alpha=\alpha$.

We define, for each $\alpha$, $x_\alpha:=\lim_{n\rightarrow\infty} q_{(\varphi|\alpha)(n)}$.

Note: $\forall \alpha \exists n[x_\alpha Rn]$.\\ Apply the Continuity Principle and find $m,n$ such that  $\forall\alpha[\underline{\overline 0}m \sqsubset \alpha \rightarrow x_\alpha Rn]$. 
\\Note that,  for each $x$ in $[-\frac{1}{2^m}, \frac{1}{2^m}]$ there exists $\alpha$ such that $\overline{\underline 0}m\sqsubset \alpha$ and $x=x_\alpha$. 

 Conclude: $\forall x \in [-\frac{1}{2^m}, \frac{1}{2^m}][ x R n]$. \end{proof}

\begin{corollary}\label{C:contreal} Every function from $\mathcal{R}$ to $\mathcal{R}$ is pointwise continuous. \end{corollary}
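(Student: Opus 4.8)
The plan is to derive Corollary \ref{C:contreal} directly from Theorem \ref{T:realcont}. Suppose $f\colon\mathcal{R}\to\mathcal{R}$ is a function. I want to show $f$ is pointwise continuous, i.e.\ $\forall x\forall k\exists m\forall y[|x-y|<\frac{1}{2^m}\rightarrow|f(x)-f(y)|<\frac{1}{2^k}]$. Fix $k$. The idea is to define a relation $R\subseteq\mathcal{R}\times\omega$ by stipulating $xRn$ if and only if $n$ codes (via our fixed enumeration $q_0,q_1,\ldots$ of the rationals) a rational $q_n$ with $|f(x)-q_n|<\frac{1}{2^{k+1}}$. Since $f(x)$ is a real number, for every $x$ there is a rational within $\frac{1}{2^{k+1}}$ of $f(x)$, so $\forall x\exists n[xRn]$; here I use countable choice, or simply the fact that a real comes equipped with such an approximating sequence, to get the relation from the function.

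Now apply Theorem \ref{T:realcont} to this $R$: for every $x$ there exist $m,n$ with $\forall y[|x-y|<\frac{1}{2^m}\rightarrow yRn]$. Fix such $m$ and $n$ for our given $x$. Then for every $y$ with $|x-y|<\frac{1}{2^m}$ we have $|f(y)-q_n|<\frac{1}{2^{k+1}}$, and in particular (taking $y=x$, which is allowed since $|x-x|=0<\frac{1}{2^m}$) also $|f(x)-q_n|<\frac{1}{2^{k+1}}$. By the triangle inequality, $|f(x)-f(y)|\le|f(x)-q_n|+|q_n-f(y)|<\frac{1}{2^{k+1}}+\frac{1}{2^{k+1}}=\frac{1}{2^k}$ for all such $y$. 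This is exactly the required modulus-of-continuity statement for $f$ at $x$ for the precision $\frac{1}{2^k}$, and since $x$ and $k$ were arbitrary, $f$ is pointwise continuous.

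The main delicate point is the passage from the function $f$ to the \emph{relation} $R$, and the fact that Theorem \ref{T:realcont} as stated hands back a single $n$ (a single rational approximant) that works uniformly for all $y$ near $x$: this is what converts "for each $y$, $f(y)$ is near \emph{some} rational" into "all $f(y)$ near $x$ are near \emph{one} rational", which is precisely continuity. One should also be a little careful that the definition of $R$ does not secretly smuggle in a noncomputable choice — but since a real number is given together with a Cauchy sequence of rationals converging to it with a known modulus, producing for each $x$ a rational $q_n$ with $|f(x)-q_n|<\frac{1}{2^{k+1}}$ is unproblematic and uses only the countable choice already available in this setting. Finally, the argument gives continuity with a modulus depending on $x$ (and $k$), as it should: no uniformity in $x$ is claimed or obtained here.
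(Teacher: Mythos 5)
Your proof is correct and follows essentially the same route as the paper: define the relation $xRn$ by ``$q_n$ approximates $f(x)$ to within $\frac{1}{2^{k+1}}$'', apply Theorem \ref{T:realcont} to get a single rational $q_n$ working on a whole neighbourhood of $x$, and finish with the triangle inequality. Your explicit remark that the single uniform approximant $q_n$ is what yields continuity is exactly the point of the paper's argument (whose written conclusion contains a small typographical slip that your version states correctly).
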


\begin{proof} Let a function $f$ from $\mathcal{R}$ to $\mathcal{R}$ be given. 
\\Let $q_0, q_1, \ldots$ be an enumeration of the rationals.
Let $p$ be given.  \\Note $\forall x \in \mathcal{R}\exists n[|f(x) -  q_n| < \frac{1}{2^{p+1}}]$.  \\Let $x$ be given. Applying Theorem \ref{T:realcont}, find $m$, $n$ such that \\$\forall y \in \mathcal{R}[|x-y|<\frac{1}{2^m} \rightarrow |f(x) - q_n|< \frac{1}{2^{p+1}}]$. \\Conclude: $\forall y \in \mathcal{R}[|f(x)-f(y)|< \frac{1}{2^m}\rightarrow|f(x)-f(y)|<\frac{1}{2^p}]$. \\We thus see: $\forall p\exists m\forall y \in \mathcal{R}[|f(x)-f(y)|< \frac{1}{2^m}\rightarrow|f(x)-f(y)|<\frac{1}{2^p}]$,\\
 i.e. $f$ is continuous at $x$. \end{proof}
We thus see that Corollary \ref{C:contreal} follows from the Continuity Principle alone. Brouwer seems to have thought the Fan Theorem is needed for this result, see \cite{brouwer27}, \cite{veldman1982} and \cite{veldman2001}.\footnote{As was observed by Serge Bozon, there is a mistake in the proof of \cite[Theorem 2.6]{veldman2001}.  Here is a correction. After the first two sentences of the proof, continue as follows: `Find $p$ such that $|\alpha-q_{\alpha(n-1)}|+1/2^p < 1/2^n$. Then, for every canonical real number $\beta$, if $|\alpha-\beta|<\frac{1}{2^p}$, there exists a canonical real $\gamma$ such that $\beta=_\mathbb{R}\gamma$ and, for all $i<n$, $\alpha(i)=\gamma(i)$, and, therefore, $f(\beta)=f(\gamma)$ and $|f(\alpha-f(\beta)|<\frac{1}{2^m}$.'}
\subsection{Spreads} The `set'  $\omega^\omega$ is an example of a kind of sets that are called {\it spreads}.

A spread is  given by a {\it spread-law} $\beta$. The elements of the spread will be certain infinite sequences of natural numbers that, in general, are unfinished and are created step-by-step. The spread-law is there to regulate the process of defining elements of the spread. It informs me, whenever I have completed a finite initial part \begin{quote} $\alpha(0), \alpha(1), \ldots, \alpha(n-1)$ \end{quote}  of  an element $\alpha$ of the spread, which numbers I  may choose for the next value: $\alpha(n)$.  

The spread-law $\beta$ itself is an element of $\omega^\omega$. For every $s$, the informal meaning of `$\beta(s)=0$' is: `the finite sequence coded by $s$ is admitted by $\beta$'. $\beta$ has to satisfy the following condition:
\begin{quote} $\forall s[\beta(s)=0\leftrightarrow \exists n[\beta(s\ast\langle n \rangle)=0]]$ \end{quote}

The `set' consisting of all $\alpha$ such that $\forall n[\beta(\overline \alpha n)=0]$ will be called $\mathcal{F}_\beta$. $\mathcal{F}_\beta$ is  the {\it spread} determined by the spread-law $\beta$. 

The condition just imposed on the spread-law $\beta$ guarantees that, when I am creating an element $\alpha$ of $\mathcal{F}_\beta$, then, at every stage $n$, having chosen \\$\alpha(0), \alpha(1), \ldots, \alpha(n-1)$, I am able to decide, for each $m$, if I may define $\alpha(n):=m$,  and: there will be at least one such $m$, i.e. I never will get `stuck'. 
\begin{theorem}[Brouwer's Continuity Principle, extended to spreads]\label{T:bcpext}$\;$\\ Let $\beta$ be a spread-law.
For every relation $R\subseteq \mathcal{F}_\beta\times\omega$, \\if $\forall \alpha \in \mathcal{F}_\beta\exists n[\alpha Rn]$, then $\forall \alpha\in \mathcal{F}_\beta\exists m\exists n \forall \gamma\in \mathcal{F}_\beta[\overline\alpha m \sqsubset \gamma\rightarrow \gamma Rn]$. \end{theorem}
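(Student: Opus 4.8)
The plan is to derive the statement from the original Continuity Principle (Axiom~\ref{ax:bcp}) by constructing a \emph{retraction} of $\mathcal{N}$ onto the spread $\mathcal{F}_\beta$: a continuous function $\rho$ from $\mathcal{N}$ to $\mathcal{N}$ such that $\rho(\alpha)\in\mathcal{F}_\beta$ for every $\alpha$, and $\rho(\gamma)=\gamma$ for every $\gamma\in\mathcal{F}_\beta$. I would build $\rho$ in the style of the function $\varphi$ used in the proof of Theorem~\ref{T:realcont}, defining $\gamma:=\rho(\alpha)$ value by value while maintaining, at every stage $n$, that $\beta(\overline\gamma n)=0$. Given an admissible $\overline\gamma n$ and the next input value $\alpha(n)$: if $\beta(\overline\gamma n\ast\langle\alpha(n)\rangle)=0$, set $\gamma(n):=\alpha(n)$; otherwise set $\gamma(n)$ equal to the least $k$ with $\beta(\overline\gamma n\ast\langle k\rangle)=0$. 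Here one uses that, by the convention on spread-laws, $\beta(\langle\;\rangle)=0$ (so the construction can start), and that the spread-law condition $\beta(s)=0\leftrightarrow\exists n[\beta(s\ast\langle n\rangle)=0]$ guarantees, for every admissible $s$, that the set $\{k\mid\beta(s\ast\langle k\rangle)=0\}$ is inhabited, while membership in it is decidable; hence its least element can be found and the recursion never gets stuck. Note that $(\rho(\alpha))(n)$ is determined by $\overline\alpha(n+1)$, so $\rho$ is indeed (given by) a continuous function from $\mathcal{N}$ to $\mathcal{N}$.

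Two properties of $\rho$ then remain to be checked, both by induction on $n$. First, $\rho(\alpha)\in\mathcal{F}_\beta$ for every $\alpha$, since $\beta(\overline{\rho(\alpha)}n)=0$ at every stage by construction. Second, and crucially, if $\gamma\in\mathcal{F}_\beta$ then $\rho(\gamma)=\gamma$: at each stage $n$ one has $\beta(\overline\gamma n\ast\langle\gamma(n)\rangle)=\beta(\overline\gamma(n+1))=0$, so the first clause of the definition applies and $(\rho(\gamma))(n)=\gamma(n)$.

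The transfer argument is then immediate. Suppose $R\subseteq\mathcal{F}_\beta\times\omega$ and $\forall\alpha\in\mathcal{F}_\beta\exists n[\alpha Rn]$. Define $S\subseteq\mathcal{N}\times\omega$ by: $\delta Sn$ if and only if $\rho(\delta)Rn$. Since $\rho(\delta)\in\mathcal{F}_\beta$, we have $\forall\delta\exists n[\delta Sn]$, so Axiom~\ref{ax:bcp} gives $\forall\delta\exists m\exists n\forall\epsilon[\overline\delta m\sqsubset\epsilon\rightarrow\rho(\epsilon)Rn]$. Now let $\alpha\in\mathcal{F}_\beta$ be given; applying this at $\delta:=\alpha$ we obtain $m,n$ with $\forall\epsilon[\overline\alpha m\sqsubset\epsilon\rightarrow\rho(\epsilon)Rn]$. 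For any $\gamma\in\mathcal{F}_\beta$ with $\overline\alpha m\sqsubset\gamma$ this yields $\rho(\gamma)Rn$, and by the second property above $\rho(\gamma)=\gamma$, hence $\gamma Rn$. Thus $\forall\alpha\in\mathcal{F}_\beta\exists m\exists n\forall\gamma\in\mathcal{F}_\beta[\overline\alpha m\sqsubset\gamma\rightarrow\gamma Rn]$, which is the desired conclusion.

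The only genuine content is the construction of $\rho$; once it is available, the rest is bookkeeping. The step I expect to need the most care is the justification that the recursion defining $\rho(\alpha)$ is sound --- this is exactly the place where the closure condition on a spread-law (together with $\beta(\langle\;\rangle)=0$) is indispensable --- and, alongside it, the verification that $\rho$ fixes $\mathcal{F}_\beta$ pointwise, on which the whole reduction hinges.
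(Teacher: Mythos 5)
Your proposal is correct and follows essentially the same route as the paper: both construct a retraction $\rho$ of $\omega^\omega$ onto $\mathcal{F}_\beta$ (copying the input value when admissible, otherwise taking the least admitted continuation), verify that $\rho$ lands in $\mathcal{F}_\beta$ and fixes it pointwise, and then transfer the Continuity Principle along $\rho$. The only difference is cosmetic: the paper tests admissibility against $\overline\alpha(n+1)$ while you test against $\overline{\rho(\alpha)}n\ast\langle\alpha(n)\rangle$, and these agree by the downward closure of spread-laws.
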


\begin{proof} One might defend this theorem like Axiom \ref{ax:bcp} itself, as a more general formulation of the same principle.

One may also derive the Theorem from the Axiom, as follows.

Let a spread-law $\beta$ be given.  

\smallskip We define $\rho:\omega^\omega\rightarrow \omega^\omega$ such that, \\for each $\alpha$, for each $n$, {\it if $\beta\bigl(\overline \alpha(n+1)\bigr)=0$}, then $(\rho|\alpha)(n)=\alpha (n)$, and, \\ {\it if not}, then $(\rho|\alpha)(n)=\mu k[\beta\bigl(\overline{\rho|\alpha}n\ast\langle k \rangle\bigr)=0]$.

Then $\forall \alpha[\rho|\alpha\in \mathcal{F}_\beta]$ and $\forall \alpha \in \mathcal{F}_\beta[\rho|\alpha=\alpha]$.

The function $\rho$ is called a {\it retraction} of $\omega^\omega$ onto $\mathcal{F}_\beta$.

\smallskip Now assume $\forall \alpha \in \mathcal{F}_\beta\exists n[\alpha R n]$.

Then $\forall \alpha \exists n[(\rho|\alpha)Rn]$.

 By Axiom \ref{ax:bcp}, for any given $\alpha$ in $\mathcal{F}_\beta$, one may find $m,n$ such that \\$\forall \gamma[\overline \alpha m\sqsubset \gamma \rightarrow (\rho|\gamma)Rn]$ and, therefore, $\forall \gamma \in \mathcal{F}_\beta[\overline\alpha m \sqsubset \gamma \rightarrow \gamma R n]$.

Conclude: $\forall \alpha\in \mathcal{F}_\beta\exists m\exists n \forall \gamma\in \mathcal{F}_\beta[\overline\alpha m \sqsubset \gamma\rightarrow \gamma Rn]$.
\end{proof}
\subsection{Axioms of Countable Choice}

\begin{axiom}[First Axiom of Countable Choice]\label{ax:firstchoice} For every relation $R\subseteq \omega\times\omega$, \\if $\forall m \exists n [mRn]$, then $\exists \alpha\forall m[m R\alpha(m)]$. \end{axiom}
This axiom seems to be a good proposal as we decided to allow the possibility that an infinite sequence $\alpha$ is created step by step. 

Assume $\forall m\exists n[mRn]$. We first find $n$, such that $0Rn$ and define $\alpha(0)=n$, we then find $n$ such that $1Rn$ and define $\alpha(1)=n$, and so on.

Note that the axiom is not so plausible if we should require that an infinite sequence $\alpha$ is given by means of an algorithm. 

The classical mathematician would suggest to {\it define} $\alpha$: for each $m$, $\alpha(m)$ should be the least $n$ such that $mRn$. This suggestion does not work in a constructive context, except for the case that one may decide, for all $m, n$, if $mRn$ or not $mRn$.

\begin{axiom}[Second Axiom of Countable Choice]\label{ax:secondchoice} For every relation $R\subseteq \omega\times\omega^\omega$, \\if $\forall m \exists \alpha [mRn]$, then $\exists \alpha\forall m[m R\alpha^m]$. \end{axiom}

This axiom also seems to be a good proposal as, in general,  we think it possible to start a project for an infinite sequence, do some work on it, then start a second project for an infinite sequence, and do some work on it, then return to the first project and do some further work on it, and so on. 

Assume $\forall n\exists \alpha[nR\alpha]$. Start a project for an infinite sequence suitable to $0$, calling it $\alpha^0$ and find $\alpha^0(0)$. Start a project for an infinite sequence suitable to $1$, calling it $\alpha^1$, and find $\alpha^1(0)$ and $\alpha^0(1)$. Start a project for an infinite sequence suitable to $2$, calling it $\alpha^2$ and calculate $\alpha^2(0)$, $\alpha^1(1)$ and $\alpha^0(2)$. And so on.
\subsection{Sharpened versions of the Continuity Principle}
\begin{axiom}[First Axiom of Continuous Choice]\label{ax:firstacc} For every relation $R\subseteq \omega^\omega\times \omega$,\\ if $\forall \alpha \exists n[\alpha R n]$, then there exists $\varphi:\omega^\omega\rightarrow \omega$ such that $\forall \alpha[\alpha R \varphi(\alpha)]$.  \end{axiom}

Assume $\forall \alpha\exists n[\alpha R n]$. We find the promised $\varphi$ recursively, as follows. Given any $s$, we first ask if $\exists t\sqsubset s[\varphi (t)\neq 0]$. If so, we define $\varphi(s)=0$. If not, we imagine the finite sequence (coded by) $s$ as the beginning of an infinite sequence $\alpha$ that is created step by step, and we ask ourselves: `does $s$ contain sufficient information for finding $n$ such that $\alpha R n$?' If so, we choose such $n$ and define $\varphi(s)=n+1$, and, if not, we define $\varphi(s) =0$.

\begin{axiom}[Second Axiom of Continuous Choice]\label{ax:secondacc} For every relation $R\subseteq \omega^\omega\times \omega^\omega$,\\ if $\forall \alpha \exists \beta[\alpha R \beta]$, then there exists $\varphi:\omega^\omega\rightarrow \omega^\omega$ such that $\forall \alpha[\alpha R (\varphi|\alpha)]$.  \end{axiom}

Assume $\forall \alpha\exists \beta[\alpha R \beta]$. We find the promised $\varphi$ recursively, as follows. Given any $s$, we find the least $p$ such that  if $\forall i<p\exists t\sqsubset s[\varphi^i (t)\neq 0]$. We imagine the finite sequence (coded by) $s$ as the beginning of an infinite sequence $\alpha$ that is created step by step. Clearly, we convinced ourselves that $s$ contains sufficient sufficient information for finding the first $p$ values of a sequence $\beta$ satifying $\alpha R \beta$.  We now ask ourselves: `does $s$ contain sufficient information for finding the next value of the sequence $\beta$ such that $\alpha R\beta$, i.e. for finding $n$ such that $n =\beta(p)$?'  If so, we choose such $n$ and define $\varphi^p(s)=n+1$, and, if not, we define $\varphi^p(s) =0$.

\section{The Borel Hierarchy Theorem}\label{S:bht}

Although the early descriptive set theorist had their doubts about some of Cantor's assumptions, they never questioned the use of classical logic.
The symmetry of classical logic is heavily used in the classical proof of the Borel Hierarchy Theorem. It is not so easy to formulate and prove a satisfying similar result in a constructive context.   Brouwer's Continuity Principle comes to the rescue, and the resulting theorem may be considered a significant and surprising application of the principle.

\subsection{Introducing stumps}{\it Generalized inductive definitions} like the following one are acceptable intuitionistically. 
\begin{definition}\label{D:stp} 
$\mathbf{Stp}$, a collection  of  subsets of $\omega$, called \emph{stumps}, is defined as follows. 

\begin{enumerate}[\upshape (i)]
\item $\emptyset \in \mathbf{Stp}$, and
\item for every infinite sequence $S_0, S_1, \ldots$ of elements of $\mathbf{Stp}$,  the set \\$S:= \{\langle \;\rangle\}\cup\bigcup\limits_{n \in \omega}\langle
 n \rangle \ast S_n$ is again an element of $\mathbf{Stp}$, and
\item nothing more: every element of $\mathbf{Stp}$ is obtained by starting from $\emptyset$ and applying the operation mentioned in (ii) repeatedly. \end{enumerate}

For every non-empty stump $S$, for every $n$, $S\upharpoonright\langle n \rangle = \{s\mid \langle n \rangle \ast s \in S\}$ is called the $n$-th \emph{immediate substump} of $S$. 
\end{definition}
\begin{definition} For every stump $S$  we define $S':=\{s\mid s\notin S \;\wedge\;\forall t[t\sqsubset s\rightarrow t \in S]\}$. The set $S'$ is called the \emph{border of the stump $S$}. \end{definition}

The border $S'$ of $S$ consists of those (code numbers of) finite sequences of natural numbers that are {\it just outside} $S$.

We shall call $\mathbf{Stp}$ a \textit{class} or a {\it set}, although it is a totality of a different kind than $\omega^\omega$ or $\omega$. Again, we have a general idea how its members are created but only a very few of them have been realized until now.

Stumps take the r\^ole fulfilled by {\it countable ordinals} in classical analysis.

As we accept Definition \ref{D:stp}, we also feel entitled to use the following axiom.

\begin{axiom}[Induction on $\mathbf{Stp}$]\label{ax:stumpinduction} Let $P\subseteq \mathbf{Stp}$ be given. If  \begin{enumerate}[\upshape (i)] \item $\emptyset \in P$ and, \item for every nonempty stump $S$, if, for all $n$, $S\upharpoonright \langle n \rangle  \in P$, then $S\in P$,\end{enumerate} then $\mathbf{Stp}=P$. \end{axiom}

The following definition introduces a subclass of the class of stumps,  useful for the treatment of Borel sets.

\begin{definition} $\mathbf{Hrs}$,  a collection  of  subsets of $\omega$, called \emph{hereditarily repetitive nonzero stumps}, is defined as follows. 

\begin{enumerate}[\upshape (i)]
\item $\{\langle\;\rangle\} \in \mathbf{Hrs}$, and
\item for every infinite sequence $S_0, S_1, \ldots$ of elements of $\mathbf{Hrs}$,  the set \\$S:= \{\langle \;\rangle\}\cup\bigcup\limits_{m, n }
\langle (m, n)\rangle \ast S_n$ is again an element of $\mathbf{Hrs}$, and
\item nothing more: every element of $\mathbf{Hrs}$ is obtained by starting from $\{\langle\;\rangle\}$ and applying the operation mentioned in (ii) repeatedly.
\end{enumerate}

\smallskip $1^\ast:=\{\langle\;\rangle\}$ is called the \emph{basic element of} $\mathbf{Hrs}$. Note that, for every $S$ in $\mathbf{Hrs}$, $S=1^\ast$ if and only if $\langle 0 \rangle \notin S$, so one may decide if $S=1^\ast$ or not. 

\smallskip For every $S$ in $\mathbf{Hrs}$, for every $s$, $s$ is called an \emph{endpoint of $S$} if and only if $s\in S$ and $s\ast\langle 0 \rangle \notin S$. Note that $\langle\;\rangle$ is an endpoint of $1^\ast$ and that, for every  $S\ne 1^\ast$ in $\mathbf{Hrs}$, for every $s$, $s$ is and endpoint of $S$ if and only if there exist $n,t$ such that $s=\langle n \rangle\ast t$ and $t$ is an endpoint of $S\upharpoonright\langle n \rangle$.

\smallskip Note that, for every $S$ in $\mathbf{Hrs}$, for each $s$, $s$ is an endpoint of $S$ if and only if, for each $n$, $s\ast\langle n \rangle$ is en element of the border $S'$ of $S$.

\smallskip  Note: $(1^\ast)' = \{\langle n \rangle \mid n \in \omega\}$. 

\end{definition}

\subsection{(Positively) Borel sets}

The class of the (positively) Borel subsets of $\omega^\omega$ is the least class of subsets of $\omega^\omega$ containing the open subsets of $\omega^\omega$ and the closed subsets of $\omega^\omega$ that is  closed under the operations of countable union and countable intersection. We define this class using hereditarily repetitive nonzero stumps as indices.

The constructive mathematician avoids  negation\footnote{A negative statement $\neg P$ reports the {\it failure} of obtaining a construction validating $P$.} and the operation of taking the complement $X\setminus Y$ of a given set $Y\subseteq X$ as much as possible. It is not true, intuitionistically, that the complement $\omega^\omega\setminus Y$ of a of a given positively Borel set $Y\subseteq \omega^\omega$  is again positively Borel.

\begin{definition}[Borel sets and Borel classes] For every  $S$ in $\mathbf{Hrs}$, for every $\beta$, \\we define subsets $\mathcal{G}^S_\beta$ and $\mathcal{F}^S_\beta$ of $\omega^\omega$,  by induction, as follows.

\begin{enumerate}[\upshape (i)] \item $\mathcal{G}^{1^\ast}_\beta =\{\alpha\mid \exists n[\beta (\overline \alpha  n) \neq 0]\}$ and $\mathcal{F}^{1^\ast}_\beta =\{\alpha\mid \forall n[\beta (\overline\alpha n) = 0]\}$.

\item For every $S\neq 1^\ast$,
$\mathcal{G}^S_\beta = \bigcup_n \mathcal{F}^{S\upharpoonright\langle n \rangle }_{\beta\upharpoonright \langle n \rangle}$ and  $\mathcal{F}^S_\beta = \bigcap_n \mathcal{G}^{S\upharpoonright\langle n \rangle }_{\beta\upharpoonright \langle n \rangle}$.
\end{enumerate}
\smallskip
For  every $S$ in $\mathbf{Hrs}$, we define classes $\mathbf{\Sigma}^0_S$ and $\mathbf{\Pi}^0_S$ of subsets of $\omega^\omega$ as follows.

For every $\mathcal{X}\subseteq \omega^\omega$, 

$\mathcal{X}$ is  $\mathbf{\Sigma}^0_S$ if and only if $\exists \beta[\mathcal{X}=\mathcal{G}^S_\beta]$ and   $\mathcal{X}$ is  $\mathbf{\Pi}^0_S$ if and only if $\exists \beta[\mathcal{X}=\mathcal{F}^S_\beta]$.

\smallskip $\mathcal{X}\subseteq \omega^\omega$ is \emph{ open} if and only if $\mathcal{X}$ is $\mathbf{\Sigma}^0_{1^\ast}$ and \emph{ closed}   if and only if $\mathcal{X}$ is $\mathbf{\Pi}^0_{1^\ast}$.

\smallskip $\mathcal{X}\subseteq \omega^\omega$ is $\mathfrak{Borel}$ if and only if, for some $S$ in $\mathbf{Hrs}$, $\mathcal{X}$ is $\mathbf{\Sigma}^0_S$.
 \end{definition}
 
 In each Borel class, we single out a special element that will turn out to be an element of the class of maximal complexity.
\begin{definition}[The leading sets of the hierarchy]\label{D:E_SA_S}

 For every $S$ in $\mathbf{Hrs}$ we define subsets $\mathcal{E}_S$ and $\mathcal{A}_S$ of $\omega^\omega$ as follows.

\begin{enumerate}[\upshape (i)] \item $\mathcal{E}_{1^\ast}:=\mathcal{G}^{1^\ast}_{Id}:=\{\alpha\mid\exists n[\alpha(\langle n \rangle)\neq 0]\}$ and $\mathcal{A}_{1^\ast}:=\mathcal{F}^{1^\ast}_{Id}:=\{\alpha\mid\forall  n[\alpha(\langle n \rangle)= 0]\}$. \item for each hereditarily repetitive nonzero stump $S$, if $S\neq 1^\ast$, \\then $\mathcal{E}_S:= \mathcal{G}^S_{Id}=\{\alpha\mid \exists n [\alpha\upharpoonright\langle n \rangle \in \mathcal{A}_{S\upharpoonright \langle n \rangle}]\}$ and \\
 $\mathcal{A}_S:=\mathcal{F}^S_{Id}= \{\alpha\mid \forall n[\alpha\upharpoonright\langle n \rangle \in \mathcal{E}_{S\upharpoonright \langle n \rangle}]\}$. \end{enumerate}

 \end{definition}
 
 \begin{remark} One may prove, by induction on $\mathbf{Hrs}$: \\for each $S$ in $\mathbf{Hrs}$, for all $\alpha, \beta$, if $\alpha \in \mathcal{A}_S$ and $\beta\in \mathcal{E}_S$, then $\alpha\:\#\;\beta$. \end{remark}
\subsection{Games} It is very useful to think of the leading sets of the Borel hierarchy in a {\it game theoretic} way, as follows. 
 
\begin{definition}[Introducing games and strategies] Let a hereditarily repetitive nonzero stump $S$ be given, and let also $\alpha$ in $\omega^\omega$ be given. We introduce $\mathbb{G}_S(\alpha)$, \emph{the game in $S$ for $\alpha$}. 
 
 A play in $\mathbb{G}_S(\alpha)$ goes as follows.  Players $I,II$ start constructing an infinite sequence $\gamma$ in $\omega^\omega$.\begin{quote} $I$ chooses  $\gamma(0)$, $II$ chooses $\gamma(1)$, $I$ chooses $\gamma(2)$, $\ldots$ \end{quote}
 The play ends as soon as a position $\overline \gamma n= \langle \gamma(0), \gamma(1), \ldots, \gamma(n-1)\rangle$ in the \emph{border} $S'$ of $S$ is reached. If $n$ is even, then Player $I$ wins the play if and only if $\alpha(\overline \gamma n)=0$ and, if $n$ is odd, then Player $I$ wins the play if and only if $\alpha(\overline \gamma n)\neq 0$. Player $II$ wins the play if and only if Player $I$ does not win the play.
 
 \smallskip For all $\sigma, \tau$, for all $s$, we define: $s\in_I\sigma$, \emph{`$s$ is played by Player $I$ according to the strategy $\sigma$'}, if and only if $\forall n[2n<length(s)\rightarrow s(2n)=\sigma\bigl(\overline s(2n)\bigr)]$, and
 $s\in_{II}\tau$, \emph{`$s$ is played by Player $II$ according to the strategy $\tau$'}, if and only if $\forall n[2n+1<length(s)\rightarrow s(2n+1)=\tau\bigl(\overline s(2n+1)\bigr)]$.

  $\;$\\For each  $S$ in $\mathbf{Hrs}$, for each $\alpha$, for all $\sigma,\tau$, we define:\\ $\mathcal{W}^I_S(\sigma, \alpha)$, \emph{$\sigma$ is a winning strategy for Player $I$ in $\mathbb{G}_S(\alpha)$},  if and only if \\$\forall i\forall s\in S'[s\in_I\sigma\rightarrow\bigl( (s\in \omega^{2i} \rightarrow \alpha(s)= 0)\;\wedge\; (s\in \omega^{2i+1} \rightarrow \alpha(s)\neq 0)\bigr)]$, and:\\ $\mathcal{W}^{II}_S(\tau, \alpha)$, \emph{$\tau$ is a winning strategy for Player $II$ in $\mathbb{G}_S(\alpha)$},  if and only if  \\$\forall i\forall s\in S'[s\in_{II}\tau\rightarrow\bigl( (s\in \omega^{2i} \rightarrow \alpha(s)\neq 0)\;\wedge\; (s\in \omega^{2i+1} \rightarrow \alpha(s)= 0)\bigr)]$. \end{definition}
 
 \begin{theorem}\label{T:strategies} For every $S$ in $\mathbf{Hrs}$,  \begin{enumerate}[\upshape (i)] \item for every $\alpha$,  for every $\sigma$, if $\mathcal{W}^I(\sigma, \alpha)$, then $\alpha \in \mathcal{E}_S$ and \item for every $\alpha$, for every $\tau$,  if $\mathcal{W}^{II}_S(\tau, \alpha)$, then $\alpha \in \mathcal{A}_S$. \item  for every $\alpha$,  if $ \alpha \in \mathcal{E}_S$, then, for some $\sigma$, $\mathcal{W}^I_{S}(\sigma, \alpha)$,  and \item for every $\alpha$,  if $ \alpha \in \mathcal{A}_S$, then, for some $\tau$,  $\mathcal{W}^{II}_S(\tau, \alpha)$.\end{enumerate} \end{theorem}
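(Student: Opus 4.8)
\subsection*{Proof proposal}

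The plan is to prove the four clauses (i)--(iv) simultaneously by induction on $\mathbf{Hrs}$, using the induction principle for $\mathbf{Hrs}$ that is analogous to Axiom \ref{ax:stumpinduction}. The clauses are genuinely interdependent: in the inductive step, clause (i) for $S$ will appeal to clause (ii) for the immediate substumps, (ii) to (i), (iii) to (iv), and (iv) to (iii), so a single joint induction is the natural framework. Everything will hinge on one structural observation about the game: once Player $I$ has made his opening move $\gamma(0)=n$ in $\mathbb{G}_S(\alpha)$, the rest of the play is precisely a play of $\mathbb{G}_{S\upharpoonright\langle n\rangle}(\alpha\upharpoonright\langle n\rangle)$ with the two players' roles exchanged --- the player moving second in the big game is the opening player, i.e.\ ``Player $I$'', of the subgame --- and, since every position of the subgame has length one less than the corresponding big-game position, ``Player $I$ wins the subgame'' is equivalent to ``Player $II$ wins the big game''. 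I would make this correspondence precise first, including the translation of the predicates $s\in_I\sigma$, $s\in_{II}\tau$ and of the values $\alpha(s)$, $(\alpha\upharpoonright\langle n\rangle)(s')$, and also record the routine fact that every play of $\mathbb{G}_S(\alpha)$ terminates, because a stump is a well-founded tree: every infinite $\gamma$ has a shortest initial segment outside $S$, and that segment lies in the border $S'$.

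For the base case $S=1^\ast$ one has $(1^\ast)'=\{\langle n\rangle\mid n\in\omega\}$, so a play ends right after Player $I$'s single move $\gamma(0)$ at the odd-length position $\langle\gamma(0)\rangle$, and $I$ wins iff $\alpha(\langle\gamma(0)\rangle)\neq 0$. Hence a winning strategy $\sigma$ for $I$ is nothing but a number $n=\sigma(\langle\;\rangle)$ with $\alpha(\langle n\rangle)\neq 0$, which exists exactly when $\alpha\in\mathcal{E}_{1^\ast}$; and since Player $II$ never moves, $\mathcal{W}^{II}_{1^\ast}(\tau,\alpha)$ holds for some (indeed every) $\tau$ iff $\forall n[\alpha(\langle n\rangle)=0]$, i.e.\ $\alpha\in\mathcal{A}_{1^\ast}$. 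So (i)--(iv) hold for $1^\ast$, with no use of a choice axiom.

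In the inductive step $S\neq 1^\ast$, so $\mathcal{E}_S=\{\alpha\mid\exists n[\alpha\upharpoonright\langle n\rangle\in\mathcal{A}_{S\upharpoonright\langle n\rangle}]\}$ and $\mathcal{A}_S=\{\alpha\mid\forall n[\alpha\upharpoonright\langle n\rangle\in\mathcal{E}_{S\upharpoonright\langle n\rangle}]\}$. For (i): given a winning $\sigma$ for $I$, set $n:=\sigma(\langle\;\rangle)$; the map $s'\mapsto\sigma(\langle n\rangle\ast s')$ (on odd-length $s'$) is then a strategy for ``Player $II$'' of the subgame and is winning there, so IH (ii) for $S\upharpoonright\langle n\rangle$ yields $\alpha\upharpoonright\langle n\rangle\in\mathcal{A}_{S\upharpoonright\langle n\rangle}$, whence $\alpha\in\mathcal{E}_S$. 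Clause (ii) is symmetric: fix an arbitrary $n$, restrict the winning $\tau$ for $II$ to the subgame reached after $\gamma(0)=n$, get a winning strategy for ``Player $I$'' of that subgame, apply IH (i); as $n$ was arbitrary, $\alpha\in\mathcal{A}_S$. For (iii): pick $n$ with $\alpha\upharpoonright\langle n\rangle\in\mathcal{A}_{S\upharpoonright\langle n\rangle}$, use IH (iv) to pick a winning $\tau_0$ for ``Player $II$'' of $\mathbb{G}_{S\upharpoonright\langle n\rangle}(\alpha\upharpoonright\langle n\rangle)$, and define $\sigma$ by $\sigma(\langle\;\rangle):=n$, $\sigma(\langle n\rangle\ast s'):=\tau_0(s')$, and arbitrarily on positions that cannot occur; the correspondence makes $\sigma$ winning for $I$. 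Clause (iv) runs symmetrically but now genuinely needs choice: from $\alpha\in\mathcal{A}_S$ and IH (iii), for every $n$ there exists a winning strategy for ``Player $I$'' of $\mathbb{G}_{S\upharpoonright\langle n\rangle}(\alpha\upharpoonright\langle n\rangle)$, and the Second Axiom of Countable Choice (Axiom \ref{ax:secondchoice}) supplies such strategies $\sigma_n$ uniformly; then $\tau$ with $\tau(\langle n\rangle\ast s'):=\sigma_n(s')$ is a winning strategy for $II$ in $\mathbb{G}_S(\alpha)$.

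The main obstacle I anticipate is stating and checking the role-and-parity correspondence cleanly: keeping straight which of the two players counts as ``Player $I$'' in the subgame, why the winning condition flips, and how $s\in_I\sigma$ in the big game translates into $s'\in_{II}\tau$ in the subgame (and vice versa). Everything else is bookkeeping; in particular I would suppress the pair-code structure $\langle(m,n)\rangle$ indexing the immediate substumps of an element of $\mathbf{Hrs}$ and simply write $S\upharpoonright\langle n\rangle$ as in Definition \ref{D:E_SA_S}. The one conceptual point worth flagging in the write-up is that clauses (i), (ii), (iii) are choice-free, whereas (iv) cannot be carried out pointwise and uses a countable-choice axiom.
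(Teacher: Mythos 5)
Your proposal is correct and follows essentially the same route as the paper: a $\mathbf{Hrs}$-induction in which each clause for $S$ is reduced to the dual clause for the immediate substumps via the role-and-parity flip (the paper encodes this flip exactly as you do, by passing from $\sigma$ to $\sigma\upharpoonright\langle n\rangle$ and switching $\mathcal{W}^I$ with $\mathcal{W}^{II}$), and with the Second Axiom of Countable Choice invoked only in clause (iv). The only cosmetic difference is that the paper runs two separate joint inductions, one for (i)+(ii) and one for (iii)+(iv), rather than a single four-clause induction, and leaves the base case $S=1^\ast$ to the reader.
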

 \begin{proof} (i) and (ii).  The proof is by induction on $\mathbf{Hrs}$. 
 
 The case $S=1^\ast$ is easy and left to the reader.

 Now let $S$ be given such that $S\neq 1^\ast$ and, for each $n$, for each $\alpha$, \\ if $\exists \sigma[\mathcal{W}_{S\upharpoonright\langle n \rangle}^I(\sigma, \alpha)]$,, then $\alpha \in \mathcal{E}_{S\upharpoonright\langle n \rangle}$, and,  if $\exists \tau[\mathcal{W}_{S\upharpoonright\langle n \rangle}^{II}(\tau, \alpha)]$, then $\alpha \in \mathcal{A}_{S\upharpoonright\langle n \rangle}$.
 
 \smallskip Let $\sigma, \alpha$ be given such that $\mathcal{W}^I_S(\sigma, \alpha)$.  Define $n_0:=\sigma(\langle \;\rangle)$. 
  \\ Define  $\tau:=\sigma\upharpoonright \langle n_0\rangle$ and note:  $\mathcal{W}^{II}_{S\upharpoonright\langle n_0\rangle}(\tau, \alpha\upharpoonright\langle n_o\rangle)$. \\ Conclude: $\alpha\upharpoonright\langle n_0\rangle \in \mathcal{A}_{S\upharpoonright\langle n_0\rangle}$ and $\alpha \in \mathcal{E}_S$.
  
  \smallskip  Let $\tau, \alpha$ be given such that $\mathcal{W}_S^{II}(\tau, \alpha)$. 
  \\Then, for each $n$, $\mathcal{W}_{S\upharpoonright\langle n \rangle}^I(\tau\upharpoonright \langle n \rangle, \alpha\upharpoonright\langle n \rangle)$.  \\Conclude: for each $n$, $\alpha\upharpoonright\langle n \rangle \in \mathcal{E}_{S\upharpoonright\langle n \rangle}$ and: $\alpha \in \mathcal{A}_S$. 
  
  \smallskip (iii) and (iv). The proof is by induction on $\mathbf{Hrs}$. 
 
 The case $S=1^\ast$ is easy and left to the reader.

 Now let $S$ be given such that $S\neq 1^\ast$ and, for each $n$, for each $\alpha$, \\ if $ \alpha \in \mathcal{E}_{S\upharpoonright\langle n \rangle} $, then, for some $\sigma$,  $\alpha \in \mathcal{W}^I_{S\upharpoonright\langle n \rangle}(\sigma, \alpha)$, and,  \\if $ \alpha \in \mathcal{A}_{S\upharpoonright\langle n \rangle} $, then for some $\tau$,  $\alpha \in \mathcal{W}^{II}_{S\upharpoonright\langle n \rangle}(\tau, \alpha)$, 
 
  Let $\alpha$ be given such that $\alpha \in \mathcal{E}_S$. Find $n$ such that $\alpha\upharpoonright\langle n \rangle \in \mathcal{A}_{S\upharpoonright\langle n \rangle}$. \\Find $\tau$ such that $\mathcal{W}^{II}_{S\upharpoonright\langle n \rangle} (\tau, \alpha\upharpoonright\langle n \rangle)$. \\Define $\sigma$ such that $\sigma(\langle\;\rangle)=n$ and $\sigma\upharpoonright \langle n \rangle =\tau$ and note: $\mathcal{W}^I_S(\sigma, \alpha)$. 
  
  \smallskip Let $\alpha$ be given such that $\alpha \in \mathcal{A}_S$. \\Then $\forall n[\alpha\upharpoonright\langle n \rangle \in \mathcal{E}_{S\upharpoonright\langle n \rangle}]$ and $\forall n\exists \sigma[\mathcal{W}^I_{S\upharpoonright\langle n \rangle}(\sigma, \alpha\upharpoonright\langle n \rangle)]$. 
   \\Using the Second Axiom of Countable Choice, Axiom \ref{ax:secondchoice}, find $\tau$ such that \\$\forall n[\mathcal{W}^I_{S\upharpoonright\langle n\rangle}(\tau\upharpoonright\langle n \rangle, \alpha\upharpoonright\langle n \rangle)]$ and note: $\mathcal{W}^{II}_S(\tau, \alpha)$.  
  \end{proof}
  \begin{definition}\label{D:correctionbystrategies}For every $S$ in $\mathbf{Hrs}$, for every $\tau$, for every $\alpha$, we let $\tau\Join_S^{II}\alpha$, \\\emph{`$\alpha$-as-corrected by the strategy $\tau$ for Player $II$ in $\mathbb{G}_S(\alpha)$'}, be the element of $\omega^\omega $ satisfying:
  for all $s$ in the border $S'$ of $S$, \\if $length(s)$ is even, then $\tau\Join_S^{II}\alpha(s) =\max\bigl(1, \alpha(s)\bigr)$, and, \\if $length(s)$ is odd, then $\tau\Join_S^{II}\alpha(s) =0,$ and, \\for all $s$, if $s$ is not in the border $S'$ of $S$, then $\tau\Join_S^{II}\alpha(s)=\alpha(s)$.\end{definition}
  
  \begin{remark}Note: for every $S$ in $\mathbf{Hrs}$, for every $\alpha$, \\$\alpha \in \mathcal{A}_S$ if and only if, for some $\tau$, $\alpha= \tau\Join^{II}_S\alpha$. \end{remark}
  
  The following Lemma is crucial. With this tool we will prove the Hierarchy Theorem.
  \begin{lemma}[A consequence of Brouwer's Continuity Principle]\label{L:continuityborel}$\;$\\For every $S$ in $\mathbf{Hrs}$, for every relation $R\subseteq \omega^\omega\times\omega$, if $\forall \alpha \in \mathcal{A}_S\exists n[\alpha R n]$, then $\forall \alpha \in \mathcal{A}_S\exists p\exists q\forall \beta \in \mathcal{A}_S[\bigl(\beta(0)=\alpha(0) \;\wedge\;\forall n<p[\beta\upharpoonright\langle n\rangle=\alpha\upharpoonright \langle n\rangle]\bigr)\rightarrow \beta Rq]$. \end{lemma}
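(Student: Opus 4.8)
The plan is to reduce the statement to an ordinary instance of Brouwer's Continuity Principle (Axiom \ref{ax:bcp}) by coordinatizing $\mathcal{A}_S$ by means of the correction operation of Definition \ref{D:correctionbystrategies}. We may decide whether $S=1^\ast$ or not. The case $S=1^\ast$ is the easy one: $\mathcal{A}_{1^\ast}$ is the cylinder $\{\alpha\mid\forall n[\alpha(\langle n\rangle)=0]\}$, which admits an obvious continuous retraction of $\omega^\omega$ onto $\mathcal{A}_{1^\ast}$, and the argument below goes through with this retraction in place of the map $\Phi$ constructed next. So assume from now on that $S\ne 1^\ast$. Fix the standard continuous pairing $\eta\mapsto(\alpha_\eta,\tau_\eta)$ of $\omega^\omega$ with $\omega^\omega\times\omega^\omega$ and define $\Phi\colon\omega^\omega\to\omega^\omega$ by $\Phi(\eta):=\tau_\eta\Join_S^{II}\alpha_\eta$. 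Then $\Phi$ is continuous; in every play of $\mathbb{G}_S(\Phi(\eta))$ in which Player $II$ follows $\tau_\eta$ Player $II$ wins (see Definition \ref{D:correctionbystrategies}), so $\mathcal{W}^{II}_S(\tau_\eta,\Phi(\eta))$ holds and, by Theorem \ref{T:strategies}(ii), $\Phi(\eta)\in\mathcal{A}_S$ for every $\eta$. Moreover $\Phi$ maps $\omega^\omega$ \emph{onto} $\mathcal{A}_S$: by the Remark following Definition \ref{D:correctionbystrategies} every $\beta\in\mathcal{A}_S$ equals $\tau\Join_S^{II}\beta$ for some $\tau$, and then $\Phi(\eta)=\beta$ for any $\eta$ coding $(\beta,\tau)$.

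Now assume $\forall\alpha\in\mathcal{A}_S\exists n[\alpha Rn]$ and let $\eta R'n$ abbreviate $\Phi(\eta)Rn$. As $\Phi(\eta)\in\mathcal{A}_S$ for all $\eta$, we have $\forall\eta\exists n[\eta R'n]$, so Brouwer's Continuity Principle yields $\forall\eta\exists M\exists n\,\forall\epsilon[\overline\eta M\sqsubset\epsilon\rightarrow\Phi(\epsilon)Rn]$. Fix $\alpha\in\mathcal{A}_S$; by Theorem \ref{T:strategies}(iv) choose $\tau$ with $\mathcal{W}^{II}_S(\tau,\alpha)$; then $\tau\Join_S^{II}\alpha=\alpha$, so $\Phi(\eta)=\alpha$ where $\eta$ codes $(\alpha,\tau)$. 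Let $M,n$ be as provided and put $q:=n$. It suffices to produce $p$ such that every $\beta\in\mathcal{A}_S$ with $\beta(0)=\alpha(0)$ and $\beta\upharpoonright\langle k\rangle=\alpha\upharpoonright\langle k\rangle$ for all $k<p$ may be written as $\Phi(\eta')$ with $\overline\eta M\sqsubset\eta'$; for then $\beta=\Phi(\eta')Rn=q$.

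Note that $\overline\eta M$ only fixes $\overline\alpha M_1$ and $\overline\tau M_1$ for some $M_1$ computable from $M$ and the pairing. Choose $p$ so large that every finite sequence with code below $M_1$ is either the empty sequence or has first entry less than $p$. Let $\beta\in\mathcal{A}_S$ be as above. Then $\beta$ agrees with $\alpha$ at every argument with code below $M_1$, so $\overline\beta M_1=\overline\alpha M_1$. Using the Second Axiom of Countable Choice (Axiom \ref{ax:secondchoice}) and Theorem \ref{T:strategies}(iii) applied to $\beta\upharpoonright\langle k\rangle\in\mathcal{E}_{S\upharpoonright\langle k\rangle}$, choose, for each $k\ge p$, a strategy $\sigma_k$ with $\mathcal{W}^I_{S\upharpoonright\langle k\rangle}(\sigma_k,\beta\upharpoonright\langle k\rangle)$, and define a strategy $\tau'$ for Player $II$ in $\mathbb{G}_S(\beta)$ by $\tau'(\langle\;\rangle):=\tau(\langle\;\rangle)$, $\tau'\upharpoonright\langle k\rangle:=\tau\upharpoonright\langle k\rangle$ for $k<p$, and $\tau'\upharpoonright\langle k\rangle:=\sigma_k$ for $k\ge p$. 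For $k<p$, $\mathcal{W}^{II}_S(\tau,\alpha)$ gives $\mathcal{W}^I_{S\upharpoonright\langle k\rangle}(\tau\upharpoonright\langle k\rangle,\alpha\upharpoonright\langle k\rangle)$, and $\alpha\upharpoonright\langle k\rangle=\beta\upharpoonright\langle k\rangle$; so $\mathcal{W}^I_{S\upharpoonright\langle k\rangle}(\tau'\upharpoonright\langle k\rangle,\beta\upharpoonright\langle k\rangle)$ holds for every $k$, whence $\mathcal{W}^{II}_S(\tau',\beta)$ as in the proofs of parts (ii) and (iv) of Theorem \ref{T:strategies}. Since $\mathcal{W}^{II}_S(\tau',\beta)$, at every border position reached by $\tau'$ the value of $\beta$ is already the one imposed by the correction, so $\tau'\Join_S^{II}\beta=\beta$. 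By the choice of $p$, $\tau'$ agrees with $\tau$ at every argument with code below $M_1$, so $\overline{\tau'}M_1=\overline\tau M_1$; hence the code $\eta'$ of $(\beta,\tau')$ satisfies $\overline\eta M\sqsubset\eta'$, and $\Phi(\eta')=\tau'\Join_S^{II}\beta=\beta$. This is what was required.

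I expect the heart of the difficulty to be the third paragraph: translating the narrow neighbourhood $\overline\eta M\sqsubset\epsilon$ produced by the Continuity Principle into the much wider neighbourhood of $\alpha$ inside $\mathcal{A}_S$ that occurs in the conclusion, where the whole immediate substumps $\alpha\upharpoonright\langle k\rangle$ with $k<p$ are frozen but everything further out is free. Beyond the harmless choice of $p$ from the coding conventions, this needs the genuinely delicate step of grafting a family of fresh winning strategies $\sigma_k$ for Player $I$ in the subgames $\mathbb{G}_{S\upharpoonright\langle k\rangle}$ onto the untouched initial part of $\tau$, so that the resulting strategy $\tau'$ still witnesses $\beta\in\mathcal{A}_S$ through the correction operation; this is exactly where the recursive game analysis underlying Theorem \ref{T:strategies} and an appeal to countable choice are used. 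Everything else is a routine application of Axiom \ref{ax:bcp}.
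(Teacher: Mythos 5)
Your proof is correct and follows essentially the same route as the paper: reduce to Axiom \ref{ax:bcp} by regarding the correction operation $(\alpha,\tau)\mapsto\tau\Join_S^{II}\alpha$ as a continuous surjection onto $\mathcal{A}_S$, then choose $p$ from the coding bound so that the hypothesis on $\beta$ forces agreement with $\alpha$ on the whole initial segment used by the Continuity Principle. The only difference is one of explicitness: where the paper simply asserts that some $\sigma$ with $\overline\tau m\sqsubset\sigma$ and $\beta=\sigma\Join_S^{II}\beta$ exists, you spell out the grafting of the strategies $\sigma_k$ onto $\tau$, which is a helpful elaboration of the same step.
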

  
  \begin{proof}Assume: $\forall \alpha\in \mathcal{A}_S\exists n[\alpha Rn]$. 
  Conclude: $\forall \alpha \forall \tau\exists n[(\tau\Join^{II}_S\alpha)Rn]$.
  
 Let $\alpha$ in $\mathcal{A}_S$ be given. Using Theorem \ref{T:strategies}(iv), find $\tau$ such that $\alpha = \tau\Join_S^{II}\alpha$.
 
  Using Brouwer's Continuity Principle, Axiom \ref{ax:bcp}, find $m,q$ such that \\$\forall \beta\forall \sigma[(\overline\alpha m\sqsubset \beta \;\wedge\; \overline \tau m\sqsubset \sigma)\rightarrow (\sigma\Join^{II}_S \beta)Rq]$.
  
  Find $p$ such that, for all $s$, if $s\le m$, then $s(0)< p$.
  
  Note that, for all $\beta$ in $\mathcal{A}_S$, if $\beta(0)=\alpha(0)$ and $\forall n<p[\beta\upharpoonright\langle n \rangle =\alpha\upharpoonright\langle n \rangle]$, then $\overline \alpha m \sqsubset \beta$ and, for some $\sigma$, $\overline \tau m\sqsubset \sigma$ and $\beta=\sigma\Join^{II}_S\beta$, and, therefore, $\beta Rq$. \end{proof}
  
\subsection{(Wadge-)reducibility}  
 The following notion of reducibility plays a key r\^ole. In classical descriptive set theory this notion is called {\it Wadge-reducibility}.  Its analog in computability theory is  {\it many-one-reducibility}.  
 \begin{definition}[Reducibility] For all $\mathcal{X}, \mathcal{Y}\subseteq \omega^\omega$, for all $\varphi:\omega^\omega\rightarrow \omega^\omega$ we define: \\\emph{$\varphi$ reduces $\mathcal{X}$ to $\mathcal{Y}$} if and only if, for every $\alpha$, $\alpha \in \mathcal{X} \leftrightarrow \varphi|\alpha \in \mathcal{Y}$. 
 
 \smallskip For all $\mathcal{X}, \mathcal{Y}\subseteq \omega^\omega$, we define: $\mathcal{X}\preceq \mathcal{Y}$, $\mathcal{X}$ \emph{reduces to} $\mathcal{Y}$ if and only if \\there exists $\varphi:\omega^\omega\rightarrow\omega^\omega$ reducing $\mathcal{X}$ to $\mathcal{Y}$. \end{definition}

\begin{theorem}[$\mathcal{E}_S$, $\mathcal{A}_S$ are \emph{complete} elements of $\mathbf{\Sigma}^0_S$, $\mathbf{\Pi}^0_S$, respectively]$\;$

For each  $S$ in $\mathbf{Hrs}$, for every $\mathcal{X}\subseteq \omega^\omega$,
$\mathcal{X}\subseteq \omega^\omega$ is $\mathbf{\Sigma}^0_S$ if and only if $\mathcal{X}\preceq \mathcal{E}_S$, and  $\mathcal{X}\subseteq \omega^\omega$ is $\mathbf{\Pi}^0_S$ if and only if $\mathcal{X}\preceq \mathcal{A}_S$. \end{theorem}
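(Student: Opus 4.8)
The plan is to prove the two biconditionals simultaneously by induction on the generation of $S$ in $\mathbf{Hrs}$ (using the induction principle for $\mathbf{Hrs}$ that is the analogue of Axiom \ref{ax:stumpinduction}). The induction has to be simultaneous because, for $S\neq 1^\ast$, the class $\mathbf{\Sigma}^0_S$ is defined from the classes $\mathbf{\Pi}^0_{S\upharpoonright\langle n\rangle}$ and $\mathbf{\Pi}^0_S$ from the $\mathbf{\Sigma}^0_{S\upharpoonright\langle n\rangle}$, while $\mathcal{E}_S$ refers to the $\mathcal{A}_{S\upharpoonright\langle n\rangle}$ and $\mathcal{A}_S$ to the $\mathcal{E}_{S\upharpoonright\langle n\rangle}$. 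I would organise the argument around two claims, each proved by its own induction on $\mathbf{Hrs}$; together with the remark that $\mathcal{E}_S=\mathcal{G}^S_{Id}$ is $\mathbf{\Sigma}^0_S$ and $\mathcal{A}_S=\mathcal{F}^S_{Id}$ is $\mathbf{\Pi}^0_S$ by definition, they deliver the theorem.

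\emph{Claim 1 (hardness): every $\mathbf{\Sigma}^0_S$ set reduces to $\mathcal{E}_S$ and every $\mathbf{\Pi}^0_S$ set reduces to $\mathcal{A}_S$.} For $S=1^\ast$ and $\mathcal{X}=\mathcal{G}^{1^\ast}_\beta$, I would read a reduction straight off $\beta$: the prescription $(\varphi|\alpha)(\langle n\rangle):=\beta(\overline\alpha n)$, and $(\varphi|\alpha)(s):=0$ for $s$ not of the form $\langle n\rangle$, defines a function $\varphi:\omega^\omega\rightarrow\omega^\omega$ (each output value depending on only finitely many inputs), and the same $\varphi$ reduces $\mathcal{G}^{1^\ast}_\beta$ to $\mathcal{E}_{1^\ast}$ and $\mathcal{F}^{1^\ast}_\beta$ to $\mathcal{A}_{1^\ast}$. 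For $S\neq 1^\ast$ and $\mathcal{X}=\mathcal{G}^S_\beta=\bigcup_n\mathcal{F}^{S\upharpoonright\langle n\rangle}_{\beta\upharpoonright\langle n\rangle}$, the induction hypothesis gives, for each $n$, a reduction of the $\mathbf{\Pi}^0_{S\upharpoonright\langle n\rangle}$ set $\mathcal{F}^{S\upharpoonright\langle n\rangle}_{\beta\upharpoonright\langle n\rangle}$ to $\mathcal{A}_{S\upharpoonright\langle n\rangle}$; I would collect such reductions $\varphi_0,\varphi_1,\ldots$ simultaneously by the Second Axiom of Countable Choice (Axiom \ref{ax:secondchoice}) and set $(\varphi|\alpha)\upharpoonright\langle n\rangle:=\varphi_n|\alpha$ (with $(\varphi|\alpha)(\langle\;\rangle):=0$), so that $\alpha\in\mathcal{G}^S_\beta$ if and only if $\varphi_n|\alpha\in\mathcal{A}_{S\upharpoonright\langle n\rangle}$ for some $n$, i.e.\ if and only if $\varphi|\alpha\in\mathcal{E}_S$. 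The $\mathbf{\Pi}$-side is symmetric, with $\bigcap_n$ and $\forall$ in place of $\bigcup_n$ and $\exists$.

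\emph{Claim 2 (closure under reducibility): if $\mathcal{Y}$ is $\mathbf{\Sigma}^0_S$ and $\varphi:\omega^\omega\rightarrow\omega^\omega$, then $\{\alpha\mid\varphi|\alpha\in\mathcal{Y}\}$ is $\mathbf{\Sigma}^0_S$, and likewise for $\mathbf{\Pi}^0_S$.} It is enough to treat $\mathcal{Y}=\mathcal{G}^S_\beta$ and $\mathcal{Y}=\mathcal{F}^S_\beta$. For $S=1^\ast$: since $\varphi$ uses only finitely much of its argument, each $\overline{\varphi|\alpha}n$, and the information whether it is already determined, is fixed by a finite initial segment of $\alpha$; hence one can construct, uniformly from $\beta$ and $\varphi$, an element $\beta'$ of $\omega^\omega$ such that $\beta'(\overline\alpha m)\neq 0$ holds exactly when some $\overline\alpha m$ already determines a value $\overline{\varphi|\alpha}n$ with $\beta(\overline{\varphi|\alpha}n)\neq 0$; then $\{\alpha\mid\varphi|\alpha\in\mathcal{G}^{1^\ast}_\beta\}=\mathcal{G}^{1^\ast}_{\beta'}$ and $\{\alpha\mid\varphi|\alpha\in\mathcal{F}^{1^\ast}_\beta\}=\mathcal{F}^{1^\ast}_{\beta'}$ for the \emph{same} $\beta'$. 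For $S\neq 1^\ast$: from $\mathcal{G}^S_\beta=\bigcup_n\mathcal{F}^{S\upharpoonright\langle n\rangle}_{\beta\upharpoonright\langle n\rangle}$ and the induction hypothesis applied to $\varphi$ and each $\mathcal{F}^{S\upharpoonright\langle n\rangle}_{\beta\upharpoonright\langle n\rangle}$, every set $\{\alpha\mid\varphi|\alpha\in\mathcal{F}^{S\upharpoonright\langle n\rangle}_{\beta\upharpoonright\langle n\rangle}\}$ is $\mathbf{\Pi}^0_{S\upharpoonright\langle n\rangle}$, say equal to $\mathcal{F}^{S\upharpoonright\langle n\rangle}_{\gamma_n}$; choosing the $\gamma_n$ simultaneously (Axiom \ref{ax:secondchoice}) and letting $\gamma\upharpoonright\langle n\rangle:=\gamma_n$, one obtains $\{\alpha\mid\varphi|\alpha\in\mathcal{G}^S_\beta\}=\bigcup_n\mathcal{F}^{S\upharpoonright\langle n\rangle}_{\gamma_n}=\mathcal{G}^S_\gamma$; the $\mathbf{\Pi}$-side is symmetric.

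Putting the pieces together: Claim 2 applied to $\mathcal{Y}=\mathcal{E}_S$, which is $\mathbf{\Sigma}^0_S$, shows that $\mathcal{X}\preceq\mathcal{E}_S$ implies $\mathcal{X}$ is $\mathbf{\Sigma}^0_S$, and similarly $\mathcal{X}\preceq\mathcal{A}_S$ implies $\mathcal{X}$ is $\mathbf{\Pi}^0_S$; Claim 1 supplies the two converse implications. Both inductions invoke their hypotheses only at the substumps $S\upharpoonright\langle n\rangle$, which are again in $\mathbf{Hrs}$, so the recursion is legitimate. I expect the main obstacle to be the base case of Claim 2: passing from a condition $\beta(\overline{\varphi|\alpha}n)\neq 0$ on initial segments of $\varphi|\alpha$ to a condition $\beta'(\overline\alpha m)\neq 0$ on initial segments of $\alpha$ is forced by the finite-information character of the functions $\varphi$, but it takes some care to see that $\beta'$ is a genuine element of $\omega^\omega$ and that one single $\beta'$ serves both the open and the closed case. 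The rest is routine unwinding of the definitions; the one point to keep an eye on is that each of the two descents into the substumps really needs the Second Axiom of Countable Choice, in order to turn the fact that for every $n$ there is a suitable reduction $\varphi_n$ (respectively index $\gamma_n$) into an actual infinite sequence of such objects.
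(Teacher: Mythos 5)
Your proposal follows essentially the same route as the paper: the same three-part decomposition (the remark that $\mathcal{E}_S$, $\mathcal{A}_S$ lie in $\mathbf{\Sigma}^0_S$, $\mathbf{\Pi}^0_S$; closure of the classes under $\preceq$; and the hardness induction showing every $\mathcal{G}^S_\beta$, $\mathcal{F}^S_\beta$ reduces to $\mathcal{E}_S$, $\mathcal{A}_S$), with the same base case and the same use of the Second Axiom of Countable Choice to assemble the reductions at the inductive step. The paper leaves your Claim 2 and the opening remark to the reader and only writes out your Claim 1, where its glueing formula is $(\psi|\alpha)\upharpoonright\langle n\rangle=\varphi^n|(\alpha\upharpoonright\langle n\rangle)$ rather than your $\varphi_n|\alpha$; the two differ only by a recoding convention in the reading of the definition of $\mathcal{G}^S_\beta$ as a countable union, and otherwise the arguments coincide.
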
 

\begin{proof} The proof that, for each $S$ in $\mathbf{Hrs}$, $\mathcal{E}_S$ is $\mathbf{\Sigma}^0_S$ and $\mathcal{A}_S$ is $\mathbf{\Pi}^0_S$ is left to the reader.
Also the proof that, for each $S$ in $\mathbf{Hrs}$, for  all $\mathcal{X}, \mathcal{Y}\subseteq \omega^\omega$, if $\mathcal{X}\preceq \mathcal{Y}$, then, if $\mathcal{Y}$ is $\mathbf{\Sigma}^0_S$, then  $\mathcal{X}$ is $\mathbf{\Sigma}^0_S$ and, if $\mathcal{Y}$ is $\mathbf{\Pi}^0_S$, then  $\mathcal{X}$ is $\mathbf{\Pi}^0_S$, is left to the reader.

\smallskip

 We now prove that, for each $S$ in $\mathbf{Hrs}$, for each $\beta$, there exists $\varphi:\omega^\omega\rightarrow \omega^\omega$ reducing both $\mathcal{G}^S_\beta$ to $\mathcal{E}_S$ and $\mathcal{F}^S_\beta$ to $\mathcal{A}_S$ and we do so by induction on $\mathbf{Hrs}$. 

\smallskip
We first consider the case that $S=\{\langle\;\rangle\}=1^\ast$ is the basic element of $\mathbf{Hrs}$.  \\Let $\beta$ be given. Define $\varphi:\omega^\omega\rightarrow\omega^\omega$ such that, for each $\alpha$, for each $n$, \\$(\varphi|\alpha)(\langle n\rangle)=\beta(\overline\alpha n)$, and note that $\varphi$ reduces $\mathcal{G}^{1^\ast}_\beta$ to $\mathcal{E}_{1^\ast}$ and $\mathcal{F}^{1^\ast}_\beta$ to $\mathcal{A}_{1^\ast}$.

\smallskip Now let $S$ be a non-basic element of $\mathbf{Hrs}$. Let $\beta$ be given. Using the induction hypothesis and the Second Axiom of Countable Choice\footnote{The use of this axiom at this place may be avoided.}, find $\varphi$ such that, for each $n$, $\varphi^n:\omega^\omega\rightarrow \omega^\omega$ reduces $\mathcal{G}^{S\upharpoonright\langle n \rangle}_{\beta\upharpoonright\langle n \rangle}$ to $\mathcal{E}_{S\upharpoonright \langle n \rangle}$ and $\mathcal{F}^{S\upharpoonright\langle n \rangle}_{\beta\upharpoonright\langle n \rangle}$ to $\mathcal{A}_{S\upharpoonright \langle n \rangle}$. \\Define $\psi:\omega^\omega\rightarrow\omega^\omega$ such that, for each $\alpha$, for each $n$, $(\psi|\alpha)\upharpoonright \langle n \rangle = \varphi^n|(\alpha\upharpoonright\langle n\rangle )$ and note that $\psi$ reduces $\mathcal{G}^S_\beta$ to $\mathcal{E}_S$ and $\mathcal{F}^S_\beta$ to $\mathcal{A}_S$. 
\end{proof}

 \begin{definition}\label{D:canonicalelements}For each  $S$ in $\mathbf{Hrs}$, we define  \emph{canonical elements} $\varepsilon^{\ast}_S$, $\alpha^{\ast}_S$,  of  the sets $\mathcal{E}_S$, $\mathcal{A}_S$, respectively, as follows.

 $\forall s[(s\notin S' \rightarrow \varepsilon^\ast_S(s)=0] \;\wedge\\ \forall i \forall s\in S'[ \bigl(s\in \omega^{2i} \rightarrow \varepsilon^\ast_S(s)=0\bigr)\;\wedge\; \bigl(s\in \omega^{2i+1} \rightarrow \varepsilon^\ast_S(s)=1\bigr)]$, and
 
 $\forall s[(s\notin S' \rightarrow \alpha^\ast_S(s)=0] \;\wedge\\ \forall i \forall s\in S'[ \bigl(s\in \omega^{2i} \rightarrow \alpha^\ast_S(s)=1\bigr)\;\wedge\; \bigl(s\in \omega^{2i+1} \rightarrow \alpha^\ast_S(s)=0\bigr)]$.\end{definition}
 
 \begin{remark} Note that, for every $S$ in $\mathbf{Hrs}$, \\for every $\sigma$, $\mathcal{W}^I_S(\sigma, \varepsilon^\ast_S)]$, and, for every $\tau$, $\mathcal{W}^{II}_S(\tau, \alpha_S^\ast)]$. \end{remark}
 
 \subsection{The Hierarchy Theorem}
 
 We would like to prove the statement that, for each $S$ in $\mathbf{Hrs}$, $\mathbf{\Pi}^0_S$ is not a subclass of $\mathbf{\Sigma}^0_S$ and, conversely,  $\mathbf{\Sigma}^0_S$ is not a subclass of $\mathbf{\Pi}^0_S$. We are going to prove a stronger and more positive  statement. We shall make use of the following technical notion.
\begin{definition}[Freedom in spreads]\label{D:value-dictating} $\;$ \\ Let $\beta$ be a spread-law and let $\mathcal{F}=\mathcal{F}_\beta$ be the corresponding spread. 

Let $t$ be given. 

We define: $t$ is \emph{free in $\mathcal{F}$} if and only if, for all $s$ in $\omega^t$, if $\beta(s)=0$, then, for all $m$, $\beta(s\ast\langle m \rangle)=0$.  

We  define: $t$ is \emph{completely free in $\mathcal{F}$} if and only if, for all $u$ such that $t\sqsubseteq u$, $u$ is free in $\mathcal{F}$.

 \end{definition}
\begin{remark} If $t$ is free in $\mathcal{F}=\mathcal{F}_\beta$, then, when building, step-by-step, an element $\alpha$ of $\mathcal{F}$, and having defined $\alpha(0), \ldots, \alpha(t-1)$, we may choose any number $m$ as a value of $\alpha(t)$. The spread-law $\beta$ does not impose any restriction on our freedom of choice at $t$.\end{remark}

 We shall prove that, for each $S$ in $\mathbf{Hrs}$, $\mathbf{\Pi}^0_S$ is not a subclass of $\mathbf{\Sigma}^0_S$ by showing that every $\varphi:\omega^\omega\rightarrow\omega^\omega$ mapping $\mathcal{A}_S$ into $\mathcal{E}_S$ {\it positively fails to reduce $\mathcal{A}_S$ to $\mathcal{E}_S$}, and, similarly, we prove that, for each $S$ in $\mathbf{Hrs}$, $\mathbf{\Sigma}^0_S$ is not a subclass of $\mathbf{\Pi}^0_S$ by showing that every $\varphi:\omega^\omega\rightarrow\omega^\omega$ mapping $\mathcal{E}_S$ into $\mathcal{A}_S$ {\it positively fails to reduce $\mathcal{E}_S$ to $\mathcal{A}_S$}, i.e.:

 \begin{theorem}[Borel Hierarchy Theorem]\label{T:bht}
 $\;$ \begin{enumerate}[\upshape (i)] \item For every  $S$ in $\mathbf{Hrs}$, for every $\varphi:\omega^\omega\rightarrow\omega^\omega$, \\if $\forall\alpha[\alpha \in \mathcal{A}_S\rightarrow \varphi|\alpha \in \mathcal{E}_S]$, then $\exists \alpha[\alpha \in \mathcal{E}_S \;\wedge\; \varphi|\alpha \in \mathcal{E}_S]$.
 
 \item For every  $S$ in $\mathbf{Hrs}$, for every $\varphi:\omega^\omega\rightarrow\omega^\omega$, \\if $\forall\alpha[\alpha \in \mathcal{E}_S\rightarrow \varphi|\alpha \in \mathcal{A}_S]$, then $\exists \alpha[\alpha \in \mathcal{A}_S \;\wedge\; \varphi|\alpha \in \mathcal{A}_S]$.  \end{enumerate} \end{theorem}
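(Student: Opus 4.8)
The plan is to prove \textup{(i)} and \textup{(ii)} simultaneously by induction on $\mathbf{Hrs}$ (the analogue of Axiom \ref{ax:stumpinduction} for $\mathbf{Hrs}$, used already in the Remarks above), each of the two clauses calling the other one at the immediate substumps. The decisive tools are Brouwer's Continuity Principle (Axiom \ref{ax:bcp}), its refinement for $\mathcal{A}_S$ (Lemma \ref{L:continuityborel}), the canonical elements $\varepsilon^\ast_S,\alpha^\ast_S$, and the fact that $\mathbf{Hrs}$ is hereditarily repetitive: every immediate substump of a given $S$ occurs among the $S\upharpoonright\langle n\rangle$ for infinitely many $n$.

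For the base case $S=1^\ast$, part \textup{(i)} is the continuity move proper. Given $\varphi$ with $\varphi[\mathcal{A}_{1^\ast}]\subseteq\mathcal{E}_{1^\ast}$, the all-zero sequence lies in $\mathcal{A}_{1^\ast}$, so its $\varphi$-image is in $\mathcal{E}_{1^\ast}$, say nonzero at $\langle n_0\rangle$; by Axiom \ref{ax:bcp} applied to $\alpha\mapsto(\varphi|\alpha)(\langle n_0\rangle)$ that value is already fixed by an initial segment of length $m$, and I take $\alpha$ equal to the zero sequence below $m$ but with $\alpha(\langle j\rangle)=1$ for some $j$ with $\langle j\rangle\ge m$; then $\alpha\in\mathcal{E}_{1^\ast}$ and $\varphi|\alpha\in\mathcal{E}_{1^\ast}$. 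Part \textup{(ii)} for $S=1^\ast$ already exhibits the extra difficulty: one must build $\alpha$ inside the closed set $\mathcal{A}_{1^\ast}$ with $\varphi|\alpha$ again inside $\mathcal{A}_{1^\ast}$, and this is an $\omega$-stage construction. At stage $m$, having committed a prefix $\overline\alpha k$ with all border entries $0$, I use Axiom \ref{ax:bcp} to get a modulus $\ell\ge k$ for $(\varphi|\cdot)(\langle m\rangle)$ at the zero-extension $\overline\alpha k\ast\underline 0$; activating a border coordinate $\langle j\rangle\ge\ell$ produces an auxiliary sequence in $\mathcal{E}_{1^\ast}$ that agrees with $\overline\alpha k\ast\underline 0$ up to $\ell$ and whose $\varphi$-image lies in $\mathcal{A}_{1^\ast}$, forcing $(\varphi|(\overline\alpha k\ast\underline 0))(\langle m\rangle)=0$; extending $\alpha$ by zeros up to $\ell$ then secures requirement $m$ forever.

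For the inductive step with $S\neq 1^\ast$, part \textup{(i)} reduces to part \textup{(ii)} at one immediate substump. Given $\varphi$ with $\varphi[\mathcal{A}_S]\subseteq\mathcal{E}_S$, let $\alpha R m$ mean $(\varphi|\alpha)\upharpoonright\langle m\rangle\in\mathcal{A}_{S\upharpoonright\langle m\rangle}$, so $\forall\alpha\in\mathcal{A}_S\exists m[\alpha R m]$; Lemma \ref{L:continuityborel} applied at $\alpha^\ast_S$ gives $p,q$ such that every $\beta\in\mathcal{A}_S$ agreeing with $\alpha^\ast_S$ at $\langle\,\rangle$ and at all $\langle n\rangle\ast t$ with $n<p$ satisfies $(\varphi|\beta)\upharpoonright\langle q\rangle\in\mathcal{A}_{S\upharpoonright\langle q\rangle}$. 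Using hereditary repetitiveness I choose $n_1\ge p$ with $S\upharpoonright\langle n_1\rangle=S\upharpoonright\langle q\rangle=:T$. Since $\alpha^\ast_S\upharpoonright\langle n\rangle=\varepsilon^\ast_{S\upharpoonright\langle n\rangle}$, I freeze every input coordinate except the $n_1$-th at $\varepsilon^\ast_{S\upharpoonright\langle n\rangle}$; writing $\beta_\delta$ for the input with $\delta$ in coordinate $n_1$ (and $\beta_\delta(\langle\,\rangle)=\alpha^\ast_S(\langle\,\rangle)$), the map $\Theta\colon\delta\mapsto(\varphi|\beta_\delta)\upharpoonright\langle q\rangle$ sends $\mathcal{E}_T$ into $\mathcal{A}_T$, because for $\delta\in\mathcal{E}_T$ one has $\beta_\delta\in\mathcal{A}_S$ and $\beta_\delta$ meets the agreement condition (for $n<p$ we have $n\ne n_1$). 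By the induction hypothesis, part \textup{(ii)} for $T$, there is $\delta\in\mathcal{A}_T$ with $\Theta|\delta\in\mathcal{A}_T$; then $\alpha:=\beta_\delta$ has $\alpha\upharpoonright\langle n_1\rangle\in\mathcal{A}_{S\upharpoonright\langle n_1\rangle}$, so $\alpha\in\mathcal{E}_S$, and $(\varphi|\alpha)\upharpoonright\langle q\rangle\in\mathcal{A}_{S\upharpoonright\langle q\rangle}$, so $\varphi|\alpha\in\mathcal{E}_S$.

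Part \textup{(ii)} in the inductive step is the main obstacle; it does not collapse to a single use of the induction hypothesis. Its conclusion asks for an $\alpha$ in the $\Pi$-type set $\mathcal{A}_S$ whose image is again in $\mathcal{A}_S$, so on both sides \emph{every} immediate substream must be driven into the appropriate $\mathcal{E}$-set, and a single coordinate perturbation — all part \textup{(i)} needed, since its $\Sigma$-type conclusion is witnessed by one coordinate — cannot do this. I plan to build $\alpha$ in $\omega$ stages starting from $\alpha^\ast_S\in\mathcal{A}_S$, which stays in $\mathcal{A}_S$ under any change to only finitely many immediate substreams; at stage $m$, to secure $(\varphi|\alpha)\upharpoonright\langle m\rangle\in\mathcal{E}_{S\upharpoonright\langle m\rangle}$ I again activate an input coordinate far beyond the current continuity modulus, obtain an auxiliary input in $\mathcal{E}_S$ whose $\varphi$-image is guaranteed in $\mathcal{A}_S$, read a witnessing coordinate $j$ out of that image, and thereby reduce requirement $m$ to a lower-complexity $\mathcal{A}$-requirement on $(\varphi|\alpha)\upharpoonright\langle m\rangle\upharpoonright\langle j\rangle$, which the induction hypothesis for the immediate substumps turns into a finite commitment. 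The delicate part, which I expect to organize as a recursion on $\mathbf{Hrs}$ running alongside the master induction, is the bookkeeping: keeping all these commitments compatible and checking that the limit sequence really has each immediate substream in the corresponding $\mathcal{E}$-set.
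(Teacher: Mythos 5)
Your reduction of part \textup{(i)} at $S$ to part \textup{(ii)} at an immediate substump is correct and, as far as it goes, a clean piece of work: the use of Lemma~\ref{L:continuityborel} at $\alpha^\ast_S$, the choice of $n_1\ge p$ via hereditary repetitiveness with $S\upharpoonright\langle n_1\rangle = S\upharpoonright\langle q\rangle$, and the freezing of all coordinates except the $n_1$-th to get $\Theta:\delta\mapsto(\varphi|\beta_\delta)\upharpoonright\langle q\rangle$ sending $\mathcal{E}_T$ into $\mathcal{A}_T$, all check out, as does the base case $S=1^\ast$ for both parts.

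The genuine gap is part \textup{(ii)} in the inductive step. You say you want to ``reduce requirement $m$ to a lower-complexity $\mathcal{A}$-requirement on $(\varphi|\alpha)\upharpoonright\langle m\rangle\upharpoonright\langle j\rangle$, which the induction hypothesis for the immediate substumps turns into a finite commitment.'' But the stump $S\upharpoonright\langle m\rangle\ast\langle j\rangle$ is a \emph{second}-level substump, not an immediate one, so the master induction hypothesis does not directly apply; and more importantly, membership in $\mathcal{A}_{S\upharpoonright\langle m\rangle\ast\langle j\rangle}$ is a $\Pi$-type condition which in turn unfolds into an $\mathcal{E}$-condition for each of \emph{its} immediate substreams, each needing its own witnessing coordinate found by continuity, and so on. Nothing in your sketch closes this regress or shows that the various moduli of continuity you consult at different levels can be kept compatible. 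The difficulty you have pushed into the ``bookkeeping'' is precisely the whole content of the Hierarchy Theorem, not a routine matter. Your argument therefore has the right shape but stops exactly at the hard part.

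It is also worth noting that the paper's proof organizes the work in the opposite direction, and that this makes a real difference. The paper proves part \textup{(i)} directly, for each $S$, by a single elaborate construction of a decreasing sequence of spreads $\mathcal{F}_0\supseteq\mathcal{F}_1\supseteq\cdots$ together with two strategies $\sigma,\tau$, indexed by all positions $t$, using Lemma~\ref{L:continuityborel} at each relevant node. Part \textup{(ii)} for $S$ is then deduced from part \textup{(i)} applied to the \emph{successor} stump $T$ (with $\forall n[T\upharpoonright\langle n\rangle=S]$), together with the reduction functional $\rho$ witnessing closure of $\mathbf{\Pi}^0_S$ under countable intersection. This works because the $\Pi$-type requirement ``$\alpha$ and $\varphi|\alpha$ in $\mathcal{A}_S$'' of \textup{(ii)} appears as one layer inside the $\Sigma$-type requirement ``$\alpha$ and $\zeta|\alpha$ in $\mathcal{E}_T$'' of \textup{(i)} at $T$, so the harder-looking clause collapses onto the easier one \emph{one level up} the hierarchy. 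You went the other way, reducing \textup{(i)} to \textup{(ii)} at a substump, which leaves \textup{(ii)} as the load-bearing clause and forces you to build, from scratch, an $\alpha$ meeting infinitely many nested $\Pi$-type requirements. To complete your proposal you would in effect have to reconstruct something like the paper's game-theoretic spread construction for \textup{(ii)} rather than \textup{(i)}; until you do, the inductive step is open.
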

 
 \begin{proof} (i) Let $S$ in $\mathbf{Hrs}$ be given.  
 
 Let $\varphi:\omega^\omega\rightarrow\omega^\omega$ be given such that   $\forall\alpha[\alpha \in \mathcal{A}_S\rightarrow \varphi|\alpha \in \mathcal{E}_S]$.

 We are going to construct $\alpha, \sigma, \tau$ such that $\mathcal{W}^{I}_S(\sigma, \alpha)]$ and $\mathcal{W}^{I}_S(\tau, \varphi|\alpha)$, and, therefore, both $\alpha$ and $\varphi|\alpha$ are in $\mathcal{E}_S$.  
 
 We do so by building  an infinite sequence $\mathcal{F}_0, \mathcal{F}_1, \mathcal{F}_2, \ldots$ of  spreads such that
 
 \begin{enumerate}[\upshape a.] \item $\mathcal{F}_0 = \omega^\omega$ and for each $t$, $\mathcal{F}_{t+1}\subseteq\mathcal{F}_t$,  and, for each $s$, if $length(s)<length(t)$, then $s$ is admitted by the spread-law defining $\mathcal{F}_t$ if and only if $s$ is admitted by the spread-law defining $\mathcal{F}_{t+1}$.  \item for all $i,t, u$, \\if  $t,u\in \omega^{2i}$ and $t \in S$ and $t\in_I \sigma$ and $u\in_I\tau$ and $t_{II}=u_{II}$,  then \begin{enumerate}[\upshape b1.] \item $t$ is completely free in $\mathcal{F}_t$, and \item $u\in S$ and  $S\upharpoonright t=S\upharpoonright u$,  and \item $\forall \alpha \in \mathcal{F}_t[\alpha\upharpoonright t \in \mathcal{A}_{S\upharpoonright t} \rightarrow (\varphi|\alpha)\upharpoonright u \in \mathcal{E}_{S\upharpoonright u}]$, and, \item if $t$ is an endpoint of $S$,  then $u$ is an endpoint of $S$ and \\$\forall \alpha \in \mathcal{F}_{t+1}[ \alpha(t\ast\langle\sigma(t)\rangle)=(\varphi|\alpha)(u\ast\langle \delta(u)\rangle) =1  ]$,  and, \item if $t\ast\langle \sigma(t)\rangle$ is an endpoint of $S$,   then $u\ast\langle\tau(u)\rangle$ is an endpoint of $S$ and $\forall \alpha \in \mathcal{F}_{t+1}\forall n[ \alpha(t\ast\langle \sigma(s), n\rangle)=(\varphi|\alpha)(u\ast\langle \delta(u), n\rangle = 0]$. \end{enumerate}\end{enumerate}
 
 Note that, as $\mathcal{F}_0=\omega^\omega$, conditions b1, b2 and b3 are satisfied for $i=0$.

Now let $t$ be given. We want to define $\mathcal{F}_{t+1}$ and distinguish two cases.

\smallskip

{\it Case (i)}  $t\notin S$, or $\exists i[t\in \omega^{2i+1}]$, or $t\notin_I\sigma$.  We define $\mathcal{F}_{t+1}=\mathcal{F}_t$.

 \smallskip {\it Case (ii)}.  $t\in S$, and $\exists i[t\in \omega^{2i}]$, and $t\in_I\sigma$.
 
  We now determine $u$ such that   $length(u)=length(t)$  and $u\in_I\tau$ and $t_{II}=u_{II}$.  We then know  $S\upharpoonright t=S\upharpoonright u$ and $t$ is  completely free in $\mathcal{F}_t$   and \\$\forall \alpha \in \mathcal{F}_t[\alpha\upharpoonright t \in \mathcal{A}_{S\upharpoonright t} \rightarrow (\varphi|\alpha)\upharpoonright u \in \mathcal{E}_{S\upharpoonright u}]$. 
 
 \smallskip
 
 We again distinguish two cases.
 
 \smallskip
 {\it Case (ii)(I)}. $S\upharpoonright t= S\upharpoonright u =1^\ast=\{\langle\;\rangle\}$, i.e. $t,u$ are endpoints of $S$. \\ Note: $\forall \alpha \in \mathcal{F}_t[\alpha\upharpoonright t \in \mathcal{A}_{1^\ast}\rightarrow (\varphi|\alpha)\upharpoonright u \in \mathcal{E}_{1^\ast}]$. \\Find $\eta$ in $\mathcal{F}_t$ such that $\eta\upharpoonright t =\alpha^\ast_{S\upharpoonright t}=\alpha_{1^\ast}^\ast\in \mathcal{A}_{1^\ast}$, i.e. $\forall n[\eta(t\ast\langle n \rangle)=0]$. \\Note: $(\varphi|\eta)\upharpoonright u \in \mathcal{E}_{1^\ast}$, and find   $q$ such that $(\varphi|\eta)(u\ast\langle q\rangle)\neq 0$.
 
 Define $\tau(u):=q$. \\ Find $p$ such that $\overline{\varphi|\eta}(u\ast\langle \tau(u) \rangle +1)\sqsubseteq \varphi|(\overline \eta p)$.
 \\Find $r$  such that $t\ast\langle r\rangle \ge p$. Note that  $t\ast\langle r\rangle$ is  free in $\mathcal{F}_t$.
 
 Define $\sigma(t):=r$.
  
 Find $c$ such that $\overline \eta \tau(u)\sqsubset c$ and $c(t\ast\langle \sigma(t) \rangle) =1$ and $c$ is admitted by $\mathcal{F}_t$.   
 \\Define   $\mathcal{F}_{t+1}= \mathcal{F}_t \cap c$.
 
 \smallskip
 Note: for every $\alpha$ in $\mathcal{F}_{t+1}$, $\alpha(t\ast\langle \sigma(t)\rangle) =1$ and $(\varphi|\alpha)(u\ast\langle \tau(u)\rangle)=1$.
 
 Also note: for all $v\perp t$, if $v$ is  completely free in $\mathcal{F}_t$, then $v$ is completely free in $\mathcal{F}_{t+1}$.
 
 \medskip
 
 {\it Case (ii)(II)}. $S\upharpoonright t= S\upharpoonright u \neq 1^\ast$, i.e. $t,u$ are no endpoints    of $S$.
 
  We have  to define $\mathcal{F}_{t+1}$ but also $\sigma(t)$ and $\tau(u)$.

    Find $\eta$ in $\mathcal{F}_t$ such that $\eta\upharpoonright t =\alpha^\ast_{S\upharpoonright t}\in \mathcal{A}_{S\upharpoonright t}$.

   We define $\psi:\omega^\omega\rightarrow\omega^\omega$ such that, for each $\beta$,  \begin{enumerate}[\upshape (1)] \item  $\forall u[\neg (t\sqsubseteq u)\rightarrow (\psi|\beta)(u)=\eta(u)]$, and  \item $(\varphi|\beta)\upharpoonright t=\beta$.
   
    \end{enumerate}
   
Note that, for all $\beta$, if $\beta \in \mathcal{A}_{S\upharpoonright t}$, then $(\psi|\beta)\upharpoonright t \in \mathcal{A}_{S \upharpoonright t}$ and $\bigl(\varphi|(\psi|\beta)\bigr)\upharpoonright u \in \mathcal{E}_{S\upharpoonright u}$.

In particular, this is true for $\beta= \alpha^\ast_{S\upharpoonright t}$.

Note: $\psi|(\alpha^\ast_{S\upharpoonright t})=\;\eta$.

Using Lemma \ref{L:continuityborel}, find $p, q$ such that, for each $\beta$, if  $\beta \in\mathcal{A}_{S\upharpoonright t}$ and $\beta(0)=\alpha^\ast_{S\upharpoonright t}(0)$ and $\forall n <p[\beta\upharpoonright \langle n \rangle = \alpha^\ast_{S\upharpoonright t}\upharpoonright \langle n \rangle]$, then $\bigl(\varphi|(\psi|\beta)\bigr)\upharpoonright u\ast\langle q\rangle \in \mathcal{A}_{S\upharpoonright u\ast\langle q \rangle}$. 

Define  $\tau(u)=q$.

Find $r>p$ such that $S\upharpoonright t\ast\langle r \rangle = S\upharpoonright u\ast\langle q \rangle = S\upharpoonright u \ast\langle \tau(u)\rangle$ and define $\sigma(t)=r$.

Define $\mathcal{F}= \{\alpha \in \mathcal{F}_t \mid \forall u [(t\sqsubset u \;\wedge\;u\perp t\ast\langle \gamma(t)\rangle)\rightarrow \alpha(u)=\eta(u)]\}$. 

Note that $t\ast\langle\sigma(t)\rangle$ is completely free in $\mathcal{F}$. 

Also note that, for each $v$, if $v \perp t$ and $v$ is  completely free in $\mathcal{F}_t$, then $v$ is  completely free in $\mathcal{F}$. 

Note: $\forall \alpha \in \mathcal{F}[\alpha\upharpoonright t\ast\langle\gamma(t)\rangle \in \mathcal{E}_{S\upharpoonright t\ast\langle \gamma(t)\rangle }\rightarrow(\varphi|\alpha)\upharpoonright u\ast\langle\delta(u)\rangle \in \mathcal{A}_{S\upharpoonright u\ast\langle \delta(u)\rangle }]$.

\smallskip
We distinguish two subcases of Case {\it (ii)(II)}. 

\smallskip
{\it Case (ii)(II)a}. $t\ast \langle \sigma(t)\rangle$ and $u\ast\langle\tau(u)\rangle$ are endpoints of $S$.

Define $\mathcal{F}_{t+1}= \{\alpha \in \mathcal{F} \mid \forall n[\alpha(t\ast\langle \sigma(t), n \rangle)=0]\}$. 

We claim: $\forall \alpha \in \mathcal{F}_{t+1} \forall n[(\varphi|\alpha)(u\ast\langle \tau (u), n\rangle)=0]$.  

We prove this claim as follows. \\Assume we find $\alpha$ in $\mathcal{F}_{t+1}$ and $n$ such that $(\varphi|\alpha)(u\ast\langle \tau (u), n\rangle)\neq 0$.

Find $m$ such that $\overline{\varphi|\alpha}(u\ast\langle\tau(u),n\rangle +1)\sqsubseteq \varphi|\overline \alpha m$.

Find $\alpha^\diamond$ in $\mathcal{F}_{t+1}$ such that $\overline \alpha m \sqsubset \alpha^\diamond$ and, for some $p$, $\alpha^\diamond(t\ast\langle \sigma(t), p\rangle)\neq 0$.

Note: both $\alpha^\diamond\upharpoonright t\ast\langle \sigma(t)\rangle$ and $(\varphi|\alpha^\diamond)\upharpoonright u\ast\langle \tau(u)\rangle$ are in $\mathcal{E}_{1^\ast}$. 

Contradiction. 

We have to conclude: $\forall \alpha \in \mathcal{F}_{t+1} \forall n[(\varphi|\alpha)(u\ast\langle \tau (u), n\rangle)=0]$.

\smallskip {\it Case (ii)(II)b}. $t\ast\langle\sigma(t)\rangle$ and $u\ast\langle \tau(u)\rangle$ are no endpoints of $S$. 

Define $\mathcal{F}_{t+1} =\mathcal{F}$.

Note: $\forall \varepsilon \in \mathcal{F}_{t+1}[\varepsilon\upharpoonright t\ast\langle\sigma(t)\rangle \in \mathcal{E}_{S\upharpoonright t\ast\langle \sigma(t)\rangle }\rightarrow(\varphi|\varepsilon)\upharpoonright u\ast\langle\tau(u)\rangle \in \mathcal{A}_{S\upharpoonright u\ast\langle \delta(u)\rangle }]$.

 Conclude: \\$\forall \alpha \in \mathcal{F}_{t+1}\forall n[\alpha\upharpoonright t\ast\langle\sigma(t),n\rangle \in \mathcal{A}_{S\upharpoonright t\ast\langle \gamma(t),n\rangle }\rightarrow(\varphi|\alpha)\upharpoonright u\ast\langle\delta(u), n\rangle \in \mathcal{E}_{S\upharpoonright u\ast\langle \delta(u),n\rangle }]$.
 
 Also note: for each $n$, $t\ast\langle\sigma(t), n\rangle$ is completely free in $\mathcal{F}_{t+1}$.
 
 \smallskip  This concludes the definition of the sequence $\mathcal{F}_0, \mathcal{F}_1, \ldots$.
 
 \bigskip Using the fact that  condition a is satisfied, find $\alpha$ such that for each $t$, $\alpha\in \mathcal{F}_t$.

 \medskip Conclude from the fact that conditions b4 and b5 are satisfied: 
 
 for all $t$ in the border $S'$ of $S$ such that $t\in_I\sigma$, \\if  $length(t)$ is even, then $\alpha(t)=0$ and, if $length(t)$ is odd, then $\alpha(t)=1$, and also:  
 
 for all $u$ in the border $S'$ of $S$ such that $u\in_I\tau$, \\if  $length(u)$ is even, then $(\varphi|\alpha)(u)=0$ and, if $length(u)$ is odd, then $(\varphi|\alpha)(u)=1$.
 
 Conclude $\mathcal{W}^{I}_S(\sigma, \alpha)$ and $\mathcal{W}^{I}_S(\tau, \varphi|\alpha)$ and: both $\alpha$ and  $\varphi|\alpha$ are in $\mathcal{E}_S$. 
 
 \bigskip (ii) Let $S$ in $\mathbf{Hrs}$ and $\varphi:\omega^\omega\rightarrow \omega^\omega$ be given such that $\forall \alpha[\alpha \in \mathcal{E}_S \rightarrow \varphi|\alpha \in \mathcal{A}_S]$.
 
 Find $T$ in $\mathbf{Hrs}$ such that $\forall n[T\upharpoonright\langle n \rangle = S]$.
\footnote{$T$ might be called the {\it successor } of $S$.} 
 
 Define $\psi:\omega^\omega\rightarrow \omega^\omega$ such that $\forall \alpha\forall n[(\psi|\alpha)\upharpoonright \langle n \rangle = \varphi|(\alpha\upharpoonright\langle n  \rangle )]$.
 
 Note: $\forall \alpha[\alpha\in\mathcal{A}_T\rightarrow \forall n[(\psi|\alpha)\upharpoonright\langle n \rangle\in\mathcal{A}_S]$. 
 
 Find $\rho:\omega^\omega\rightarrow\omega^\omega$ such that $\forall \beta[\forall n[\beta\upharpoonright\langle n \rangle \in \mathcal{A}_S] \leftrightarrow \rho|\beta \in \mathcal{A}_S]$.\footnote{Proving the existence of $\rho$ is equivalent to proving that the class $\mathbf{\Pi}^0_S$ is closed under the operation of countable intersection.  We leave it to the reader to find this proof.}
 
 Define $\zeta:\omega^\omega\rightarrow \omega^\omega$ such that, for all $\alpha$, for all $n$, $(\zeta|\alpha)\upharpoonright\langle n \rangle = \rho |(\psi|\alpha)$. 
 
 Note: for every $\alpha$, if $\alpha \in \mathcal{A}_T$, then $\zeta|\alpha \in \mathcal{E}_T$. 
 
 Using (i), find $\alpha$ in $\mathcal{E}_T$ such that $\zeta|\alpha \in \mathcal{E}_T$. 
 
 Find $n$ such that $\alpha\upharpoonright \langle n \rangle \in \mathcal{A}_{T\upharpoonright \langle n \rangle}=\mathcal{A}_S$.
 
 Find $m$ such that $(\zeta|\alpha)\upharpoonright\langle m \rangle \in \mathcal{A}_{T\upharpoonright\langle m \rangle}=\mathcal{A}_S$.
 
 Note: $(\zeta|\alpha)\upharpoonright \langle m \rangle = \rho|(\psi|\alpha) \in \mathcal{A}_S$ and conclude: \\$(\psi|\alpha)\upharpoonright\langle n \rangle =\varphi|(\alpha\upharpoonright\langle n \rangle) \in \mathcal{A}_S$.
 
 Defining $\beta:=\alpha\upharpoonright \langle n \rangle$, we thus see: both $\beta$ and $\varphi|\beta$ are in $\mathcal{A}_S$. 
 \end{proof}

 \section{The Fan Theorem}\label{S:ft}
 
 \subsection{Finitary spreads}\begin{definition}
 A spread-law $\beta$ will be called \emph{finitary} if it satisfies the following condition: \begin{quote} $\forall s[\beta(s)=0\rightarrow \exists m\forall n[\beta(s\ast\langle n \rangle)=0\rightarrow n\le m]]$. \end{quote}
 
If the spread-law $\beta$ is finitary, the corresponding spread $\mathcal{F}_\beta$ will be called a \emph{finitary spread} or a \emph{fan}. \end{definition}

When I am creating an element $\alpha$ of a {\it fan} $\mathcal{F}_\beta$, then, at each stage $n$,  having completed \begin{quote} $\alpha(0), \alpha(1), \ldots, \alpha(n-1)$ \end{quote} I only have {\it finitely many choices} for the next value, $\alpha(n)$. 

\smallskip An important example of a fan is {\it Cantor space} $2^\omega:=\mathcal{C}:=\{\alpha\mid\forall n[\alpha(n)\le 1]\}$. 

\begin{definition} For all $\mathcal{X}\subseteq \omega^\omega$, for all $B\subseteq \omega$,  we define: \emph{$B$ is a bar in $\mathcal{X}$, $Bar_\mathcal{X}(B)$,}  if and only if $\forall \alpha \in \mathcal{X}\exists n[\overline \alpha n \in B]$. \end{definition}

\begin{theorem}[Fan Theorem]\label{T:fantheorem} Let $\beta$ be a finitary spread-law. \\If $B\subseteq \omega$ is a bar in $\mathcal{F}_\beta$, some finite $B'\subseteq B$ is  bar in $\mathcal{F}_\beta$. \end{theorem}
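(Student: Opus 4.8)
The plan is to obtain the Fan Theorem from the principle of \emph{bar induction} --- precisely the principle that, as the paper announces, is isolated as Brouwer's Thesis on bars in $\mathcal{N}$ in Section~\ref{S:barth}. One should be clear that there is no cheaper route: the classical argument (``if no finite subbar existed, thread an infinite branch through the complement of $B$'') is constructively void, since that branch need not be an available element of the fan, and the dichotomy ``the subfan above $s$ does or does not admit a finite subbar'' is not decidable. So either the Fan Theorem is taken as a basic axiom in its own right (with the kind of informal justification given for the Continuity Principle), or it is a consequence of the stronger Bar Theorem; I will sketch the latter.

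First, a reduction. I may replace $B$ by its \emph{monotone closure inside the fan}, $\widetilde B := \{t \mid \beta(t)=0 \;\wedge\; \exists u[u\sqsubseteq t \;\wedge\; u \in B]\}$: this is again a bar in $\mathcal{F}_\beta$, it is monotone, and a finite subbar $B^\ast\subseteq\widetilde B$ yields a finite subbar of the original $B$ (replace each $t\in B^\ast$ by a witnessing $u\sqsubseteq t$ with $u\in B$). So assume from now on that $B$ is monotone and consists of nodes admitted by $\beta$.

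Now the induction. Call a node $s$ with $\beta(s)=0$ \emph{secured} if some finite $B^\ast\subseteq B$ satisfies $\forall\alpha\in\mathcal{F}_\beta[s\sqsubset\alpha\rightarrow\exists n[\overline\alpha n\in B^\ast]]$ --- that is, a finite part of $B$ already bars the subfan of $\mathcal{F}_\beta$ above $s$. Then: (a) every $s\in B$ is secured, witnessed by $\{s\}$; and (b) if $\beta(s)=0$ and every admitted immediate successor $s\ast\langle n\rangle$ of $s$ is secured, then $s$ is secured. Step (b) is the \emph{only} point where finitariness of $\beta$ is used: there are only finitely many $n$, say $n\le m$, with $\beta(s\ast\langle n\rangle)=0$; picking for each such $n$ a finite $B_n\subseteq B$ barring the subfan above $s\ast\langle n\rangle$, the finite set $\bigcup_{n\le m}B_n$ bars the subfan above $s$, since every element of that subfan extends one of the finitely many $s\ast\langle n\rangle$. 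Statements (a) and (b), together with the hypothesis that $B$ is a bar, are exactly what bar induction needs in order to conclude that $\langle\;\rangle$ is secured --- and that is the desired finite subbar.

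What is left, and where the genuine work lies, is legitimising the appeal to bar induction: Brouwer's Thesis is usually stated for bars in $\mathcal{N}$, not in an arbitrary fan, so I would transport the situation along the retraction $\rho$ of $\omega^\omega$ onto $\mathcal{F}_\beta$ constructed in the proof of Theorem~\ref{T:bcpext} --- pull $\widetilde B$ together with the border of the spread-law back to a bar in $\mathcal{N}$, run bar induction there, and push the conclusion back into $\mathcal{F}_\beta$. One must also take care to invoke the version of the principle that applies to monotone (not necessarily decidable) bars, since $\widetilde B$ is monotone but not obviously decidable, and to phrase ``secured'' so that the induction principle can be applied to it. I expect this matching of formulations --- getting the exact shape of the fan-theoretic statement to line up with the exact shape of Brouwer's Thesis as it will be stated --- to be the main obstacle; the combinatorial core, the union of finitely many finite subbars, is immediate once finitariness is available.
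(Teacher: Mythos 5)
Your argument is correct, but it deliberately takes a different route from the paper's proof of Theorem~\ref{T:fantheorem}. The paper's Section~\ref{S:ft} proof is a \emph{direct} argument: it introduces, \emph{for bars in a fan specifically}, the stipulation that ``$B$ bars $s$ in $\mathcal{F}_\beta$'' is established precisely when one possesses a canonical (finite) proof built from observations (i)--(iii), and then performs the substitution ``$B$ bars $s$'' $\leadsto$ ``$B$ finitely bars $s$'' inside that canonical proof, checking the substituted proof remains valid. The axiomatic input is thus a canonical-proof thesis tailor-made for finitary spreads, and the Bar Theorem (with its infinitary canonical proofs) is introduced afresh and independently later in Section~\ref{S:barth}. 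You instead route everything through bar induction on $\mathcal{N}$ (Theorem~\ref{T:barinduction}), transporting the bar in $\mathcal{F}_\beta$ up to a bar $B^+=B\cup\{s\mid\beta(s)\neq 0\}$ in $\omega^\omega$ via the retraction and then doing bar induction with $C:=\{s\mid\beta(s)\neq 0\;\vee\;$ some finite $B^\ast\subseteq B$ bars $\mathcal{F}_\beta\cap s\}$; this is exactly the strategy the paper itself later uses to derive both the Almost-Fan Theorem (Theorem~\ref{T:aft}) and the Fan Theorem from it (Corollary~\ref{C:fan3}), and it is also Brouwer's own historical route. The two approaches share the same combinatorial core --- ``a finite union of finite sets is finite,'' keyed to finitariness at the inductive step --- and differ only in what they take as the irreducible axiomatic kernel: the paper's Section~\ref{S:ft} keeps that kernel as small and fan-specific as possible, at the cost of a second canonical-proof stipulation in Section~\ref{S:barth}; your route economises by invoking the stronger thesis once. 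A couple of small remarks on your sketch: the monotone-closure reduction is harmless but not actually needed, since the $C$ above already satisfies the biconditional $\forall s[s\in C\leftrightarrow\forall n[s\ast\langle n\rangle\in C]]$ for arbitrary $B$ (the $\Rightarrow$ direction is immediate because $\mathcal{F}_\beta\cap s\ast\langle n\rangle\subseteq\mathcal{F}_\beta\cap s$, the $\Leftarrow$ direction is your finite-union step); and your worry about decidable versus monotone bar induction is well taken in general, but the paper's Theorem~\ref{T:barinduction} as stated places no decidability constraint on $B$, so the matching you anticipated as ``the main obstacle'' is in fact already provided for.
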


\begin{proof} Assume $\beta$ is a finitary spread-law and let $B\subseteq \omega$ be a bar in  $\mathcal{F}_\beta$. 

How may I have convinced myself that $B$ is indeed a bar in $\mathcal{F}_\beta$? 

(Under what circumstances shall we say that this conclusion is justified? Some agreement here is, intuitionistically,  the only way to make sense of the statement). 

Let us define, for each $s$ such that $\beta(s)=0$, \\
\emph{$B$ bars $s$ in $\mathcal{F}_\beta$} if and only if $Bar_{\mathcal{F}_\beta\cap s}(B)$, i.e. $\forall \alpha \in \mathcal{F}_\beta[ s\sqsubset \alpha\rightarrow \exists n[\overline \alpha n \in B]]$.  

Now  observe the following:

(i) For each $s$, if $\beta(s) =0$ and $s\in B$, then $B$ bars $s$ in $\mathcal{F}_\beta$.

(ii) For each $s$, if $\beta(s)=0$ and, for every $n$ such that $\beta(s\ast\langle n \rangle)=0$, $B$ bars $s\ast\langle n \rangle$ in $\mathcal{F}_\beta$, then $B$ bars $s$ in $\mathcal{F}_\beta$. 

(iii) For each $s$, if $\beta(s)=\beta(s\ast\langle n \rangle)=0$ and $B$ bars $s$ in $\mathcal{F}_\beta$, then $B$ bars $s\ast\langle n \rangle$ in $\mathcal{F}_\beta$. 

\smallskip 
Note that one may {\it prove} a statement of the form `$B$ bars $s$ in $\mathcal{F}_\beta$' by starting from observations of the form (i) and using observations of the form (ii) and (iii) as reasoning steps.

 Let us now agree to consider the statement `$B$ bars $s$ in $\mathcal{F}_\beta$' as  established or {\it true} if and only if we are able to provide such a {\it canonical} proof. 
 
 \smallskip
  This agreement marks an important point in the development of our intuitionistic mathematics. We are introducing an {\it axiomatic assumption}.
  
\smallskip  If we do so, we may argue as follows. 
 
 Assume $Bar_{\mathcal{F}_\beta}(B)$, i.e. $B$ bars $\langle \; \rangle$ in $\mathcal{F}_\beta$. 
 
 Find a canonical proof of this statement.
 
 Now replace in this canonical proof every statement `$B$ bars $s$ in $\mathcal{F}_\beta$' by `$B$ \emph{finitely} bars $s$ in $\mathcal{F}_\beta$' where the latter means: \begin{quote} some finite $B'\subseteq B$ bars $s$ in $\mathcal{F}_\beta$. \end{quote}
 
 Under this replacement our canonical proof changes into another valid proof. In order to see this, we have to use the fact that a finite union of finite sets of integers is itself a finite set of integers.
 
 The conclusion of the new proof will be: `\emph{some finite $B'\subseteq B$ bars $\langle\;\rangle$ in $\mathcal{F}_\beta$}' and that is what we wanted to establish. 
\end{proof}
\subsection{The Uniform-Continuity Theorem}
The following result is the first application of the Fan Theorem.
\begin{theorem}\label{T:uct} Every pointwise continuous function from $[0,1]$ to $\mathcal{R}$ is uniformly continuous on $[0,1]$.\end{theorem}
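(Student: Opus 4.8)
The plan is to encode a pointwise continuous $f:[0,1]\to\mathcal R$ as a bar on a suitable fan, apply the Fan Theorem, and then convert the resulting finite subbar into a single modulus of uniform continuity. The key point is that each real $x\in[0,1]$ can be represented by a canonical element of the fan $\mathcal C$ of (codes for) shrinking sequences of rational intervals contained in $[0,1]$, just as in the proof of Theorem~\ref{T:realcont}; so I would fix an enumeration $q_0,q_1,\ldots$ of the rationals and work with the fan $\mathcal F_\beta$ whose elements $\alpha$ code sequences of dyadic rationals with $|q_{\alpha(n+1)}-q_{\alpha(n)}|\le\frac1{2^n}$, so that $x_\alpha:=\lim_n q_{\alpha(n)}$ runs over all of $[0,1]$ as $\alpha$ runs over $\mathcal F_\beta$, and conversely every $x\in[0,1]$ is $x_\alpha$ for some $\alpha\in\mathcal F_\beta$.

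First I would let $p$ be given and use pointwise continuity of $f$: for every $x$ there is $m$ with $\forall y[|x-y|<\frac1{2^m}\to|f(x)-f(y)|<\frac1{2^{p+2}}]$. Passing through the fan, $\forall\alpha\in\mathcal F_\beta\exists m\exists n$ such that $n$ is (a code for) a rational with $|f(x_\alpha)-q_n|<\frac1{2^{p+2}}$ and this approximation is valid on a neighbourhood of $x_\alpha$ of radius $\frac1{2^m}$. By the First Axiom of Continuous Choice (Axiom~\ref{ax:firstacc}), I get a continuous functional selecting such data, and hence — this is the crux — a set $B\subseteq\omega$ which is a bar in $\mathcal F_\beta$, where $\overline\alpha k\in B$ records that the first $k$ values of $\alpha$ already pin down $x_\alpha$ to within $\frac1{2^{m+1}}$, say, of a rational $q_n$ with $|f(x_\alpha)-q_n|<\frac1{2^{p+2}}$, so that $f$ varies by less than $\frac1{2^{p+1}}$ on all reals $x_\gamma$ with $\overline\gamma k=\overline\alpha k$. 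That $B$ is genuinely a bar follows because every $\alpha\in\mathcal F_\beta$ eventually reaches such a stage.

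Next I would apply the Fan Theorem~\ref{T:fantheorem}: some finite $B'\subseteq B$ is a bar in $\mathcal F_\beta$. Let $N$ be larger than the length of every element of $B'$. Since $B'$ is finite, there is a single $\delta$ (of the form $\frac1{2^M}$) such that whenever $x,y\in[0,1]$ with $|x-y|<\delta$, one can find $\alpha,\gamma\in\mathcal F_\beta$ with $x=x_\alpha$, $y=x_\gamma$ and $\overline\alpha k=\overline\gamma k$ for the element $\overline\alpha k\in B'$ barring $\alpha$ — here I need that $\delta$ is small enough to force $x$ and $y$ into the same basic interval determined by a node of length $\le N$. Then $|f(x)-f(y)|\le|f(x)-q_n|+|q_n-f(y)|<\frac1{2^{p+1}}+\frac1{2^{p+1}}=\frac1{2^p}$. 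As $p$ was arbitrary, this exhibits a modulus of uniform continuity, i.e. $f$ is uniformly continuous on $[0,1]$.

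\textbf{Main obstacle.} The delicate step is the passage from ``$f$ is pointwise continuous'' to an honest decidable bar $B$ on the fan that is stable under extension — one must be careful that the neighbourhood on which a rational approximation to $f(x_\alpha)$ is valid is expressed in terms of a finite initial segment $\overline\alpha k$ (a basic open set of the fan), not merely in terms of the real $x_\alpha$, and that shrinking $\frac1{2^m}$-balls of reals are correctly matched with cylinders $\overline\alpha k$ of the coding fan; getting the two ``radii'' (the metric one on $[0,1]$ and the combinatorial one on $\mathcal F_\beta$) to line up, and choosing $N$ and $\delta$ accordingly, is where the real work lies, though it is bookkeeping rather than anything conceptually new after Theorem~\ref{T:realcont}.
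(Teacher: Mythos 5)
There is a genuine gap in the setup, and beyond that your route differs from the paper's in an instructive way.

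\textbf{The gap.} The spread you propose to use --- sequences $\alpha$ with $|q_{\alpha(n+1)}-q_{\alpha(n)}|\le\frac1{2^n}$, $q_0,q_1,\ldots$ an enumeration of the rationals, ``just as in the proof of Theorem~\ref{T:realcont}'' --- is \emph{not} a fan. At any node, infinitely many rationals lie within $\frac1{2^n}$ of $q_{\alpha(n)}$, so infinitely many immediate extensions are admitted. This is harmless in Theorem~\ref{T:realcont}, where only the Continuity Principle on $\omega^\omega$ is used; but the Fan Theorem~\ref{T:fantheorem} requires a \emph{finitary} spread-law, so as written it does not apply to your $\mathcal F_\beta$. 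Your parenthetical ``dyadic rationals'' gestures at a fix, but to make $\mathcal F_\beta$ a fan you must explicitly constrain the $n$-th coordinate to a \emph{finite} set (e.g.\ dyadics of denominator $2^{n+1}$ in $[0,1]$) while keeping $\alpha\mapsto x_\alpha$ surjective onto $[0,1]$, which in turn depends on the step size $\frac1{2^n}$ exceeding the grid spacing so that every real has a code. This has to be stated, since it is exactly the hypothesis the Fan Theorem needs, and it also underlies the overlap you invoke in the collation step. (A smaller point: Axiom~\ref{ax:firstacc} is stated for $\omega^\omega$; to apply it on the fan you should go through the retraction, as in Theorem~\ref{T:bcpext}.)

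\textbf{Route comparison.} Once the fan is repaired, your plan --- extract a finite sub-bar $B'$ via the Fan Theorem, bound its lengths by $N$, then produce a single $\delta=\frac1{2^M}$ by a Lebesgue-number-type argument using the overlap of the coding intervals --- is valid and is indeed the textbook route. The paper's proof is organized differently and avoids the collation step altogether. It codes $[0,1]$ by Cantor space $2^\omega$ via the \emph{overlapping} trisection $\rho$, and lets $B$ be the set of nodes $s$ whose interval $[\rho'(s),\rho''(s)]$ is ``fine'' (admits a modulus of uniform continuity at the chosen accuracy). It shows $B$ is a bar from pointwise continuity, exactly as you do, but then, rather than extracting a finite sub-bar, it observes the equivalence $s\in B\leftrightarrow\bigl(s\ast\langle 0\rangle\in B\;\wedge\;s\ast\langle 1\rangle\in B\bigr)$ --- which holds precisely because the two children's intervals overlap --- and runs the canonical-proof transformation from the Fan Theorem's proof directly to the conclusion $\langle\;\rangle\in B$, i.e.\ $[0,1]$ is fine. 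So the monotone predicate ``fine'' absorbs all the bookkeeping you flag as ``where the real work lies''; your approach has to do that work explicitly, and you should actually argue that a finite $B'$ of bounded depth yields a common $\delta$ (the overlap of intervals of nodes in $B'$ is what makes it go through), rather than only asserting it.
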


\begin{proof} Let $f$ be a pointwise continuous function from $[0,1]$ to $\mathcal{R}$. 

We first define $\rho$ such that, for each $s$ in $2^{<\omega}$, $\rho(s)=\bigl(\rho'(s), \rho''(s)\bigr)$ is a pair of rationals.   We define $\rho$ by induction on the length of the argument.

We define $\rho(\langle\;\rangle)= (0,1)$ and, for each $s$ in $2^{<\omega}$, \\$\rho(s\ast \langle 0 \rangle))= \bigl(\rho'(s), \frac{1}{3}\rho'(s) + \frac{2}{3}\rho''(s)\bigr)$ and $\rho(s\ast\langle 1 \rangle)=\bigl(\frac{2}{3}\rho'(s) +\frac{1}{3}\rho''(s), \rho''(s)\bigr)$.

\smallskip
We intend to prove, for each $m$, \begin{quote} $\exists n \forall x \in [0,1]\forall y \in [0,1][|x-y|<\frac{1}{2^n}\rightarrow |f(x)-f(y)|<\frac{1}{2^m}]$. \end{quote}

Let $m$ be given.

We define, for all rationals $p,q$ such that $0\le p < q\le 1$, \\$[p,q]$ is \emph{fine} if and only if  $\exists n \forall x \in [p,q]\forall y \in [p,q][|x-y|<\frac{1}{2^n}\rightarrow |f(x)-f(y)|<\frac{1}{2^m}]$.

We want to prove: $[0,1]$ is fine.

Let $B$ be the set of all $s$ in $2^{<\omega}$ such that
$[\rho'(s), \rho''(s)]$ is fine.

We first prove: $B$ is a bar in Cantor space $2^\omega$. 

Let $\alpha$ in $2^\omega$ be given. Find a real $x$ such that, for all $n$, $\rho'(\overline \alpha n)\le x \le \rho''(\overline \alpha n)$. As $f$ is continuous at $x$, find $l$ such that $\forall y \in [0,1][|x-y|<\frac{1}{2^l} \rightarrow |f(x)-f(y)|<\frac{1}{2^{m+1}}]$. Note: $\rho''\bigl(\overline \alpha(2l)\bigr)-\rho'\bigl(\overline \alpha(2l)\bigr)= (\frac{2}{3})^{2l}<(\frac{1}{2})^l$. Conclude: $\overline \alpha(2l)\in B$.

We thus see that $B$ is a bar in $2^\omega$.

\smallskip

One easily verifies:  for all $s$ in $2^{<\omega}$, \\$s\in B$ if and only if both $s\ast\langle 0\rangle \in B$ and $s\ast\langle 1 \rangle \in B$.

Now find a canonical proof of: `$B$ bars $\langle\;\rangle$ in $2^\omega$' and replace, in this proof, every statement: `$B$  bars $s$ in $2^\omega$' by `$s \in B$'. 

The result will be a valid proof, and the conclusion of the proof is: `$\langle \;\rangle \in B$', i.e. `$[0,1]$ is fine'.
\end{proof}

 \section{Measure and Integration}\label{S:measure}
 
 Brouwer worked on the theory of measure and integration, following the lead of H. Lebesgue, see \cite[Chapter VI]{heyting56}. Bishop chose for an approach inspired by P.J. Daniell see \cite[Chapter 6]{bishopbridges85}.  We here return to Brouwer's approach.
 
 \subsection{A note on real numbers}\label{SS:reals} A real number  is an infinite sequence \\$x=x(0), x(1), \ldots$ of pairs $x(n)=\bigl(x'(n), x''(n)\bigr)$ of  rationals such that \begin{enumerate} \item $x$ is \emph{shrinking}, i.e. for all $n$, $x'(n)\le x'(n+1)\le x''(n+1)\le x''(n)$, and \item $x$ is \emph{dwindling}, i.e. for every $m$, there exists $n$ such that $x''(n)-x'(n)<\frac{1}{2^m}$. \end{enumerate}
 
 $\mathcal{R}$ denotes the set of the real numbers. For all $x,y$ in $\mathcal{R}$, one defines \begin{enumerate} \item $x<_\mathcal{R} y$ if and only if, for some $n$, $x''(n)<y'(n)$, and \item $x\le_\mathcal{R} y$ if and only if, for all $n$, $x'(n)\le y''(n)$, and \item $x=_\mathcal{R}y$ ($x$ \emph{really-coincides with} $y$, $x$ is \emph{(really) equal} to $y$), \\if and only if $x\le_\mathcal{R} y$ and $y\le_\mathcal{R} x$. \end{enumerate}
 
 If confusion seems unlikely, we omit the subscript `$\mathcal{R}$'.
 
 One may prove \emph{Cantor's Intersection Theorem}: \begin{quote} Given an infinite sequence $(x_0, y_0), (x_1, y_1), \ldots$ of pairs of reals that is \emph{shrinking}, i.e. for all $n$, $x_n\le x_{n+1}\le y_{n+1}\le y_n$, and \emph{dwindling}, i.e. for all $m$ there exists $n$ such that $y_n-x_n < \frac{1}{2^m}$, then there exists a real $z$ such that, for all $n$, $x_n\le z \le y_n$, and, for each real $t$, if, for all $n$, $x_n\le_\mathcal{R} t \le_\mathcal{R} y_n$, then $t=_\mathcal{R} z$. 
 \end{quote}
 
 $[0,1]:= \{x\in \mathcal{R}\mid 0\le x \le 1\}$. 
 
 \smallskip We will treat rationals and also pairs of rationals as if they were natural numbers. This approach may be made precise by suitable coding functions, see \cite[Section 8]{veldman2011b}. 
 \subsection{Measurable open sets} \begin{definition}\label{D:opensetsmeasurable} Let $(q_0, r_0), (q_1, r_1), \ldots$ be an enumeration of all pairs $(q,r)$ of rationals such that $q\le  r$.

 For all $n$, for all $a$ in $\omega^n$, we define $\mathcal{H}_a :=\{x\in \mathcal{R}\mid \exists j < n[ q_{a(j)}< x<r_{a(j)}]\}.$
 
 For all $\alpha$ in $\omega^\omega$, we define $\mathcal{H}_\alpha :=\{x\in \mathcal{R}\mid \exists j[q_{\alpha(j)}<x<r_{\alpha(j)}]\}=\bigcup_n \mathcal{H}_{\overline\alpha n}$. 
 
 \smallskip $\mathcal{X}\subseteq [0,1]$ is called \emph{open} if and only if there exists $\alpha$ such that $\mathcal{X}=\mathcal{H}_\alpha\cap [0,1]$.
 
 \end{definition}
 \begin{definition} For each $n$, for each $b$ in $\omega^n$, 
 $b$ is \emph{neatly increasing} if and only if \\$\forall j<n[q_{b(j)}< r_{b(j)}]$ and $\forall j<n-1[r_{b(j)}\le q_{b(j+1)}]$. \end{definition}
 
 \begin{remark} For each $a$, there exists exactly one $b$ such that $b$ is neatly increasing and $\mathcal{H}_a=\mathcal{H}_b$.\end{remark}
 
 The proof of this fact is left to the reader.
 
 \begin{definition} For each $a$, we define  $\mu(a) := \sum_{j<length(b)} r_{b(j)}-_\mathbb{Q} q_{b(j)}$, where $b$ is neatly increasing and satisfies $\mathcal{H}_a =\mathcal{H}_b$.

  \medskip  $\alpha$ is called \emph{measurable} if and only if $\mu(\alpha):=\lim_{n\rightarrow \infty} \mu( \overline \alpha n)$ exists.   \end{definition}
  
  \begin{definition}\label{D:intersectionsegments} For all rationals $q,r,s,t$ such that $q\le r$ and $s\le t$ we define a pair of rationals called  $(q,r)\cap(s,t)$ as follows.
  
  If $r<s$ or $t<q$, we define  $(q,r)\cap(s,t)= (0,0)$, and,\\ if $s\le r$ and $q\le t$ we define: $(q,r)\cap (s,t)=(\max (q,s), \min(r,t))$. \end{definition}
  
  \begin{definition}\label{D:localizingmeasure}
  
  Let $\alpha$ be measurable and let rationals $q,r$ be given such that $q< r$. Find $\beta$ such that, for each $j$, $(q_{\beta(j)}, r_{\beta(j)})=(q_{\alpha(j)}, r_{\alpha(j)})\cap(q,r)$.   Note that $\beta$ is measurable. We define: $\mu\upharpoonright[q,r](\alpha):= \mu(\beta)$.

   For each $n$, we define: \emph{$\alpha$ covers $(q,r)$ for more than $1-\frac{1}{n}$} if and only if  \\$\mu\upharpoonright[q,r](\alpha)> (1-\frac{1}{n})(r-q)$.
   
   We also define: \emph{$\alpha$ never covers $(q,r)$} if and only if $\mu\upharpoonright[q,r](\alpha) <r-q$.\end{definition}
   
   \begin{lemma}\label{L:measurecover} Let $\alpha$ be measurable and let rationals $p,q,r$ be given such that $q<p<r$. \begin{enumerate}[\upshape (i)]\item  For each $n$, either $\alpha$ covers $(q,r)$ for more than $1-\frac{1}{n}$, or $\alpha$ never covers $(q,r)$. \item $\mu\upharpoonright[q,r](\alpha)=\mu\upharpoonright[q,p](\alpha)+\mu\upharpoonright[p,r](\alpha)$.
   
   \item For all $n$,
   if $\alpha$ covers both $(q,p)$ and $(p,r)$ for more than $1-\frac{1}{n}$, then $\alpha$ covers $(q,r)$ for more than $1-\frac{1}{n}$. \item
   If $\alpha$ never covers $(q,r)$, then {\it either} $\alpha$ never covers $(q,p)$ {\it or} $\alpha$ never covers $(p,r)$.\end{enumerate}\end{lemma}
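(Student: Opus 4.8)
The plan is to rest everything on two elementary facts. First, for each measurable $\alpha$ and rationals $q<r$, the localized measure $\mu\upharpoonright[q,r](\alpha)=\mu(\beta)$ (with $\beta$ localizing $\alpha$ to $(q,r)$) is a genuine real number with $0\le\mu\upharpoonright[q,r](\alpha)\le r-q$, since at each stage $n$ one has $\mathcal{H}_{\overline\beta n}=\mathcal{H}_{\overline\alpha n}\cap(q,r)\subseteq(q,r)$, so $\mu(\overline\beta n)\le r-q$. Second, the order on $\mathcal{R}$ is cotransitive with respect to the rationals: for every real $x$ and all rationals $a<b$ one has $x>a$ or $x<b$; this follows at once from $x$ being dwindling (pick $n$ with $x''(n)-x'(n)<b-a$ and decide the rational inequality $x'(n)>a$). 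Given these, (i) is immediate: as $q<r$ we have $(1-\frac{1}{n})(r-q)<r-q$, and cotransitivity applied to the real $\mu\upharpoonright[q,r](\alpha)$ with these two rationals is exactly the required dichotomy.

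For (ii) I would argue at the level of finite approximations. Let $\beta,\beta_1,\beta_2$ localize $\alpha$ to $(q,r),(q,p),(p,r)$. Because intersecting a rational interval with a fixed rational interval commutes with finite unions, one checks for each $n$ that $\mathcal{H}_{\overline\beta n}=\mathcal{H}_{\overline\alpha n}\cap(q,r)$, $\mathcal{H}_{\overline{\beta_1}n}=\mathcal{H}_{\overline\alpha n}\cap(q,p)$ and $\mathcal{H}_{\overline{\beta_2}n}=\mathcal{H}_{\overline\alpha n}\cap(p,r)$. Since $(q,r)$ is the disjoint union of $(q,p)$, $\{p\}$ and $(p,r)$, and the total length $\mu$ of a finite union of rational open intervals is unaffected by deleting the single point $p$, this yields $\mu(\overline\beta n)=\mu(\overline{\beta_1}n)+\mu(\overline{\beta_2}n)$ for every $n$; taking limits gives (ii). The one genuinely laborious point is the purely arithmetical verification that $\mu$, defined here through the neatly increasing normal form, really is additive across $p$ in this way; I would dispatch it by induction on $n$ or by comparing normal forms, using that a finite union of finite sets is finite and that lengths of non-overlapping rational intervals add.

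Part (iii) then follows by arithmetic: by (ii), $\mu\upharpoonright[q,r](\alpha)=\mu\upharpoonright[q,p](\alpha)+\mu\upharpoonright[p,r](\alpha)>(1-\frac{1}{n})(p-q)+(1-\frac{1}{n})(r-p)=(1-\frac{1}{n})(r-q)$. For (iv), set $a=\mu\upharpoonright[q,p](\alpha)$ and $b=\mu\upharpoonright[p,r](\alpha)$; by the boundedness above, $a\le p-q$ and $b\le r-p$, and by (ii) the hypothesis that $\alpha$ never covers $(q,r)$ reads $a+b<(p-q)+(r-p)$. Since $a+b$ is a real and the right-hand side a rational, this inequality supplies a positive rational $\varepsilon$ with $a+b\le(p-q)+(r-p)-\varepsilon$. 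Now apply cotransitivity to the real $a$ and the rationals $(p-q)-\frac{\varepsilon}{2}<p-q$: if $a<p-q$ we are done, and if $a>(p-q)-\frac{\varepsilon}{2}$ then $b=(a+b)-a<(r-p)-\frac{\varepsilon}{2}<r-p$, so $\alpha$ never covers $(p,r)$. The main (mild) obstacle throughout is to keep every disjunction constructive, always routing it through the dwindling property rather than through a case split on equality of reals; beyond that, only the normal-form bookkeeping in (ii) requires real work.
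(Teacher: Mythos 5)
The paper leaves the proof of Lemma~\ref{L:measurecover} to the reader, so there is no reference argument to compare against; your proof is correct and, as far as I can tell, is the argument the author is inviting. The two tools you isolate are exactly the right ones: cotransitivity of the real order with respect to a pair of distinct rationals (derived directly from the dwindling condition of Subsection~\ref{SS:reals}) gives (i) immediately, and, after (ii), it also gives (iv) by the $\varepsilon/2$ manoeuvre you describe — note that the bounds $a\le p-q$, $b\le r-p$ you mention are not actually needed in that step, since the case split on $a$ already forces $b<r-p$ arithmetically. The only real work, as you say, is the finitary additivity $\mu(\overline\beta n)=\mu(\overline{\beta_1}n)+\mu(\overline{\beta_2}n)$ underlying (ii); this reduces to two normal-form facts about the $\mu$ of Definition~\ref{D:localizingmeasure}, namely that $\mu$ is additive on finite families of intervals lying on opposite sides of $p$, and that it is unchanged by deleting the single point $p$. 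Both are decidable rational bookkeeping, and since the inductive verification is of the same character as the unproved assertion ``$\beta$ is measurable'' already accepted in Definition~\ref{D:localizingmeasure}, it is reasonable to leave it at the level of detail you give. One trivial caveat: in (i) the hypothesis $n\ge 1$ is implicit, so that $\frac{1}{n}$ is defined and $(1-\frac{1}{n})(r-q)<r-q$ is a genuine strict inequality between rationals.
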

 \begin{proof} The proof of these statements is left to the reader. \end{proof}  
   
 \begin{lemma}\label{L:measurable} Let $\alpha, \beta$ be given such that  both $\alpha, \beta$ are measurable. \begin{enumerate}[\upshape (i)]  \item If $\mu(\alpha)<\mu(\beta)$ and $\mathcal{H}_\alpha\subseteq \mathcal{H}_\beta$, then one may find an element of $\mathcal{H}_\beta\setminus \mathcal{H}_\alpha$. 
 \item if $\mathcal{H}_\beta\subseteq \mathcal{H}_\alpha$, then $\mu(\beta)\le\mu(\alpha)$, and,  if $\mathcal{H}_\beta= \mathcal{H}_\alpha$, then $\mu(\beta)=\mu(\alpha)$. \end{enumerate}\end{lemma}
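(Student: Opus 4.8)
The plan is to derive both parts from a single construction: given a finite, pairwise disjoint family of open rational intervals whose total length exceeds $\mu(\alpha)$ and which is contained in $\mathcal{H}_\beta$, produce a real number lying in that family but outside $\mathcal{H}_\alpha$. The core is the sub-claim: \emph{if $I_0,\dots,I_{k-1}$ are pairwise disjoint open rational intervals with $\sum_{j<k}|I_j|>\mu(\alpha)$, then one may find a real $z$ with $z\in\bigcup_{j<k}I_j$ and $z\notin\mathcal{H}_\alpha$.}

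To prove the sub-claim I would first pick $n$ so large that $(1-\frac{1}{n})\sum_{j<k}|I_j|>\mu(\alpha)$. By Lemma \ref{L:measurecover}(i), for each $j<k$ one may decide whether $\alpha$ covers $I_j$ for more than $1-\frac{1}{n}$ or $\alpha$ never covers $I_j$; running through all $j<k$, it cannot be that $\alpha$ covers every $I_j$ for more than $1-\frac{1}{n}$, since then a direct estimate with neatly increasing normalisations (together with the additivity of Lemma \ref{L:measurecover}(ii)) would give $\mu(\alpha)\ge\sum_{j<k}\mu\upharpoonright I_j(\alpha)>(1-\frac{1}{n})\sum_{j<k}|I_j|>\mu(\alpha)$. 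Hence one finds $j<k$ with $\alpha$ never covering $I_j=(q,r)$, i.e. $\mu\upharpoonright[q,r](\alpha)<r-q$. Choosing a rational $\varepsilon>0$ small enough that $\alpha$ still never covers $(q+\varepsilon,r-\varepsilon)$, I would then bisect repeatedly: with $[q_0,r_0]:=[q+\varepsilon,r-\varepsilon]$, and given $[q_i,r_i]$ with $\alpha$ never covering $(q_i,r_i)$, split at the midpoint $p$; by Lemma \ref{L:measurecover}(ii) the deficits $(p-q_i)-\mu\upharpoonright[q_i,p](\alpha)$ and $(r_i-p)-\mu\upharpoonright[p,r_i](\alpha)$ sum to the strictly positive deficit of $(q_i,r_i)$, so one may decide, by approximate comparison of reals, which is the larger up to a fixed fraction and keep that half, retaining a strictly positive deficit. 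This produces a shrinking, dwindling sequence of closed rational intervals inside $(q,r)$ on each of which $\alpha$ never covers; by Cantor's Intersection Theorem there is a real $z$ with $q_i\le z\le r_i$ for all $i$, so $q<z<r$ and $z\in I_j$. Finally, if $z\in\mathcal{H}_\alpha$, say $q_{\alpha(j_0)}<z<r_{\alpha(j_0)}$, then for $i$ large $[q_i,r_i]\subseteq(q_{\alpha(j_0)},r_{\alpha(j_0)})$, hence $\mu\upharpoonright[q_i,r_i](\alpha)=r_i-q_i$, contradicting that $\alpha$ never covers $(q_i,r_i)$; so $z\notin\mathcal{H}_\alpha$.

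Part (i) then follows at once: from $\mu(\alpha)<\mu(\beta)$ choose $N$ with $\mu(\alpha)<\mu(\overline\beta N)$ (a rational), write $\mathcal{H}_{\overline\beta N}$ as the disjoint union of the open rational intervals supplied by its neatly increasing normalisation — total length $\mu(\overline\beta N)>\mu(\alpha)$, contained in $\mathcal{H}_\beta$ — and apply the sub-claim; note that the hypothesis $\mathcal{H}_\alpha\subseteq\mathcal{H}_\beta$ plays no role in the construction, which is what lets it be re-used in part (ii). For part (ii), since $x\le y$ is equivalent to $\neg(y<x)$ for reals, it suffices to refute $\mu(\alpha)<\mu(\beta)$: under that assumption the construction of part (i) produces a real $z\in\mathcal{H}_\beta$ with $z\notin\mathcal{H}_\alpha$, contradicting $\mathcal{H}_\beta\subseteq\mathcal{H}_\alpha$; hence $\mu(\beta)\le\mu(\alpha)$, and if moreover $\mathcal{H}_\beta=\mathcal{H}_\alpha$, applying this in both directions gives $\mu(\alpha)=\mu(\beta)$.

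The main obstacle I expect is making the bisection fully constructive and correct: one has to keep the limit point genuinely interior to $I_j$ (hence the preliminary shrinking to $[q+\varepsilon,r-\varepsilon]$, so that $z$ really lies in the open interval and thus in $\mathcal{H}_\beta$), and one has to carry a strictly positive rational lower bound on the deficit $r_i-q_i-\mu\upharpoonright[q_i,r_i](\alpha)$ through every halving, which is why approximate comparison of reals ($x<b$ or $x>a$ whenever $a<b$) is needed at each step in place of an undecidable case split. The surrounding bookkeeping — monotonicity and finite additivity of the measure of finite unions of rational intervals, and $\mu\upharpoonright[q,r](\alpha)\le r-q$ — is elementary but must be checked directly, not via part (ii), to avoid circularity. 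Note that, by contrast with the deeper results of this section, no appeal to the Fan Theorem is required.
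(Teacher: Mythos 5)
Your proposal is correct and follows essentially the same route as the paper's proof: choose $N$ with $\mu(\overline\beta N)>\mu(\alpha)$, pass to the neatly increasing normalisation, locate an interval that $\alpha$ never covers, and nest down (keeping the "never covers" property at each stage, which is exactly Lemma \ref{L:measurecover}(iv)) to a point of $\mathcal{H}_\beta\setminus\mathcal{H}_\alpha$ via Cantor's Intersection Theorem, with (ii) obtained from (i) by refuting $\mu(\alpha)<\mu(\beta)$. Your explicit tracking of the deficit through the bisection can be replaced by a direct appeal to Lemma \ref{L:measurecover}(iv), but the argument is the same.
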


 \begin{proof} (i) Find $n$ such that $\mu(\overline \beta n)>\mu(\alpha)$. Find $b$ such that $b$ is neatly increasing and $\mathcal{H}_b = \mathcal{H}_{\overline \beta n}$. Find $j<length(b)$ such that $\alpha$ never covers $ (q_{b(j)}, r_{b(j)})$. \\Using Lemma \ref{L:measurecover},  define a real $x$ such that $x(0)= (q_{b(j)}, r_{b(j)})$ and, for each $n$, \\$x'(n)<x'(n+1)<x''(n+1)<x''(n)$ and $\alpha$ never covers $x(n)$. \\Note: $x\in \mathcal{H}_\beta\setminus \mathcal{H}_\alpha$.
 
 \smallskip (ii) This easily follows from (i).
  \end{proof}
   
   Note that, in the proof of Lemma \ref{L:measurable}, we did not use the Fan Theorem.
  \subsection{On the complement of a measurable open set}

 \begin{definition}\label{D:varphibin} We define a function $B$ associating to every  $a$ in $2^{<\omega}$  a pair of rationals $B(a)= \bigl(B'(a)
  , B''(a)\bigr)$. 
  
  $B(\langle\;\rangle)=(0,1)$ and for all $a$ in $2^{<\omega}$, we consider $M(a):=\frac{B'(a)+B''(a)}{2}$ and then define $B(a\ast\langle 0 \rangle)= \bigl(B'(a) , M(a)\bigr)$ and $B(a\ast\langle 1 \rangle)= \bigl(M(a), B''(a)\bigr)$.
  
 \smallskip We also define a function $\varphi_{bin}$ from Cantor space $2^\omega$ to $[0,1]$, by the following. 
 
 For each $\gamma$ in $2^\omega$, for each $n$, $(\varphi_{bin}|\gamma)(n) = B(\overline \gamma n)$. \end{definition}
 
  Note that, for all $\gamma$ in $2^\omega$, $\varphi_{bin}|\gamma =_\mathcal{R} \sum_{n=0}^\infty \gamma(n)\cdot 2^{-n-1}$.  
 
 \smallskip Note that, constructively, $\varphi_{bin}$ is {\it not} a surjective mapping of Cantor space $\mathcal{C}$ onto $[0,1]$. The set $\varphi_{bin}|2^\omega$ consists of all $x$ in $[0,1]$ that admit of a {\it binary expansion}, i.e. for all $m$, for all $i<2^m$, one may decide: $x\le \frac{i}{2^m}$ or $\frac{i}{2^m} \le x$.

 The following Lemma shows that, within the complement of a `small'  measurable subset of $[0,1]$, one may construct `large',   `compact' sets. 
 \begin{lemma}\label{L:complopen}  Let $n>0$ and $ \alpha$ be given such that $\alpha$ is measurable and $\mu(\alpha)<\frac{1}{2^{n+2}}$. There exists a 
 fan-law $\beta$ in $2^\omega$ such that \begin{enumerate} \item for all $a$, if $\beta(a)=0$, then $a \in 2^{<\omega}$, and $\alpha$ never covers $B(a)$,  \item  for all $a$ in $2^{<\omega}$, for all $i<2$, if $\beta(a)= 0$ and $\beta(a\ast\langle i \rangle)=1$, \\then $\alpha$ covers $B(a\ast \langle i \rangle)$ for more than $\frac{1}{2}$, and \item there exists a measurable $\alpha^+$ such that  $\mu(\alpha^+)<\frac{1}{2^n}$ and, \\for all $x$ in $[0,1]$, if $x\notin \mathcal{H}_{\alpha^+}$, then  $x \notin \mathcal{H}_\alpha$ and $\exists \gamma \in \mathcal{F}_\beta[\varphi_{bin}|\gamma = x]$. \end{enumerate}\end{lemma}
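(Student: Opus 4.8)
The plan is to build the fan-law $\beta$ by recursion on $2^{<\omega}$ using the decidable alternative of Lemma~\ref{L:measurecover}(i), then to obtain $\alpha^+$ by padding an enumeration of the intervals of $\alpha$ with a small, explicitly controlled supply of extra open intervals, and finally, for $x$ outside $\mathcal{H}_{\alpha^+}$, to read off a binary expansion of $x$ that stays inside $\mathcal{F}_\beta$. I would define $\beta$ as follows. Put $\beta(s)=1$ for every $s\notin 2^{<\omega}$ and $\beta(\langle\;\rangle)=0$; the latter is legitimate, since $\mu\upharpoonright[0,1](\alpha)\le\mu(\alpha)<\frac{1}{2^{n+2}}<1$, so $\alpha$ never covers $B(\langle\;\rangle)=(0,1)$. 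If $a\in 2^{<\omega}$ has received the value $0$ together with a certificate that $\alpha$ never covers $(B'(a),B''(a))$, use Lemma~\ref{L:measurecover}(iv) to select a child $a\ast\langle i_0\rangle$ on whose interval $\alpha$ never covers and set $\beta(a\ast\langle i_0\rangle)=0$; for the sibling $a\ast\langle i\rangle$ (with $i<2$, $i\neq i_0$) apply Lemma~\ref{L:measurecover}(i) with parameter $2$, setting $\beta(a\ast\langle i\rangle)=1$ if $\alpha$ covers $(B'(a\ast\langle i\rangle),B''(a\ast\langle i\rangle))$ for more than $1-\frac12=\frac12$, and $\beta(a\ast\langle i\rangle)=0$, with a certificate, otherwise; if $\beta(a)\neq 0$, give all children of $a$ the value $1$. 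This defines a total $\beta$. Since a node is processed only after having received the value $0$, it is immediate that $\beta$ is a finitary spread-law with $\mathcal{F}_\beta\subseteq 2^\omega$ (at least one, and at most two, children of a $0$-node are $0$-nodes), that $\beta(s\ast\langle i\rangle)=0$ implies $\beta(s)=0$ by monotonicity of the localised measure (Lemma~\ref{L:measurecover}(ii), Lemma~\ref{L:measurable}) together with the recursive structure, and that requirements~(1) and~(2) hold.

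For $\alpha^+$, call $a$ \emph{removed} if $\beta(a)=1$ while $\beta$ of its predecessor is $0$; requirement~(2) gives $B''(a)-B'(a)<2\mu\upharpoonright[B'(a),B''(a)](\alpha)$ for such $a$, and distinct removed intervals have disjoint interiors, so a routine computation with Lemma~\ref{L:measurecover}(ii) and Lemma~\ref{L:measurable} yields $\sum_{a\text{ removed}}(B''(a)-B'(a))\le 2\mu(\alpha)<\frac{1}{2^{n+1}}$. Call $a$ \emph{branching} if $\beta(a)=\beta(a\ast\langle 0\rangle)=\beta(a\ast\langle 1\rangle)=0$. Choose positive rationals $\delta_a$, one for each $a\in 2^{<\omega}$, with $\sum_a\delta_a<\frac{1}{2^{n+4}}$, and let $\alpha^+$ enumerate: every $(q_{\alpha(j)},r_{\alpha(j)})$; for each removed $a$ the interval $(B'(a)-\delta_a,B''(a)+\delta_a)$; and for each branching $a$ the interval $(M(a)-\delta_a,M(a)+\delta_a)$. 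Then the partial measures of $\alpha^+$ are non-decreasing and bounded, so $\alpha^+$ is measurable, and $\mu(\alpha^+)\le\mu(\alpha)+\sum_{a\text{ removed}}(B''(a)-B'(a))+2\sum_a\delta_a<\frac{1}{2^{n+2}}+\frac{1}{2^{n+1}}+\frac{1}{2^{n+3}}<\frac{1}{2^n}$. Moreover $\mathcal{H}_\alpha\subseteq\mathcal{H}_{\alpha^+}$, so every $x$ with $x\notin\mathcal{H}_{\alpha^+}$ satisfies $x\notin\mathcal{H}_\alpha$; this secures the first two clauses of requirement~(3).

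It remains, for $x\in[0,1]$ with $x\notin\mathcal{H}_{\alpha^+}$, to produce $\gamma\in\mathcal{F}_\beta$ with $\varphi_{bin}|\gamma=_\mathcal{R}x$. I would build $\gamma\in 2^\omega$ recursively, maintaining $\beta(\overline\gamma k)=0$ and $B'(\overline\gamma k)\le x\le B''(\overline\gamma k)$, which hold for $k=0$. At stage $k$, with $a=\overline\gamma k$ and $M=M(a)$, first determine which children of $a$ survive; this is decidable from the definition of $\beta$. If $a$ is branching, then $(M-\delta_a,M+\delta_a)$ is one of the intervals of $\alpha^+$, so cotransitivity applied to $M-\delta_a<M+\delta_a$, together with $x\notin(M-\delta_a,M+\delta_a)$, gives $x\le M-\delta_a$ or $x\ge M+\delta_a$; set $\gamma(k)=0$ in the first case and $\gamma(k)=1$ in the second, both children surviving. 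If exactly one child survives, say $a\ast\langle 0\rangle$, then $a\ast\langle 1\rangle$ is removed, so $(M-\delta_{a\ast\langle 1\rangle},B''(a)+\delta_{a\ast\langle 1\rangle})$ is one of the intervals of $\alpha^+$; since $x\le B''(a)$, the hypothesis $x\notin\mathcal{H}_{\alpha^+}$ forces $x\le M-\delta_{a\ast\langle 1\rangle}<M$, so set $\gamma(k)=0$; the case in which only $a\ast\langle 1\rangle$ survives is symmetric. In every case $\beta(\overline\gamma(k+1))=0$ and $B'(\overline\gamma(k+1))\le x\le B''(\overline\gamma(k+1))$. Hence $\gamma\in\mathcal{F}_\beta$, and since $B''(\overline\gamma k)-B'(\overline\gamma k)=2^{-k}$, Cantor's Intersection Theorem gives $\varphi_{bin}|\gamma=_\mathcal{R}x$, completing requirement~(3).

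The main obstacle is twofold. First, the predicate ``$\alpha$ never covers $(B'(a),B''(a))$'' is not decidable as it stands, so $\beta$ cannot be built by a direct case split on it: the recursion launders this through the \emph{decidable} dichotomy of Lemma~\ref{L:measurecover}(i), while Lemma~\ref{L:measurecover}(iv) is exactly what guarantees that some child of a $0$-node survives, that is, that $\beta$ really is a spread-law. Second, and this is where the real work lies, the final step would collapse under the naive recipe ``send $x$ into whichever half contains it'', since that choice is not constructively available at dyadic ambiguities; the role of the $\delta_a$-padding is precisely to repair this — the overhang of an enlarged removed interval past the endpoint it shares with the surviving sibling, and the small neighbourhood of the midpoint of a branching node, are what make the left/right decision legitimate — and the arithmetic $\frac{1}{2^{n+2}}+\frac{1}{2^{n+1}}+\frac{1}{2^{n+3}}<\frac{1}{2^n}$ shows that there is measure to spare for it.
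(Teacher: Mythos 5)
Your construction is essentially the paper's: the fan-law $\beta$ is built by exactly the same recursion (select a never-covered child via Lemma \ref{L:measurecover}(iv), classify the sibling via the non-exclusive dichotomy of Lemma \ref{L:measurecover}(i)), and $\alpha^+$ is obtained by padding so that every $x$ outside $\mathcal{H}_{\alpha^+}$ admits a binary expansion landing in $\mathcal{F}_\beta$. The one real variation is the padding scheme: the paper throws in a geometrically shrinking neighbourhood of \emph{every} rational, concludes $\forall q\in\mathbb{Q}[q\;\#\;x]$, and only then observes that the resulting $\gamma$ avoids the removed nodes; you pad only at the midpoints of branching nodes and at the overhanging endpoints of enlarged removed intervals, which is more economical and forces you to construct $\gamma$ digit by digit. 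Your recursion is correct (the cotransitivity argument at each node is exactly what is needed), and including the intervals of $\alpha$ itself in $\alpha^+$ lets you get $x\notin\mathcal{H}_\alpha$ for free, where the paper derives it indirectly from clause (1).

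There is, however, one step that fails as stated: ``the partial measures of $\alpha^+$ are non-decreasing and bounded, so $\alpha^+$ is measurable.'' A non-decreasing bounded sequence of rationals need not converge constructively --- this is precisely the failure of the classical form of Dedekind's Theorem that Subsection \ref{SS:dedekind} is about, so the principle you invoke is not available. The claim is true, but it needs the argument the paper gives for $\alpha^\dag$: the contribution of the $\delta_a$'s converges by your explicit geometric bound, the contribution of the intervals of $\alpha$ converges because $\alpha$ is measurable, and the contribution of the removed intervals must be controlled by the tails of $\mu(\overline\alpha p)$, using the fact that each removed interval is more than half covered by $\alpha$ and that the removed intervals are pairwise disjoint (so that, past a stage where $\mu(\overline\alpha p)$ approximates $\mu(\alpha)$ to within $\varepsilon$, the removed intervals not yet enumerated have total length less than about $2\varepsilon$). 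With that repair the proof is complete.
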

 
 \begin{proof} Let $n>0$ and $ \alpha$ be given such that $\alpha$ is measurable and $\mu(\alpha)<\frac{1}{2^{n+2}}$.
 
 We define the promised fan-law $\beta$ as follows.

 For each $a$, if $a\notin 2^{<\omega}$, then $\beta(a)= 1$.
 
 For $a$ in $2^{<\omega}$,  $\beta(a)$ is defined by induction on $length(a)$. We will take care that, for each $a$ in $2^{<\omega}$, if $\beta(a) =0$, then $\alpha$ never covers $B(a)$.

  Define $\beta(\langle\;\rangle)=0$ and note: $\alpha$ never covers $B(\langle\;\rangle)$.
  
   Now assume: $a\in 2^{<\omega}$, and $\beta(a)$ has been defined.
   
   If $\beta(a)=1$, define $\beta(a\ast \langle 0\rangle)= \beta(a\ast\langle 1 \rangle)=1$.
   
   If $\beta(a) = 0$, we may assume: $\alpha$ never covers $B(a)$. 
   
   Find $i<2$ such that $\alpha$ never covers $B(a\ast\langle i \rangle)$ and define $\beta(a\ast\langle i \rangle)=0$.
   
   Then consider $a\ast\langle 1-i\rangle$ and note: \\{\it either} $\alpha$ never covers $B(a\ast\langle 1-i\rangle)$ {\it or} $\alpha$ covers $B(a\ast\langle 1-i\rangle)$ for more than $\frac{1}{2}$. 
   
   Define $\beta(a\ast\langle 1-i\rangle)$ such that, if $\beta(a\ast \langle 1-i\rangle)=0$, then $\alpha$ never covers $B(a\ast\langle 1 -i\rangle)$, and, if $\beta(a\ast\langle 1 -i\rangle) =1$, then $\alpha$ covers $B(a\ast\langle 1 - i\rangle)$ for more than $\frac{1}{2}$. 
   
   Note that we have some freedom in carrying out this step as the conditions `$a$ never covers $B(a\ast\langle 1-i\rangle)$' and `$a$ covers $B(a\ast\langle 1-i\rangle)$ for more than $\frac{1}{2}$' do not exclude each other.
   
   Define $C:=\{a\ast\langle i\rangle \mid a\in 2^{<\omega}, i<2 \mid \beta(a)= 0 \;\wedge\; \beta(a\ast\langle i \rangle)=1\}$.
   
     Note that, for all $a,b$ in $C$, if  $a\neq b$ then $B(a)\cap B(b)=(0,0)$.
   
   Define $\alpha^\dag$ such that, for each $n$, {\it if} there exists    $a$ in $C$ such that  \\$(q_n, r_n)=\bigl(B'(a), B''(a)\bigr)$, then $\alpha^\dag(n)=n$ and, {\it if not}, then $q_{\alpha^\dag(n)} = r_{\alpha^\dag(n)}=0$.

   Note: $\mathcal{H}_{\alpha^\dag} =\bigcup_{a\in C}\bigl(B'(a), B''(a)\bigr)$.
   
   We now prove that $\alpha^\dag$ is measurable. \\ Let $k$ be given. Find $p$ such that $\mu(\alpha)-\mu(\overline \alpha p) < \frac{1}{2^{k+2}}$. \\Find $m$ such that $\mu(\overline{\alpha^\dag}m)>\mu(\overline \alpha p)-\frac{1}{2^{k+2}}$. \\Note that, for all $n>m$, $\mu(\overline{\alpha^\dag}n) - \mu(\overline{\alpha^\dag} m)< 2\cdot\frac{1}{2^{k+2}} + 2\cdot\frac{1}{2^{k+2}} =\frac{1}{2^k}$. 
   \\We thus see that $\mu(\alpha^\dag)=\lim_{n\rightarrow\infty}\mu(\overline{\alpha^\dag}n)$ exists, i.e. $\alpha^\dag$ is measurable.

   \smallskip Note: $\mu(\alpha)<\frac{1}{2^{n+2}}$ and for each $a$ in $C$,  $\alpha$ covers $B(a)$ for more than $\frac{1}{2}$. \\Conclude: $\mu(\alpha^\dag)\le 2\mu(\alpha)< \frac{1}{2^{n+1}}$.

   \smallskip Finally, note that, for each $\gamma$ in $\mathcal{F}_\beta$, for each $n$, $\alpha$ does not cover \\$(\varphi_{bin}|\gamma)(n)= B(\overline \gamma n)$, and, therefore: $\varphi_{bin}|\gamma \notin \mathcal{H}_\alpha$.

   Now define $\alpha^+$ such that,  for each $a$, \\ $\alpha^+(2a)= \alpha^\dag(a)$, and  $\alpha(2a+1)=(q_a-\frac{1}{2^{n+a+3}}, q_a+\frac{1}{2^{n+a+3}})$. 
   
    Note that $\alpha^+$ is measurable and $\mu(\alpha^+)\le \mu(\alpha^\dag) + \frac{1}{2^{n+2}}<\frac{1}{2^n}$.

   Assume $x\in [0,1]$ and  $x \notin \mathcal{H}_{\alpha^+}$. Then $\forall q \in \mathbb{Q}[q \;\#\; x]$. We thus may find $\gamma$ in $2^\omega$ such that $\varphi_{bin}|\gamma = x$.  Note: $x\notin\mathcal{H}_{\alpha^+}$  and thus, for each $n$, $\overline \gamma n\notin C$. Conclude: $\forall n[\beta(\overline \gamma n)=0]$ and: $\gamma \in \mathcal{F}_\beta$.
   
   We thus see: $\forall x \in [0,1][x \notin \mathcal{H}_{\alpha^+}\rightarrow \exists \gamma \in \mathcal{F}_\beta[\varphi_{bin}|\gamma = x]]$.  
\end{proof}
 \subsection{Almost-full subsets of $[0,1]$ and  almost-full functions from $[0,1]$ to $[-1,1]$}
 
 \begin{definition} $\mathcal{X}\subseteq [0,1]$ will be called \emph{almost-full} if and only if, for each $n$, there exists a measurable $\alpha$ such that $\mu(\alpha)<\frac{1}{2^n}$ and  $\forall x \in [0,1][x\notin \mathcal{H}_\alpha \rightarrow x  \in \mathcal{X}]$. 
 
 \smallskip A partial function $f$ from $[0,1]$ to $[-1,1]$ will be called \emph{almost-full} if and only if its domain $Dom(f) = \{x\in [0,1]\mid \; f(x)\; is \; defined \}$ is almost-full. \end{definition}

 \begin{definition}\label{D:measurablefunction} Let $f$ be an almost-full function from $[0,1]$ to $[-1,1]$. 
 
 $f$ is called \emph{measurable} if and only if, for each $n$, one may find $m$ and rationals $u_0, u_1, \ldots, u_{2^m-1}$ such that for each $i<2^m$, $-1\le u_i\le 1$, and a measurable $\alpha$ such that $\mu(\alpha)<\frac{1}{2^{n+2}}$ and, for each $x$ in $Dom(f)$, for each $i<2^m$, if $x\notin \mathcal{H}_\alpha$ and $\frac{i}{2^m}< x <\frac{i+1}{2^m}$, then $|f(x)-u_i|<\frac{1}{2^{n+1}}$.
 
 \smallskip The number $\sum_{i<2^{m}} u_i\cdot\frac{1}{2^{m}}$ will be called an \emph{estimate of the integral of $f$ of accuracy $\frac{1}{2^{n}}$.}  \end{definition}
  
\begin{theorem}\label{T:integral} Let $f$ be an almost-full  and measurable function from $[0,1]$ to $[-1,1]$. \begin{enumerate}[\upshape (i)] \item For all rationals $q,r$, if $q,r$ are estimates of the integral of $f$ of accuracy  $\frac{1}{2^m}$, $\frac{1}{2^n}$, respectively, then $|q-r| \le \frac{1}{2^m} + \frac{1}{2^n}$. 

\item  There exists a real $x$ such that, for all $n$, for all estimates $q$ of the integral of $f$ of accuracy $\frac{1}{2^n}$, $|x-q|\le \frac{1}{2^n}$. \end{enumerate}\end{theorem}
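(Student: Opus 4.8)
I will prove (i) by regarding each estimate as the integral of a dyadic step function that agrees with $f$ to within the stated tolerance off a set of small measure, comparing the two step functions on the common refinement of their grids, and controlling the discrepancy by a measure bookkeeping. Suppose $q=\sum_{i<2^{a}}u_i2^{-a}$ comes with rationals $u_i\in[-1,1]$ and a measurable $\alpha$ with $\mu(\alpha)<2^{-(m+2)}$ witnessing accuracy $\tfrac{1}{2^m}$, and $r=\sum_{j<2^{b}}v_j2^{-b}$ comes with rationals $v_j\in[-1,1]$ and a measurable $\beta$ with $\mu(\beta)<2^{-(n+2)}$ witnessing accuracy $\tfrac{1}{2^n}$. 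Put $L=\max(a,b)$ and, for $p<2^{L}$, let $\tilde u_p,\tilde v_p$ be the values that the two partitions attach to the interval $(\tfrac{p}{2^{L}},\tfrac{p+1}{2^{L}})$, so that $q-r=\sum_{p<2^{L}}(\tilde u_p-\tilde v_p)2^{-L}$. Fix a rational $\varepsilon>0$; using almost-fullness of $Dom(f)$ choose a measurable $\delta$ with $\mu(\delta)<\varepsilon$ and $[0,1]\setminus\mathcal H_\delta\subseteq Dom(f)$, and let $\gamma$ be a measurable object with $\mathcal H_\gamma=\mathcal H_\alpha\cup\mathcal H_\beta\cup\mathcal H_\delta$ and $\mu(\gamma)\le\mu(\alpha)+\mu(\beta)+\mu(\delta)<2^{-(m+2)}+2^{-(n+2)}+\varepsilon=:\theta$ (subadditivity of $\mu$ under unions of enumerations, obtained via the ``neatly increasing'' normalisation).

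Now fix $N$ and, using Lemma \ref{L:measurecover}(i), split $\{p<2^{L}\}$ into the \emph{good} $p$, for which $\gamma$ never covers $(\tfrac{p}{2^{L}},\tfrac{p+1}{2^{L}})$, and the \emph{bad} $p$, for which $\gamma$ covers it for more than $1-\tfrac1N$. For a good $p$, Lemma \ref{L:measurecover}(iv) lets one shrink the interval to a real $x$ with $x\notin\mathcal H_\gamma$, exactly as in the proof of Lemma \ref{L:measurable}(i); then $x\in Dom(f)\setminus(\mathcal H_\alpha\cup\mathcal H_\beta)$ and $x$ lies strictly inside the coarser intervals carrying $\tilde u_p$ and $\tilde v_p$, so $|\tilde u_p-\tilde v_p|\le|\tilde u_p-f(x)|+|f(x)-\tilde v_p|<2^{-(m+1)}+2^{-(n+1)}$. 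For a bad $p$ I only use $|\tilde u_p-\tilde v_p|\le2$; since the bad intervals are pairwise disjoint, iterating the additivity of Lemma \ref{L:measurecover}(ii) gives $(1-\tfrac1N)2^{-L}\cdot\#\{\text{bad }p\}\le\mu(\gamma)<\theta$. Hence $|q-r|$ is below $(2^{-(m+1)}+2^{-(n+1)})\cdot\sum_{p}2^{-L}$ plus $2\cdot 2^{-L}\cdot\#\{\text{bad }p\}$, i.e. $|q-r|<2^{-(m+1)}+2^{-(n+1)}+\tfrac{2\theta}{1-1/N}$. Letting $N\to\infty$ and then $\varepsilon\to0$ gives $|q-r|\le 2^{-(m+1)}+2^{-(n+1)}+2\bigl(2^{-(m+2)}+2^{-(n+2)}\bigr)=2^{-m}+2^{-n}$, which is (i).

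For (ii) the plan is a routine completeness argument resting on (i). Using measurability of $f$ together with the First Axiom of Countable Choice (Axiom \ref{ax:firstchoice}) applied to the relation ``$q\in\mathbb Q$ is an estimate of the integral of $f$ of accuracy $\tfrac{1}{2^{n}}$'', pick a sequence $q_0,q_1,\dots$ with each $q_n$ an estimate of accuracy $\tfrac{1}{2^{n}}$. Set $a_n:=\max_{k\le n}(q_k-2^{-k})$ and $b_n:=\min_{k\le n}(q_k+2^{-k})$. Part (i) gives $q_k-2^{-k}\le q_l+2^{-l}$ for all $k,l\le n$, so $a_n\le b_n$, while $a_n\le a_{n+1}\le b_{n+1}\le b_n$ and $b_n-a_n\le 2^{-n+1}$ are immediate; thus $(a_n,b_n)_n$ is shrinking and dwindling, and Cantor's Intersection Theorem (Subsection \ref{SS:reals}) supplies a real $z$ with $a_n\le z\le b_n$ for all $n$. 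Finally, given any estimate $q$ of accuracy $\tfrac{1}{2^{n}}$, part (i) yields $q_k-2^{-k}\le q+2^{-n}$ and $q-2^{-n}\le q_k+2^{-k}$ for every $k$, hence $z\le q+2^{-n}$ and $z\ge q-2^{-n}$, i.e. $|z-q|\le 2^{-n}$; so $z$ is the required real.

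The main obstacle is the measure bookkeeping in part (i): isolating the finitely many ``mostly covered'' dyadic intervals, bounding their total length by the combined measure of the three exceptional sets $\mathcal H_\alpha,\mathcal H_\beta,\mathcal H_\delta$ through Lemma \ref{L:measurecover}, and passing carefully from the strict inequalities available at each finite stage (parametrised by $N$ and $\varepsilon$) to the non-strict conclusion $|q-r|\le 2^{-m}+2^{-n}$ by sending $N\to\infty$ and $\varepsilon\to0$. One also needs the routine facts that the ``neatly increasing'' reorganisation makes $\mu$ subadditive under unions of enumerations and additive over the subdivision of $[0,1]$ into dyadic intervals. Once (i) is in hand, (ii) is merely the remark that the $q_n$ form a Cauchy-like net of rationals that, by (i), is pinned down by every other estimate as well. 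Note that neither part invokes the Fan Theorem.
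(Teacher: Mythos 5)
The paper leaves this proof to the reader, so there is no ``paper's own proof'' to compare against; what matters is whether your argument is correct. It is, in its essentials: the common-refinement comparison for (i), with a good/bad dichotomy of level-$L$ dyadic intervals via Lemma \ref{L:measurecover}(i), a Lemma \ref{L:measurable}(i)-style nested-interval construction of a witnessing point in each good interval, Lemma \ref{L:measurecover}(ii) to account additively for the total localized measure, and a double limit $N\to\infty$, $\varepsilon\to 0$, does give $|q-r|\le 2^{-m}+2^{-n}$; and the Cantor-intersection argument for (ii) is the right way to squeeze out the real.

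Two spots deserve a sentence more care. First, you assert without argument that a measurable $\gamma$ with $\mathcal H_\gamma=\mathcal H_\alpha\cup\mathcal H_\beta\cup\mathcal H_\delta$ and $\mu(\gamma)\le\mu(\alpha)+\mu(\beta)+\mu(\delta)$ exists. Constructively, bounded monotone sequences of rationals need not converge, so ``measurable'' (i.e.\ $\lim_k\mu(\overline\gamma k)$ exists) must be argued: interleave $\alpha,\beta,\delta$ into $\gamma$ and, given $k$, use the measurability of each of $\alpha,\beta,\gamma$ to find a stage beyond which each tail contributes $<2^{-(k+2)}$; then finite subadditivity of $\mu$ on finite unions of rational intervals (a combinatorial fact about the ``neatly increasing'' normalisation) bounds $\mu(\overline\gamma m)-\mu(\overline\gamma n)$, exactly as in the measurability of $\alpha^\dag$ inside the proof of Lemma \ref{L:complopen}. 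Second, in (ii) the step ``$q_k-2^{-k}\le q+2^{-n}$ for all $k$, hence $z\le q+2^{-n}$'' does not follow directly: you only know $a_k\le z\le b_k$, not $a_k\to z$. The correct chain is $z\le b_k\le q_k+2^{-k}\le (q+2^{-n})+2^{-k+1}$ for all $k$, and then let $k\to\infty$ (for each approximant $z'(l)$ of $z$, the rational inequality $z'(l)\le q+2^{-n}+2^{-k+1}$ for all $k$ gives $z'(l)\le q+2^{-n}$). With these two repairs your proof is complete, and your closing observation that neither part uses the Fan Theorem is correct and worth keeping.
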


\begin{proof} The proof of this Theorem is left to the reader. \end{proof}
The number intended in Theorem \ref{T:integral}(ii) is unique up to the relation of real coincidence and will be called: $\int_0^1f$, the {\it integral of $f$ on $[0,1]$}. 
 
 For the next result, see \cite[page 7]{rootselaar54} and \cite[Section 6.2.2, Theorem 1]{heyting56}.
 
 \begin{theorem}[van Rootselaar]$\;$\\ Every almost-full function from $[0,1]$ to $[-1,1]$ is measurable. \end{theorem}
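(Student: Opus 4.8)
The plan is to transport $f$ to a fan, where it becomes an everywhere-defined function, and there apply the Fan Theorem, just as in the proof of Theorem~\ref{T:uct}.

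Given $n$, I would first use the almost-fullness of $Dom(f)$ to fix a measurable $\alpha_0$ with $\mu(\alpha_0)<\frac{1}{2^{n+4}}$ and $\forall x\in[0,1][x\notin\mathcal{H}_{\alpha_0}\rightarrow x\in Dom(f)]$, and then apply Lemma~\ref{L:complopen} with $n+2$ in place of $n$ and $\alpha_0$ in place of $\alpha$. This yields a (finitary) fan-law $\beta$ in $2^\omega$ and a measurable $\alpha^+$ with $\mu(\alpha^+)<\frac{1}{2^{n+2}}$ such that every $x\in[0,1]$ with $x\notin\mathcal{H}_{\alpha^+}$ satisfies both $x\notin\mathcal{H}_{\alpha_0}$ and $x=\varphi_{bin}|\gamma$ for some $\gamma\in\mathcal{F}_\beta$. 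Moreover, by the first assertion of Lemma~\ref{L:complopen}, for every $\gamma\in\mathcal{F}_\beta$ and every $k$ the sequence $\alpha_0$ never covers the interval $B(\overline\gamma k)$, so $\varphi_{bin}|\gamma\notin\mathcal{H}_{\alpha_0}$ and hence $\varphi_{bin}|\gamma\in Dom(f)$. Therefore $g\colon\mathcal{F}_\beta\rightarrow[-1,1]$, $g(\gamma):=f(\varphi_{bin}|\gamma)$, is a \emph{total} function on the fan $\mathcal{F}_\beta$.

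Being a function on a spread, $g$ is pointwise continuous by Brouwer's Continuity Principle extended to spreads (Theorem~\ref{T:bcpext}), and I would extract from the Fan Theorem (Theorem~\ref{T:fantheorem}) a modulus of uniform continuity, exactly as in the proof of Theorem~\ref{T:uct}. Concretely, with $q_0,q_1,\ldots$ an enumeration of the rationals, one has $\forall\gamma\in\mathcal{F}_\beta\exists k[|g(\gamma)-q_k|<\frac{1}{2^{n+3}}]$, so by Theorem~\ref{T:bcpext} the set $D:=\{s\mid\exists k\forall\delta\in\mathcal{F}_\beta[s\sqsubset\delta\rightarrow|g(\delta)-q_k|<\frac{1}{2^{n+3}}]\}$ is a bar in $\mathcal{F}_\beta$; the Fan Theorem gives a finite subbar and hence an $m$ with $\forall\gamma\in\mathcal{F}_\beta\exists j\le m[\overline\gamma j\in D]$. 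It follows that for all $\gamma,\delta\in\mathcal{F}_\beta$, if $\overline\gamma m=\overline\delta m$, then $|g(\gamma)-g(\delta)|<\frac{1}{2^{n+2}}$. Now I would build the approximating step function. For each $i<2^m$ let $s_i$ be the length-$m$ binary representation of $i$, so that $B(s_i)=\bigl(\frac{i}{2^m},\frac{i+1}{2^m}\bigr)$ as an interval. Deciding whether $\beta(s_i)=0$: if so, choose $\gamma_i\in\mathcal{F}_\beta$ with $\overline{\gamma_i}m=s_i$ and a rational $u_i\in[-1,1]$ with $|u_i-g(\gamma_i)|<\frac{1}{2^{n+2}}$; if $\beta(s_i)\neq0$, put $u_i:=0$. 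I claim $m$, $u_0,\ldots,u_{2^m-1}$ and $\alpha^+$ witness the measurability of $f$ at parameter $n$. Since $\mu(\alpha^+)<\frac{1}{2^{n+2}}$, only the approximation property needs checking: let $x\in Dom(f)$ and $i<2^m$ with $x\notin\mathcal{H}_{\alpha^+}$ and $\frac{i}{2^m}<x<\frac{i+1}{2^m}$; then $x=\varphi_{bin}|\gamma$ for some $\gamma\in\mathcal{F}_\beta$, and since $x=\varphi_{bin}|\gamma$ lies in the dyadic interval $B(\overline\gamma m)$ while $x$ also lies strictly inside the dyadic interval $\bigl(\frac{i}{2^m},\frac{i+1}{2^m}\bigr)$ of the same width, the length-$m$ sequence $\overline\gamma m$ must code $i$, i.e. $\overline\gamma m=s_i$; in particular $\beta(s_i)=\beta(\overline\gamma m)=0$. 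Hence $u_i$ was chosen from some $\gamma_i\in\mathcal{F}_\beta$ with $\overline{\gamma_i}m=s_i=\overline\gamma m$, so by the uniform-continuity estimate $|g(\gamma_i)-g(\gamma)|<\frac{1}{2^{n+2}}$, and since $g(\gamma)=f(\varphi_{bin}|\gamma)=f(x)$ we obtain $|f(x)-u_i|\le|f(x)-g(\gamma_i)|+|g(\gamma_i)-u_i|<\frac{1}{2^{n+1}}$. As $n$ was arbitrary, $f$ is measurable.

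I expect the main obstacle to be the first step: recognising that an only-almost-full $f$ genuinely restricts to an everywhere-defined function on a fan --- which is precisely what Lemma~\ref{L:complopen} supplies --- together with the small bookkeeping identifying the dyadic partition intervals $\bigl(\frac{i}{2^m},\frac{i+1}{2^m}\bigr)$ with the ranges $B(s_i)$ of $\varphi_{bin}$ and the observation that a real lying strictly inside a dyadic interval determines its first $m$ binary digits. Once $g$ is seen to be a function on a fan, the rest is the familiar ``pointwise continuous, hence (by the Fan Theorem) uniformly continuous'' pattern of Theorem~\ref{T:uct}; the only genuine care needed is keeping the accuracy constants in order (almost-fullness at level $n+4$, Lemma~\ref{L:complopen} at level $n+2$, continuity modulus for accuracy $\frac{1}{2^{n+2}}$), so that the final error stays below $\frac{1}{2^{n+1}}$ while the exceptional open set keeps measure below $\frac{1}{2^{n+2}}$.
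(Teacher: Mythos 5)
Your argument is correct and is essentially the paper's own proof: apply Lemma~\ref{L:complopen} to trap the complement of a small open exceptional set inside the image of a fan under $\varphi_{bin}$, observe that $f\circ\varphi_{bin}$ is then total on that fan, and extract a uniform modulus via Theorem~\ref{T:bcpext} plus the Fan Theorem. The only (cosmetic) difference is in how the $u_i$ are chosen: the paper takes the single rational $u$ delivered directly by the uniform form of the continuity estimate $\forall a\in 2^{<\omega}_m[\beta(a)=0\rightarrow\exists u\forall\delta\in\mathcal{F}_\beta[a\sqsubset\delta\rightarrow|u-f(\varphi_{bin}|\delta)|<\frac{1}{2^{n+1}}]]$, whereas you first prove a two-sided uniform-continuity bound for $g$ and then approximate at a representative $\gamma_i$; both yield the same accuracy after the triangle inequality.
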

 
 \begin{proof} Let $f$ be an almost-full function from $[0,1]$ to $[-1,1]$. 
 
 Let $n$ be given. 
 
 Find a measurable $\alpha$ such that $\mu(\alpha)<\frac{1}{2^{n+4}}$ and \\$\forall x \in [0,1][x\notin\mathcal{H}_\alpha\rightarrow x \in Dom(f)]$. 
 
 Using Lemma \ref{L:complopen}, find a fan-law $\beta$ in $2^\omega$ such that \begin{enumerate} \item for all $a$, if $\beta(a)=0$, then $a \in 2^{<\omega}$, and $\alpha$ never covers $B(a)$,  \item  for all $a$ in $2^{<\omega}$, for all $i<2$, if $\beta(a)= 0$ and $\beta(a\ast\langle i \rangle)=1$, \\then $\alpha$ covers $B(a\ast \langle i \rangle)$ for more than $\frac{1}{2}$, and \item there exists a measurable $\alpha^+$ such that  $\mu(\alpha^+)<\frac{1}{2^{n+2}}$ and, \\for all $x$ in $[0,1]$, if $x\notin \mathcal{H}_{\alpha^+}$, then  $x \notin \mathcal{H}_\alpha$ and $\exists \gamma \in \mathcal{F}_\beta[\varphi_{bin}|\gamma = x]$. \end{enumerate}\
 
\smallskip  Conclude: $\forall \gamma \in \mathcal{F}_\beta[\varphi_{bin}|\gamma \in Dom(f)]$. 

\smallskip Therefore: $\forall \gamma \in \mathcal{F}_\beta  \exists u \in \mathbb{Q}[|u-f(\varphi_{bin}|\gamma)|<\frac{1}{2^{n+1}}]$. 

\smallskip Using the extension of Brouwer's Continuity Principle to spreads, Theorem \ref{T:bcpext}, conclude: $\forall \gamma \in \mathcal{F}_\beta  \exists m\exists u \in \mathbb{Q}\forall \delta \in \mathcal{F}_\beta[ \overline \gamma m \sqsubset \delta\rightarrow [|u-f(\varphi_{bin}|\delta)|<\frac{1}{2^{n+1}}]$.  

\smallskip Using the Fan Theorem, Theorem \ref{T:fantheorem}, find $m$ such that\\$\forall \gamma \in \mathcal{F}_\beta  \exists u \in \mathbb{Q}\forall \delta \in \mathcal{F}_\beta[ \overline \gamma m \sqsubset \delta\rightarrow |u-f(\varphi_{bin}|\delta)|<\frac{1}{2^{n+1}}]$, i.e.
\\ $\forall a \in 2^{<\omega}_m[\beta(a)=0\rightarrow   \exists u \in \mathbb{Q}\forall \delta \in \mathcal{F}_\beta[ a \sqsubset \delta\rightarrow |u-f(\varphi_{bin}|\delta)|<\frac{1}{2^{n+1}}]]$.  

\smallskip Now find $u_0, u_1, \ldots, u_{2^m-1}$ such that, for all $i<2^m-1$, \\for all $a\in B_m$, if $B(a) = (\frac{i}{2^m}, \frac{i+1}{2^m})$, then, \begin{enumerate} \item if $\beta(a)=0$, then $\forall \delta \in \mathcal{F}_\beta[ a\sqsubset \delta \rightarrow |u_i-f(\varphi_{bin}|\delta)|<\frac{1}{2^{n+1}}]$, and, \item if $\beta(a)=1$, then $u_i=0$. \end{enumerate}  

Note that, for all $x$ in $Dom(f)$, for all $i<2^m$, \\if $x \notin \mathcal{H}_{\alpha^+}$ and $\frac{i}{2^m}<x<\frac{i+1}{2^m}$, then $|f(x)-u_i|<\frac{1}{2^{n+1}}$. 

\smallskip We thus see that, for each $m$, we can make an estimate of the integral of $f$ of accuracy $\frac{1}{2^n}$, and conclude:   $f$ is measurable. 
\end{proof}
\begin{definition} $\mathcal{Y}\subseteq [0,1]$ is \emph{measurable} if and only if its characteristic function \begin{quote} $\chi_\mathcal{Y}:=\{(x,i)\in [0,1]\times\{0,1\}\mid (x \in \mathcal{Y} \wedge i=1)\vee (x \notin \mathcal{Y} \wedge i=0)\}$ \end{quote} is measurable. If $\mathcal{Y}\subseteq [0,1]$ is measurable, we define  $\mu(\mathcal{Y}) := \int_0^1\chi_\mathcal{Y}$. \\ $\mu(\mathcal{Y})$ is called the \emph{measure} of $\mathcal{Y}$. \end{definition}

\begin{corollary}\label{Cor:almostfull} $\mathcal{Y}\subseteq [0,1]$ is measurable if and only if $\mathcal{Y}\cup ([0,1]\setminus\mathcal{Y})$ is almost-full. \end{corollary}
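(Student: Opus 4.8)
The plan is simply to unwind Definition~\ref{D:measurablefunction} together with the definition of a measurable subset of $[0,1]$, and, for the nontrivial implication, to invoke van Rootselaar's Theorem.

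For the implication from left to right, assume $\mathcal{Y}\subseteq[0,1]$ is measurable, i.e.\ $\chi_\mathcal{Y}$ is measurable. By Definition~\ref{D:measurablefunction}, the predicate ``is a \emph{measurable} function from $[0,1]$ to $[-1,1]$'' only applies to functions that are already \emph{almost-full}; hence $\chi_\mathcal{Y}$ is almost-full, which, by definition, means that its domain $Dom(\chi_\mathcal{Y})$ is an almost-full subset of $[0,1]$. Since $Dom(\chi_\mathcal{Y})=\{x\in[0,1]\mid x\in\mathcal{Y}\;\vee\;x\notin\mathcal{Y}\}=\mathcal{Y}\cup([0,1]\setminus\mathcal{Y})$, we conclude that $\mathcal{Y}\cup([0,1]\setminus\mathcal{Y})$ is almost-full.

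For the converse, assume $\mathcal{Y}\cup([0,1]\setminus\mathcal{Y})$ is almost-full. Then $\chi_\mathcal{Y}$ is a partial function from $[0,1]$ to $\{0,1\}\subseteq[-1,1]$ whose domain $\mathcal{Y}\cup([0,1]\setminus\mathcal{Y})$ is almost-full, so $\chi_\mathcal{Y}$ is an almost-full function from $[0,1]$ to $[-1,1]$. By van Rootselaar's Theorem, every almost-full function from $[0,1]$ to $[-1,1]$ is measurable; hence $\chi_\mathcal{Y}$ is measurable, and therefore, by definition, $\mathcal{Y}$ is measurable.

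The only matters needing (minor) attention are the bookkeeping remarks that ``almost-full'' for the set $\mathcal{Y}\cup([0,1]\setminus\mathcal{Y})$ is literally the condition ``almost-full'' for the partial function $\chi_\mathcal{Y}$ (the latter being \emph{defined} as the former's being its domain), and that the values $0,1$ of $\chi_\mathcal{Y}$ lie in $[-1,1]$. There is thus no genuine obstacle: all of the real content sits inside van Rootselaar's Theorem, which we are entitled to assume, and the Corollary is essentially its specialization to characteristic functions.
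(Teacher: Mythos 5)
Your proof is correct and follows exactly the route the paper intends: the paper states the corollary immediately after van Rootselaar's Theorem with no written proof, because it is precisely the specialization of that theorem (for the hard direction) together with the observation that, by Definition~\ref{D:measurablefunction}, measurability of a function presupposes almost-fullness (for the easy direction), and $Dom(\chi_\mathcal{Y})=\mathcal{Y}\cup([0,1]\setminus\mathcal{Y})$. Nothing is missing.
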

\subsection{Many integrable sets}\label{SS:manyintegrable}

We want to prove an important result overlooked by Brouwer and Heyting: \begin{quote}for every almost-full and therefore measurable function $f$  from $[0,1]$ to $[-1,1]$, for all but countably many $y$ in $[-1,1]$, \\the set $\{x\in [0,1] \mid x\in Dom(f)\;\wedge\;f(x) <y\}$ is measurable. \end{quote}We avoid the theory of \textit{profiles} developed by E. Bishop for the purpose of proving this theorem, see \cite[Chapter 6, Section 4 and Theorem 4.11]{bishopbridges85}. Our approach is the one followed in \cite{driessen}.

\begin{lemma}\label{L:findingapproximateoriginal} Let $f$ be a measurable function from $[0,1]$ to $[-1,1]$. Let rationals $q,r$ be given such that $-1\le q < r\le 1$. Let $n$ be given.

One may find rationals $s,t$ such that $q\le s<t\le r$ and a measurable $\alpha$ such that $\mu(\alpha)<\frac{1}{2^n}$ and $\forall x\in Dom(f)[x\notin \mathcal{H}_\alpha\rightarrow \bigl(f(x)<s \;\vee\; t<f(x)\bigr)]$. \end{lemma}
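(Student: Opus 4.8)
The plan is to use the measurability of $f$ at a finely chosen accuracy to obtain a finite step-function approximation, and then to find, inside the open interval $(q,r)$, a small subinterval $(s,t)$ onto which none of the finitely many "step values" lands, so that the preimage of $(s,t)$ under the approximation is covered by a small measurable set. First I would fix an auxiliary parameter $k$, large relative to $n$ and to $\frac{1}{r-q}$, to be pinned down at the end. Applying the definition of measurability of $f$ (Definition \ref{D:measurablefunction}) with accuracy $\frac{1}{2^{k}}$, I obtain an $m$, rationals $u_0, \ldots, u_{2^m-1}$ in $[-1,1]$, and a measurable $\alpha_0$ with $\mu(\alpha_0)<\frac{1}{2^{k+2}}$ such that for every $x$ in $Dom(f)$ with $x\notin\mathcal{H}_{\alpha_0}$ and $\frac{i}{2^m}<x<\frac{i+1}{2^m}$ we have $|f(x)-u_i|<\frac{1}{2^{k+1}}$.

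Next I would partition $[q,r]$ into finitely many, say $N$, equal closed subintervals, where $N$ is chosen large enough that at least one of them contains no $u_i$ in (a slight enlargement of) its interior; this is possible because there are only $2^m$ values $u_i$, so taking $N>2^{m+1}$ guarantees, by a counting/pigeonhole argument on which subinterval each $u_i$ falls into, that some subinterval $[s,t]\subseteq[q,r]$ has the property that $|u_i-y|\ge\frac{t-s}{4}$ for every $i<2^m$ and every $y\in[s,t]$ — here I also need $\frac{1}{2^{k+1}}$ to be small compared to $\frac{t-s}{4}$, which is why $k$ must be large relative to $N$ and hence relative to $m$. With such $s,t$ fixed, the crucial observation is: if $x\in Dom(f)$, $x\notin\mathcal{H}_{\alpha_0}$, and $x$ is not a dyadic rational of the form $\frac{i}{2^m}$, then $x$ lies strictly between consecutive dyadic points and $|f(x)-u_i|<\frac{1}{2^{k+1}}<\frac{t-s}{4}\le|u_i-y|$ for all $y\in[s,t]$, whence $f(x)<s$ or $t<f(x)$. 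So the only "bad" $x$ are those in $\mathcal{H}_{\alpha_0}$ or equal to one of the $2^m$ dyadic points $\frac{i}{2^m}$.

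Finally I would build the promised measurable $\alpha$ by enlarging $\alpha_0$: interleave with $\alpha_0$ a sequence of tiny intervals around each dyadic point $\frac{i}{2^m}$, the $i$-th of radius roughly $\frac{1}{2^{n+m+i+3}}$, exactly as in the construction of $\alpha^+$ from $\alpha^\dag$ in the proof of Lemma \ref{L:complopen}. Then $\alpha$ is measurable, $\mu(\alpha)\le\mu(\alpha_0)+\sum_i\frac{2}{2^{n+m+i+3}}<\frac{1}{2^{k+2}}+\frac{1}{2^{n+m+2}}$, and by choosing $k\ge n$ (after it was already chosen large for the pigeonhole step) this is $<\frac{1}{2^n}$, while $\mathcal{H}_\alpha\supseteq\mathcal{H}_{\alpha_0}\cup\bigcup_i(\frac{i}{2^m}-\varepsilon_i,\frac{i}{2^m}+\varepsilon_i)$ contains all the bad points. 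Hence $\forall x\in Dom(f)[x\notin\mathcal{H}_\alpha\rightarrow(f(x)<s\;\vee\;t<f(x))]$, as required; and by shrinking $[s,t]$ slightly to rational endpoints strictly inside $(q,r)$ we also get $q\le s<t\le r$. The main obstacle I anticipate is bookkeeping the dependence of $k$ on $m$: $m$ is produced by the measurability hypothesis only after $k$ is chosen, so the pigeonhole count ("one of $N$ subintervals avoids all $2^m$ values $u_i$ by a margin") must be arranged so that $N$ can be chosen \emph{after} seeing $m$, which is fine since $N$ only has to beat $2^{m+1}$; but then the requirement $\frac{1}{2^{k+1}}\ll\frac{t-s}{4}\sim\frac{r-q}{4N}$ forces $k$ to depend on $m$, which contradicts $k$ having been fixed first. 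The fix is to not fix $k$ up front for the approximation accuracy but instead apply measurability \emph{twice}, or more cleanly: apply measurability once with a provisional accuracy, read off $m$, then choose $N$ and re-apply measurability with accuracy $\frac{1}{2^{k}}$ for $k$ now chosen large relative to that $m$ and to $n$ — the second application yields a \emph{new} (possibly larger) $m'$, but the step values from the second application are still only finitely many, so the pigeonhole with $N>2^{m'+1}$ still works provided we picked $N$ after $m'$; iterating is unnecessary because we may simply run the whole argument with the second $m'$ from the start, choosing $N$ and the radii after it. Once this ordering is sorted out the estimates are routine.
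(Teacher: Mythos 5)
You correctly identify the obstacle at the end, but the fix you propose does not resolve it, and the gap is genuine. Your plan requires a subinterval $(s,t)\subseteq(q,r)$ that \emph{avoids all} of the $2^m$ step values $u_i$ by a margin larger than the approximation accuracy $\frac{1}{2^{k+1}}$. The pigeonhole giving such a subinterval forces $N>2^{m+1}$, hence $t-s\sim\frac{r-q}{N}<\frac{r-q}{2^{m+1}}$, hence the requirement $\frac{1}{2^{k+1}}\ll t-s$ forces $k\gg m$. But $m$ is produced only \emph{after} $k$ is chosen (it is the output of applying Definition \ref{D:measurablefunction} at accuracy $\frac{1}{2^k}$), and the definition gives no bound whatsoever on $m$ in terms of $k$. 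Applying measurability twice does not help: the second application at accuracy $\frac{1}{2^k}$ yields fresh step values $u'_0,\ldots,u'_{2^{m'}-1}$ (not the old ones) with $m'$ again uncontrolled by $k$, so picking $N>2^{m'+1}$ re-creates exactly the same circular dependence $k\gg m'$. There is no ordering of the choices of $k$, $m$, $N$ that breaks this, because the dependence is genuinely cyclic in the absence of an a priori bound on $m$ in terms of the accuracy.

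The paper escapes by weakening the demand on the subinterval: it does \emph{not} ask that $(s,t)$ avoid all $u_i$, only that it contain \emph{few} of them. Fix $l$ depending only on $n$ and $r-q$ (so that $\frac{1}{2^l}<\frac{r-q}{2^{n+1}}$), apply measurability once at this accuracy to get $m$, the $u_i$, and $\alpha$, and chop $[q,r]$ into $j_0\ge 2^{n+1}$ cells of width $\frac{1}{2^l}$. The sets $A_j$ of indices $i$ with $u_i$ landing in cell $j$ are disjoint, so some cell $j_1$ has $Card(A_{j_1})\le\frac{2^m}{j_0}$. Take $(s,t)$ strictly inside that cell, shrunk by $\frac{1}{2^{l+2}}$ on each side. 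For $i\notin A_{j_1}$, the accuracy $\frac{1}{2^{l+2}}$ pushes $f(x)$ (for $x$ in cell $i$ away from $\mathcal{H}_\alpha$) below $s$ or above $t$. For the bad indices $i\in A_{j_1}$ one simply throws the intervals $(\frac{i}{2^m},\frac{i+1}{2^m})$ into the exceptional set $\mathcal{H}_{\alpha^+}$, along with tiny neighbourhoods of the dyadic points. The key arithmetic is that the excised measure is
$Card(A_{j_1})\cdot\frac{1}{2^m}\le\frac{2^m/j_0}{2^m}=\frac{1}{j_0}\le\frac{1}{2^{n+1}}$,
which is \emph{independent of} $m$. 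So $m$ never needs to be bounded in terms of the accuracy, and the circularity simply does not arise. This trade — tolerate a few contaminated cells but make the total contaminated measure scale like $\frac{1}{j_0}$ — is the idea your proposal is missing.
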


\begin{proof}Let $f$ be a measurable function from $[0,1]$ to $[-1,1]$. Let rationals $q,r$ be given such that $-1\le q < r\le 1$. Let $n$ be given.

\smallskip Find $l$ such that $\frac{1}{2^l}<\frac{r-q}{2^{n+1}}$. Note: $r-q\le 2$ and $l> n$.

Using Definition \ref{D:measurablefunction}, find $m$ and rationals $u_0, u_1, \ldots, u_{2^m-1}$ such that, for each $i<2^m$, $-1\le u_i\le 1$, and  a measurable $\alpha$ such that $\mu(\alpha)<\frac{1}{2^{l+2}}$ and, for each $x$ in $Dom(f)$, for each $i<2^m$, if $x\notin \mathcal{H}_\alpha$ and $\frac{i}{2^m}< x < \frac{i+1}{2^m}$, then $|f(x)-u_i|<\frac{1}{2^{l+2}}$.

Define $j_0:=\mu j[r<q+\frac{j+1}{2^l}]$. 

Note: $2^{n+1}\cdot\frac{1}{2^l}< r-q$, so $2^{n+1}\le j_0$. 

For each $j< j_0$,  consider the set $A_j:=\{i<2^m\mid q+\frac{j}{2^l}\le u_i < q+\frac{j+1}{2^l}\}$.

Note: for all $j<k< j_0$, $A_j\cap A_k = \emptyset$.

Find $j_1\le j_0$ such that, for all $j\le j_0$, $Card(A_{j_1})\le Card(A_j)$.

Note: $Card(A_{j_1})\le \frac{2^m}{j_0}\le\frac{2^m}{2^{n+1}}=2^{m-n-1}$.

Define $s:= q+\frac{j_1}{2^l}+\frac{1}{2^{l+2}}$ and $t:=q+\frac{j_1+1}{2^l}-\frac{1}{2^{l+2}}$ and note: $q<s<t<r$.

Note: for each $i<2^m$, if $i\notin A_{j_1}$, then {\it either} $u_i<q+\frac{j_1}{2^l}$ and,\\ for all $x$ in $Dom(f)\cap (\frac{i}{2^m}, \frac{i+1}{2^m})$, if $x\notin \mathcal{H}_\alpha$, then $f(x)< q+\frac{j_1}{2^l}+\frac{1}{2^{l+2}} = s$,
\\{\it or} $q+\frac{j_1+1}{2^l}\le u_i$ and, \\
for all $x$ in $Dom(f)\cap (\frac{i}{2^m}, \frac{i+1}{2^m})$, if $x \notin \mathcal{H}_\alpha$, then  $t= q+\frac{j_1+1}{2^l}-\frac{1}{2^{l+2}}< f(x) $.
 
\smallskip One now may  define $\alpha^+$ such that $\alpha^+$ is measurable and $\mathcal{H}_\alpha\subseteq \mathcal{H}_{\alpha^+}$ and, for each $i$ in $A_{j_1}$, $(\frac{i}{2^m}, \frac{i+1}{2^m})\subseteq \mathcal{H}_{\alpha^+}$ and, for each $i\le 2^m$, $\frac{i}{2^m}\in \mathcal{H}_{\alpha^+}$ and \\$\mu(\alpha^+)<\mu(\alpha) + \sum_{i\in A_{j_1}} \frac{1}{2^m}<\frac{1}{2^{l+2}} + \frac{1}{2^{n+1}}< \frac{1}{2^n}$.

 Note: $\alpha^+$ is measurable and $\mu(\alpha^+)<\frac{1}{2^n}$, and, for all $x$ in $Dom(f)$, if $x\notin \mathcal{H}_{\alpha^+}$, then either $f(x)<s$ or $t<f(x)$. 

 \end{proof}
 
 \begin{theorem}\label{T:findingoriginal} Let $f$ be a measurable function from $[0,1]$ to $[-1,1]$. Let rationals $u,v$ be given such that $-1\le u<v\le 1$.
 
 There exists $y$ in $(u,v)$ such that, for almost all $x$ in $[0,1]$, $x\in Dom(f)$ and $f(x) <y$ or $y<f(x)$. 
 \end{theorem}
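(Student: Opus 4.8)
The plan is to iterate Lemma \ref{L:findingapproximateoriginal} to build a nested sequence of ever-shorter rational intervals inside $(u,v)$, together with a corresponding sequence of measurable sets of vanishing measure that the function ``jumps over'' at level $n$, and then to take the common point of the nested intervals as the desired $y$. First I would apply Lemma \ref{L:findingapproximateoriginal} with $q=u$, $r=v$ and $n=1$: this produces rationals $s_0,t_0$ with $u\le s_0<t_0\le v$ and a measurable $\alpha_0$ with $\mu(\alpha_0)<\frac12$ such that for every $x$ in $Dom(f)$ with $x\notin\mathcal{H}_{\alpha_0}$ we have $f(x)<s_0$ or $t_0<f(x)$. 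Then I would recurse: having obtained $s_k<t_k$, apply the Lemma again with $q:=s_k$, $r:=t_k$ and $n:=k+2$, obtaining rationals $s_{k+1},t_{k+1}$ with $s_k\le s_{k+1}<t_{k+1}\le t_k$ and a measurable $\alpha_{k+1}$ with $\mu(\alpha_{k+1})<\frac{1}{2^{k+2}}$, such that for all $x$ in $Dom(f)$ with $x\notin\mathcal{H}_{\alpha_{k+1}}$ we have $f(x)<s_{k+1}$ or $t_{k+1}<f(x)$.

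The key numerical point is that the interval lengths shrink: from the proof of Lemma \ref{L:findingapproximateoriginal}, the output interval $(s,t)$ lies strictly inside a dyadic block of length $\frac{1}{2^l}$ where $\frac{1}{2^l}<\frac{r-q}{2^{n+1}}$, so $t_{k+1}-s_{k+1}<\frac{1}{2^{k+3}}(t_k-s_k)$; hence the sequence of pairs $(s_k,t_k)$ is shrinking and dwindling, and by Cantor's Intersection Theorem (stated in Subsection \ref{SS:reals}) there is a real $y$ with $s_k\le y\le t_k$ for all $k$, and in fact $u<s_0\le y\le t_0<v$ so $y\in(u,v)$. Next I would define, for each $n$, the measurable set $\mathcal{H}_{\alpha_{n-1}}$ (reindexing so that level $n$ corresponds to accuracy $\frac{1}{2^n}$), which has $\mu(\alpha_{n-1})<\frac{1}{2^n}$. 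For any $x$ in $Dom(f)$ with $x\notin\mathcal{H}_{\alpha_{n-1}}$ we have $f(x)<s_{n-1}\le y$ or $y\le t_{n-1}<f(x)$, so $f(x)<y$ or $y<f(x)$. This exhibits, for each $n$, a measurable set of measure $<\frac{1}{2^n}$ outside of which every $x\in Dom(f)$ satisfies $f(x)<y$ or $y<f(x)$; combined with the fact that $Dom(f)$ itself is almost-full (so there is a further measurable set of small measure outside which $x\in Dom(f)$), a standard union argument shows that for almost all $x$ in $[0,1]$ we have $x\in Dom(f)$ and ($f(x)<y$ or $y<f(x)$), which is the conclusion.

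The main obstacle I anticipate is purely bookkeeping: one must verify that the interval lengths genuinely dwindle and that the running lower endpoint $q$ stays bounded away from $1$ and the upper endpoint $r$ away from $-1$ so that Lemma \ref{L:findingapproximateoriginal} remains applicable at every stage (it requires $-1\le q<r\le 1$, which is preserved since $[s_{k+1},t_{k+1}]\subseteq[s_k,t_k]\subseteq[u,v]\subseteq[-1,1]$). A secondary subtlety, characteristic of the constructive setting, is that we do not decide for a given $x$ which of $f(x)<y$ or $y<f(x)$ holds, nor do we claim the set $\{x\mid f(x)<y\}$ is literally measurable here — only the disjunction ``$f(x)<y$ or $y<f(x)$'' is forced off a small set; identifying the ``countably many'' exceptional $y$ and proving genuine measurability of the sublevel sets for the remaining $y$ (the headline result of Subsection \ref{SS:manyintegrable}) would be a further step built on top of this theorem, using Corollary \ref{Cor:almostfull}.
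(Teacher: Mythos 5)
Your proof matches the paper's argument essentially step for step: iterate Lemma \ref{L:findingapproximateoriginal} to produce a nested, dwindling sequence of rational intervals inside $(u,v)$ together with open exceptional sets of measure tending to $0$, then extract $y$ by Cantor's Intersection Theorem, and finish by combining the exceptional sets with the almost-fullness of $Dom(f)$. The one point where you go beyond what is strictly licensed by the \emph{statement} of Lemma \ref{L:findingapproximateoriginal} --- that the output interval is short, which you justify by inspecting the lemma's proof --- can be sidestepped by replacing $(s_k,t_k)$ with a rational subinterval of length $<\frac{1}{2^k}$ before re-applying the lemma, so that the dwindling follows from the stated bound $q\le s<t\le r$; the paper's own terse proof relies on the same fact without flagging it.
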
 
 
 \begin{proof} Applying Lemma \ref{L:findingapproximateoriginal}, we find an infinite sequence $(u_0, v_0), (u_1, v_1), \ldots$ of pairs of rationals, and an infinite sequence $\alpha_0, \alpha_1, \ldots$ of measurable elements of $\omega^\omega$ such that \begin{enumerate} \item $(u_0, v_0)= (u, v)$, \item for each $n$, $u_n < u_{n+1} < v_{n+1} < v_n$,  and $v_{n+1} - u_{n+1}<\frac{1}{2^n}$, \item for each $n$, $\mu(\alpha_n) < \frac{1}{2^n}$ and $\forall x \in Dom(f)\setminus \mathcal{H}_{\alpha_n}[f(x) < u_n \;\vee \;v_n < f(x)]$. \end{enumerate}
 
 Find $y$ such that, for each $n$, $u_n \le y \le v_n$ and note that $y$ satisfies the requirements. 
 \end{proof}
 \begin{corollary}\label{Cor:denseoriginal} Let $f$ be a measurable function from $[0,1]$ to $[-1.1]$.\\ The set $\{y\in [-1,1]\mid \{x\in [0,1]\mid f(x) <y\}\;is\;measurable\}$ is dense in $[-1,1]$. \end{corollary}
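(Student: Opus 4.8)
The plan is to read the statement off from Theorem~\ref{T:findingoriginal} together with the equivalence recorded in Corollary~\ref{Cor:almostfull}. A subset $\mathcal{D}$ of $[-1,1]$ is dense exactly when it meets every open subinterval of $[-1,1]$, and every such subinterval contains one $(u,v)$ with rational endpoints; so it suffices to show that for all rationals $u,v$ with $-1\le u<v\le 1$ there is some $y\in(u,v)$ for which $\{x\in[0,1]\mid f(x)<y\}$ --- which we read as $\{x\in[0,1]\mid x\in Dom(f)\;\wedge\;f(x)<y\}$ --- is measurable.

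First I would fix such rationals $u<v$ and apply Theorem~\ref{T:findingoriginal}. This yields $y\in(u,v)$ such that, for almost all $x$ in $[0,1]$, $x\in Dom(f)$ and $f(x)<y\;\vee\;y<f(x)$; unfolding ``for almost all'', this means that for each $n$ there is a measurable $\alpha_n$ with $\mu(\alpha_n)<\frac{1}{2^n}$ and $\forall x\in[0,1]\bigl[x\notin\mathcal{H}_{\alpha_n}\rightarrow\bigl(x\in Dom(f)\;\wedge\;(f(x)<y\;\vee\;y<f(x))\bigr)\bigr]$.

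Next I would verify that, writing $\mathcal{Y}:=\{x\in[0,1]\mid x\in Dom(f)\;\wedge\;f(x)<y\}$, the set $\mathcal{Y}\cup\bigl([0,1]\setminus\mathcal{Y}\bigr)$ is almost-full, witnessed by the very sequence $(\alpha_n)_n$: for any $n$ and any $x\in[0,1]$ with $x\notin\mathcal{H}_{\alpha_n}$, if $f(x)<y$ then $x\in\mathcal{Y}$, while if $y<f(x)$ then $\neg(f(x)<y)$, hence $\neg\bigl(x\in Dom(f)\wedge f(x)<y\bigr)$, so $x\in[0,1]\setminus\mathcal{Y}$; in both cases $x\in\mathcal{Y}\cup([0,1]\setminus\mathcal{Y})$. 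Since $\mu(\alpha_n)<\frac{1}{2^n}$ for every $n$, this set is almost-full, so by Corollary~\ref{Cor:almostfull} the set $\mathcal{Y}$ is measurable. As $u,v$ were arbitrary rationals, the set in the statement meets every rational subinterval of $[-1,1]$ and is therefore dense.

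There is no serious obstacle: the real work has been done in Theorem~\ref{T:findingoriginal}. The only points needing attention are the bookkeeping forced by the partiality of $f$ --- so that $[0,1]\setminus\mathcal{Y}$ really is the complement of $\mathcal{Y}$ within $[0,1]$ and $y<f(x)$ genuinely puts $x$ there --- and the recognition that the phrase ``for almost all $x$'' in Theorem~\ref{T:findingoriginal} is precisely the assertion that the domain of $\chi_{\mathcal{Y}}$ is almost-full, which is exactly the hypothesis in Corollary~\ref{Cor:almostfull}. One could instead argue from Lemma~\ref{L:findingapproximateoriginal} directly, constructing nested rational intervals $(u_n,v_n)$ with $v_n-u_n<\frac{1}{2^n}$ and measurable $\alpha_n$ by hand, taking $y$ to be their common point, and using $f(x)<u_{n+1}\le y$ and $y\le v_{n+1}<f(x)$ to transfer the dichotomy to $y$ --- but that merely reproves Theorem~\ref{T:findingoriginal}.
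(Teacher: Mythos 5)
Your proof is correct and follows exactly the route the paper takes, which is the one-line instruction ``Use Theorem~\ref{T:findingoriginal} and Corollary~\ref{Cor:almostfull}.'' You have simply unfolded the bookkeeping that the paper leaves implicit: that the ``for almost all $x$'' dichotomy $f(x)<y\;\vee\;y<f(x)$ delivered by Theorem~\ref{T:findingoriginal} is precisely the statement that $\mathcal{Y}\cup([0,1]\setminus\mathcal{Y})$ is almost-full, which via Corollary~\ref{Cor:almostfull} gives measurability of $\mathcal{Y}$.
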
 \begin{proof} Use Theorem \ref{T:findingoriginal}  and Corollary \ref{Cor:almostfull}. \end{proof}
 \begin{lemma}\label{L:monotonecontinuouspreparation} Let $h$ be a partial function from $[-1,1]$ to $[0, 1]$ such that $ Dom(h)$ is dense in $[-1,1]$ and $h$ is non-decreasing, i.e. \\$\forall x \in Dom(h)\forall y \in Dom(h)[x<y\rightarrow h(x) \le h(y)]$. 
 
 Let $x,y$ in $Dom(h)$ be given such that $x<y$ and $h(x) < h(y)$. 
 
 There exists $z$ in $[x, y]$ such that, for all $u$ in $Dom(h)$, if $u\;\#\; z$, then  \\$\exists t\in Dom(h)\exists w\in Dom(h)[t<u<w \;\wedge\; h(w)-h(t) < \frac{2}{3}\bigl(h(y)-h(x)\bigr)]$. \end{lemma}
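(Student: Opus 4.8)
The plan is to mimic the classical "trisection" argument one uses to find a point of continuity of a monotone function, adapting it to the constructive setting where $\mathrm{Dom}(h)$ is only dense, not all of $[-1,1]$. First I would set $a_0:=x$, $b_0:=y$ and build, by recursion, two sequences $a_0\le a_1\le \cdots$ and $\cdots \le b_1\le b_0$ of points of $\mathrm{Dom}(h)$ with $a_n<b_n$, $[a_{n+1},b_{n+1}]\subseteq[a_n,b_n]$, $b_{n+1}-a_{n+1}\le \tfrac{2}{3}(b_n-a_n)$, and — this is the key invariant —
\[ h(b_n)-h(a_n)\le \left(\tfrac{2}{3}\right)^{n}\bigl(h(y)-h(x)\bigr). \]
At stage $n$, given $a_n<b_n$ in $\mathrm{Dom}(h)$, I would use density of $\mathrm{Dom}(h)$ to pick two points $p,q\in\mathrm{Dom}(h)$ lying strictly inside $[a_n,b_n]$ near the two "trisection" marks $a_n+\tfrac13(b_n-a_n)$ and $a_n+\tfrac23(b_n-a_n)$, with $p<q$. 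Monotonicity gives $h(a_n)\le h(p)\le h(q)\le h(b_n)$, so one of the three gaps $h(p)-h(a_n)$, $h(q)-h(p)$, $h(b_n)-h(q)$ — wait, their sum is $h(b_n)-h(a_n)$, hence by a constructive "pigeonhole on rationals" (comparing the three rational-valued quantities $h(p)-h(a_n)$, $h(q)-h(p)$, $h(b_n)-h(q)$ against $\tfrac13(h(b_n)-h(a_n))$, which is decidable) I can choose a subinterval among $[a_n,p]$, $[p,q]$, $[q,b_n]$ on which the $h$-oscillation is at most $\tfrac13(h(b_n)-h(a_n))\le\tfrac23(h(b_n)-h(a_n))$; call its endpoints $a_{n+1},b_{n+1}$, and shrink further (again using density) so that $b_{n+1}-a_{n+1}\le\tfrac23(b_n-a_n)$ as well.

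Having built these sequences, I would invoke Cantor's Intersection Theorem (stated in Subsection~\ref{SS:reals}) on the shrinking, dwindling sequence of pairs $(a_n,b_n)$ to obtain a real $z$ with $a_n\le z\le b_n$ for all $n$; clearly $z\in[x,y]$. Now let $u\in\mathrm{Dom}(h)$ with $u\;\#\;z$ be given. From $u\;\#\;z$ I get a concrete $m$ with either $u<a_m$-ish or $b_m$-ish $<u$ — more precisely, since $a_n\uparrow z$ and $b_n\downarrow z$, apartness of $u$ from $z$ yields an $n$ with $a_n<u<b_n$ (choosing $n$ large enough that $b_n-a_n$ is smaller than the witnessed distance $|u-z|$, so $u$ cannot have drifted outside $[a_n,b_n]$ without separating from $z$ at an earlier stage; one handles the two sides $u<z$ and $z<u$ symmetrically, using $a_n\to z$ resp. $b_n\to z$). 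Then, using density of $\mathrm{Dom}(h)$ once more, pick $t,w\in\mathrm{Dom}(h)$ with $a_n\le t<u<w\le b_n$ — e.g. $t:=a_{n+1}$, $w:=b_{n+1}$ will do once we check $a_{n+1}<u<b_{n+1}$, which follows by taking $n$ one larger at the outset. By monotonicity and the invariant, $h(w)-h(t)\le h(b_n)-h(a_n)\le(\tfrac23)^n(h(y)-h(x))\le \tfrac23(h(y)-h(x))$, which is exactly the required conclusion.

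The main obstacle I anticipate is the passage from "$u\;\#\;z$" to "$\exists n[a_n<u<b_n$ and moreover $a_{n+1}<u<b_{n+1}]$" in a fully constructive way: one must extract a numerical modulus from the apartness and argue carefully that the nested interval containing $z$ has not yet "passed" $u$. Concretely, from $u\;\#\;z$ choose $k$ with $|u-z|>\tfrac{1}{2^k}$; from dwindling choose $n$ with $b_n-a_n<\tfrac{1}{2^k}$; since $z\in[a_n,b_n]$, the point $u$ with $|u-z|>b_n-a_n$ must satisfy $u<a_n$ or $b_n<u$, i.e. $u\notin[a_n,b_n]$ — so in fact I should choose $z$'s enclosing interval to be the one strictly inside $[x,y]$ from stage $1$ on, and reindex so that the statement reads $\exists t,w$ with $t<u<w$. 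A second, minor subtlety is ensuring at each recursion step that the chosen trisection subinterval has endpoints genuinely in $\mathrm{Dom}(h)$ with strict inequalities; density handles this, but one must avoid accidentally collapsing an interval to a point. Everything else — the pigeonhole on three rationals, the geometric decay, the final monotonicity estimate — is routine.
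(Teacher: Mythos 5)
There is a genuine gap, and it is in the heart of the argument, not in the boundary bookkeeping that you flag at the end.

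Your recursion keeps the trisection piece of \emph{smallest oscillation}, so the nested intervals $[a_n,b_n]$ shrink toward a point of \emph{continuity}, and the invariant you maintain is $h(b_n)-h(a_n)\le(\tfrac23)^n\varepsilon$. But the conclusion of the lemma is a statement about every $u\;\#\;z$, i.e.\ about points \emph{outside} the nested intervals. A point $u$ that is ejected already at stage $0\to 1$ lands in one of the two discarded trisection pieces of $[x,y]$, and the oscillation of such a piece can be as large as $\varepsilon=h(y)-h(x)$ (e.g.\ $h$ flat on $[x,q]$ and jumping by $\varepsilon$ on $[q,y]$): neither $[a_0,b_0]$ nor the discarded piece itself provides a bracket $t<u<w$ with $h(w)-h(t)<\tfrac23\varepsilon$. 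The geometric decay of the \emph{kept} interval's oscillation does nothing for $u$'s neighbourhood. The paper's proof runs the recursion in the opposite direction: it places $a$ in the middle third, and uses the constructive trichotomy to decide that at least one of $h(a)-h(x)$, $h(y)-h(a)$ is $<\tfrac23\varepsilon$ (they cannot both exceed $\tfrac12\varepsilon$), then discards \emph{that} outer piece while keeping the side toward the large jump. The invariant maintained is ``every $u$ in $[x,x_n)\cup(y_n,y]$ has a bracket of oscillation $<\tfrac23\varepsilon$'', with the brackets always taken from the \emph{original} endpoints $x,y$ to the current midpoint $a$. The Cantor point $z$ is thus the (possible) jump location, and everything else is already neat; no control on $h(y_n)-h(x_n)$ is needed or claimed.

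A smaller but still real problem is the phrase ``comparing the three rational-valued quantities \dots, which is decidable''. The function $h$ is real-valued (and in the application it is the measure function $y\mapsto\mu(\{x\mid f(x)<y\})$), so $h(p)-h(a_n)$ etc.\ are reals, and strict order between two reals is not decidable constructively. One must use cotransitivity with a positive gap, exactly as the paper does with the pair of thresholds $\tfrac12\varepsilon$ and $\tfrac23\varepsilon$. Your trisection pigeonhole can be repaired along these lines, but as written the step is not constructively justified; and even after repairing it, the difficulty from the previous paragraph remains.
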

 
 \begin{proof} Define $\varepsilon:= h(y)-h(x)$.
 
 A point $u$ in $[x,y]$ will be called {\it neat} if and only if \begin{quote}$\exists t\in Dom(h)\exists w\in Dom(h)[t<u<w \;\wedge\; h(w)-h(t) < \frac{2}{3}\varepsilon]$.\end{quote}
 
 We  define an infinite sequence $(x_0,  y_0), (x_1, y_1), \ldots$ of pairs of elements of $Dom(h)$, such that, for all $n$, $x\le x_n\le x_{n+1}\le y_{n+1}\le y_n\le y$ and $y_n-x_n\le (\frac{2}{3})^n(y-x)$ and every $u$ in $[x,x_n)\cup(y_n, y]$ is neat.  
 
 We first define: $(x_0, y_0):= (x,y)$, 
 
 Now let $n$ be given such that $(x_n, y_n)$ has been defined already such that $x_n< y_n$.
 
 Determine  $a$ in $Dom(h)$ such that $\frac{2}{3}x_n + \frac{1}{3}y_n < a < \frac{1}{3}x_n + \frac{2}{3} y_n$.
 
 Note: $a-x_n<\frac{2}{3}(y_n-x_n)$ and also: $y_n-a<\frac{2}{3}(y_n-x_n)$.
 
 Note: $h(x)\le h(a) \le h(y)$. 
 
  Either $\frac{1}{2}\varepsilon<h(a)-h(x)$ or  $h(a) -h(x) <\frac{2}{3}\varepsilon$, and also: either $\frac{1}{2}\varepsilon<h(y)-h(a)$ or  $h(y) -h(a) <\frac{2}{3}\varepsilon$, but not both $\frac{1}{2}\varepsilon<h(a)-h(x)$ and $\frac{1}{2}\varepsilon<h(y)-h(a)$.

Note that, if $h(a)-h(x)<\frac{2}{3}\varepsilon$, then every $u$ in $[x,a)$ will be neat, and, if $h(y)-h(a)<\frac{2}{3}\varepsilon$, then every $u$ in $(a, y]$ will be neat.
 
  We  define $(x_{n+1}, y_{n+1})$  in such a way that {\it either} $(x_{n+1}, y_{n+1})=(x_n, a)$ and $h(y)-h(a) < \frac{2}{3}\varepsilon$,  {\it or} $(x_{n+1}, y_{n+1})=(a, y_n)$ and $h(a)-h(x) < \frac{2}{3}\varepsilon$.
  
  Then every $u$ in $[x,x_{n+1})\cup(y_{n+1}, y]$ will be neat.
  
  \smallskip Clearly, the infinite sequence $(x_0, y_0), (x_1, y_1), \ldots$ satisfies our requirements.
  
  Using Cantor's Intersection Theorem, find $z$ such that, for all $n$, $x_n\le z \le y_n$ and note: for all $u$ in $[0,1]$, if $u\;\#\;z$, then, for some $n$, $u<x_n$ or $y_n<u$ and: $u$ is neat.\end{proof}
  
  \begin{definition} Let $h$ be a partial function from $[-1,1]$ to $[0,1]$ such that $Dom(h)$ is dense in $[-1,1]$ and $h$ is non-decreasing. \\Let $u$ in $[-1,1]$ be given. $u$ is a \emph{point of continuity for $h$} if and only if \\$\forall n \exists t \in Dom(h)\exists w\in Dom(h)[t<u<w \;\wedge\; h(w)-h(t) <\frac{1}{2^n}]$. \end{definition}
  
  \begin{remark}\label{R:extension} 
  If $u$ is a point of continuity for $h$, there exists $y$ in $[0,1]$ such that $\forall n\exists m \forall t\in Dom(h)[u-\frac{1}{2^m}<t<u+\frac{1}{2^m} \rightarrow  |h(t)-y|<\frac{1}{2^n}]$. $y$ is unique up to the relation of real coincidence and will be called \emph{the value of $h$ at $u$}. One may extend the  partial function $h$ to the partial function $g\subseteq [-1,1]\times[0,1]$ consisting of all pairs $(u,y)$ such that either $(u,y)\in h$ or $u$ is a point of continuity for $h$ and $y$ is the value of $h$ at $u$. \end{remark}
  \begin{theorem}\label{T:manypointsofcontinuity} Let $h$ be a partial function from $[-1,1]$ to $[0,1]$ such that $Dom(h)$ is dense in $[-1,1]$ and $h$ is non-decreasing. All but countably many elements of $[-1,1]$ are points of continuity for $h$. \end{theorem}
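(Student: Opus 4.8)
The plan is to exhibit a sequence $c_0,c_1,\ldots$ of points of $[-1,1]$ such that every $u\in[-1,1]$ with $\forall k[u\;\#\;c_k]$ is a point of continuity for $h$; this is what ``all but countably many'' should mean here. Putting $-1$ and $1$ among the $c_k$, we may assume $u\in(-1,1)$. First, using density of $Dom(h)$ together with Axiom \ref{ax:secondchoice}, fix a countable set $D\subseteq Dom(h)$ dense in $[-1,1]$ (for each pair of rationals $p<q$ in $[-1,1]$ pick a point of $Dom(h)$ strictly between them). The remaining $c_k$ are obtained from Lemma \ref{L:monotonecontinuouspreparation}: for every triple $(d,d',j)$ with $d<d'$ in $D$ and $j\in\omega$, use cotransitivity of $<_{\mathcal{R}}$ to decide $h(d')-h(d)<\frac{1}{2^{j}}$ or $h(d')-h(d)>\frac{1}{2^{j+1}}$, and let $c_{d,d',j}$ be $d$ in the first case and, in the second case (where $h(d)<h(d')$, so the Lemma applies to $[d,d']$), the point furnished by Lemma \ref{L:monotonecontinuouspreparation}. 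These points, together with $-1$ and $1$, constitute the announced sequence.

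The core of the proof is a finite \emph{zoom-in}. Let $u\in(-1,1)$ be apart from every $c_k$, and let $n$ be given; we must find $t,w\in Dom(h)$ with $t<u<w$ and $h(w)-h(t)<\frac{1}{2^{n}}$. By density of $D$ choose $t_0,w_0\in D$ with $t_0<u<w_0$, so $\varepsilon_0:=h(w_0)-h(t_0)\in[0,1]$. Given $t_j,w_j\in D$ with $t_j<u<w_j$, recall the cotransitivity decision recorded in the definition of $c_{t_j,w_j,n+1}$. If it gave $h(w_j)-h(t_j)<\frac{1}{2^{n+1}}$, stop: $t:=t_j$, $w:=w_j$ work. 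Otherwise $c_{t_j,w_j,n+1}$ is the point provided by Lemma \ref{L:monotonecontinuouspreparation} for $[t_j,w_j]$; as $u\in[t_j,w_j]$ is apart from it, Lemma \ref{L:monotonecontinuouspreparation} (in the form its proof gives, valid for every $u$ in the interval, not only for $u\in Dom(h)$) yields $t,w\in Dom(h)$ with $t<u<w$ and $h(w)-h(t)<\frac{2}{3}\varepsilon_j$; using density of $D$ and monotonicity of $h$, replace these by $t_{j+1},w_{j+1}\in D$ with $t<t_{j+1}<u<w_{j+1}<w$, so that $\varepsilon_{j+1}:=h(w_{j+1})-h(t_{j+1})<\frac{2}{3}\varepsilon_j$. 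As long as the loop continues, $\varepsilon_j<(\frac{2}{3})^j$, so once $(\frac{2}{3})^j<\frac{1}{2^{n+2}}$ the second alternative of the recorded decision for $(t_j,w_j,n+1)$ is impossible and the loop halts, delivering the wanted $t,w$. Thus $u$ satisfies the defining clause of a point of continuity for each $n$, so every $u$ apart from the whole sequence is a point of continuity for $h$.

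The main obstacle is the constructive heart of the classical fact that ``a monotone function has at most countably many jumps'': one cannot decide where $h$ jumps, so the discontinuities cannot be listed directly. Lemma \ref{L:monotonecontinuouspreparation} is precisely the tool that pins down one point carrying at least two thirds of the increase of $h$ on an interval, and the work lies in (i) organising these into a genuinely enumerable family — by running the cotransitivity split at every dyadic scale on every pair from the fixed countable dense set $D$, so that the very same recorded split can be consulted inside the zoom-in loop — and (ii) checking that the geometric factor $\frac{2}{3}$ forces the loop to terminate after a number of steps depending only on $n$. A minor point worth flagging is the discrepancy between the statement of Lemma \ref{L:monotonecontinuouspreparation} (``for all $u$ in $Dom(h)$'') and what its proof actually establishes (``for all $u$ in $[x,y]$''): it is the latter reading that is used above.
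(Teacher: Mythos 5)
Your proof is correct and follows essentially the same route as the paper's: a fixed countable dense subset of $Dom(h)$, an enumerated family of resolvers obtained from Lemma \ref{L:monotonecontinuouspreparation} (you index by triples $(d,d',j)$ with a recorded cotransitivity decision at dyadic scale $\frac{1}{2^j}$; the paper indexes by tuples $(m,p,q,r)$ recording rational witnesses of the two strict inequalities), and a geometric zoom-in by a factor $\frac{2}{3}$ that you phrase as a bounded loop and the paper as an induction giving $h(w)-h(t)<(\frac{2}{3})^n\cdot 3$. You are also right to flag that the proof of Lemma \ref{L:monotonecontinuouspreparation} delivers the conclusion for \emph{all} $u$ in $[x,y]$ apart from $z$, not only for $u\in Dom(h)$, and that this stronger reading is what the paper itself uses via its definition of ``$z$ resolves $(x_m,x_n)$''.
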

 \begin{proof} Find an infinite sequence $x_0, x_1, \ldots$ of elements of $Dom(h)$ such that \\$\forall p \in \mathbb{Q} \forall q\in \mathbb{Q}[0\le p <q\le  1\rightarrow \exists n[p\le x_n\le q]]$. 
 
 Define, for all $m,n$ such that $x_m<x_n$ and $h(x_m)<h(x_n)$, for all $z$ in $[0,1]$, \begin{quote} $z$ \emph{resolves} $(x_m, x_n)$ \end{quote}if and only if $x_m\le z \le x_n$ and, for all $u$ in $[x_m, x_n]$, if $u\;\#\;z$, then \\$\exists t \in Dom(h)\exists w \in Dom(h)[t<u<w \;\wedge\; h(w)-h(t) < \frac{2}{3}\bigl(h(x_n)-h(x_m)\bigr)]$.

 Using Lemma \ref{L:monotonecontinuouspreparation},  define an infinite sequence $z_0, z_1, \ldots$ of elements of $[0,1]$ such that $z_0=0$ and $z_1=1$ and, for each $n$, \\{\it if} $\bigl(x_{n(0)})''\bigl(n(2)\bigr)<\bigl(x_{n(1)})'\bigl(n(2)\bigr)$ and  $(h\bigl(x_{n(0)}))''\bigl((n(3)\bigr) <(h\bigl(x_{n(1)}))'\bigl(n(3)\bigr)$, \\{\it then} $z_{n+2}$ resolves $(x_{n(0)}, x_{n(1)})$.  
 
 Let $u$ be given such that, for each $n$, $u\;\#\; z_n$. We prove that $u$ is a point of continuity for $h$, by showing, inductively:
  \begin{quote} For each $n$, there exist $t,w$ in $Dom(h)$ such that $t<u<w$ and $h(w)-h(t)<(\frac{2}{3})^n\cdot 3$ \end{quote}
 
 The case $n=0$ obviously holds: find $t,w$ in $Dom(h)$ such that $t<u<w$ and note: $h(w) -h(t)\le|h(w)|+|h(t)| \le 2<3$. 
 
 Now let $n, t,w$ be given such that $t,w$ in $Dom(h)$ and $t<u<w$ and \\$h(w) - h(t)<(\frac{2}{3})^n\cdot 3$. Find $m,p$ such that $t<x_m<u<x_p<w$ and note: $h(x_p) - h(x_m)<(\frac{2}{3})^n\cdot 3$. Now distinguish two cases. 
 
 \textit{Case (1). \it} $h(x_p) - h(x_m)<(\frac{2}{3})^{n+1}\cdot 3$ and  we are done, or
 
 {\it Case (2).} $0<h(x_p)-h(x_m)$. \\In the latter case,  find $q,r$ such that $x_m''(q)<x_p'(q)$ and $\bigl(h(x_m)\bigr)''(r) < \bigl(h(x_p)\bigr)'(r)$. \\Then find $s$ such that  $s(0) = m, \;s(1)= p, \;s(2) =q$ and $s(3)=r$. \\As $y_{s+2}$ resolves $(x_m, x_p)$ and $u\;\#\; y_{s+2}$,  find $t,w$ in $Dom(h)$ such that $t<u<w$ and $h(w)-h(t)<\frac{2}{3}\bigl(h(x_p)-h(x_m)\bigr)< (\frac{2}{3})^{n+1}\cdot 3$. 
 
 We thus see that $u$ is indeed a point of continuity for $h$.
 \end{proof}
 
 \begin{remark}\label{R:co-enumerable} Using Remark  \ref{R:extension} and Theorem \ref{T:manypointsofcontinuity}, observe that a non-decreasing partial function $h$ from $[-1,1]$ to $[0, 1]$ such that $Dom(h)$ is dense in $[-1,1]$ may be extended to a non-decreasing partial function $g$ from $[-1,1]$ to $[0,1]$ such that $Dom(g)$ is \emph{co-enumerable}, i.e. there exists an infinite sequence $z_0, z_1, \ldots$ of elements of $[-1,1]$ such that, for every $u$ in $[0,1]$, if $\forall n[ u \;\#\;z_n]$, then $u\in Dom(g)$. \end{remark}
 
 \begin{theorem} Let $f$ be a measurable function $f$ from $[0,1]$ to $[-1,1]$.  The domain of the partial function \begin{quote}$D(f):=\{(y, m)\in [-1,1]\times [0,1]\mid m= \mu(\{x\in [0,1]\mid f(x)<y\})\}$ \end{quote}is a co-enumerable subset of $[-1,1]$. \end{theorem}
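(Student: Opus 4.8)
The plan is to study the non-decreasing partial function $h$ from $[-1,1]$ to $[0,1]$ given by $h(y):=\mu\bigl(\{x\in[0,1]\mid f(x)<y\}\bigr)$, defined precisely for those $y$ for which $\{x\in[0,1]\mid f(x)<y\}$ is measurable; thus $Dom(h)=Dom\bigl(D(f)\bigr)$. By Corollary \ref{Cor:denseoriginal}, $Dom(h)$ is dense in $[-1,1]$, and $h$ is non-decreasing, since for $y<y'$ in $Dom(h)$ one has $\chi_{\{f<y\}}\le\chi_{\{f<y'\}}$ on the (almost-full) intersection of the domains of these characteristic functions, so monotonicity of the integral gives $h(y)\le h(y')$. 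Hence Theorem \ref{T:manypointsofcontinuity} applies: all but countably many $u$ in $[-1,1]$ are points of continuity for $h$, and by Remarks \ref{R:extension} and \ref{R:co-enumerable} the extension $g$ of $h$, whose domain is $Dom(h)$ together with the points of continuity of $h$, has co-enumerable domain. So it is enough to prove that every point of continuity $u$ of $h$ already lies in $Dom(h)$, i.e.\ that $\{x\in[0,1]\mid f(x)<u\}$ is measurable; the value $\mu(\{f<u\})$ is then squeezed, by the argument below, to the value of $h$ at $u$, so that $g=D(f)$ and $Dom\bigl(D(f)\bigr)=Dom(g)$ is co-enumerable.

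Fix a point of continuity $u$ of $h$ and an $n$. By Corollary \ref{Cor:almostfull} it suffices to produce a measurable $\alpha$ with $\mu(\alpha)<\frac{1}{2^n}$ such that, for each $x$ in $[0,1]$ with $x\notin\mathcal{H}_\alpha$, one can decide whether $x\in Dom(f)$ and $f(x)<u$. Using the continuity of $h$ at $u$ and the density of $Dom(h)$, choose rationals $t_0<t_1<w_1<w_0$ in $Dom(h)$ with $t_1<u<w_1$ and $h(w_0)-h(t_0)<\frac{1}{2^{n+3}}$. Then, applying Definition \ref{D:measurablefunction} to $f$ with a large enough accuracy parameter, obtain $m$, rationals $u_0,\dots,u_{2^m-1}\in[-1,1]$ and a measurable $\alpha_0$ with $\mu(\alpha_0)<\frac{1}{2^{n+3}}$ such that $|f(x)-u_i|<\frac{1}{2^k}$ whenever $x\in Dom(f)$, $x\notin\mathcal{H}_{\alpha_0}$ and $\frac{i}{2^m}<x<\frac{i+1}{2^m}$, where $k$ is chosen so large that $\frac{2}{2^k}<\min(t_1-t_0,\,w_0-w_1)$.

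Now classify the dyadic intervals $(\frac{i}{2^m},\frac{i+1}{2^m})$ by the size of the rational $u_i$, a decidable matter. If $u_i<t_1-\frac{1}{2^k}$, then $f(x)<t_1<u$ for every relevant $x$ in the $i$-th interval; if $u_i>w_1+\frac{1}{2^k}$, then $f(x)>w_1>u$ there; in both cases membership of such $x$ in $\{f<u\}$ is decided. Let $\mathcal{O}$ be the union of the remaining intervals, those with $t_1-\frac{1}{2^k}\le u_i\le w_1+\frac{1}{2^k}$; it is a finite union of rational intervals, hence an open measurable set, and for $x\in Dom(f)\setminus\mathcal{H}_{\alpha_0}$ lying in such an interval one has $t_0<f(x)<w_0$, so $\mathcal{O}\subseteq\mathcal{H}_{\alpha_0}\cup\bigl([0,1]\setminus Dom(f)\bigr)\cup\{x\in Dom(f)\mid t_0<f(x)<w_0\}$. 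Since $t_0,w_0\in Dom(h)$, the last set is measurable of measure $h(w_0)-h(t_0)<\frac{1}{2^{n+3}}$; choosing also a measurable $\alpha_1$ with $[0,1]\setminus Dom(f)\subseteq\mathcal{H}_{\alpha_1}$ and $\mu(\alpha_1)<\frac{1}{2^{n+3}}$, finite subadditivity of measure gives $\mu(\mathcal{O})<\frac{1}{2^{n+1}}$. Finally let $\alpha$ be measurable with $\mathcal{H}_\alpha$ equal to the union of $\mathcal{H}_{\alpha_0}$, $\mathcal{H}_{\alpha_1}$, $\mathcal{O}$ and small neighbourhoods of the points $\frac{i}{2^m}$; then $\mu(\alpha)<\frac{1}{2^n}$, and off $\mathcal{H}_\alpha$ every $x$ lies in $Dom(f)$ and in a unique decided dyadic interval, so we can decide whether $f(x)<u$. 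Hence $\{f<u\}$ is measurable, which finishes the proof.

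The crux, and the main obstacle, is precisely this last passage: the hypothesis only says that $h$ is continuous at $u$ along its dense domain, whereas what is needed is an honest measurable open set off which the predicate $f(x)<u$ is decidable. The idea is to sandwich $u$ between rationals $t_1<u<w_1$ in $Dom(h)$ with nearby $h$-values, push the undecided dyadic intervals into the open region $\{t_0<f<w_0\}$ whose measure is controlled by $h(w_0)-h(t_0)$, and exploit that the comparisons $u_i<t_1-\frac{1}{2^k}$ and $u_i>w_1+\frac{1}{2^k}$ are between rationals and hence decidable, so no undecidable real comparison with $u$ is ever required. One also has to invoke a few elementary facts not spelled out in the text: monotonicity and finite subadditivity of $\mu$, additivity over the splitting $\{f<w_0\}=\{f<t_0\}\cup\{t_0\le f<w_0\}$, and the fact that $[0,1]\setminus Dom(f)$ is covered by arbitrarily small measurable open sets because $Dom(f)$ is almost-full.
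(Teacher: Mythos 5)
Your proof takes a genuinely different and more thorough route than the paper's. The paper's own proof is a two-liner: observe, via Corollary~\ref{Cor:denseoriginal}, that $D(f)$ is a non-decreasing partial function with dense domain, and then invoke Remark~\ref{R:co-enumerable}. However, Remark~\ref{R:co-enumerable} only asserts co-enumerability of $Dom(g)$, where $g$ is the \emph{extension} of $D(f)$ by the points of continuity; taken literally, this proves that $D(f)$ extends to a partial function with co-enumerable domain, not that $Dom\bigl(D(f)\bigr)$ itself is co-enumerable. You noticed precisely this gap and supplied the missing lemma: every point of continuity of $h=D(f)$ already lies in $Dom(h)$, so that $Dom(g)=Dom(h)$ and co-enumerability transfers. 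This is a real strengthening, and the sandwich argument you give for the lemma --- sandwiching $u$ between $t_1<u<w_1$, comparing the approximating rationals $u_i$ with $t_1,w_1$, and pushing the undecided dyadic intervals into $\{t_0<f<w_0\}$ whose measure is controlled by $h(w_0)-h(t_0)$ --- is the right idea.

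One slip needs repair. You ``choose rationals $t_0<t_1<w_1<w_0$ in $Dom(h)$'', but the dense set $Dom(h)$ need not contain any rationals, and the decidability of the comparisons $u_i<t_1-\frac{1}{2^k}$ versus $u_i>w_1+\frac{1}{2^k}$ is exactly what requires $t_1,w_1$ to be rational. The fix is to separate the two roles: take $t_0,w_0\in Dom(h)$ (real, not necessarily rational) supplied by the definition of point of continuity with $h(w_0)-h(t_0)$ small, and then choose \emph{rational} $t_1,w_1$ with $t_0<t_1<u<w_1<w_0$; the accuracy $\frac{1}{2^k}$ is chosen small against $\min(t_1-t_0,w_0-w_1)$ as you do, and the rest goes through with $\{t_0<f<w_0\}\subseteq\{f<w_0\}\setminus\{f<t_0\}$, the latter being measurable of measure $h(w_0)-h(t_0)$. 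A further bit of care is needed in stating the inclusion bounding $\mu(\mathcal{O})$ constructively --- one should say that every $x\in\mathcal{O}$ with $x\notin\mathcal{H}_{\alpha_0}\cup\mathcal{H}_{\alpha_1}$ lies in $\{f<w_0\}\setminus\{f<t_0\}$, and then appeal to subadditivity --- rather than using the classical set difference $[0,1]\setminus Dom(f)$, but these are phrasing issues, not gaps in the idea.
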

 
 \begin{proof} Note that $D(f)$ is non-decreasing and that, by Corollary \ref{Cor:denseoriginal}, $Dom\bigl(D(f)\bigr)$ is a dense subset of $[-1,1]$. Now use Remark \ref{R:co-enumerable}.\end{proof}
 
 \section{The Bar Theorem}\label{S:barth}
 Brouwer, when first proving the Fan Theorem, obtained the Fan Theorem as a Corollary of a more general result, see \cite{brouwer27} and \cite{brouwer54}.  \begin{theorem}[Bar Theorem]\label{T:bartheorem}$\*$\\ If $B\subseteq \omega$ be a bar in $\omega^\omega$, there exists 
 a stump $S$ such that $ S\cap B$ is a bar in $\omega^\omega$.  \end{theorem}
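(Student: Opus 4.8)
The plan is to use the same kind of ``canonical proof'' analysis that was used for the Fan Theorem (Theorem \ref{T:fantheorem}), but now adapted to the full Baire space $\omega^\omega$, where the obstacle is that a node may have infinitely many immediate successors. The axiomatic input is Brouwer's Thesis on Bars in $\mathcal{N}$: if $B$ is a bar in $\omega^\omega$ then $\langle\;\rangle$ admits a \emph{canonical} proof of being barred, built from the three observations that (i) if $s\in B$ then $s$ is barred, (ii) if every $s\ast\langle n\rangle$ is barred then $s$ is barred (the \emph{$\omega$-rule}), and (iii) if $s$ is barred then every $s\ast\langle n\rangle$ is barred. The whole point of the Bar Theorem compared to the Fan Theorem is that the canonical proof, being a well-founded tree of reasoning steps closed under the $\omega$-rule, \emph{is itself a stump}; extracting that stump is exactly the content of the theorem.

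The key steps, in order, are as follows. First I would fix a canonical proof $\Pi$ that $\langle\;\rangle$ is barred, and define, by induction on the structure of $\Pi$ (equivalently, by induction on $\mathbf{Stp}$, using Axiom \ref{ax:stumpinduction}), a stump $S_\Pi$ together with the claim that $S_\Pi\cap B$ is a bar in $\omega^\omega$. Concretely: if $\Pi$ ends with an application of rule (i) to the node $s$, set $S_\Pi=\emptyset$ (so $s\in B$ and ``$s$ is barred by $S_\Pi\cap B$'' trivially, since $\langle\;\rangle\in B$ after the obvious relativisation to the subtree above $s$); if $\Pi$ ends with the $\omega$-rule applied at a node $s$ from subproofs $\Pi_0,\Pi_1,\ldots$ of ``$s\ast\langle n\rangle$ is barred'', set $S_\Pi:=\{\langle\;\rangle\}\cup\bigcup_{n}\langle n\rangle\ast S_{\Pi_n}$, which is a stump by clause (ii) of Definition \ref{D:stp}; applications of rule (iii) (``thinning'' / restriction) are absorbed by replacing $S$ with a suitable substump. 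Along this induction one proves simultaneously the invariant: for every node $s$ to which the proof assigns the stump $S_s$, the set $\{t\mid s\ast t\in B\}$ contains a bar in $\omega^\omega$ whose elements all lie in $S_s$. At the root this yields $S:=S_{\langle\;\rangle}$ with $S\cap B$ a bar in $\omega^\omega$, as required.

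I expect the main obstacle to be precisely the careful bookkeeping of rule (iii), the monotonicity step $s$ barred $\Rightarrow s\ast\langle n\rangle$ barred: in the Fan-Theorem proof this was harmless because one was only tracking finite subsets of $B$, but here the inductive data is a stump, and naive use of (iii) can make the assigned stump ``too small'' relative to the part of $B$ one actually needs above a longer node. The fix is to be generous: whenever (iii) is invoked, pass to a stump that is at least as large as both the one inherited from the parent and any stump produced by an independent subderivation, using that stumps are closed under the operation of clause (ii) and hence (by iterating) under finite — indeed countable — ``unions'' of the relevant shape; the existence of a common enlargement is where Axiom \ref{ax:stumpinduction} does the real work. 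A secondary, more bureaucratic point is to phrase the statement ``$B$ bars $s$'' and its canonical proofs precisely enough that the structural induction is legitimate — this is the analogue of the agreement made explicit in the proof of Theorem \ref{T:fantheorem}, and once that agreement is in place the argument for $\omega^\omega$ runs parallel to the one for fans, with ``finite subset of $B$'' systematically replaced by ``substump $S$ with $S\cap B$ a bar''.
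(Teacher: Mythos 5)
Your proposal follows essentially the same route as the paper: take a canonical proof of `$B$ bars $\langle\;\rangle$' built from observations (i)--(iii), replace each statement `$B$ bars $s$' by `there exists a stump $S$ such that $(s\ast S)\cap B$ bars $s$', and build the stump at an application of the $\omega$-rule exactly as you describe (the paper uses the Second Axiom of Countable Choice to assemble the sequence $S_0, S_1,\ldots$ into one stump). Two small corrections: in the base case $s\in B$ the stump must be $\{\langle\;\rangle\}$ rather than $\emptyset$ (the empty stump retains nothing of $B$); and rule (iii) is handled not by enlarging stumps but by simply passing to the immediate substump $S\upharpoonright\langle n\rangle$, with a separate degenerate case $S\upharpoonright\langle n\rangle=\emptyset$ in which an initial segment of $s$ already lies in $B$ and the singleton stump suffices.
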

 \begin{proof} 
 Let us define, for each $B\subseteq \omega$, for each $s$, \\$B$ {\it bars $s$} if and only if $Bar_{\omega^\omega\cap s}(B)$, i.e. $\forall \alpha[s\sqsubset \alpha\rightarrow \exists n[\overline \alpha n \in B]]$. 
 
  Observe the following:
  
  (i) For all $s$, if $s\in B$, then $B$ bars $s$.
  
  (ii) If, for all $n$, $B$ bars $s\ast\langle n \rangle$, then $B$ bars $s$ in $\omega^\omega$. 
  
  (iii) For all $s$, if $B$ bars $s$, then, for all $n$,  $B$ bars $s\ast\langle n \rangle$.
 
 \smallskip Now let $B\subseteq \omega$ be given such that $Bar_{\omega^\omega}(B)$, i.e. $B$ bars $\langle\;\rangle$. 
 
 Under what circumstances should we say that we are  entitled to affirm this statement?
 
 Note that one may {\it prove} a statement of the form `$B$ bars $s$' by starting from observations of the form (i) and using observations of the form (ii) and (iii) as reasoning steps.

 Let us now agree to consider the statement `$B$ bars $s$' as  established or {\it true} if and only if we are able to provide such a {\it canonical} proof.  
 
 Note that such a canonical proof is no longer a finite `tree', like in the case of the Fan Theorem, but an infinitary one. The structure of a canonical proof is comparable to the structure of a stump. 
 
 The above agreement marks an important point in the development of our \\intuitionistic mathematics. We are introducing an {\it axiomatic assumption}.
 
 \smallskip
 After shaking hands, we
  argue as follows. 
 
 Take a canonical proof of `$B$ bars $\langle \;\rangle$'.
 
 In this canonical proof, replace every statement: `$B$ bars $s$' by the statement: `there exists a stump $S$ such that 
$(s\ast S )\cap B$ bars $s$'.

We now verify that the new `proof' is  a valid proof.

\smallskip (i) If $s\in B$ we can take $S=\{\langle\;\rangle\}$. 

(ii) Let $s$ be given such that, for each $n$, there exists a stump $S$ such that $(s\ast\langle n \rangle\ast S)\cap B$ bars $s\ast\langle n \rangle$. Using the Second Axiom of Countable Choice\footnote{Stumps, as decidable subsets of $\omega$, may be identified with their characteristic functions.}, Axiom \ref{ax:secondchoice}, we   build, step by step, a stump $S$ such that, for each $n$, $(s\ast\langle n \rangle\ast (S\upharpoonright n))\cap B$ bars $s\ast\langle n \rangle$. Note that $(s\ast S)\cap B$ bars $s$.

(iii) Let $s, n$ be given. Let $S$ be a stump such that $(s\ast\langle n\rangle \ast S))\cap B$ bars $s$. Now distinguish two cases.

\textit{Case (1)}. $S\upharpoonright \langle n \rangle =\emptyset$. Conclude: $\exists t \sqsubseteq s[t \in B]$. Define $T:=\{\langle\;\rangle\}$ and note: $(s\ast\langle n \rangle\ast T)\cap B$ bars $s\ast\langle n \rangle$.

\textit{Case (2)}. $\langle \; \rangle \in S\upharpoonright \langle n \rangle$. Then $(s\ast\langle n \rangle \ast S\upharpoonright\langle n \rangle)\cap B$ bars $s\ast\langle n \rangle$. 

\smallskip We thus see that our new `proof' is a valid proof indeed.

We may affirm its conclusion: \\`there exists a stump $S$ such that $S\cap B$ bars $\langle\;\rangle$, i.e. $Bar_{\omega^\omega}(S\cap B)$.'
 \end{proof}
 
 \subsection{An application} \begin{definition}\label{D:inductivewell-orderings} For all $A,B\subseteq \mathbb{Q}$, we define: \\$A<B$ if and only if $\forall q \in A\forall r \in B[q<r]$.
 
 \smallskip We define a collection $\mathcal{WO}$ of subsets of $\mathbb{Q}$ by the following inductive definition. The elements of $\mathcal{WO}$ are called the \emph{inductively  well-ordered} subsets of $\mathbb{Q}$.
 
 \begin{enumerate}[\upshape(i)]\item $\emptyset \in \mathcal{WO}$, and, for each $q$ in $\mathbb{Q}$, $\{q\}\in \mathcal{WO}$. 
 \\These are the \emph{basic elements} of $\mathcal{WO}$. 
 \item For every sequence $A_0, A_1, \ldots$ of elements of $\mathcal{WO}$ such that, for each $n$, \\$A_n<A_{n+1}$, also $\bigcup_n A_n \in \mathcal{WO}$. \\This is the \emph{construction step} of the set $\mathcal{WO}$.
 
 \item Every element of $\mathcal{WO}$ is obtained from basic elements of $\mathcal{WO}$ by applying the construction step repeatedly. \end{enumerate}
 
 \smallskip
We let $q_0, q_1, \ldots$ be some canonical  enumeration without repetitions of $\mathbb{Q}$.

$A\subseteq \mathbb{Q}$ is a \emph{decidable} subset of $\mathbb{Q}$ if and only if $\exists \alpha\forall n[q_n \in A \leftrightarrow \alpha(n)\neq 0]$.

\smallskip Let $A\subseteq \mathbb{Q}$ and $\alpha$ be given. $\alpha$ is called an \emph{enumeration of} $A$ if and only if \\$\forall n [q_n\in A \leftrightarrow  \exists m[ \alpha(m)=n+1]]$.  

$A\subseteq \mathbb{Q}$ is \emph{ enumerable} if and only if there exists an enumeration of $A$. 

\medskip $A\subseteq \mathbb{Q}$ is \emph{well-founded} if and only if $\forall \gamma[\forall n[q_{\gamma(n)} \in A]\rightarrow \exists n[q_{\gamma(n)}\le q_{\gamma(n+1)}]]$. 
 \end{definition}

 \begin{lemma}\label{L:decidablewf}  Let $\alpha$ be an enumeration of $A\subseteq \mathbb{Q}$. \\$A$ is well-founded if and only if $\forall \gamma\exists n[ \forall i\le n+1[\alpha\circ\gamma(i)>0]\rightarrow q_{\alpha\circ\gamma(n)}\le q_{\alpha\circ\gamma(n+1)}]$. \end{lemma}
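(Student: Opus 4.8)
The plan is to prove the two implications separately, unfolding each definition into a statement about infinite sequences and using the enumeration $\alpha$ as a dictionary between the (possibly undecidable) predicate ``$q_{\gamma(n)}\in A$'' and the existential statement ``$\exists m[\alpha(m)=\gamma(n)+1]$''. Throughout I read $q_{\alpha\circ\gamma(n)}$ as \emph{the rational that $\alpha$ enumerates at position $\gamma(n)$}, i.e.\ as $q_{\alpha(\gamma(n))-1}$ whenever $\alpha(\gamma(n))>0$. The basic fact I use repeatedly is that, for natural numbers, ``$\alpha(\gamma(i))>0$'' is a decidable condition, so bounded quantifications over it are decidable and may be used in definitions by cases.

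For the direction \emph{right-hand side $\Rightarrow$ $A$ is well-founded}: let $\delta$ be given with $\forall n[q_{\delta(n)}\in A]$. Then $\forall n\exists m[\alpha(m)=\delta(n)+1]$, so by the First Axiom of Countable Choice, Axiom \ref{ax:firstchoice}, applied to the relation $\{(n,m)\mid\alpha(m)=\delta(n)+1\}$, there is $\gamma$ with $\forall n[\alpha(\gamma(n))=\delta(n)+1]$. In particular $\alpha(\gamma(i))>0$ for \emph{every} $i$, so for this $\gamma$ the antecedent of the inner implication of the right-hand side holds for all $n$; applying the right-hand side to $\gamma$ therefore yields $n$ with $q_{\alpha\circ\gamma(n)}\le q_{\alpha\circ\gamma(n+1)}$, which, since $\alpha(\gamma(n))-1=\delta(n)$, says exactly $q_{\delta(n)}\le q_{\delta(n+1)}$. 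Hence $\exists n[q_{\delta(n)}\le q_{\delta(n+1)}]$, as required. This direction is routine once the choice step is in place.

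For the converse, \emph{$A$ well-founded $\Rightarrow$ right-hand side}: let $\gamma$ be given, and decide whether $\alpha(\gamma(0))>0$. If not, take $n=0$: the antecedent $\forall i\le 1[\alpha\circ\gamma(i)>0]$ fails, so the implication is vacuously true. If $\alpha(\gamma(0))>0$, then $q_{\alpha(\gamma(0))-1}\in A$, so $A$ is inhabited; put $m_0:=\gamma(0)$ as a fallback position. Define $\delta$ by $\delta(n):=\gamma(n)$ if $\forall i\le n[\alpha(\gamma(i))>0]$ and $\delta(n):=m_0$ otherwise (a legitimate definition by cases). Then $\alpha(\delta(n))>0$ for every $n$, so $n\mapsto \alpha(\delta(n))-1$ is a sequence of indices with $q_{\alpha\circ\delta(n)}\in A$ for all $n$; by well-foundedness there is $n$ with $q_{\alpha\circ\delta(n)}\le q_{\alpha\circ\delta(n+1)}$. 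For this $n$, assume the antecedent $\forall i\le n+1[\alpha\circ\gamma(i)>0]$; then for each $i\le n+1$ the weaker condition $\forall j\le i[\alpha(\gamma(j))>0]$ holds, so $\delta(i)=\gamma(i)$, and in particular $\delta(n)=\gamma(n)$ and $\delta(n+1)=\gamma(n+1)$. Hence $q_{\alpha\circ\gamma(n)}\le q_{\alpha\circ\gamma(n+1)}$, which is the required conclusion; thus $\exists n[\dots]$.

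The only delicate point is this last direction: one must manufacture, out of the arbitrary ``position sequence'' $\gamma$, an honest sequence \emph{in} $A$ to which well-foundedness applies, and this is exactly the truncated sequence $\delta$ that copies $\gamma$ as long as its $\alpha$-values stay positive and then parks at $m_0$; the subsidiary worry that $A$ might fail to be inhabited is dissolved by the decidable case split on $\alpha(\gamma(0))$. Everything else — the decidability of the bounded conditions and the bookkeeping that $\delta$ agrees with $\gamma$ below $n+1$ under the implication's hypothesis — is mechanical.
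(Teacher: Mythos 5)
Your proof is correct and follows essentially the same route as the paper: in the forward direction you truncate $\gamma$ to an honest sequence of indices into $A$ (the paper patches the values pointwise rather than truncating, a cosmetic difference), and in the converse you lift a sequence in $A$ to a sequence of positions via the enumeration, where the paper's ``find $\delta$'' is the same step you carry out with Countable Choice (minimization over the decidable condition $\alpha(m)=\delta(n)+1$ would also do). You also correctly noticed that $q_{\alpha\circ\gamma(n)}$ must be read as $q_{\alpha(\gamma(n))-1}$, a notational slip in the statement that the paper's own proof confirms.
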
 
 \begin{proof} Let $\alpha$ be an enumeration of $A\subseteq \mathbb{Q}$. 
 
 \smallskip First assume $A$ is well-founded. Let $\gamma$ be given. Distinguish two cases. \\ \textit{Case (a)}. $\alpha\circ\gamma(0)=0$. Then $\forall i\le 2[\alpha\circ\gamma(i)>0]\rightarrow q_{\alpha\circ \gamma(0)}\le q_{\alpha\circ\gamma(1)}$.
 \\ \textit{Cases (b)}. $\alpha\circ\gamma(0)>0$. Now define $\beta$ such that, for each $n$, if $\alpha(n)>0$, then $\beta(n) =\alpha(n)$, and, if $\alpha(n)=0$, then $\beta(n)=\alpha\circ\gamma(0)$. Note: $\forall n[\beta(n)>0 \;\wedge\; q_{\beta(n)-1} \in A]$. Find $n$ such that $q_{\beta(n)-1} \le q_{\beta(n+1)-1}$. Note that, if $\forall i\le n+1[\alpha(i)>0]$, then $\alpha(n)=\beta(n)$ and $\alpha(n+1)=\beta(n+1)$ and   $q_{\alpha(n)-1} \le q_{\alpha(n+1)-1}$.  \\Conclude: $\forall \gamma\exists n[ \forall i\le n+1[\alpha\circ\gamma(i)>0]\rightarrow q_{\alpha\circ\gamma(n)}\le q_{\alpha\circ\gamma(n+1)}]$.

 \smallskip  Now assume $\forall \gamma\exists n[ \forall i\le n+1[\alpha\circ\gamma(i)>0]\rightarrow q_{\alpha\circ\gamma(n)}\le q_{\alpha\circ\gamma(n+1)}]$.  
 
 Let $\gamma$ be given such that $\forall n[q_{\gamma(n)}\in A]$. Find $\delta$ such that, for each $n$, $\delta(n) >0$ and $q_{\alpha\circ\delta(n)-1}=q_{\gamma(n)}$. Find $n$ such that $q_{\alpha\circ\delta(n)-1}\le q_{\alpha\circ\delta(n+1)-1}$ and conclude: $q_{\gamma(n)}\le q_{\gamma(n+1)}$.
 \\ Conclude: $\forall \gamma [\forall n [q_{\gamma(n)}\in A]\rightarrow \exists n[q_{\gamma(n)}\le q_{\gamma(n+1)}]$, i.e. $A$ is well-founded. \end{proof}
 \begin{lemma}\label{L:wo}$\;$\begin{enumerate}[\upshape(i)]\item Every $A$ in $\mathcal{WO}$ is enumerable and well-founded.\item For all $A$ in $\mathcal{WO}$,  for all $a,b$ in $ A$, the set $\{c\in A\mid a< c \le b\}$ belongs to $\mathcal{WO}$. \item For every enumerable subset $A$ of $\mathbb{Q}$, if, for all $a$ in $A$, $A_{\le a}:=\{b\in A\mid b\le a\}$ belongs to $\mathcal{WO}$, then $A \in \mathcal{WO}$. \end{enumerate} \end{lemma}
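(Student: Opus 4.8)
The plan is to treat all three parts by structural induction on $\mathcal{WO}$ --- the induction principle accompanying the inductive Definition \ref{D:inductivewell-orderings}, just as Axiom \ref{ax:stumpinduction} accompanies the definition of $\mathbf{Stp}$ --- together with some elementary bookkeeping. It is cleanest to prove (ii) first, then (i), and finally to read off (iii) from (ii).

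For (ii) I would not prove the displayed statement directly but the slightly more malleable one: for every $A$ in $\mathcal{WO}$ and \emph{every} rational $a$, both the initial segment $A_{\le a}=\{c\in A\mid c\le a\}$ and the final segment $A_{>a}=\{c\in A\mid c>a\}$ belong to $\mathcal{WO}$. The lemma then follows since $\{c\in A\mid a<c\le b\}=(A_{>a})_{\le b}$. The induction on $\mathcal{WO}$ is painless: $\emptyset$ and the singletons are settled using the decidability of the order on $\mathbb{Q}$, and if $A=\bigcup_nA_n$ with $A_n<A_{n+1}$, then $A_{\le a}=\bigcup_n(A_n)_{\le a}$, each $(A_n)_{\le a}$ lies in $\mathcal{WO}$ by the induction hypothesis, and $(A_n)_{\le a}<(A_{n+1})_{\le a}$ because these are subsets of $A_n$ and $A_{n+1}$; so $A_{\le a}\in\mathcal{WO}$ by the construction step, and similarly for $A_{>a}$. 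The only choice needed is the mild one, as in the proof of the Bar Theorem, of collecting the construction-witnesses of the blocks via Axiom \ref{ax:secondchoice}.

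For (i), enumerability is easy: $\emptyset$ and singletons are enumerable, and for $A=\bigcup_nA_n$ one dovetails enumerations of the $A_n$ supplied by Axiom \ref{ax:secondchoice}. The well-foundedness clause carries the real content. Again by induction on $\mathcal{WO}$: the basic cases are immediate, and for $A=\bigcup_nA_n$ with $A_n<A_{n+1}$ one is given $\gamma$ with $\forall n[q_{\gamma(n)}\in A]$ and wants $\exists n[q_{\gamma(n)}\le q_{\gamma(n+1)}]$. The intended idea is to locate, for each $n$, a block index $k_n$ with $q_{\gamma(n)}\in A_{k_n}$ --- which is feasible, since $q_{\gamma(n)}$ is known to lie in the union and the blocks are enumerable, by running the block-enumerations in parallel --- and then to use that along a strictly decreasing run of $\gamma$ the indices $k_n$ do not grow, so the run is eventually trapped in a single block $A_k$ where the induction hypothesis (well-foundedness of $A_k$, best applied in the $\forall\gamma\exists n$ form of Lemma \ref{L:decidablewf}, which displays it as a bar statement) furnishes the non-descent within a bound. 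The hard part is that this monotonicity of block indices fails verbatim: an empty, or otherwise degenerate, intermediate block $A_j$ can make an element of a later block smaller than an element of an earlier one, and emptiness of a block is not decidable. Carrying the bounded-descent argument through constructively --- reorganising so that one genuinely reduces to a single well-founded block, rather than invoking the classical principle that a non-increasing sequence of naturals is eventually constant --- is the technical heart of the lemma, and is where I expect the bulk of the effort to go.

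For (iii), take an enumeration $\alpha$ of $A$ and read off from it a non-decreasing sequence $c_0\le c_1\le\cdots$ of elements of $A$, namely the running maxima of the rationals certified by $\alpha$ up to each stage, with empty blocks inserted at the (at most finitely many initial) stages before any element has appeared. Put $D_0:=A_{\le c_0}$ and $D_{k+1}:=\{c\in A\mid c_k<c\le c_{k+1}\}$. Since $A_{\le c_k}\in\mathcal{WO}$ by hypothesis, each $D_{k+1}\in\mathcal{WO}$ by part (ii); moreover $D_k<D_{k+1}$ by construction, and $\bigcup_kD_k=A$ because every member of $A$ is certified by $\alpha$ at some stage and so belongs to some $D_k$. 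Hence $A\in\mathcal{WO}$ by the construction step of Definition \ref{D:inductivewell-orderings}.
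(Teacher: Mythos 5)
Your parts (ii) and (iii) are fine and essentially coincide with the paper: the paper leaves (i) and (ii) to the reader as "straightforward induction on $\mathcal{WO}$", and its proof of (iii) is exactly your running-maxima decomposition (it extracts the record subsequence $\beta$ of the enumeration $\alpha$, puts $A_n:=\{r\in A\mid r\le q_{\beta(n)-1}\wedge\forall i<n[\beta(i)>0\rightarrow r>q_{\beta(i)-1}]\}$ and $A_n:=\emptyset$ when $\beta(n)=0$, and applies (ii) and the construction step). Your strengthening of (ii) to "both $A_{\le a}$ and $A_{>a}$ lie in $\mathcal{WO}$ for every rational $a$" is a clean way to carry out the induction the paper has in mind.

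The genuine gap is the well-foundedness half of (i), which you describe but do not prove: you state an intended descending-block-index argument, correctly observe that it breaks because $A<B$ holds vacuously when a block is empty and emptiness is undecidable, and then defer "the bulk of the effort". That is not a proof, and the difficulty you have located is real — indeed it is worse than a technical inconvenience. Under the literal reading of Definition \ref{D:inductivewell-orderings}, the chain $\{1\}<\emptyset<\{\tfrac12\}<\emptyset<\{\tfrac13\}<\cdots$ satisfies the hypothesis of the construction step (each comparison with $\emptyset$ is vacuous), so $\{1,\tfrac12,\tfrac13,\ldots\}$ would belong to $\mathcal{WO}$ although it is manifestly not well-founded; hence no induction, straightforward or otherwise, can establish (i) without first tightening the definition (for instance by recording rational separators between consecutive blocks, which is what actually happens in every place the construction step is invoked in the paper, including the proof of (iii)), or by restricting the construction step to inhabited $A_n$ so that $<$ becomes transitive across the blocks. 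Even after such a repair, your plan still needs work: landing an infinite tail of $\gamma$ in a single block so as to apply that block's well-foundedness is not a matter of "a non-increasing sequence of naturals is eventually constant" (which is not constructively available in the form you need), and one must either strengthen the induction hypothesis to a finitized, almost-full-style form of well-foundedness or argue via the closure properties of Section \ref{S:aft}. Since the paper supplies no proof of (i) either, you have not missed an argument the paper gives, but your proposal does not close the one part of the lemma that carries real content.
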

 
 \begin{proof}  The proof of (i) and (ii) is by straightforward induction on $\mathcal{WO}$ and left to the reader.
 
 \smallskip
 
 (iii)  Let $\alpha$ be an enumeration of $A$.
 Define $\beta$ such that, for each $n$, \\\textit{if} $\alpha(n)>0$ and $\forall i<n[\beta(i)>0 \rightarrow q_{\beta(i)-1}<q_{\alpha(n)-1}]$, then $\beta(n)=\alpha(n)$, and, \\\textit{if not}, then $\beta(n) = 0$.
  \\Define an infinite sequence $A_0, A_1, A_2, \ldots$ of subsets of $A$, such that,  
 for each $n$, if $\beta(n)>0$,  then $A_n:=\{r \in A \mid r\le q_{\beta(n)-1}\; \wedge\;\forall i<n[\beta(i)>0\rightarrow r> q_{\beta(i)-1}]\}$, and, if $\beta(n)=0$, then $A_{n}=\emptyset$.
 
 Using (iii), note: for all $n$, $A_n \in \mathcal{WO}$.

 Then note: for all $n$, $A_n <A_{n+1}$ and $A:=\bigcup_n A_n$ and conclude: $A\in \mathcal{WO}$. 
 \end{proof}
 
 The next result may be compared to results in \cite[\S 5]{howard}.
 \begin{theorem}\label{T:wo}\footnote{In \cite[Section 4, Theorem 6]{veldman2008} one finds an intuitionistic version of a more difficult but related result: 
  F. Hausdorff's Theorem  on scattered subsets of $\mathbb{Q}$. The proof is wrong however and the result is doubtful.}   Let $\alpha$ be an enumeration of $A\subseteq \mathbb{Q}$.  \\If $\forall \gamma\exists n[\forall i\le n+1[\alpha\circ\gamma(i)>0]\rightarrow q_{\alpha\circ\gamma(n)}\le q_{\alpha\circ\gamma(n+1)}]$, then $A\in \mathcal{WO}$. \end{theorem}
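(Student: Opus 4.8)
The plan is to reduce the statement, via Lemma \ref{L:decidablewf}, to the assertion that \emph{every enumerable and well-founded subset of $\mathbb{Q}$ belongs to $\mathcal{WO}$}, and then to prove this using the Bar Theorem, Theorem \ref{T:bartheorem}, together with Lemma \ref{L:wo} and the construction step of Definition \ref{D:inductivewell-orderings}. By Lemma \ref{L:wo}(iii) it is enough to show that, for every $a$ in $A$, the set $A_{\le a}:=\{c\in A\mid c\le a\}$ belongs to $\mathcal{WO}$. So I fix $a$ in $A$; then $A$ is inhabited, so I may replace the given enumeration by one that never ``pauses'' (never takes the value $0$). For a rational $c$, write $A_{<c}:=\{c'\in A\mid c'<c\}$; since the order on $\mathbb{Q}$ is decidable and $A$ is enumerable, each $A_{<c}$ is again enumerable.

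Next I would encode well-foundedness as a bar. Call a finite sequence $s$ a \emph{descent below $a$} if the rationals enumerated by $\alpha$ at the stages $s(0),s(1),\dots,s(length(s)-1)$ form a strictly decreasing sequence all of whose terms are $<a$; this is a \emph{decidable} property of $s$, and the set $T$ of descents below $a$ is closed under initial segments, with $\langle\;\rangle\in T$. Feeding, for an arbitrary $\gamma$, the sequence consisting of $a$ followed by the rationals enumerated by $\alpha$ at $\gamma(0),\gamma(1),\dots$ into the well-foundedness of $A$ produces an $n$ at which that sequence first fails to strictly decrease; this shows that $B:=\{s\ast\langle m\rangle\mid s\in T\;\wedge\;s\ast\langle m\rangle\notin T\}$, the set of immediate exits from $T$, is a bar in $\omega^\omega$. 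Applying the Bar Theorem, Theorem \ref{T:bartheorem}, I obtain a stump $\Sigma$ with $\Sigma\cap B$ a bar in $\omega^\omega$. Since $B$ consists of the immediate exits from $T$, every infinite sequence leaves $T$ at a node belonging to $\Sigma$; as $\Sigma$ is closed under initial segments this forces $T\subseteq\Sigma$, and it shows that $T$, as a sub-tree of the stump $\Sigma$, is well-founded.

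The core of the proof is an induction \emph{along $T$}, carried out formally as an instance of induction on stumps, Axiom \ref{ax:stumpinduction}, applied to the class of all stumps $U$ having the property that for every $s\in T$ with $\Sigma\upharpoonright s=U$ one has $A_{<c_s}\in\mathcal{WO}$, where $c_s$ denotes the rational enumerated at the last stage of $s$ and $c_{\langle\;\rangle}:=a$. The inductive step is uniform, the leaves of $T$ being subsumed as the degenerate case. At a node $s\in T$ the inductive hypothesis states that $A_{<c}\in\mathcal{WO}$ for every $c$ in $A$ with $c<c_s$, i.e. for every $c\in A_{<c_s}$; from each such membership and the construction step of Definition \ref{D:inductivewell-orderings} (take $A_0:=A_{<c}$, $A_1:=\{c\}$, $A_n:=\emptyset$ for $n\ge 2$) one obtains $A_{\le c}=A_{<c}\cup\{c\}\in\mathcal{WO}$; and then, since $A_{<c_s}$ is enumerable and $(A_{<c_s})_{\le c}=A_{\le c}\in\mathcal{WO}$ for every $c\in A_{<c_s}$, Lemma \ref{L:wo}(iii) yields $A_{<c_s}\in\mathcal{WO}$. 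Reading this off at $s=\langle\;\rangle$ gives $A_{<a}\in\mathcal{WO}$, whence $A_{\le a}=A_{<a}\cup\{a\}\in\mathcal{WO}$ by the same construction step; as $a\in A$ was arbitrary, a final application of Lemma \ref{L:wo}(iii) to $A$ itself gives $A\in\mathcal{WO}$.

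The step I expect to be the main obstacle is the transition from ``the Bar Theorem hands us \emph{some} stump $\Sigma$ with $\Sigma\cap B$ a bar'' to the stepwise construction of the memberships $A_{<c_s}\in\mathcal{WO}$: the relativisation of Axiom \ref{ax:stumpinduction} to the nodes of $T$ must be arranged so that, at a node $s$, the inductive hypothesis genuinely ranges over \emph{all} immediate descent-extensions of $s$, which is exactly what makes it match ``all $c\in A_{<c_s}$'' as needed to invoke Lemma \ref{L:wo}(iii); verifying $T\subseteq\Sigma$ is the point that legitimises this. Everything order-theoretic is routine given Lemma \ref{L:wo}, and the only further nuisances — eliminating pauses from the enumeration, and checking that the subsets $A_{<c}$ are enumerable — are disposed of as indicated above once $a\in A$ has been fixed.
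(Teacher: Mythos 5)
Your proposal is correct, and it arrives at the result via the same underlying machinery — the Bar Theorem, induction on $\mathbf{Stp}$, and Lemma \ref{L:wo}(iii) at each inductive step — but it organises the argument rather differently from the paper's first proof. The paper parametrises the induction over enumerations: it proves, for \emph{every} stump $S$, the property $(\ast)$ that whenever $S\cap B_\alpha$ bars $\omega^\omega$, the set enumerated by $\alpha$ lies in $\mathcal{WO}$, and the inductive step trades $S$ for $S\upharpoonright\langle n\rangle$ while simultaneously trading $\alpha$ for a restricted enumeration $\beta$ of the initial segment $A_{<q}$. You instead fix $a\in A$ and a pause-free enumeration, build the tree $T$ of strict descents below $a$, let $B$ be the immediate exits from $T$, and induct over substumps of the single stump $\Sigma$ that the Bar Theorem hands you, using nodes $s\in T$ as the parameter. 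That forces you to check $T\subseteq\Sigma$ so that the base case $P(\emptyset)$ is vacuous; your argument for this (extend $s$ to an infinite $\gamma$, find $n$ with $\overline\gamma n\in\Sigma\cap B$, observe $n>length(s)$ since $T$ is an initial-segment-closed tree, then use closure of $\Sigma$) is sound. The paper's $\alpha$-parametrisation avoids that check but at the cost of re-massaging the enumeration at each step, so the two are roughly of equal difficulty. Worth noting: the paper's \emph{second} proof of this theorem replaces the Bar Theorem + stump-induction detour by a direct appeal to Bar Induction (Theorem \ref{T:barinduction}) with a predicate $C$ that is essentially your ``$A_{<c_s}\in\mathcal{WO}$ at the last enumerated rational of $s$''; your construction is closest in spirit to that second proof, and you could streamline your argument by using Bar Induction on $B$ and your $T$ directly, dispensing with $\Sigma$, the relation $T\subseteq\Sigma$, and the somewhat awkward induction predicate ``for every $s\in T$ with $\Sigma\upharpoonright s=U$\dots''.
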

 
 \begin{proof} For every $\alpha$,  define $B_\alpha:=\bigcup_n \{s\in \omega^{n+1} \mid \exists i \le n [\alpha\circ s (i)=0]\} \cup \\\bigcup_n\{s\in \omega^{n+2}\mid \alpha\circ s(n)>0\;\wedge\;\alpha\circ s(n+1) >0   \;\wedge\;  q_{\alpha\circ s(n)-1}\le q_{\alpha\circ s(n+1)-1}\}$. 
  
  \smallskip Note: if $\forall \gamma\exists n[\forall i\le n+1[\alpha\circ\gamma(i)>0]\rightarrow q_{\alpha\circ\gamma(n)}\le q_{\alpha\circ\gamma(n+1)}]$, then $B_\alpha$ is a bar in $\omega^\omega$, and, by Theorem \ref{T:bartheorem}, there exists a stump $S$ such that $S\cap B_\alpha$ is a bar in $\omega^\omega$.
  
 \smallskip We will say say that a stump $S$ has the property $(\ast)$ if and only if \begin{quote} 
 
 $(\ast)$ for every $\alpha$, if $S\cap B_\alpha$ is a bar in $\omega^\omega$,  then $\{q_{\alpha(n)}\mid n \in \omega\}\in \mathcal{WO}$. \end{quote}
 
 We now prove that every stump has the property $(\ast)$,  by induction on the set $\mathbf{Stp}$ of stumps.

 \smallskip Let a  stump $S$ be given such that every immediate substump of $S$ has the property $(\ast)$. 
 
 Let $\alpha$ be given such that $S\cap B_\alpha$ is a bar in $\omega^\omega$. 
 
 We want to prove: $A\in \mathcal{WO}$. 
 
 According to Lemma \ref{L:wo}, it suffices to prove: \\for all  $q$ in $\mathbb{Q}$, if $q \in A$, then $A_{\le q}=\{r\in A\mid r\le q\}\in \mathcal{WO}$. 

 \smallskip Let $q$ in $A$ be given. Find $n$ such that $\alpha(n)>0$ and $q_{\alpha(n)-1}=q$. 
 
   Define $\beta$ such that, for all $m$, if $\alpha(m)>0$ and $q_{\alpha(m)-1}<q_n$, then $\beta(m)=\alpha(m)$, and, if not, then $\beta(m)= 0$. Note that $\beta$ enumerates $\{q\in A\mid q<q_n\}$. 
   
   Let $\delta$ be given. 
   Find $p$ such that  $\langle n \rangle \ast \overline \delta p \in S\cap B_\alpha$. Note: $\alpha(n)>0$ and:  $p>0$. Now distinguish two cases.  
   \\{\it Case (a)}. $\alpha\circ \delta (p-1) =0$. Then also $\beta\circ\delta(p-1)=0$ and $\overline \delta p \in B_\beta$.
   \\{\it Case (b)}. $\alpha\circ\delta(p-1)>0$. We distinguish two subcases. 
   \\{\it Case (bi)}. $p=1$ and  $q_{\alpha(n)-1}\le q_{\alpha\circ \delta(0)-1}$. Then $\beta\circ\delta(0) =0$ and  $\overline \delta 1= \overline \delta p \in B_\beta$.
   \\{\it Case (bii)}. $p>1$ and  $\alpha\circ \delta(p-2)>0$ and  $\alpha \circ \delta (p-1)>0$ and \\$q_{\alpha\circ\delta(p-2)-1}\le q_{\alpha\circ\delta(p-1)-1}$. Now {\it either} $\beta \circ \delta(p-2)=\alpha\circ\delta(p-2)$ and $\beta \circ \delta(p-1)=\alpha\circ\delta(p-1)$ and $\overline \delta p \in B_\beta$, {\it or} $\exists i<p[\beta \circ \delta(i) = 0]$ and again: $\overline \delta p \in B_\beta$.

   Conclude: $\forall \delta \exists p[\langle n \rangle \ast \overline \delta p \in S \;\wedge\; \overline \delta p \in B_\beta]$.

 We thus see: $(S\upharpoonright\langle n \rangle)\cap B_\beta$ is a bar in $\omega^\omega$.

 \smallskip As $S\upharpoonright\langle n \rangle$ has the property $(\ast)$, conclude: $A_{<q_n} \in \mathcal{WO}$.
 
 But then also $A_{\le q_n}= A_{< q_n}\cup\{q_n\} \in \mathcal{WO}$.

 \smallskip We thus see: for all $n$, if $q_n\in A$, then $A_{\le q_n} \in \mathcal{WO}$, and conclude: \\$A=\bigcup_n A_n \in \mathcal{WO}$.
 
 \smallskip   Using Theorem \ref{T:bartheorem}, we conclude: for every decidable subset $A$ of $\mathbb{Q}$,\\ if  $\forall \gamma\exists n[q_{\gamma(n)}\le q_{\gamma(n+1)}\;\vee\;\gamma(n)\notin A]$, then $A\in \mathcal{WO}$.
 \end{proof}
 
 \subsection{Bar Induction} \begin{theorem}[Principle of Bar Induction]\label{T:barinduction} Let $B,C\subseteq \omega$ be given such that $Bar_{\omega^\omega}(B)$ and $B\subseteq C$ and $\forall s[s\in C\leftrightarrow \forall n[s\ast\langle n \rangle \in C]$. Then $\langle\;\rangle \in C$. \end{theorem}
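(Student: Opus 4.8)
The plan is to read off the conclusion from a \emph{canonical proof} that $B$ is a bar in $\omega^\omega$, in exactly the style of the proofs of the Fan Theorem (Theorem \ref{T:fantheorem}) and the Bar Theorem (Theorem \ref{T:bartheorem}). Recall that, once Brouwer's Thesis on bars in $\mathcal{N}$ has been adopted, the hypothesis $Bar_{\omega^\omega}(B)$ entitles us to such a canonical proof of `$B$ bars $\langle\;\rangle$': a well-founded object, stump-like in structure, built from leaves of the form `$s\in B$, therefore $B$ bars $s$' by repeated use of the two reasoning steps `if, for all $n$, $B$ bars $s\ast\langle n\rangle$, then $B$ bars $s$' and `if $B$ bars $s$, then $B$ bars $s\ast\langle n\rangle$'. (If one prefers to use the Thesis only through Theorem \ref{T:bartheorem}, one may instead fix a stump $S$ with $S\cap B$ a bar in $\omega^\omega$, pass to the monotone bar $\{s\mid S\cap B\text{ bars }s\}$, and run the induction below on $S$ itself; the only extra bookkeeping there is the verification that this set is closed under passing from $s$ to $s\ast\langle n\rangle$ and is still indexed by a stump.)

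The idea is then to replace, throughout the canonical proof, every occurrence of a statement `$B$ bars $s$' by the statement `$s\in C$', and to observe that each of the three schemes survives the replacement: the leaf scheme becomes `$s\in B$, therefore $s\in C$', sound because $B\subseteq C$; the first reasoning step becomes `if $s\ast\langle n\rangle\in C$ for all $n$, then $s\in C$', which is the $(\Leftarrow)$-half of the hypothesis $\forall s[s\in C\leftrightarrow\forall n[s\ast\langle n\rangle\in C]]$; and the second becomes `if $s\in C$, then $s\ast\langle n\rangle\in C$', which is its $(\Rightarrow)$-half. Carrying the induction along the well-founded structure of the canonical proof — an appeal to Induction on $\mathbf{Stp}$, Axiom \ref{ax:stumpinduction} — one finds that every statement $s\in C$ appearing in the transformed proof holds, and in particular that its conclusion, `$\langle\;\rangle\in C$', holds.

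I do not expect a genuine obstacle; the only delicate point is the passage from `replacing statements in a canonical proof' to a bona fide induction over a stump-structured object, which is precisely the manoeuvre already made, and in effect licensed, in the proof of the Bar Theorem.
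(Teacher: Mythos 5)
Your proposal is correct and is essentially the paper's own argument: take a canonical proof of `$B$ bars $\langle\;\rangle$', replace each statement `$B$ bars $s$' by `$s\in C$', and check that the leaf scheme and the two reasoning steps remain valid by $B\subseteq C$ and the two halves of the biconditional, respectively. Your explicit verification of the three schemes only spells out what the paper leaves implicit.
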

 \begin{proof} Assume $Bar_{\omega^\omega}(B)$, i.e. $B$ bars $\langle\;\rangle$.
 
 Find a canonical proof of: `$B$ bars $\langle\;\rangle$'.
 
 Assume also: $B\subseteq C$ and $\forall s[s\in C\leftrightarrow \forall n[s\ast\langle n \rangle \in C]$. 
 
 In the canonical proof, replace every statement: `$B$ bars $s$' by the statement `$s \in C$'. 
 
 Note that the result is another valid proof, with conclusion: $`\langle\;\rangle \in C$'. \end{proof}
 
 We now may give a second proof of Theorem \ref{T:wo}. 
 
 \begin{proof} Let $\alpha$ be given such that  \\$\forall \gamma\exists n[\forall i\le n+1[\alpha\circ\gamma(i)>0]\rightarrow q_{\alpha\circ\gamma(n)}\le q_{\alpha\circ\gamma(n+1)}]$. 
 
 \smallskip
Define $B=B_\alpha:=\bigcup_n \{s\in \omega^{n+1} \mid \exists i \le n [\alpha\circ s (i)=0]\} \cup \\\bigcup_n\{s\in \omega^{n+2}\mid \alpha\circ s(n)>0\;\wedge\;\alpha\circ s(n+1) >0   \;\wedge\;  q_{\alpha\circ s(n)-1}\le q_{\alpha\circ s(n+1)-1}\}$. 

\smallskip
Let $C$ be the set of all $s$ such that {\it either} $\exists t\sqsubseteq s[t \in B]$ {\it or}  $s=\langle \;\rangle$ and $A\in \mathcal{WO}$, {\it or} $n:=length(s)>0$ and   $\alpha\circ s(n-1)>0$ and  $A_{<q_{\alpha\circ s(n-1)-1}}\in \mathcal{WO}$.  

\smallskip Note: $Bar_{\omega^\omega}(B)$ and $B\subseteq C$.

Also note: for all $s$, if $s\in C$, then $\forall n[s\ast\langle n \rangle \in C]$.

Finally, let $s$ be given such that $\forall n[s\ast\langle n \rangle \in C]$. We distinguish two cases.

{\it Case (1)}. $s=\langle \;\rangle$. For all $n$, we may consider $\langle n \rangle$, and conclude: \\ if $q_n \in A$, then $A_{<q_n} \in \mathcal{WO}$.

By Lemma \ref{L:wo}, $A\in \mathcal{WO}$ and $\langle\;\rangle \in C$.

{\it Case (2)}. $p:=length(s) >0$. \\For all $n$, such that $q_n<q_{s(p-1)}$, we may consider $s\ast\langle n \rangle$ and conclude:  $A_{<q_n} \in \mathcal{WO}$.

By Lemma \ref{L:wo}, $A_{<q_{s(p-1)}}\in \mathcal{WO}$ and $s \in C$.

\smallskip Using Theorem \ref{T:barinduction}, we conclude:  $\langle\;\rangle \in C$ and $A\in \mathcal{WO}$.

\end{proof} 

\section{The Almost-Fan Theorem}\label{S:aft}

This Section has seven Subsections. In Subsection \ref{SS:almostfinite} we introduce the notion of an almost-finite subset of $\omega$. In Subsection \ref{SS:afspreads} we introduce  almost-finitary spreads and we show that, like the Fan Theorem, the Almost-Fan Theorem follows from the Bar Theorem. We show that the Almost-Fan Theorem implies the Fan theorem. In Subsection \ref{SS:openinduction} we formulate the Principle of Open Induction on $[0,1]$ and show that it follows from the Almost-Fan Theorem. In Subsection \ref{SS:dedekind} we see that the Principle of Open Induction on $[0,1]$ implies a version of Dedekind's Theorem. In Subsection \ref{SS:ramsey} we prove that the Almost-Fan Theorem also implies an intuitionistic version of the Infinite Ramsey Theorem.  In Subsection \ref{SS:bw} we use this Ramsey Theorem together with Dedekind's Theorem in order to prove an intuitionistic version of the Bolzano-Weierstrass Theorem. In Subsection \ref{SS:phr} we show that the Infinite Ramsy Theorem implies the Paris-Harrington-Ramsey Theorem. 

\smallskip The  results of this Section may be seen as intuitionistic comments on results in \cite[Chapter III]{Simpson}.
\subsection{Almost-finite subsets of $\omega$}\label{SS:almostfinite} One may formulate many notions of finiteness, even for decidable subsets of $\omega$, see \cite{veldman1995}, \cite{veldman1999} and \cite[Section 3]{veldman2005}.  We need three of them. \begin{definition}\label{D:finite} $B\subseteq \omega$ is a  \emph{decidable subset of $\omega$} if and only if \\$\exists \alpha\forall n[n\in B\leftrightarrow \alpha(n)\neq 0]$.

A decidable subset $B$ of $\omega$ is \begin{enumerate} \item\emph{finite} if and only if $\exists n\forall  m>n[m\notin B]$,  \item \emph{bounded-in-number} if and only if $\exists k\forall s \in [\omega]^{k+1}\exists i\le k[s(i)\notin B]$, and \item \emph{almost-finite} if and only if $\forall \zeta \in [\omega]^\omega\exists i[\zeta(i)\notin B]$. \end{enumerate} \end{definition}

\begin{theorem} \begin{enumerate}[\upshape (i)] \item For every decidable $B\subseteq\omega$, if $B$ is finite, then $B$ is bounded-in-number, but not conversely.
 \item For every decidable $B\subseteq \omega$, if $B$ is bounded-in-number, then $B$ is almost-finite, but not conversely. \end{enumerate} \end{theorem}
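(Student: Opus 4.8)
The plan is to establish four claims: (i) finite $\Rightarrow$ bounded-in-number; (i$'$) the converse fails; (ii) bounded-in-number $\Rightarrow$ almost-finite; (ii$'$) the converse fails. The first three are direct, constructive, finite combinatorics. The real interest lies in the two counterexamples, where one must exhibit, intuitionistically, a decidable set that is bounded-in-number but for which we cannot prove finiteness, and one that is almost-finite but not bounded-in-number. These separations will use an undecided statement (a "weak counterexample" in Brouwer's style), so that deciding finiteness or a numerical bound would decide something we have no method to decide.

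For (i): suppose $B$ is finite, say $\forall m > n[m \notin B]$. Then take $k = n+1$; given any $s \in [\omega]^{k+1}$, i.e. $n+2$ strictly increasing values, at least one value $s(i)$ exceeds $n$ (since $n+2$ strictly increasing naturals cannot all be $\le n$), hence $s(i) \notin B$. So $B$ is bounded-in-number with witness $k = n+1$. For (ii): suppose $B$ is bounded-in-number with witness $k$, so every $(k+1)$-element subset of $\omega$ misses $B$ somewhere. Let $\zeta \in [\omega]^\omega$ be any infinite strictly increasing sequence. Apply the bounded-in-number property to $s = \overline\zeta(k+1) = \langle \zeta(0), \ldots, \zeta(k)\rangle \in [\omega]^{k+1}$, getting $i \le k$ with $\zeta(i) = s(i) \notin B$. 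Hence $\exists i[\zeta(i) \notin B]$, and $B$ is almost-finite. Both of these are immediate once one unpacks the definitions — no obstacle.

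The main work is the two non-converses, and this is where care is needed. For (i$'$): let $\alpha$ be an infinite sequence of choices (or a sequence built from an undecided mathematical statement, e.g. $\alpha(n) \ne 0$ iff some specified property of $n$ that we have not decided holds, chosen so that at most one $n$ is ever admitted) and define $B = \{n \mid \alpha(n) \ne 0\}$ arranged so that $B$ has at most one element — then $B$ is bounded-in-number with $k = 1$, since any two distinct naturals cannot both lie in a singleton-or-empty set. But if we could prove $B$ finite, we could in particular decide whether $B = \emptyset$, i.e. decide the undecided statement; since no such method is available, we cannot prove $B$ is finite. For (ii$'$): build a decidable $B$ that is almost-finite but for which no numerical bound $k$ can be proven — e.g., let $B$ contain, for each $n$, an initial block of length depending on an undecided sequence, so that $\forall \zeta \in [\omega]^\omega \exists i[\zeta(i) \notin B]$ holds (any infinite increasing sequence eventually escapes every block) while a proof that $B$ is bounded-in-number would yield a bound on all the block-lengths, again deciding what we cannot decide. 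The hard part will be choosing the undecided parameter and the set $B$ precisely enough that almost-finiteness is \emph{provable} (not merely not-refutable) while finiteness, respectively boundedness-in-number, is demonstrably unprovable; this requires a genuine weak counterexample construction rather than a plain classical counterexample, and making the "cannot prove" direction rigorous — typically by showing that the assertion in question implies an instance of $\forall n(P(n) \vee \neg P(n)) \to (\forall n P(n) \vee \neg\forall n P(n))$ or Markov's Principle — is the step that demands the most attention.
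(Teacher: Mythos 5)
Your directions (i) and (ii), and your singleton-style weak counterexample for the non-converse of (i), are sound and coincide with the paper's: the paper takes $B = \{k_{99}\}$, where $k_{99}$ is the least $k$ (if any) at which a run of $99$ nines begins in the decimal expansion of $\pi$. This $B$ is decidable, has at most one element, and is therefore bounded-in-number with witness $k=1$, but we have no method for affirming $\exists n\forall m>n[m\notin B]$.

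The gap is in your counterexample to the converse of (ii). A set containing ``for each $n$, an initial block of length depending on an undecided sequence'' is underspecified and, as described, risks failing on two counts. If it is a union of blocks, almost-finiteness is not obvious: a strictly increasing $\zeta$ need not escape a sufficiently dense union of intervals, so your parenthetical ``any infinite increasing sequence eventually escapes every block'' is exactly what has to be argued, and for a union of blocks it need not hold. And if the block lengths themselves depend on undecided data, deciding membership of a fixed $m$ could require knowing whether some later block reaches $m$, so $B$ may fail to be decidable. The paper avoids both difficulties by taking a \emph{single} interval both of whose endpoints depend on the undecided parameter:
\[
B := \{n \mid k_{99} \le n \le 2\cdot k_{99}\}.
\]
Membership is decidable: for a given $n$, search $k\le n$ for the start of a $99$-nines block and check $n\le 2k$. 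Almost-finiteness is an explicit dichotomy: given $\zeta\in[\omega]^\omega$, decide whether a $99$-nines block starts at or before $\zeta(0)$. If not, $\zeta(0)<k_{99}$ and $\zeta(0)\notin B$. If so, $k_{99}$ is a known number $\le\zeta(0)$, and $\zeta(2k_{99}+1) > 2k_{99}$, so $\zeta(2k_{99}+1)\notin B$. But no bound on the \emph{number} of elements of $B$ can be affirmed, since $B$ has $k_{99}+1$ elements if $k_{99}$ exists, and bounding that would bound $k_{99}$.

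One further comment: you anticipate having to make the ``cannot prove'' direction rigorous by showing the converses imply LPO or Markov's Principle. The paper does neither, and the theorem as stated does not require it: ``but not conversely'' is cashed out as a Brouwerian weak counterexample (``we are unable to find $n$ such that\,\ldots''), not as a proof that the converse implies a non-constructive principle. Your instinct to sharpen this is reasonable, but it is extra work beyond the paper's claim.
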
 \begin{proof} (i) Let $B\subseteq \omega$ be decidable and finite. Find  $n$ be such that $\forall m>n[m \notin B]$. Clearly, for all $s$ in $[\omega]^{n+2}$, $s(n+1)>n$ and $s(n+1)\notin B$. We thus see that $B$ is bounded-in-number. 
 
 As to the converse, we give a counterexample in Brouwer's style.\\ Let $d:\omega\rightarrow \{0,1,\ldots,9\}$ be the decimal expansion of $\pi$. \\Define $B:=\{k_{99}\}:=\{n \mid n=\mu k \forall i<99[d(k+i)=9]\}$. \\$B$ is a decidable subset of $\omega$ and $B$ has at most one member, but we are unable to find  $n$ such that $\forall m>n[m\notin B]$.
 
 \smallskip (ii) Let $B\subseteq \omega$ be decidable and bounded-in-number. \\Find $k$ such that $\forall s \in [\omega]^{k+1}\exists i\le k[s(i) \notin B]$. \\Conclude: $\forall \zeta \in [\omega]^\omega \exists i\le k[\zeta(i)\notin B]$ and: $B$ is almost-finite.
 
 As to the converse, we give  a counterexample in Brouwer's style. \\Define $B:=\{n\mid k_{99} \le n \le 2\cdot k_{99}\}:=\\ \{ n\mid \mu k \forall i<99[d(k+i)=9\}]\le n \le 2\cdot \mu k \forall i<99[d(k+i)=9]\}$. 
 
 We now prove that $B$ is almost-finite. \\ Let $\zeta$ in $[\omega]^\omega$ be given. \\\textit{Either} $\zeta(0)< k_{99}$ and $\zeta(0)\notin B$, \textit{or} $\zeta(0)\ge k_{99}$ and $\zeta(2\cdot k_{99} +1)\notin B$. 
 
 We are unable, however,  to find $n$ such that $B$ has at most $n$ members.   \end{proof}

 We now prove that the union of two almost-finite subsets of $\omega$ is almost-finite, and that a decidable subset of $\omega$ that is the union of almost-finitely many almost-finite subsets of $\omega$,   is itself almost-finite.

\begin{lemma}\label{L:almost-finite} \begin{enumerate}[\upshape (i)] \item For all decidable subsets $B,C$ of $\omega$, \\if $B,C$ are almost-finite, then $B\cup C$ is almost-finite. \item For every decidable subset $B$ of $\omega$, if there  exists an infinite sequence \\$B_0, B_1, B_2, \ldots$ of decidable and almost-finite subsets of $\omega$ such that\\$B=\bigcup_n B_n$  and $\forall \zeta\in [\omega]^\omega\exists i[B_{\zeta(i)}=\emptyset]$, then $B$ itself is almost-finite. \end{enumerate}\end{lemma}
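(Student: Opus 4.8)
The plan is to prove (i) directly and to deduce (ii) by an induction on stumps that is legitimized by the Bar Theorem; the naive argument for (ii) would need Markov's Principle, and avoiding this is the main point. For (i), I would first record an auxiliary fact: if $D\subseteq\omega$ is decidable and almost-finite, then for every $\zeta\in[\omega]^\omega$ and every $N$ there is $i>N$ with $\zeta(i)\notin D$ --- apply almost-finiteness of $D$ to the tail $k\mapsto\zeta(N+1+k)$, which again lies in $[\omega]^\omega$. Now let decidable almost-finite $B,C$ and $\zeta\in[\omega]^\omega$ be given. Using the auxiliary fact, define by recursion $\rho\in[\omega]^\omega$ with $\rho(0):=\mu i[\zeta(i)\notin B]$ and $\rho(k+1):=\mu i[i>\rho(k)\:\wedge\:\zeta(i)\notin B]$; each minimisation terminates, since $\zeta(i)\notin B$ is decidable and almost-finiteness of $B$ (applied to $\zeta$, respectively to a tail of $\zeta$) supplies a witness. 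Then $\forall k[\zeta(\rho(k))\notin B]$ and $\zeta\circ\rho\in[\omega]^\omega$, so almost-finiteness of $C$ applied to $\zeta\circ\rho$ gives $k$ with $\zeta(\rho(k))\notin C$; the point $\zeta(\rho(k))$ avoids $B\cup C$. As $\zeta$ was arbitrary, $B\cup C$ is almost-finite, and iterating, every union of finitely many almost-finite sets is almost-finite. This uses neither the Fan nor the Bar Theorem.

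For (ii) the obvious attempt --- run through $\zeta(0),\zeta(1),\dots$, stop at the first $\zeta(i)\notin B$ (a decidable test), and, failing that, read off $n_i:=\mu n[\zeta(i)\in B_n]$, thin so the $n_i$ increase, and contradict the hypothesis, all the $B_{n_i}$ being inhabited --- yields only $\neg\neg\exists i[\zeta(i)\notin B]$, whose strengthening to $\exists i[\zeta(i)\notin B]$ is Markov's Principle. To get a genuine bound I would pass to bars in $\omega^\omega$. Fix the continuous retraction $\mathrm{incr}:\omega^\omega\to[\omega]^\omega$ given by $\mathrm{incr}(\alpha)(0)=\alpha(0)$ and $\mathrm{incr}(\alpha)(k+1)=\mathrm{incr}(\alpha)(k)+\alpha(k+1)+1$, which restricts to a bijection between finite sequences and strictly increasing finite sequences. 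The hypothesis $\forall\zeta\in[\omega]^\omega\exists i[B_{\zeta(i)}=\emptyset]$ then says precisely that $\mathrm{Bar}:=\{s\mid\exists i<\mathrm{lh}(s)[B_{\mathrm{incr}(s)(i)}=\emptyset]\}$ is a bar in $\omega^\omega$, so by Theorem~\ref{T:bartheorem} there is a stump $S$ with $S\cap\mathrm{Bar}$ a bar in $\omega^\omega$; this $S$ is the rank to induct on.

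Then, by induction on $\mathbf{Stp}$ (Axiom~\ref{ax:stumpinduction}), I would prove: for every decidable family $(B_n)$ of almost-finite sets with decidable union $B$, every stump $S$ with $S\cap\mathrm{Bar}$ a bar, every strictly increasing tuple $\vec n=\langle n_0,\dots,n_{t-1}\rangle$ with each $B_{n_j}$ inhabited and with $S\upharpoonright\mathrm{incr}^{-1}(\vec n)$ equal to the stump under consideration, and every $\zeta\in[\omega]^\omega$ all of whose values avoid $\bigcup_{k\le\max\vec n}B_k$, there is $i$ with $\zeta(i)\notin B$; the instance $\vec n=\langle\,\rangle$, $S'=S$ gives that $B$ is almost-finite. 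If the stump is $\emptyset$, then $\mathrm{incr}^{-1}(\vec n)$ is a leaf of $S$, hence (since $S\cap\mathrm{Bar}$ bars $\omega^\omega$ and $\mathrm{Bar}$ is closed under extension) $\mathrm{incr}^{-1}(\vec n)\in\mathrm{Bar}$, i.e.\ some $B_{n_j}$ is empty --- contradicting the hypothesis on $\vec n$, so this case is vacuous. Otherwise, decide whether $\zeta(0)\in B$; if not, take $i=0$; if so, let $n:=\mu n'[\zeta(0)\in B_{n'}]$, which exceeds $\max\vec n$ (because $\zeta$ avoids $\bigcup_{k\le\max\vec n}B_k$) and has $B_n$ inhabited. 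By part (i), $\bigcup_{k\le n}B_k$ is decidable and almost-finite, so --- exactly as in the proof of (i) --- $\zeta$ has a subsequence $\zeta'\in[\omega]^\omega$ avoiding $\bigcup_{k\le n}B_k$; applying the induction hypothesis for the immediate substump $S\upharpoonright\mathrm{incr}^{-1}(\vec n\ast\langle n\rangle)$ to $\vec n\ast\langle n\rangle$ and $\zeta'$ gives $j$ with $\zeta'(j)\notin B$, which is some $\zeta(i)\notin B$.

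I expect the design of the induction in (ii) --- replacing the would-be non-terminating search by a recursion whose well-foundedness is supplied by the Bar Theorem --- to be the real difficulty; the only other, purely clerical, points are the identification of the immediate substumps of $S$ with the one-step extensions of $\vec n$ under $\mathrm{incr}$, and the observation that the invariant ``$\zeta$ avoids $\bigcup_{k\le\max\vec n}B_k$'' is exactly what forces each newly discovered index to be both new and larger than all earlier ones.
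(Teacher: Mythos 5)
Your proof of (i) is the paper's: thin $\zeta$ to a subsequence $\zeta\circ\rho$ avoiding $B$, each $\mu$-minimisation terminating because membership in $B$ is decidable and $B$ is almost-finite applied to a tail of $\zeta$, and then let almost-finiteness of $C$ on $\zeta\circ\rho$ produce the common escape.

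Your proof of (ii) works, but it invokes the Bar Theorem where none is needed, and your reason for reaching for it is mistaken. You argue that the direct argument, searching for the first $\zeta(i)\notin B$, gives only $\neg\neg\exists i[\zeta(i)\notin B]$ and so requires Markov's Principle, and you therefore code $[\omega]^\omega$ into $\omega^\omega$ via your $\mathrm{incr}$, extract a stump by Theorem~\ref{T:bartheorem}, and induct on $\mathbf{Stp}$ carrying the invariant that the tuple $\vec n$ is strictly increasing, each $B_{n_j}$ inhabited, and $\zeta$ avoids $\bigcup_{k\le\max\vec n}B_k$. That induction is sound: the base case is vacuous because, with $\mathrm{incr}^{-1}(\vec n)\notin S$ and $S\cap\mathrm{Bar}$ a bar, some prefix of $\mathrm{incr}^{-1}(\vec n)$ lies in $\mathrm{Bar}$, forcing some $B_{n_i}=\emptyset$ against inhabitation; the step passes to the immediate substump along the one-step extension of $\mathrm{incr}^{-1}(\vec n)$. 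But (ii) is provable from (i) and decidability of $B$ alone. The device is not to wait for an exit but to build, unconditionally and totally, a strictly increasing $\xi\in[\omega]^\omega$ and apply the hypothesis $\forall\zeta'\in[\omega]^\omega\exists i[B_{\zeta'(i)}=\emptyset]$ to it. Set $\alpha(p):=\mu n[\zeta(p)\in B_n]$ wherever $\zeta(p)\in B$. By (i), $\bigcup_{i\le p}B_i$ is almost-finite, so for any $p$ one constructively finds $q>p$ with $\zeta(q)\notin\bigcup_{i\le p}B_i$; decidably, either $\zeta(q)\notin B$ (an exit, to be recorded) or $\alpha(q)>p$. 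Iterating gives $\eta\in[\omega]^\omega$ together with a decidable flag marking the step at which an exit was first recorded; let $\xi$ agree with $\alpha\circ\eta$ while the flag is clear and grow by $1$ once it is set, so that $\xi\in[\omega]^\omega$ outright. The hypothesis then produces a concrete $i$ with $B_{\xi(i)}=\emptyset$. If the flag were still clear at step $i$, one would have $\zeta(\eta(i))\in B_{\alpha(\eta(i))}=B_{\xi(i)}=\emptyset$, absurd; so the flag is set, and the recorded exit is a genuine witness. No Markov's Principle and no Bar Theorem: (ii) has the same axiomatic strength as (i). Your Bar Theorem route is not wrong, but it overshoots and hides this.
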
 

\begin{proof} (i) Let $\zeta$ in $[\omega]^\omega$ be given. Find $\eta$ in $[\omega]^\omega$ such that, for each $n$, \\$\eta(n)=\mu p[\forall i<n[p>\eta(i)]\;\wedge\;\zeta(p) \notin B]$. Note: $\forall p[\zeta\circ\eta(p)\notin B]$. \\Find $p$ such that $\zeta\circ\eta(p)\notin C$. Define $m:=\eta(p)$ and note: $\zeta(m) \notin B\cup C$. \\We thus see: $\forall \zeta \in[\omega]^\omega\exists m[\zeta(m)\notin B\cup C]$.

\smallskip (ii) Let $\zeta$ in $[\omega]^\omega$ be given. We want to prove: $QED:=\exists p[\zeta(p)\notin \bigcup_n B_n]$.\footnote{QED: `quod \emph{est} demonstrandum', `what we (still) have to prove' rather than `quod \emph{erat} demonstrandum', what we \emph{did} have to prove'.}

Find $\alpha$ such that, for all $p$, if $\zeta(p)\in B=\bigcup_n B_n$, then $\alpha(p)=\mu n[\zeta(p)\in B_n]$. 

 We claim: $\forall p \exists q>p[\alpha(q)>p \;\vee\; QED]$.
 We prove this claim as follows. 
 
 Let $p$ be given. 
 
 Using (i), observe that $\bigcup_{i\le p}B_i$ is almost-finite. 
 
 Find $q>p$ such that $\zeta(q) \notin \bigcup_{i \le p}B_i$.
 
 If $\zeta(q) \in \bigcup_n B_n$, then $\alpha(q)>p$.
 
 If $\zeta(q) \notin \bigcup_n B_n$, then $QED$.  
 
 \smallskip Now find $\eta$ in $[\omega]^\omega$ such that $\eta(0)=0$ and $\forall p[\alpha\circ\eta(p+1)> \alpha\circ\eta(p) \;\vee\; QED]$.
 
 Find $p$ such that $B_{\alpha\circ\eta(p)}=\emptyset$ and conclude: either $QED$, or $\zeta\circ \eta(p)\notin \bigcup_nB_n$ and again: $QED$.  \end{proof}
 
 \subsection{Almost-finitary spreads}\label{SS:afspreads}
\begin{definition}\label{D:almostfan}
 A spread-law $\beta$ will be called \emph{almost-finitary} if it satisfies the following condition: \begin{quote} $\forall s[\beta(s)=0\rightarrow \forall \zeta\in[\omega]^\omega\exists n[\beta(s\ast\langle \zeta(n) \rangle)\neq 0]]$. \end{quote}
 
If the spread-law $\beta$ is almost-finitary, the corresponding spread $\mathcal{F}_\beta$ will be called an \emph{ almost-finitary spread} or an \emph{ almost-fan}. \end{definition}

If $\beta$ is an almost-finitary spread-law and $s$ is admitted by $\beta$, there are only almost-finitely many immediate successors $s\ast\langle n \rangle$ of $s$ that are admitted by $\beta$. 
\begin{definition} $P\subseteq \omega$ is \emph{almost-full} if and only if $\forall \zeta\in[\omega]^\omega \exists n[\zeta(n)\in P]$. \end{definition}
We need the following Lemma.

\begin{lemma}\label{L:almostfull} $\;$ \begin{enumerate}[\upshape (i)] \item For all $P,Q\subseteq \omega$, if $P$, $Q$ are almost-full, then $P\cap Q$ is almost-full. \item For all $n>0$, for all $P_0, P_1, \ldots, P_{n-1}\subseteq \omega$, \\if, for all $i<n$, $P_i$ is almost-full,  then $\bigcap_{i<n}P_i$ is almost-full. \end{enumerate} \end{lemma}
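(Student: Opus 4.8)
The plan is to prove (i) directly, by a \emph{thinning} argument, and then obtain (ii) from (i) by an obvious induction on $n$.

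\textbf{Proof of (i).} Let $\zeta$ in $[\omega]^\omega$ be given; I must produce $n$ with $\zeta(n)\in P\cap Q$. For each $k$, the tail $\zeta_k$, defined by $\zeta_k(i):=\zeta(k+i)$, again belongs to $[\omega]^\omega$, so the almost-fullness of $P$ supplies, given any such $\zeta_k$, a number witnessing that some term of $\zeta_k$ lies in $P$. Using this I would build, by recursion, an $\eta$ in $[\omega]^\omega$ with $\zeta\circ\eta(i)\in P$ for every $i$: take $\eta(0)$ to be a witness for $P$ applied to $\zeta$, and, having defined $\eta(0)<\dots<\eta(k)$, apply the almost-fullness of $P$ to the tail $\zeta_{\eta(k)+1}$ to obtain $m$ with $\zeta(\eta(k)+1+m)\in P$ and set $\eta(k+1):=\eta(k)+1+m$. (No decidability of $P$ is available, so, unlike in the proof of Lemma~\ref{L:almost-finite}(i), I cannot use a least-number operator; instead I rely on the intuitionistic reading of ``$P$ is almost-full'' as a \emph{method} producing such witnesses, or, if one prefers, on the First Axiom of Countable Choice, Axiom~\ref{ax:firstchoice}, to assemble them into the single sequence $\eta$.) Then $\zeta\circ\eta$ is again an element of $[\omega]^\omega$, all of whose terms lie in $P$. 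Applying the almost-fullness of $Q$ to $\zeta\circ\eta$, I obtain $j$ with $\zeta\circ\eta(j)\in Q$; since $\zeta\circ\eta(j)\in P$ as well, the number $n:=\eta(j)$ satisfies $\zeta(n)\in P\cap Q$. As $\zeta$ was arbitrary, $P\cap Q$ is almost-full.

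\textbf{Proof of (ii).} Induction on $n\ge 1$. For $n=1$, $\bigcap_{i<1}P_i=P_0$ is almost-full by hypothesis. For the step, suppose $P_0,\dots,P_n$ are all almost-full; by the induction hypothesis $\bigcap_{i<n}P_i$ is almost-full, and then (i), applied to $\bigcap_{i<n}P_i$ and $P_n$, shows that $\bigcap_{i<n+1}P_i=\bigl(\bigcap_{i<n}P_i\bigr)\cap P_n$ is almost-full.

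\textbf{The main obstacle.} The only genuinely delicate point is the recursive construction of $\eta$ in part (i): one must check that every tail $\zeta_k$ is again in $[\omega]^\omega$ (immediate from $\zeta$ being strictly increasing) and that witnesses can be extracted and collected into one infinite sequence without assuming $P$ decidable. Once $\eta$ is in hand, the remainder of (i) is bookkeeping about strict monotonicity of composed sequences, and (ii) is a two-line induction.
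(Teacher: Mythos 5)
Your proof is correct and follows essentially the same route as the paper: produce a strictly increasing $\eta$ with $\zeta\circ\eta(n)\in P$ for every $n$ (via the First Axiom of Countable Choice or, equivalently, by recursion on the tails of $\zeta$), then apply the almost-fullness of $Q$ to $\zeta\circ\eta$ to land in $P\cap Q$, and obtain (ii) from (i) by induction. You spell out the recursive construction of $\eta$ somewhat more explicitly than the paper, which simply invokes Axiom~\ref{ax:firstchoice}, but the underlying idea is the same.
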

\begin{proof} (i) Assume $P,Q\subseteq \omega$ are almost-full. \\Let $\zeta$ in $[\omega]^\omega$ be given. Using the First Axiom of Countable Choice, find $\eta$ in $[\omega]^\omega$ such that, for each $n$, $\eta(n)\in P$. \\Find $p$ such that $\zeta\circ\eta(p)\in Q$. Define $m:=\eta(p)$ and note: $\zeta(m) \in P\cap Q$. \\We thus see: $\forall \zeta \in[\omega]^\omega\exists m[\zeta(m)\notin P\cap Q]$, i.e. $P\cap Q$ is almost-full. 

\smallskip (ii) Use (i) and induction. \end{proof}

The following theorem appears in \cite{veldman2001} and \cite[Section 7.2]{veldman2011c}.
\begin{theorem}[Almost-Fan Theorem]
\label{T:aft} Let $\beta$ be an almost-finitary spread-law and let $B\subseteq \omega$ be  a bar in $\mathcal{F}_\beta$. \begin{enumerate}[\upshape (i)] \item  There exists $B'\subseteq B$ such that $B'$ is a bar in $\mathcal{F}_\beta$ and $B'$ is a decidable and almost-finite subset of $\omega$. 
\item $\forall \zeta \in [\omega]^\omega\exists n[\beta(\zeta(n)) \neq 0 \;\vee \;\exists t\sqsubseteq \zeta(n)[t \in B]]$, and, therefore: \\$\forall \zeta \in [\omega]^\omega[\forall n[\beta\bigl(\zeta(n)\bigr)=0]\rightarrow \exists n \exists t[t\sqsubseteq \zeta(n)\;\wedge\; t \in B]]$.  \end{enumerate} \end{theorem}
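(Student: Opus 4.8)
The plan is to imitate, for the almost-fan $\mathcal{F}_\beta$, the argument by which the Fan Theorem (Theorem \ref{T:fantheorem}) was obtained in Section \ref{S:ft}, replacing the finiteness bookkeeping used there by the closure properties of almost-finite sets recorded in Lemma \ref{L:almost-finite}. As in that proof, once we are entitled to affirm that $B$ is a bar in $\mathcal{F}_\beta$ we have a \emph{canonical proof} of ``$B$ bars $\langle\;\rangle$ in $\mathcal{F}_\beta$'' built from the basic observations ``if $\beta(s)=0$ and $s\in B$, then $B$ bars $s$ in $\mathcal{F}_\beta$'' by means of the two inference steps ``if $\beta(s)=0$ and, for each $n$ with $\beta(s\ast\langle n\rangle)=0$, $B$ bars $s\ast\langle n\rangle$ in $\mathcal{F}_\beta$, then $B$ bars $s$ in $\mathcal{F}_\beta$'' and ``if $\beta(s)=\beta(s\ast\langle n\rangle)=0$ and $B$ bars $s$ in $\mathcal{F}_\beta$, then $B$ bars $s\ast\langle n\rangle$ in $\mathcal{F}_\beta$''; such a proof has the structure of a stump. (One may justify this just as in Section \ref{S:ft}, or obtain it from the Bar Theorem, Theorem \ref{T:bartheorem}, by means of a retraction $\rho\colon\omega^\omega\to\mathcal{F}_\beta$.)

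For part (i) I would take the canonical proof and replace every statement ``$B$ bars $s$ in $\mathcal{F}_\beta$'' by ``there exists a decidable and almost-finite $B'\subseteq B$ barring $s$ in $\mathcal{F}_\beta$'', where moreover the elements of $B'$ may be taken to extend $s$. The basic observations survive with $B'=\{s\}$, and the monotonicity step survives with the same $B'$. The decisive step is the first inference step: given, for each $n$ with $\beta(s\ast\langle n\rangle)=0$, a decidable almost-finite witness $B'_n\subseteq B$ barring $s\ast\langle n\rangle$, and setting $B'_n=\emptyset$ whenever $\beta(s\ast\langle n\rangle)\neq 0$, the set $B':=\bigcup_n B'_n$ bars $s$; it is decidable because the $B'_n$ can be arranged to live in the pairwise disjoint cones above the $s\ast\langle n\rangle$; and it is almost-finite by Lemma \ref{L:almost-finite}(ii), since almost-finitary-ness of $\beta$ at $s$ (Definition \ref{D:almostfan}) is precisely $\forall\zeta\in[\omega]^\omega\exists i[\beta(s\ast\langle\zeta(i)\rangle)\neq 0]$, i.e. $\forall\zeta\in[\omega]^\omega\exists i[B'_{\zeta(i)}=\emptyset]$. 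The conclusion of the transformed proof is exactly (i).

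For part (ii) I would run the same machine with the replacement of ``$B$ bars $s$ in $\mathcal{F}_\beta$'' by \[Q(s)\colon\quad\forall\zeta\in[\omega]^\omega\bigl[\forall n[s\sqsubseteq\zeta(n)\;\wedge\;\beta(\zeta(n))=0]\rightarrow\exists n\,\exists t[t\sqsubseteq\zeta(n)\;\wedge\;t\in B]\bigr].\] The basic observations and the monotonicity step are immediate. In the first inference step one first disposes of the cases that $s\in B$ or some proper prefix of $s$ lies in $B$ (then $Q(s)$ is trivial), and, for a $\zeta$ as in $Q(s)$, one discards the at most one index with $\zeta(n)$ equal to (the code of) $s$, so that each $\zeta(n)$ properly extends some $s\ast\langle m\rangle$ with $\beta(s\ast\langle m\rangle)=0$; after this reduction, $Q(s\ast\langle m\rangle)$ is equivalent to the almost-finiteness of the decidable set $G_m:=\{u\mid s\ast\langle m\rangle\sqsubseteq u\;\wedge\;\beta(u)=0\;\wedge\;\forall t\sqsubseteq u[t\notin B]\}$, while $Q(s)$ is equivalent to the almost-finiteness of $\bigcup_m G_m$, and the latter follows from the former by Lemma \ref{L:almost-finite}(ii) just as in part (i), the hypothesis $\forall\zeta\in[\omega]^\omega\exists i[G_{\zeta(i)}=\emptyset]$ once more coming from the almost-finitary-ness of $\beta$ at $s$. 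Thus $Q(\langle\;\rangle)$ holds, which is the first assertion of (ii); the second assertion is an immediate weakening.

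The main obstacle I anticipate is exactly the first inference step in each transformation, i.e. seeing that the replacement clause is closed under passage from the immediate successors of a node to the node itself. This is where the content of the Almost-Fan Theorem lies: one cannot in general turn an almost-fan's branching into a finite branching, so one must instead recognise that ``an almost-finite union of decidable almost-finite sets, almost-finitely many of which are empty, is again almost-finite'' (Lemma \ref{L:almost-finite}(ii)) supplies precisely the closure required, with Definition \ref{D:almostfan} providing the needed ``almost-finitely many of which are empty''. The remaining friction --- keeping the transformed witnesses decidable, and the small combinatorial reductions in the treatment of $Q$ --- is routine.
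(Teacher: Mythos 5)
Your treatment of part (i) is essentially the paper's own proof: the paper runs Bar Induction (Theorem \ref{T:barinduction}) on $B^{+}:=B\cup\{s\mid\beta(s)\neq 0\}$, a bar in $\omega^\omega$ obtained via a retraction onto $\mathcal{F}_\beta$, with the predicate ``there is a decidable almost-finite $B'\subseteq B$ barring $s$ in $\mathcal{F}_\beta$'', and the decisive inductive step is exactly your appeal to Lemma \ref{L:almost-finite}(ii), with the almost-finitariness of $\beta$ at $s$ supplying $\forall\zeta\in[\omega]^\omega\exists i[B'_{\zeta(i)}=\emptyset]$ (one also needs the Second Axiom of Countable Choice to collect the witnesses $B'_n$, as the paper notes).

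Part (ii), however, has a genuine gap, in two respects. First, your predicate $Q(s)$ is the \emph{conditional} form, so $Q(\langle\;\rangle)$ is the \emph{second}, weaker assertion of (ii), not the first: constructively, $\forall\zeta[\forall n[\ldots]\rightarrow\exists n[\ldots]]$ does not yield $\forall\zeta\exists n[\beta(\zeta(n))\neq 0\vee\exists t\sqsubseteq\zeta(n)[t\in B]]$, and your claimed equivalence of $Q(s\ast\langle m\rangle)$ with the almost-finiteness of $G_m$ holds only in the direction you do not need (almost-finiteness of $G_m$ implies $Q(s\ast\langle m\rangle)$, not conversely). The induction hypothesis you carry is therefore too weak to feed into Lemma \ref{L:almost-finite}(ii). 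Second, even with the correct positive predicate ``$G_s$ is almost-finite'', Lemma \ref{L:almost-finite}(ii) applies only to \emph{decidable} sets, and $G_m$ is decidable only if $B$ is --- which the theorem does not assume. The paper's proof of (ii) works instead with the almost-\emph{full} sets $P_i=\{u\mid\neg(s\ast\langle i\rangle\sqsubseteq u)\vee\beta(u)\neq 0\vee\exists t\sqsubseteq u[t\in B]\}$, for which no decidability is required, and replaces the easy appeal to Lemma \ref{L:almost-finite}(ii) by a genuinely more delicate argument: Lemma \ref{L:almostfull} on finite intersections of almost-full sets, the First Axiom of Countable Choice to extract a diagonal sequence, and the almost-fan property to close the loop. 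Your plan can in fact be salvaged more cheaply than that: first invoke part (i) to replace $B$ by a decidable almost-finite bar $B'\subseteq B$, and then run Bar Induction with the positive predicate ``$\{u\mid s\sqsubseteq u\wedge\beta(u)=0\wedge\forall t\sqsubseteq u[t\notin B']\}$ is almost-finite'', for which your Lemma \ref{L:almost-finite}(ii) step becomes legitimate; but as written the reduction does not go through.
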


\begin{proof} Let $\beta$ be an almost-finitary-spread-law
 and let $B\subseteq\omega$ be a bar in $\mathcal{F}_\beta$.  
 
 Define $B^+:= B \cup \{s\mid \beta(s)\neq 0\}$.
   
   We claim that $B^+$ is a bar in $\omega^\omega$. \\In order to see this, we let $\rho:\omega^\omega \rightarrow \omega^\omega$ be a retraction\footnote{See the proof of Theorem \ref{T:bcpext}.} of $\omega^\omega$ onto $\mathcal{F}_\beta$, i.e.  \\$\forall \alpha[\rho|\alpha \in \mathcal{F}_\beta]$ and even $\forall \alpha \forall n[\overline{(\rho|\alpha)}n \neq \overline \alpha n\rightarrow \beta(\overline\alpha n)\neq 0]$, and $\forall \alpha \in \mathcal{F}_\beta[\rho|\alpha =\alpha]$.
   
   Given $\alpha$, find $n$ such that $\overline{(\rho|\alpha)}n \in B$ and distinguish two cases. \textit{Either} \\$\overline{(\rho|\alpha)}n =\overline \alpha n$ and $\overline \alpha n \in B^+$, \textit{or} $\overline{(\rho|\alpha)}n \neq\overline \alpha n$ and $\beta(\overline \alpha n)\neq 0$ and, again, $\overline \alpha n \in B^+$.

   We thus see that, indeed, $B^+$ is a bar in $\omega^\omega$.

\smallskip (i) Let $C$ be the set of all $s$ such that either $\beta(s)\neq 0$ or $\beta(s)=0$ and there exists $B'\subseteq B$ such that $Bar_{\mathcal{F}_\beta\cap s}(B')$ and $B'$ is a decidable and almost-finite subset of $\omega$.

\smallskip Note the following:

(i)1. $B^+\subseteq C$.

(i)2. For all $s$, if $s\in C$, then for all $n$, $s\ast\langle n \rangle \in C$.

(i)3. Let $s$ be given such that $\forall n[s\ast\langle n  \rangle \in C]$. \\Find, using axiom \ref{ax:secondchoice},  an infinite sequence $B_0, B_1, \ldots$ of subsets of $B$ such that, for each $n$, if $\beta(s\ast\langle n \rangle)\neq 0$, then $B_n=\emptyset$ and, if $\beta(s\ast\langle n \rangle)=0$, then $B_n$ is a decidable and almost-finite subset of $\omega$ and $B_n$ is a bar in $\mathcal{F}_\beta\cap s\ast\langle n\rangle$.  Define $E:=\bigcup_n B_n$ and note: $E$ is a decidable subset of $\omega$ and $\forall \zeta \in [\omega]^\omega\exists n[B_{\zeta(n)}=\emptyset]$ and $E$ is a bar in $\mathcal{F}_\beta\cap s$ and $E\subseteq B$. \\  According to 
 Lemma \ref{L:almost-finite}, $E$ is almost-finite.  One may conclude: $s \in C$.   

\smallskip Using Theorem \ref{T:barinduction},  conclude: $\langle\;\rangle \in C$, i.e. there exists $B'\subseteq B$ such that $B'$ is  bar in $\mathcal{F}_\beta$ and $B'$ is a decidable and almost-finite subset of $\omega$.

\medskip (ii) Let $D$ be the set of all $s$ such that either: $\beta(s)\neq 0$ or:\\ $\beta(s)=0$ and $\forall \zeta \in [\omega]^\omega]\exists n[\neg\bigl(s\sqsubseteq \zeta(n)\bigr)\;\vee\;\beta\bigl(\zeta(n)\bigr)\neq 0\;\vee\;\exists t\sqsubseteq \zeta(n)[t\in B]]$.  

 \smallskip Note the following:

(ii)1. $B^+\subseteq D$.

(ii)2. For all $s$, if $s\in D$, then for all $i$, $s\ast\langle i \rangle \in D$.

(ii)3. For all $s$, if $\forall i[s\ast\langle i \rangle \in D]$, then $s \in D$. \\We prove (ii)3  as follows. \\Let $s$ be given such that $\forall i[s\ast\langle i\rangle \in D]$. \\We want to prove: $s \in D$ and may assume: $\beta(s)=0$. 
  \\Let $\zeta$ in $[\omega]^\omega$ be given. \\Define $QED:=\exists n[\neg\bigl(s\sqsubseteq \zeta(n)\bigr)\;\vee\;\beta\bigl(\zeta(n)\bigr)\neq 0\;\vee\;\exists t\sqsubseteq \zeta(n)[t\in B]]$. \\Define $\zeta^\ast$ in $[\omega]^\omega]$ such that, for each $n$, \\{\it if} $\forall i\le n +1 [s\sqsubseteq \zeta(i) \;\wedge\;\beta\bigl(\zeta(i)\bigr)=0]$, then $\zeta^\ast(n)=\zeta(n+1)$ and, \\{\it if not}, then $\zeta^\ast(n)$ is the least $k$ such that $\forall i<n[k>\zeta(i)]$ and $s\sqsubseteq k$ and $\beta(k)=0$.
 \\Note that, for each $n$, $s\sqsubset \zeta^\ast (n)$ and $\beta\bigl(\zeta^\ast(n)\bigr)=0$. \\Define $QED^\ast:= \exists n\exists t\sqsubseteq \zeta^\ast(n)[t\in B]$.
  \\ Note that, by assumption, for each $i$, \\the set $P_i:=\{u\mid\neg( s\ast \langle i \rangle \sqsubseteq u) \;\vee\;\beta(u)\neq 0\;\vee\; \exists t\sqsubseteq u[t\in B] \}$ is almost-full. 
 \\Using Lemma \ref{L:almostfull}, we conclude that, for each $n$, the set $\bigcap_{i\le n}P_i$ is almost-full. \\Using the First Axiom of Countable Choice, we  determine $\eta$ in $[\omega]^\omega$  such that, \\for all $n$, $\zeta^\ast\circ\eta(n)\in \bigcap_{i\le n} P_i$. 
 \\Now find $k:=length(s)$ and define  $\gamma$ such that, for each $n$,  $\gamma(n)=\bigl(\zeta^\ast\circ\eta(n)\bigr)(k)$.  
 \\Note that, for each $n$, if  $\gamma(n) \le n$, then $s\ast \langle \gamma(n)\rangle \sqsubseteq \zeta^\ast\circ\eta (n)$, and, \\as $\zeta^\ast\circ\eta(n) \in P_{\gamma(n)}$, one may conclude $\exists t\sqsubseteq \zeta^\ast\circ\eta(n)[t\in B]$ and $QED^\ast$.
\\Define $\gamma^\ast$ such that, for each $m$, \\{\it if } $\forall n\le m[\gamma(n)> n]$, then $\gamma^\ast (m)=\gamma(m)$, and, {\it if not}, then $\gamma^\ast(m)=m$. \\\ Note that, for each $m$, if $\gamma^\ast(m)\neq \gamma(m)$, then $\exists n\le m[\gamma(n)\le n]$ and $QED^\ast$.  Moreover, $\forall p\exists q[\gamma^\ast(q)>p]$. 
\\Find $\theta$ in $[\omega]^\omega$ such that $\theta(0)=0$ and $\forall n[\gamma^\ast\circ\theta(n+1)>\gamma^\ast\circ \theta(n)]$. 
\\Use the fact that $\beta$ is an almost-fan-law and find $n$ such that $ \beta\bigl(s\ast\langle \gamma^\ast\circ\theta(n)\rangle\bigr)\neq 0$.  
\\Conclude: $\gamma^\ast\circ \theta(n)\neq \gamma\circ\theta(n)$ and:   $QED^\ast$.\\Find $n,t$ such that $t\sqsubseteq \zeta^\ast(n)$ and $t\in B$. There are two cases.\\{\it Case} (ii)3a. $\zeta^\ast(n)=\zeta(n+1)$. Then $QED$. \\ {\it Case} (ii)3b. $\zeta^\ast(n)\neq \zeta(n+1)$. \\Then $\exists i\le n+1[\neg \bigl(s\sqsubseteq \zeta(i)\bigr) \;\vee\; \beta\bigl(\zeta(i)\bigr)\neq 0]$ and again: $QED$.

We thus see: for all $s$, if $\forall i[s\ast\langle i \rangle \in D]$, then $s\in D$. \\This concludes our proof of (ii)3. 

\smallskip Using Theorem \ref{T:barinduction},  conclude: $\langle\;\rangle \in D$, i.e. \\$\forall \zeta \in [\omega]^\omega]\exists n[ \beta\bigl(\zeta(n)\bigr)\neq 0 \;\vee\; \exists t\sqsubseteq \zeta(n)[t\in B]]$.

\end{proof}

 The Fan Theorem may be derived from the Almost-Fan Theorem, as follows. 

\begin{corollary}\label{C:fan3}  Let $\beta$ be a finitary spread-law.
 
 If $B\subseteq \omega$ is a a decidable subset of $\omega$ and a  bar in $\mathcal{F}_\beta$, some finite $B'\subseteq B$ is  bar in $\mathcal{F}_\beta$. \end{corollary}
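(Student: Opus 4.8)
The plan is to run the Bar-Induction argument underlying the Almost-Fan Theorem, but with \emph{finite} in place of \emph{almost-finite}; the whole point is that when $\beta$ is finitary, the almost-finite unions occurring in that argument become genuine finite unions of finite sets. First I observe that a finitary spread-law is almost-finitary: if $\beta(s)=0$, finitariness yields $m$ with $\forall n[\beta(s\ast\langle n\rangle)=0\rightarrow n\le m]$, and any $\zeta\in[\omega]^\omega$ then has $\zeta(m+1)>m$, so $\beta\bigl(s\ast\langle\zeta(m+1)\rangle\bigr)\neq 0$. Hence the Almost-Fan Theorem \ref{T:aft} applies to $\beta$; in particular $B$ already contains a decidable, almost-finite sub-bar. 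To squeeze out a genuinely finite sub-bar I proceed as below (the argument below is really the finitary specialization of the proof of Theorem \ref{T:aft}, so one could equivalently first replace $B$ by its decidable almost-finite sub-bar and then run it).

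If $\beta(\langle\;\rangle)\neq 0$ then $\mathcal{F}_\beta=\emptyset$ and $\emptyset\subseteq B$ is a finite bar; so assume $\beta(\langle\;\rangle)=0$. Put $B^{+}:=B\cup\{s\mid\beta(s)\neq 0\}$. Using a retraction $\rho$ of $\omega^\omega$ onto $\mathcal{F}_\beta$, exactly as in the proof of Theorem \ref{T:bcpext} and of the Almost-Fan Theorem, one checks that $B^{+}$ is a bar in $\omega^\omega$: given $\alpha$, pick $n$ with $\overline{(\rho|\alpha)}n\in B$; then either $\overline\alpha n=\overline{(\rho|\alpha)}n\in B$, or $\overline\alpha n\neq\overline{(\rho|\alpha)}n$, whence $\beta(\overline\alpha n)\neq 0$; in both cases $\overline\alpha n\in B^{+}$. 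Now let $C$ be the set of all $s$ such that \emph{either} $\beta(s)\neq 0$ \emph{or} $\beta(s)=0$ and some finite $B'\subseteq B$ is a bar in $\mathcal{F}_\beta\cap s$. Then $B^{+}\subseteq C$ (take $B'=\{s\}$ when $s\in B$), and I claim $\forall s[s\in C\leftrightarrow\forall n[s\ast\langle n\rangle\in C]]$. Granting this, Theorem \ref{T:barinduction} gives $\langle\;\rangle\in C$, i.e. a finite $B'\subseteq B$ that is a bar in $\mathcal{F}_\beta$, which is exactly the desired conclusion.

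The main obstacle — and the only place where finitariness, rather than mere almost-finitariness, is used — is the implication $\forall n[s\ast\langle n\rangle\in C]\Rightarrow s\in C$. One decides whether $\beta(s)\neq 0$ (then $s\in C$ at once) or $\beta(s)=0$; in the latter case finitariness supplies $m$ with $\forall n[\beta(s\ast\langle n\rangle)=0\rightarrow n\le m]$. For each $n\le m$ with $\beta(s\ast\langle n\rangle)=0$ the hypothesis gives a finite $B'_{n}\subseteq B$ barring $s\ast\langle n\rangle$; for the remaining $n\le m$ put $B'_{n}=\emptyset$. Choosing these finitely many sets needs only a finite iteration of existential elimination, no choice axiom. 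Then $B':=\bigcup_{n\le m}B'_{n}$ is a finite subset of $B$, and it bars $s$ in $\mathcal{F}_\beta$: any $\alpha\in\mathcal{F}_\beta$ with $s\sqsubset\alpha$ has next value $n:=\alpha\bigl(length(s)\bigr)$ satisfying $\beta(s\ast\langle n\rangle)=0$, hence $n\le m$, and $B'_{n}$ already bars $s\ast\langle n\rangle$. The reverse implication is routine: a finite bar of $s$ is a finite bar of every $s\ast\langle n\rangle$ with $\beta(s\ast\langle n\rangle)=0$, while $\beta(s)\neq 0$ forces $\beta(s\ast\langle n\rangle)\neq 0$ for all $n$.
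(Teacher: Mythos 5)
Your proof is correct, but it takes a genuinely different route from the paper's. The paper proves Corollary~\ref{C:fan3} by first \emph{applying} the Almost-Fan Theorem to get a decidable, almost-finite sub-bar $B'\subseteq B$, then exploiting the fact that in a fan one can \emph{decide} whether a given finite list of positions already bars $\mathcal{F}_\beta$: it defines a strictly increasing sequence $\eta\in[\omega]^\omega$ which enumerates fresh elements of $B'$ as long as the positions chosen so far do not yet bar the fan, and switches to listing off-spread codes once they do; almost-finiteness of $B'$ then forces $\eta$ to leave $B'$ at some finite stage $k$, and $\{\eta(0),\ldots,\eta(k-1)\}$ is the desired finite bar. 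You instead bypass the Almost-Fan Theorem entirely and run Bar Induction (Theorem~\ref{T:barinduction}) on $B^{+}=B\cup\{s\mid\beta(s)\neq 0\}$ with the inductive predicate ``some finite $B'\subseteq B$ bars $\mathcal{F}_\beta\cap s$'', using finitariness of $\beta$ to reduce the inductive step to a finite union of finite sets, which needs no choice. Both arguments are sound. The paper's route is designed to exhibit the Fan Theorem as a \emph{consequence} of the Almost-Fan Theorem (that is the whole point of stating this as a Corollary in Section~\ref{SS:afspreads}), and the passage from almost-finite to finite via the $\eta$-construction is the interesting content; your route is shorter, is really the finitary specialization of the Bar-Induction proof of Theorem~\ref{T:aft}, and as a small bonus does not actually use the decidability of $B$ assumed in the statement (the finite $B'_n$ extracted from the hypothesis $s\ast\langle n\rangle\in C$ are automatically decidable). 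If the intended pedagogical point is ``Almost-Fan Theorem $\Rightarrow$ Fan Theorem'', your proof does not deliver it; as a self-contained derivation from Bar Induction it is perfectly good and arguably cleaner.
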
 
 
 \begin{proof}  Let $\beta$ be a finitary spread-law and let $B\subseteq \omega$ be a decidable subset of $\omega$ and a bar in $\mathcal{F}_\beta$.
 
 Applying the Almost-Fan Theorem, find $B'\subseteq B$ such that $B'$ is a bar in $\mathcal{F}_\beta$ and $B'$ is a decidable and almost-finite subset of $\omega$. 
 
 Note that $\mathcal{F}_\beta$ is a fan. Therefore, for all $n$, for all $s$ in $\omega^n$, one may decide: either $\forall \alpha \in \mathcal{F}_\beta \exists i<n[s(i)\sqsubset \alpha]$, or $\exists \alpha \in \mathcal{F}_\beta\forall i <n[s(i)\perp\alpha]$.

  Now define $\eta$  such that, for all $n$, \\\textit{if}  $\exists \alpha \in \mathcal{F}_\beta\forall i<n[\alpha\perp \eta(i)]$, then $\eta(n) = \mu t \in B'\forall i<n[t\neq \eta(i)]$, and, \\\textit{if } $\forall \alpha \in \mathcal{F}_\beta\exists i<n[\ \eta(i)\sqsubset\alpha]$, then $\eta(n)=\mu p[\beta(p)\neq 0 \;\wedge\;\forall i<n[\eta(i)<p]]$.  
  
  Note $\eta\in [\omega]^\omega$. Define $k:=\mu n[\eta(n)\notin B']$ and conclude: $\{\eta(0), \eta(1), \ldots\eta(k-1)\}$ is a finite subset of $B$ and a bar in $\mathcal{F}_\beta$.  \end{proof}

\subsection{Open Induction in $[0,1]$}\label{SS:openinduction}

\begin{definition} For every $n>0$, for every finite sequence \\$\bigl((a_0, b_0), (a_1, b_1), \ldots, (a_{n-1}, b_{n-1})\bigr)$ of pairs of rationals, \\for every rational $c\ge 0$, we define the relation \begin{quote} $\bigl((a_0, b_0), (a_1, b_1), \ldots, (a_{n-1}, b_{n-1})\bigr)$ \emph{covers} $[0,c)$ \end{quote} as follows, by induction.

(i) $n=1$ and \emph{either:}  $a_0<0<b_0$ and $c=b_0$, \emph{or:}  $c=0$.

(ii) $n>1$ and there exists $d$ such that $\bigl((a_0, b_0), (a_1, b_1), \ldots, a_{n-2}, b_{n-2})$ covers $[0,d)$ and \emph{either:} $a_{n-1}<d<b_{n-1}$ and $c=b_{n-1}$ \emph{or:} $c=d$.

\smallskip If $A:=\{(a_0, b_0), (a_1, b_1), \ldots, (a_{n-1}, b_{n-1})\}$ is a finite \emph{set} of pairs of rationals, then, for every rational $c\ge 0$, \emph{$A$ covers $[0,c)$} if and only if, \\for some permutation $\pi$ of  $\{0,1,\ldots, n-1\}$, \\the finite sequence $\bigl((a_{\pi(0)}, b_{\pi(0}), (a_{\pi(1)}, b_{\pi(1)}, \ldots, (a_{\pi(n-1)}, b_{\pi(n-1)})\bigr)$ covers $[0,c)$. \end{definition}

Note that one may \emph{decide}, for all $n>0$, for every finite set $A$ of pairs of rationals, for every rational $c\ge 0$, if $A$ covers $[0,c)$ or not.
\begin{definition}\label{D:progressive}

$\mathcal{X}\subseteq [0,1]$ is called \emph{progressive in $[0,1]$} if and only if, for every $x$ in $[0,1]$, if $[0,x)\subseteq \mathcal{X}$, then $x\in \mathcal{X}$. \end{definition}

The following principle was used by \'E. Borel in the proof of what is now called the Heine-Borel Theorem, see \cite{borel} and \cite[Section 4.3]{veldman2011c}.  Its first proof in an intuitionistic context is due to Th. Coquand, see \cite{coquand} and \cite[Theorem 4.1]{veldman2011c}. 
\begin{theorem}[Principle of Open Induction in ${[} 0, 1{]} $] \label{T:openinduction}\;\\ If $\mathcal{H}\subseteq \mathcal{R}$ is open and progressive in $[0,1]$, then $[0,1]\subseteq \mathcal{H}$. \end{theorem}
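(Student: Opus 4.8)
The plan is to derive the Principle of Open Induction in $[0,1]$ from the Almost-Fan Theorem (Theorem \ref{T:aft}), following the strategy that converts a covering of $[0,1]$ by the open progressive set $\mathcal{H}$ into a bar on an almost-fan. First I would write $\mathcal{H}=\mathcal{H}_\alpha\cap[0,1]$ for some $\alpha$, so that $\mathcal{H}$ is a union of rational open intervals $(q_{\alpha(j)},r_{\alpha(j)})$. Assume for contradiction — or rather, argue directly — that $\mathcal{H}$ fails to cover some point; the natural move is to set up a spread whose branches code descending sequences of nested rational intervals $[c_0,d_0]\supseteq[c_1,d_1]\supseteq\cdots$ all contained in $[0,1]$, shrinking to a point, together with a promise that at each stage the left endpoint $c_n$ has \emph{not} yet been seen to be covered by finitely many of the intervals listed so far. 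Because each such interval has rational endpoints and we only ever need finitely many ``competitor'' intervals, the branching at each node is controlled: the set of admissible next refinements that keep us ``not yet covered'' is, by a compactness-free argument using the decidability of ``$A$ covers $[0,c)$'', an almost-finite set of possibilities. Hence the spread is an \emph{almost-fan}.

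The key steps, in order, would be: (1) fix the enumeration of pairs of rationals and set up the spread-law $\beta$ so that $\mathcal{F}_\beta$ consists of codes for shrinking-and-dwindling sequences of nested closed rational subintervals of $[0,1]$ whose left endpoints are, at every finite stage, not demonstrably in the union of the finitely many $\mathcal{H}$-intervals mentioned so far; (2) verify that $\beta$ is almost-finitary, i.e. that from any node only almost-finitely many immediate successors are admitted — this uses that the ``budget'' of relevant intervals at a node is finite and that ``$A$ covers $[0,c)$'' is decidable, so that among any $[\omega]^\omega$-sequence of candidate refinements all but finitely many get blocked; (3) to each branch $\gamma\in\mathcal{F}_\beta$ associate, via Cantor's Intersection Theorem, a real $x_\gamma\in[0,1]$ with $x_\gamma$ the common point of the nested intervals; (4) observe that by hypothesis $\mathcal{H}$ is progressive, and $x_\gamma\in[0,1]$, so if $[0,x_\gamma)\subseteq\mathcal{H}$ then $x_\gamma\in\mathcal{H}$, and either way $x_\gamma$ lies in some rational interval $(q_{\alpha(j)},r_{\alpha(j)})$ eventually — giving a natural number $n$ with $\overline\gamma n$ in a bar set $B$ consisting of nodes where the left endpoint \emph{has} become covered; (5) conclude $B$ is a bar in $\mathcal{F}_\beta$; (6) apply the Almost-Fan Theorem, Theorem \ref{T:aft}(i), to extract a decidable almost-finite bar $B'\subseteq B$; (7) from the almost-finiteness of $B'$ and the tree structure, push the ``covered'' information back down to the root by an induction along $B'$ (finitely many unions of finite sets of intervals stay finite — this is the analogue of the union-of-finite-sets step in the Fan Theorem proof, but now carried out with almost-finiteness), concluding that $[0,0)$'s successors, and ultimately all of $[0,1]$, get covered; (8) hence $1\in\mathcal{H}$ and in fact $[0,1]\subseteq\mathcal{H}$.

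The hard part will be step (2), correctly arranging the spread-law so that it is genuinely almost-finitary rather than merely a spread: I need the node data to carry enough bookkeeping (which of the $\mathcal{H}$-intervals are ``active'', how far down the subdivision we are) that the set of legal next moves, though possibly infinite, is provably almost-finite — and this must be done without secretly assuming the Fan Theorem or decidability of membership in $\mathcal{H}$. A delicate related point is step (7): replicating the ``canonical proof rewriting'' of the Fan Theorem but with \emph{almost-finite} unions, which is exactly what Lemma \ref{L:almost-finite} is designed to license; I would lean on Theorem \ref{T:barinduction} (Bar Induction) together with Lemma \ref{L:almost-finite}(ii) to run this induction cleanly, defining the relevant set $C$ of nodes $s$ for which some almost-finite subcollection of $\mathcal{H}$-intervals covers the interval coded below $s$, showing $B'\subseteq C$, that $C$ is closed upward through successors in the appropriate sense, and that $C$ is progressive along the tree, so $\langle\;\rangle\in C$. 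One subtlety to watch: the conclusion we want, $[0,1]\subseteq\mathcal{H}$, is the statement that \emph{every} $x\in[0,1]$ lies in $\mathcal{H}$; since $\mathcal{H}$ is open this amounts to $1$ being covered and by progressivity everything below being covered, so it suffices to land $1$ in $\mathcal{H}$ via the covering relation, which is where the ``$A$ covers $[0,c)$ with $c=1$'' formulation is used.
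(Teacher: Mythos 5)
Your overall architecture is the paper's: code nested rational subintervals of $[0,1]$ as nodes of a spread, show the spread is almost-finitary, show that the nodes whose interval sits inside a basic interval of $\mathcal{H}$ form a bar, apply the Almost-Fan Theorem to get a decidable almost-finite bar $B'$, and finish by a covering/chaining argument. (For the record, the paper settles your worry about step (2) very cheaply: each node has exactly one ``stay left'' successor and at most one ``move right'' successor, the latter admitted only for the unique least witness $p$ such that the first $p$ basic intervals cover $[0,c)$ for the new left endpoint $c$; so at most two successors per node, and any strictly increasing sequence of candidate successors is rejected by its third term. No bookkeeping of ``active intervals'' is needed. The paper's last step is also a direct greedy chain through $B'$ rather than a second bar induction, though your Bar Induction variant is not unreasonable.)

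The genuine gap is in steps (1) and (4), and it is the heart of the proof. You admit a node when its left endpoint has \emph{not yet} been seen to be covered, and then in step (4) you write ``if $[0,x_\gamma)\subseteq\mathcal{H}$ then $x_\gamma\in\mathcal{H}$, and either way $x_\gamma$ lies in some rational interval eventually.'' There is no ``either way'': progressivity is the \emph{only} hypothesis available to put $x_\gamma$ into $\mathcal{H}$, and to invoke it you must first \emph{prove} $[0,x_\gamma)\subseteq\mathcal{H}$ for every branch $\gamma$. Your negative invariant gives you nothing of the sort; if anything, branches along which the left endpoint is ``not yet covered'' are exactly the ones for which you cannot certify $[0,x_\gamma)\subseteq\mathcal{H}$, so the bar property fails and the argument collapses. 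The invariant must have the opposite polarity: every admitted node $s$ satisfies $[0,\rho'(s))\subseteq\mathcal{H}$, which is maintained because the ``move right'' successor is admitted \emph{only after} finitely many basic intervals have been verified to cover $[0,c)$ for the new left endpoint $c$ (decidability of ``covers'' makes this a legitimate spread-law), while the ``stay left'' successor keeps the same left endpoint. Then the left endpoints along a branch increase to $x_\gamma$ with $[0,\rho'(\overline\gamma n))\subseteq\mathcal{H}$ at every stage, hence $[0,x_\gamma)\subseteq\mathcal{H}$, hence $x_\gamma\in\mathcal{H}$ by progressivity, hence some $\overline\gamma m$ lands in the bar. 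Without this positive invariant the proposal does not prove the theorem.
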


\begin{proof} We first construct $\rho$. For each $s$, $\rho(s)=\bigl(\rho'(s), \rho''(s)\bigr)$ will be a pair of rationals such that $0\le \rho'(s)\le\rho''(s)\le 1$.

Define $\rho(\langle\;\rangle):= (0,1)$ and, for each $s$ in $2^{<\omega}$, \\$\rho(s\ast \langle 0 \rangle)):= \bigl(\rho'(s), \frac{1}{3}\rho'(s) + \frac{2}{3}\rho''(s)\bigr)$ and, \\for each $n>0$,  $\rho(s\ast\langle n \rangle):=\bigl(\frac{2}{3}\rho'(s) +\frac{1}{3}\rho''(s), \rho''(s)\bigr)$.

Let $\mathcal{H}\subseteq \mathcal{R}$ be given such that $\mathcal{H}$ is open and progressive in $[0,1]$.

Using Definition \ref{D:opensetsmeasurable}, find $\alpha$ such that $\forall x \in \mathcal{R}[x\in \mathcal{H}\leftrightarrow \exists n[q_{\alpha(n)}<x<r_{\alpha(n)}]]$. 

Now define $\beta$ such that $\beta(\langle\;\rangle)=0$ and, for all $s$, \begin{enumerate}[\upshape (1)] \item if $\beta(s)=0$, then $\beta(s\ast\langle 0 \rangle)=0$ and, \item for all $ n$, $\beta(s\ast\langle n+1 \rangle) =0$ if and only if $n$ is  the least $p$ such that \\$\{(q_{\alpha(i)}, r_{\alpha(i)})\mid i<p\}$ covers $[0, \rho'(s\ast\langle n \rangle)$. \end{enumerate}

One may prove: for each $s$, if $\beta(s)=0$, then $[0, \rho'(s)]\subseteq \mathcal{H}$. The proof is by induction on $length(s)$. 

Note that $\beta$ is a spread-law, and that, for each $s$, if $\beta(s)=0$, there are at most two numbers $n$ such that $\beta(s\ast\langle n \rangle)=0$, so $\beta$ is an almost-finitary spread-law.

We let $B$ be the set of all $s$ such that $\beta(s)=0$ and,\\ for some $n<length(s)$, $q_{\alpha(n)} < \rho'(s) <\rho''(s) < r_{\alpha(n)}$.

The following argument shows that $B$ is a bar in $\mathcal{F}_\beta$. 

Let $\gamma$ in $\mathcal{F}_\beta$ be given.

Note that, for each $n$, $[0, \rho'(\overline \gamma n)]\subseteq \mathcal{H}$.

 Let $x$ be the real number  such that, for each $n$, $x(n)=\rho(\overline \gamma n )$.  

Note:  $[0,x)\subseteq \mathcal{H}$, and, therefore, $x\in \mathcal{H}$.

Find $n$ such that $q_{\alpha(n)}<x<r_{\alpha(n)}$. 

Find $m>n$ such that $q_{\alpha(n)}<\rho'(\overline\gamma m)<\rho''(\overline\gamma m)<r_{\alpha(n)}$ and note: $\overline \gamma m \in  B$.

We thus see: $Bar_{\mathcal{F}_\beta}(B)$.

\smallskip We now  apply  Theorem \ref{T:aft}(i) and find $B'\subseteq B$ such that $B'$ is a bar in $\mathcal{F}_\beta$ and  a decidable and almost-finite subset of $\omega$.

Define $\zeta$ in $\omega^\omega$  as follows, by induction. 

Let $\zeta(0)$ the least element $s$ of $B'$ such that $\rho'(s) < 0 <\rho''(s)$. 

Note: $\{\rho\bigl(\zeta(0)\}$ covers $[0, \rho''\bigl(\zeta(0)\bigr)$, so $[0, \rho''\bigl(\zeta(0)\bigr)\subseteq \mathcal{H}$.

Now let $n$ be given such that we defined $\zeta(0), \zeta(1), \ldots \zeta(n)$ and \\$\{\rho\bigl(\zeta(0)\bigr), \rho\bigl(\zeta(1)\bigr), \ldots, \rho\bigl( \zeta(n)\bigr)\}$ covers $[0, \rho''\bigl(\zeta(n)\bigr))$, so $[0, \rho''(\zeta(n))\bigr)\subseteq \mathcal{H}$. 

Consider $t:=\rho''\bigl(\zeta(n) \bigr)$ and note: $[0,t)\subseteq \mathcal{H}$, and: $t\in \mathcal{H}$. 

If $t<_\mathbb{Q} 1_\mathbb{Q}$, let $\zeta(n+1)$ be the least $s$ in $B'$ such that $\rho'(s)< t< \rho''(s)$ and $\forall i<length(s)[\overline s i\notin B]$. Note: $\forall j<n+1[\zeta(j)\neq \zeta(n+1)]$. 

If $t\ge_\mathbb{Q} 1$, let $\zeta(n+1)$ be the least $s$ such that $s\notin B'$ and $\forall i\le n[s\neq \zeta(i)$. 

Note: $\forall i\forall j[i<j\rightarrow \zeta(i)\neq\ \zeta(j)]$.

Find $\eta$ in $[\omega]^\omega$ such that $\zeta\circ \eta \in [\omega]^\omega$,

Use the fact that $B'$ is almost-finite and note: $\exists n[\zeta\circ\eta(n)\notin B']$.

Let $p$ be the least $n$ such that $\zeta(p+1)\notin B$ and note:\\
$\rho''\bigl(\zeta(p)\bigr)\ge 1$, and: $[0,1]\subseteq \mathcal{H}$.
\end{proof}

\subsection{Dedekind's Theorem}\label{SS:dedekind} The following Theorem shows that a nondecreasing sequence of reals that positively fails to converge grows beyond all bounds. This is a counterpart in intuitionistic analysis to `{\it Dedekind's Theorem}':  \begin{quote}`{\it  A non-decreasing infinite sequence of reals that is bounded from above must have a limit}'. \end{quote}R.~Dedekind's aim in writing \cite{dedekind} was  to give a rigorous proof of this statement.\footnote{If one reads the classical formulation of Dedekind's Theorem constructively, one obtains a false statement: consider the sequence $q_0, q_1, \ldots$ of rationals scuh that, for all $n<k_{99}$, $q_n=0$ and for all $n\ge k_{99}$, $q_n=1$.}

\begin{theorem}[Dedekind's Theorem]\label{T:dedekind}For all $\gamma$, \\if $\forall n[q_{\gamma(n)}\le q_{ \gamma(n+1)}]$ and $\forall \zeta \in [\omega]^\omega\exists n [ q_{\gamma\circ\zeta(n)} + \frac{1}{2^n}< q_{ \gamma\circ\zeta(n+1)}]$, then $\exists n[ 1 <q_{ \gamma(n)}]$.\end{theorem}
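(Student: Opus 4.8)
The plan is to deduce this result from the Principle of Open Induction in $[0,1]$ (Theorem \ref{T:openinduction}). Write $r_n := q_{\gamma(n)}$; then $(r_n)_n$ is a non-decreasing sequence of rationals, $r_0 = q_{\gamma(0)}$ is a concrete rational, and the second hypothesis reads $\forall \zeta \in [\omega]^\omega \exists n[r_{\zeta(n)} + \frac{1}{2^n} < r_{\zeta(n+1)}]$. The first thing I would observe is that both hypotheses are preserved under passing to a tail: for every $N$, the sequence $(r_{N+k})_k$ again satisfies them, since given $\zeta \in [\omega]^\omega$ one applies the hypothesis for $(r_n)_n$ to $k \mapsto N + \zeta(k)$, which is again in $[\omega]^\omega$. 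Consequently it is enough to prove the single statement $\exists n[r_n > r_0 + 1]$: applying it to $(r_n)_n$, then to a tail beyond the witness, and so on, $K := \lceil\max(0, 1 - r_0)\rceil + 1$ times, produces $n$ with $r_n > r_0 + K$, and $r_0 + K \ge 2$, so $r_n > 1$.

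To prove $\exists n[r_n > r_0 + 1]$ I would apply Open Induction to the set $\mathcal{H} := \{x \in \mathcal{R} \mid \exists n[r_n > r_0 + 2x - 1]\}$. One checks at once that $\mathcal{H}$ is open in the sense of Definition \ref{D:opensetsmeasurable} (if $r_{n_0} > r_0 + 2x_0 - 1$, the same $n_0$ works for all $x$ below some rational threshold), and that $0 \in \mathcal{H}$ trivially with $n = 0$, since $r_0 > r_0 - 1$. If $\mathcal{H}$ is progressive in $[0,1]$, Theorem \ref{T:openinduction} yields $[0,1] \subseteq \mathcal{H}$, and the instance $x = 1$ is exactly $\exists n[r_n > r_0 + 1]$.

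The core of the argument is the verification that $\mathcal{H}$ is progressive. Given $x \in [0,1]$ with $[0,x) \subseteq \mathcal{H}$, put $L := r_0 + 2x - 1 \le r_0 + 1$; I must produce $n$ with $r_n > L$. The idea is to climb up towards $L$ slowly along a subsequence and then let the fast-divergence hypothesis take the final step. Concretely, I would define $\zeta \in [\omega]^\omega$ recursively by $\zeta(0) := 0$ and, for $i \ge 1$, $\zeta(i) := $ the least $n > \zeta(i-1)$ with $r_n > L - \frac{1}{2^i}$. Such an $n$ exists: using the decidable alternative ``$L - \frac{1}{2^i} < r_0$'' versus ``$L - \frac{1}{2^i} > r_0 - \frac{1}{2^{i+1}}$'', the first case gives such an $n$ immediately because $(r_n)_n$ is non-decreasing and $r_0$ is attained, while the second case forces $x > \frac12$, so that $x - \frac{1}{2^{i+1}} \in [0,x) \subseteq \mathcal{H}$ and $r_0 + 2(x - \frac{1}{2^{i+1}}) - 1 = L - \frac{1}{2^i}$ is exceeded by some $r_m$, hence by $r_n$ for $n$ large enough. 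So $\zeta$ is well defined and strictly increasing; applying the fast-divergence hypothesis to it gives $n$ with $r_{\zeta(n)} + \frac{1}{2^n} < r_{\zeta(n+1)}$. If $n \ge 1$ then $r_{\zeta(n)} > L - \frac{1}{2^n}$, whence $r_{\zeta(n+1)} > L$; and if $n = 0$ then $r_{\zeta(1)} > r_{\zeta(0)} + 1 = r_0 + 1 \ge L$, so again $r_{\zeta(n+1)} > L$. Either way $x \in \mathcal{H}$, and progressivity follows.

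I expect the main obstacle to be precisely this last step and getting the bookkeeping right: the thresholds $L - \frac{1}{2^i}$ must be spaced so as to match the weights $\frac{1}{2^n}$ of the hypothesis \emph{on the nose}; the search defining $\zeta(i)$ must be shown to terminate, which is why one climbs towards $L$ from below rather than aiming at $L$ directly and why the decidable margin $\frac{1}{2^{i+1}}$ is built in; and the conclusion $r_{\zeta(n+1)} > L$ must be obtained uniformly in $x$, including at the boundary $x = 1$, where $L = r_0 + 1$ and the separate (easy) treatment of the case $n = 0$ is what preserves the strict inequality.
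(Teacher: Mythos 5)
Your proposal is correct and follows the paper's route: both derive the theorem from the Principle of Open Induction in $[0,1]$ applied to an open set recording which levels the non-decreasing sequence has already exceeded. The paper simply takes $\mathcal{H}:=\{x\in\mathcal{R}\mid \exists n[x<q_{\gamma(n)}]\}$, for which progressivity is immediate (from $[0,x)\subseteq\mathcal{H}$ one gets $\zeta$ in $[\omega]^\omega$ with $x<q_{\gamma\circ\zeta(n)}+\frac{1}{2^n}$ for all $n$, and the divergence hypothesis then yields $x<q_{\gamma\circ\zeta(n+1)}$) and $1\in\mathcal{H}$ is literally the desired conclusion, so your affine rescaling, the two-case decidability analysis in the construction of $\zeta$, and the tail-iteration reduction are all avoidable.
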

 
 Note that the condition: `$\forall \zeta \in [\omega]^\omega\exists n [ q_{\gamma\circ\zeta(n)} + \frac{1}{2^n}< q_{ \gamma\circ\zeta(n+1)}]$' says: the sequence $q_{\gamma(0)}, q_{\gamma(1)}, \ldots$ positively fails to be convergent.
 
 Also note that, once one sees how to obtain the conclusion `$\exists n[ 1 <q_{ \gamma(n)}]$' , one will also see how to obtain the conclusion:  `$\forall M\in \mathbb{Q}\exists n[ M<r_{\gamma(n)}]$', i.e. the sequence $q_{\gamma(0)}, q_{\gamma(1)}, \ldots$  grows beyond all bounds.
 
\begin{proof} Define $\mathcal{H}:=\{x\in \mathcal{R}\mid \exists n[x<q_{\gamma(n)}]\}$. 

Note that $\mathcal{H}$ is an open subset of $\mathcal{R}$.

We now prove that $\mathcal{H}$ is progressive in $[0,1]$. 
\\Let $x$ in $[0,1]$ be given such that $[0,x)\subseteq \mathcal{H}$.\\In particular, for each $n$, $x-\frac{1}{2^n} \in \mathcal{H}$. \\ Find $\zeta$ in $[\omega]^\omega$ such that, for each $n$, $x-\frac{1}{2^n} < q_{\gamma\circ\zeta(n)}$, i.e. $x<q_{\gamma\circ\zeta(n)}+\frac{1}{2^n}$. \\ Find $n$ such that $q_{\gamma\circ\zeta(n)} +\frac{1}{2^n}<q_{\gamma\circ\zeta(n+1)}$ and conclude: $x<q_{\gamma\circ\zeta(n+1)}$, and: $x\in \mathcal{H}$.\\Conclude:  $\forall x \in [0,1][[0,x)\subseteq \mathcal{H}\rightarrow x \in \mathcal{H}]$, i.e. 
$\mathcal{H}$ is progressive in $[0,1]$.

\smallskip Using Theorem \ref{T:openinduction},  conclude: $1\in \mathcal{H}$, i.e.
$\exists n[1<q_{\gamma(n)}]$.  \end{proof}
\subsection{Ramsey's Theorem}\label{SS:ramsey}

The usual formulation of (the two-dimensional case of) the Infinite Ramsey Theorem is the following: 

\begin{quote} Given $R\subseteq [\omega]^2$, there exists an infinite subset $Z$ of $\omega$ such that\\ either $[Z]^2 \subseteq R$ or $[Z]^2\subseteq [\omega]^2\setminus R$. \end{quote}

 Given  any $X\subseteq \omega$, $[X]^2$  denotes the collection of the 2-element-subsets of $X$. 
\\We prefer to define $[X]^2$ as the collection of strictly increasing sequences of elements of $X$ of length 2.
\\We also use the set $[\omega]^\omega$ of the infinite strictly increasing sequences of natural numbers rather than the collection of the infinite subsets of $\omega$. \\We  reformulate Ramsey's theorem as follows: 

\begin{quote} Given $R\subseteq [\omega]^2$, there exists $\zeta$ in $[\omega]^\omega$ such that \\either  $\forall s \in [\omega]^2[\zeta\circ s \in R]$ or $\forall s \in [\omega]^2[\zeta\circ s \notin R]$.  \end{quote}

As finite sequences of natural numbers are coded by natural numbers, we may further simplify this to:

\begin{quote} Given $R\subseteq \omega$, there exists $\zeta$ in $[\omega]^\omega$ such that \\either  $\forall s \in [\omega]^2[\zeta\circ s \in R]$ or $\forall s \in [\omega]^2[\zeta\circ s \in \omega\setminus R]$.  \end{quote}

 If we consider this statement from a constructive point of view, we soon \\discover, thinking of the constructive interpretation of `or', that it can not be true. A counterexample in Brouwer's style is given by the set \begin{quote} $R:=\{s\in [\omega]^2 \mid s(0)<k_{99} \;\wedge\; s(1) < k_{99}\}$. \end{quote}
 (Note that,  for every $\zeta$ in $[\omega]^\omega$, if $\forall s \in [\omega]^2[\zeta\circ s \in R]$, then $\forall n[n< k_{99}]$, and, \\if $\forall s \in [\omega]^2[\zeta\circ s \notin R]$, then $\exists n\le \zeta(0)[n=k_{99}]$.) 
 
 \smallskip One might hope however, that the following holds: 
 
 \begin{quote} Given $R\subseteq \omega$, a contradiction follows from: \\$\neg\exists \zeta\in[\omega]^\omega\forall s \in [\omega]^2[\zeta\circ s \in R]$ and $\neg\exists \zeta\in[\omega]^\omega\forall s \in [\omega]^2[\zeta\circ s \in \omega\setminus R]$. 
 \end{quote}
 
 One might be even more hopeful about: \begin{quote} Given $R\subseteq \omega$, a contradiction follows from: \\$\forall \zeta\in[\omega]^\omega\exists s \in [\omega]^2[\zeta\circ s \in \omega\setminus R]$ and $\forall \zeta\in[\omega]^\omega\exists s \in [\omega]^2[\zeta\circ s \in  R]$. 
 \end{quote}
 
 Indeed, this may be proven intuitionistically. But one may do better and show: 
 
 \begin{quote} Given $R, T\subseteq \omega$, \\ if $\forall \zeta\in[\omega]^\omega\exists s \in [\omega]^2[\zeta\circ s \in \omega\setminus R]$ and $\forall \zeta\in[\omega]^\omega\exists s \in [\omega]^2[\zeta\circ s \in  T]$, \\then $\forall \zeta\in[\omega]^\omega\exists s \in [\omega]^2[\zeta\circ s \in \omega\setminus R\cap T]$.
 \end{quote}
 
 We will prove Ramsey's Theorem and its extension to higher dimensions in the above form. The proof uses the Almost-Fan Theorem. This theorem enables one to use a version of the `\textit{Erd\"os-Rado compactness argument}'.
\begin{definition}\label{D:almostfull} For each $\alpha$, $D_\alpha:=\{n\mid\alpha(n)\neq 0\}$. \\$D_\alpha$ is called \emph{the subset of $\omega$ decided by $\alpha$}. 

For each $k>0$, for each $\alpha$, $D_\alpha$ is  \emph{$k$-almost-full}\footnote{See also Definition \ref{D:almostfull}.} if and only if \\$\forall \zeta \in [\omega]^\omega\exists s \in [\omega]^k[\zeta\circ s \in D_\alpha]$. 

\smallskip For all $n,k$ such that $k\le n$, for all $s$ in $[\omega]^n$, for all $\alpha$, \begin{enumerate} \item $s$ is \emph{$(\alpha, k)$-monochromatic} if and only if \\$\forall t \in [n]^k\forall u \in [n]^k[\alpha(s\circ t)=\alpha(s\circ u)]$, and \item $s$ is \emph{$(\alpha, k+1)$-pre-monochromatic} if and only if \\$\forall t \in [n]^k\forall p \forall q[(t\ast\langle p \rangle \in [n]^{k+1} \;\wedge\; t\ast\langle q \rangle \in [n]^{k+1})\rightarrow \alpha(t\ast\langle p \rangle)=\alpha(t\ast\langle q \rangle)
]$.\end{enumerate} \end{definition}

Note that, if $D_\alpha$ is $k$-almost-full, then $\neg\exists \zeta\in[\omega]^\omega\forall s \in [\omega]^k[\zeta\circ s \in \omega\setminus D_\alpha]$. 

\begin{theorem}[Ramsey's Theorem, the infinite case]\label{T:ramsey}$\;$\\
For all $k>0$,  for all $\alpha, \beta$, \\if $D_\alpha$, $D_\beta$ are $k$-almost-full, then $D_\alpha\cap D_\beta$ is $k$-almost-full. \end{theorem}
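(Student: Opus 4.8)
The plan is to prove the statement by induction on $k$, the base case $k=1$ being immediate from Lemma~\ref{L:almostfull}(i), and the inductive step being a constructive rendering of the Erd\"os-Rado argument built on the Almost-Fan Theorem.

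\emph{Base case.} If $D_\alpha$ is $1$-almost-full, then the decidable set $P:=\{m\mid\langle m\rangle\in D_\alpha\}$ satisfies $\forall\zeta\in[\omega]^\omega\exists n[\zeta(n)\in P]$, i.e.\ $P$ is almost-full in the sense preceding Lemma~\ref{L:almostfull}; likewise $Q:=\{m\mid\langle m\rangle\in D_\beta\}$ is almost-full. By Lemma~\ref{L:almostfull}(i) the set $P\cap Q=\{m\mid\langle m\rangle\in D_\alpha\cap D_\beta\}$ is almost-full, which says precisely that $D_\alpha\cap D_\beta$ is $1$-almost-full.

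\emph{Inductive step.} Suppose the statement holds for $k$; let $\alpha,\beta$ be given with $D_\alpha,D_\beta$ both $(k+1)$-almost-full, and let $\zeta\in[\omega]^\omega$ be given. Replacing $\alpha,\beta$ by $s\mapsto\alpha(\zeta\circ s)$ and $s\mapsto\beta(\zeta\circ s)$ we may assume $\zeta$ is the identity, so it suffices to exhibit one $s\in[\omega]^{k+1}$ with $s\in D_\alpha\cap D_\beta$; we may also assume that the only codes in $D_\alpha\cup D_\beta$ are codes of strictly increasing $(k+1)$-sequences. The key step is to produce a long enough strictly increasing sequence $w$ together with a $k$-subsequence $t$ of $w$ that is both $(\alpha,k+1)$- and $(\beta,k+1)$-pre-monochromatic, with colour ``in'' for both colourings. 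To get this I would run an Erd\"os-Rado book-keeping process: build a strictly increasing sequence step by step while recording, along the way, colour commitments for the $\alpha$- and the $\beta$-colour of those $(k+1)$-subsets whose largest element has not yet been chosen, admitting a new element only when it respects all commitments recorded so far. Only a bounded amount of recorded information is allowed to constrain any single choice, so this process is governed by an almost-finitary spread-law $\delta$; the positions at which enough elements have been chosen that $(k+1)$-almost-fullness of $D_\alpha$ and of $D_\beta$ forces an ``in''-coloured pre-monochromatic $k$-subset form a decidable set $B$ which, one checks, is a bar in $\mathcal{F}_\delta$. Applying the Almost-Fan Theorem, Theorem~\ref{T:aft}, and arguing as there, using Lemma~\ref{L:almost-finite}, the process must reach such a position after almost-finitely many steps; this yields the desired $w$ and $t$. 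On the elements of $w$ lying past $t$ the colourings $\alpha,\beta$ collapse, by pre-monochromaticity, to $k$-colourings whose ``in'' classes are $k$-almost-full (relative to that finite stretch of $w$, which is all that is used), so the induction hypothesis produces a $k$-subset that is ``in'' for both, and adjoining one further element of $w$ gives $s\in[\omega]^{k+1}$ with $s\in D_\alpha\cap D_\beta$, as required. Higher dimensions and the corollary giving the displayed ``either/or'' contradiction form then follow in the usual way (e.g.\ by taking $D_\beta=\omega\setminus D_\alpha$).

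\emph{The main obstacle.} Everything hinges on the design of the book-keeping spread-law $\delta$: it must be a genuine spread-law, it must be almost-finitary so that Theorem~\ref{T:aft} applies, the termination set $B$ must be decidable and actually bar $\mathcal{F}_\delta$, and the recorded colour commitments must carry exactly enough information to let the induction hypothesis finish the job. Balancing ``record enough'' against ``keep the branching almost-finite'' is the heart of the matter, and is precisely where one needs the Almost-Fan Theorem rather than the Fan Theorem, and where the argument diverges from the classical Erd\"os-Rado proof.
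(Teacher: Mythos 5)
Your outline follows the paper's proof in its essentials: induction on $k$, with the inductive step running the Erd\"os--Rado construction as an almost-finitary spread (the branching at a node of length $n$ in the tree of doubly pre-monochromatic increasing sequences is bounded by $4^{\binom{n}{k}}$, and finite branches are padded with a constant tail to obtain a genuine spread), a decidable bar $B$, and an appeal to Theorem \ref{T:aft}. The base case via Lemma \ref{L:almostfull}(i) and the reduction to $\zeta=Id$ by passing to $\alpha\circ\zeta$, $\beta\circ\zeta$ are also exactly as in the paper.

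The gap is in the one step you gesture at rather than carry out: the proof that $B$ bars the spread, which is where the induction hypothesis must enter. You propose to apply the hypothesis for dimension $k$ ``on the elements of $w$ lying past $t$ \dots relative to that finite stretch of $w$''. But $k$-almost-fullness is a statement quantified over all $\zeta\in[\omega]^\omega$; it has no meaning relative to a finite stretch, and invoking the $k$-dimensional hypothesis on a finite segment is the \emph{finite} Erd\"os--Rado argument, which is precisely what is not available constructively. What is needed (and what the paper does) is: given an arbitrary branch $\gamma$ of the spread $\mathcal{F}_\varepsilon$, first correct it to a genuine $\gamma^\ast\in[\omega]^\omega$ agreeing with $\gamma$ as long as $\gamma$ stays inside the Erd\"os--Rado tree; then collapse $\alpha^\dag,\beta^\dag$ along $\gamma^\ast$ to colourings $\alpha^\ast,\beta^\ast$ of $[\omega]^k$ via $t\mapsto\gamma^\ast\circ t^{+}$; then observe that each is $k$-almost-full only \emph{up to the escape clause} $\exists n[\overline{\gamma^\ast}n\notin D_\delta]$ (a failure of pre-monochromaticity witnesses an exit from the tree), and absorb that clause into modified colourings $\alpha^{\ast\ast},\beta^{\ast\ast}$ before the induction hypothesis can legitimately be applied. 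Only then does every branch yield either an ``in--in'' $(k{+}1)$-set or an exit from the tree, i.e.\ $Bar_{\mathcal{F}_\varepsilon}(B)$; the concluding application of Theorem \ref{T:aft}(ii) to an enumeration of the (infinite) tree rules out the exit alternative. Relatedly, your description of $B$ as ``the positions at which almost-fullness forces an in-coloured pre-monochromatic $k$-subset'' conflates the membership condition with the reason $B$ is a bar, and as stated is not decidable; $B$ should simply be the set of admitted nodes that either contain a $(k{+}1)$-subset lying in $D_{\alpha^\dag}\cap D_{\beta^\dag}$ or have already left the tree.
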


\begin{proof} We use induction, and start with the case $k=1$.

Let $\alpha, \beta$ be given such that $D_\alpha, D_\beta$ are $1$-almost-full.

Let $\zeta$ in $[\omega]^\omega$ be given. Find $\eta$ in $[\omega]^\omega$ such that $\forall n \in \omega[\langle\zeta\circ\eta(n)\rangle\in D_\alpha]$. Find $p$ such that $\langle \zeta\circ\eta(p)\rangle \in D_\beta$. Define $q:=\eta(p)$ and note: $\langle \zeta(q)\rangle \in D_\alpha\cap D_\beta$.

We thus see:  $\forall \zeta \in [\omega]^\omega\exists q[\langle \zeta(q)\rangle \in D_\alpha\cap D_\beta$, i.e. $D_\alpha\cap D_\beta$ is $1$-almost-full. 

\smallskip Now assume $k>0$ is given such that   for all $\alpha, \beta$, \\if $D_\alpha$, $D_\beta$ are $k$-almost-full, then $D_\alpha\cap D_\beta$ is $k$-almost-full.

 Let $\alpha, \beta$ be given such that $D_\alpha$, $D_\beta$ are $k+1$-almost-full.
 
 We want to prove: $D_\alpha\cap D_\beta$ is $k+1$-almost-full.
 
 Let $\zeta$ in $[\omega]^\omega$ be given.
 
 We want to prove: $QED:=\exists s\in [\omega]^{k+1}[\zeta \circ s \in D_\alpha\cap D_\beta]$.\footnote{QED: `quod \emph{est} demonstrandum', `what we have to prove' rather than `quod \emph{erat} demonstrandum', what we \emph{did} have to prove'.}

 \smallskip
 We define: $\alpha^\dag = \alpha\circ\zeta$ and $\beta^\dag=\beta\circ \zeta$.

 We  define $\delta$, as follows, by induction.  $\delta(\langle\;\rangle)=0$, and, for all $s$, for all $n$, $\delta(s\ast\langle n \rangle)=0$ if and only if $\delta(s)=0$ and $s$ is the largest $t<n$ such that $\delta(t)=0$ and $t\ast\langle n \rangle$ is both ($k+1,\alpha^\dag)$-pre-monochromatic and  $(k+1,\beta^\dag)$-pre-monochromatic.

The set $D_\delta$ has the property that, for all $s,n$, if $s\ast\langle n \rangle \in D_\delta$, then $s \in D_\delta$, and for this reason, is called a {\it tree}. $D_\delta $ should be called the \emph{$(k+1,\alpha^\dag, \beta^\dag)$-Erd\"os-Rado-tree}.

Note: $D_\delta\subseteq [\omega]^{<\omega}$. 

Note: for each $n$, there exists exactly one $s$ such that $s\ast\langle n \rangle \in D_\delta$. 

Note: for each $n$, for each $s$ in $D_\delta\cap \omega^n$,  there are at most $4^{\binom{n}{k}}$ numbers $i$ such that $s\ast\langle i \rangle\in D_\delta$. This is because, for each $n$,  the set $[n]^k$ has $\binom{n}{k}$ elements, and  for each  $t$ in $[n]^k$,   for each $i$, $(s\circ t)\ast\langle i \rangle$ belongs to one of the four sets $D_{\alpha^\dag} \cap D_{\beta^\dag}$, $D_{\alpha^\dag} \setminus D_{\beta^\dag}$, $D_{\beta^\dag} \setminus D_{\alpha^\dag}$ and $\omega\setminus(D_{\alpha^\dag} \cup D_{\beta^\dag})$.

	  Define $\varepsilon$ such that $\forall s[\varepsilon(s) = 0\leftrightarrow \exists t\exists n[t \in D_\delta\;\wedge \;s = t\ast\overline{\underline 0}n]]$. 
	  
	  Note that $\varepsilon$ is an almost-finitary spread-law and that the set $\mathcal{F}_\varepsilon$ is  an almost-finitary spread.
	 
	 Define $B:=\bigcup_n\{s\in \omega^n\mid\varepsilon(s)=0 \;\wedge\;(\exists t\in [n]^{k+1}[s\circ t \in D_{\alpha^\dag}\cap D_{\beta^\dag}]\;\vee\; s \notin D_\delta)\}$.

	    We  now prove that $B$ is a bar in the almost-finitary spread $\mathcal{F}_\varepsilon$.  
	   
	   \smallskip
	  Assume: $\gamma\in\mathcal{F}_\varepsilon$.

	 Define $\gamma^\ast$ such that, for each $n$, if $\overline {\gamma}(n+1) \in D_\delta$ 
	  then $\overline{\gamma^\ast}(n+1) = \overline\gamma(n+1)$, and, if not, then $\overline{\gamma^\ast}(n+1)\in[\omega]^{n+1}\setminus D_\delta$.
	 Note: $\gamma^\ast \in [\omega]^\omega$ and $\forall n[\overline \gamma n \neq \overline{\gamma^*}n\rightarrow\overline \gamma n \notin D_\delta]$.
	 
	Recall: $k>0$.
	For each $t$ in $[\omega]^k$, we let $t^+$ be the element of $[\omega]^{k+1}$ satifying $t\sqsubset t^+$ and $t^+(k)=t(k-1)+1$.
	 
	   Define $\alpha^\ast$ and $\beta^\ast$  such that, for each $t$ in $[\omega]^k$,\\
	    $\alpha^\ast(t) = \alpha^\dag\bigl(\gamma^\ast\circ t^+)$ and $\beta^\ast(t) = \beta^\dag\bigl(\gamma^\ast\circ t^+)$.
	    
	We now prove: $\forall \eta \in [\omega]^\omega \exists t\in [\omega]^k [\eta \circ t \in D_{\alpha^\ast} \;\vee \;\exists n[\overline{\gamma^\ast}n\notin D_\delta]].$
	\\Assume: $\eta\in[\omega]^\omega$. Find $s$ in $[\omega]^{k+1}$ such that $\gamma^\ast \circ \eta \circ s \in D_{\alpha^\dag}$.
	\\Define: $n:=(\eta \circ s)(k) +1$. \\Define $t:= \overline s k$ and $i:= s(k)$. \\Note: $\gamma^\ast\circ \eta \circ (t\ast\langle i \rangle) \in D_{\alpha^\dag}$, i.e. $\gamma^\ast\circ \bigl((\eta \circ t)\ast\langle \eta (i)\rangle\bigr) \in D_{\alpha^\dag}$. Conclude: \\{\it either}: $\gamma^\ast\circ(\eta\circ  t)^+ \in D_{\alpha^\dag}$.
 and therefore: 	$\eta\circ t \in D_{\alpha^\ast}$, \\{\it or}: $(\eta\circ t)^+ \neq  \eta\circ s$ and $\overline {\gamma^\ast}n$ is not $(k+1, \alpha^\dag)$-pre-monochromatic,\\ and, therefore: $\overline {\gamma^\ast} n \notin D_\delta$.
	
	One may  prove by a similar argument:

	$\forall \eta \in [\omega]^\omega\exists t\in [\omega]^k [\eta \circ t \in D_{\beta^\ast}\;\vee \;\exists n[\delta\bigl(\overline{\gamma^\ast}n\bigr)=0]]$.

	Define $\alpha^{\ast\ast},\beta^{\ast\ast}$  such that $\forall s[\alpha^{\ast\ast}(s)\neq 0 \leftrightarrow\bigl(\alpha^\ast(s)\neq 0\;\vee\;\exists n\le s[\delta(\overline{\gamma^\ast}n)=0]\bigr)]$ and $\forall s[\beta^{\ast\ast}(s)\neq 0 \leftrightarrow\bigl(\beta^\ast(s)\neq 0\;\vee\;\exists n\le s[\delta(\overline{\gamma^\ast}n)=0]\bigr)]$. 

 Conclude:
	$\forall \eta \in [\omega]^\omega \exists t \in [\omega]^k[\eta \circ t \in D_{\alpha^{\ast\ast}}] \;\wedge\; \forall \eta \in [\omega]^\omega \exists t \in [\omega]^k[\eta \circ t \in D_{\beta^{\ast\ast}}] $.

	Using the induction hypothesis,  we  conclude:

	$\forall \eta \in [\omega]^\omega \exists t \in [\omega]^k[\eta \circ t \in D_{\alpha^{\ast\ast}}\cap D_{\beta^{\ast\ast}}]$.

	Find $t$ in $[\omega]^k$ such that $t \in D_{\alpha^{\ast\ast}} \cap D_{\beta^{\ast\ast}}$.  
	
	\textit{Either:} $t \in D_{\alpha^\ast} \cap D_ {\beta^\ast}$ \textit{or:} $\exists n[\delta(\overline{\gamma^\ast}n) = 0]$, that is,
	 \\ \textit{either:} $\gamma^\ast\circ(t\ast\langle j \rangle) = (\gamma^\ast \circ t)\ast \langle \gamma^\ast( j)\rangle \in D_{\alpha^\dag} \cap D_{\beta^\dag}
	 $, where $j = t(k-1) + 1$, \textit{or:} $\exists n[\delta(\overline{\gamma^\ast} n) = 0]$. In both cases, we find $n$ such that $\overline {\gamma^\ast} n \in B$.
	  
	   \textit{Either:} $\overline \gamma n = \overline{\gamma^\ast} n$ \textit{or:} $\overline \gamma n \notin D_\delta$. In both cases: $\overline \gamma n \in B$.

	We thus see: $\forall \gamma\in\mathcal{F}_\varepsilon\exists n[\overline\gamma n \in B]$, i.e. $Bar_{\mathcal{F}_\varepsilon}(B)$.

	\smallskip
	We now use Theorem \ref{T:aft}(ii). \\Find $\eta$ in $[\omega]^\omega$ such that $D_\delta=\{\eta(n)\mid n\in \omega\}$. \\Then find $n,m$ such that $\overline{\eta(n)}m \in B$.  \\Conclude: $\exists t \in [\omega]^{k+1}[s\in D_{\alpha^\dag} \cap D_{\beta^\dag}]$, i.e. $\exists t \in [\omega]^{k+1}[ \zeta\circ t \in D_\alpha\cap D_\beta]$. 
	
	We thus see: $\forall \zeta \in [\omega]^{\omega}\exists t \in [\omega]^{k+1}[ \zeta\circ t \in D_\alpha\cap D_\beta]$, i.e. \\$D_\alpha\cap D_\beta$ is $(k+1)$-almost-full. \end{proof} 
	
	\subsection{The Bolzano-Weierstrass Theorem}\label{SS:bw} The Bolzano-Weierstrass Theorem:\begin{quote}`{\it An infinite sequence of reals bounded both from above and from below must have a convergent subsequence}'. \end{quote} is a strengthening of Dedekind's Theorem, see Subsection \ref{SS:dedekind}.  As Dedekind's Theorem, in its usual formulation, already fails to be true constructively, the case of the usual formulation of the Bolzano-Weierstrass Theorem is also hopeless.\begin{theorem}[Bolzano-Weierstrass-Theorem]\label{T:bolzanow} For all $\gamma$, \\if  $\forall \zeta \in [\omega]^\omega\exists n [ \frac{1}{2^n}<|q_{\gamma\circ\zeta(n} -q_{\gamma\circ\zeta(n+1)}|]$, then $\exists n[ 1<|q_{\gamma(n)}|]$.\end{theorem}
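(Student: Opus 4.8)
The plan is to obtain the statement as an application of the Principle of Open Induction in $[0,1]$ (Theorem~\ref{T:openinduction}), the real work being an argument that combines Ramsey's Theorem (Theorem~\ref{T:ramsey}) --- to cope with the oscillation of the sequence $q_{\gamma(0)},q_{\gamma(1)},\ldots$ --- with the climbing method behind Dedekind's Theorem (Theorem~\ref{T:dedekind}). A preliminary remark I would record is that the hypothesis $\forall\zeta\in[\omega]^\omega\exists n[\frac{1}{2^n}<|q_{\gamma\circ\zeta(n)}-q_{\gamma\circ\zeta(n+1)}|]$ is inherited by every subsequence, since $\eta\circ\psi\in[\omega]^\omega$ whenever $\eta,\psi\in[\omega]^\omega$; so this hypothesis may be used freely along any subsequence we choose to build. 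Then I would set
\[\mathcal{H}:=\{x\in\mathcal{R}\mid\exists n[x<q_{\gamma(n)}]\}\cup\{x\in\mathcal{R}\mid\exists n[q_{\gamma(n)}<-x]\},\]
an open subset of $\mathcal{R}$, for which $1\in\mathcal{H}$ is precisely the desired conclusion $\exists n[1<|q_{\gamma(n)}|]$. It therefore suffices to prove that $\mathcal{H}$ is progressive in $[0,1]$; Theorem~\ref{T:openinduction} then gives $[0,1]\subseteq\mathcal{H}$, in particular $1\in\mathcal{H}$.

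For the progressivity, let $x$ in $[0,1]$ be given with $[0,x)\subseteq\mathcal{H}$. For every $k$, using that $x-\frac{1}{2^k}$ (when nonnegative) lies in $[0,x)\subseteq\mathcal{H}$, and using the familiar fact that a real is, for any pair $p<r$ of rationals, either $<r$ or $>p$, one may for each $k$ either already produce $n$ with $x<q_{\gamma(n)}$ or with $q_{\gamma(n)}<-x$ --- in which case $x\in\mathcal{H}$ and we are done --- or else produce an index $m_k$ together with a tag in $\{+,-\}$ such that $q_{\gamma(m_k)}$ lies within $\frac{1}{2^k}$ of $x$, respectively of $-x$. If we never win, the sequence accumulates at $x$ and at $-x$. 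Here Ramsey's Theorem does the crucial work: the naive move of passing to ``the indices tagged $+$'', or to ``the indices tagged $-$'', one of which ought to be cofinal, is an infinitary case distinction not available constructively, and the almost-full form of Ramsey (Theorem~\ref{T:ramsey}), applied to a colouring of pairs that records, for the chosen witnesses, on which side and in which order they lie, is what lets one pass to an infinite subsequence along which the values approach $x$ (or $-x$) in a controlled, one-sided fashion. Along such a subsequence the inherited hypothesis produces a consecutive jump exceeding $\frac{1}{2^n}$ at position $n$ of that subsequence, whereas two consecutive terms of it, both caught in a one-sided neighbourhood of $x$ (or of $-x$) of width much smaller than $\frac{1}{2^n}$, cannot differ that much --- unless some term has in fact overshot the level, i.e.\ $x<q_{\gamma(m)}$ or $q_{\gamma(m)}<-x$; this is exactly the climbing step of Dedekind's Theorem, with the neighbourhoods and the thresholds $\frac{1}{2^n}$ played off against one another by thinning the subsequence.

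The main obstacle I expect is this last combination. First, Ramsey must be used in its \emph{almost-full} form --- classical Ramsey, which would hand us a monochromatic subsequence outright, is not available here --- and one has to check that the almost-full version, together with the strong, witness-producing reading of the hypothesis, still yields enough one-sided monotone structure to run the climbing argument; fitting these two mechanisms together, and doing so without recourse to Markov's Principle, is the delicate point. Second, the bookkeeping of the thresholds must be arranged so that ``positively fails to converge'' genuinely clashes with ``stays, at position $n$, in a neighbourhood of width $<\frac{1}{2^n}$''. The purely real-analytic steps --- choosing rationals between given reals, the order trichotomy just mentioned, rewriting $\mathcal{H}$ with rational data so that Definition~\ref{D:opensetsmeasurable}'s notion of openness applies literally, and the degenerate case $x=_\mathcal{R}0$, where $x\in\mathcal{H}$ amounts to $\exists n[q_{\gamma(n)}\neq 0]$ and follows at once from the hypothesis --- are routine and I would not spell them out. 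Throughout, the conclusion is kept in the disjunctive shape $1<q_{\gamma(n)}\;\vee\;q_{\gamma(n)}<-1$, which is exactly what the packaging of $\mathcal{H}$ and Open Induction permit, so that one never has to decide in which direction the sequence runs away.
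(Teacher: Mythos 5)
You identify the right ingredients --- Open Induction in $[0,1]$, Ramsey's Theorem in its almost-full form, and a Dedekind-style climbing argument --- but you arrange them differently from the paper, and the part you leave as a sketch is precisely the part that does all the work. The paper first proves two auxiliary statements $(\ast)$ and $(\ast\ast)$: for every $\zeta\in[\omega]^\omega$, either some pair of consecutive terms of $q_{\gamma\circ\zeta(\cdot)}$ strictly decreases, or $\exists n[1<q_{\gamma(n)}]$ (and symmetrically with ``strictly increases'' and $\exists n[q_{\gamma(n)}<-1]$). Each of $(\ast),(\ast\ast)$ is established by a monotonizing trick (replace $\gamma\circ\zeta$ by a non-decreasing variant that, once $\gamma\circ\zeta$ first decreases, forever after jumps by $1$) followed by an appeal to Dedekind's Theorem (Theorem \ref{T:dedekind}); that is where Open Induction is used, once per monotone direction, in exactly the clean form it was designed for. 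Ramsey's Theorem (Theorem \ref{T:ramsey}) is then applied exactly once, at the top level, to the $2$-almost-full decidable sets encoding $(\ast)$ and $(\ast\ast)$: their intersection being $2$-almost-full yields an index pair that would simultaneously witness a strict decrease and a strict increase, an impossibility, so one of the escape clauses must have fired.

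Your plan inverts this order: a single call to Open Induction on the union $\mathcal{H}$, with Ramsey to be invoked inside the proof of progressivity. That is a genuinely different decomposition and is not unreasonable in spirit, but the step you call ``the delicate point'' is where your argument stops being an argument. You gesture at a colouring of pairs and at extracting ``enough one-sided monotone structure to run the climbing argument,'' but you do not say what the colouring is nor what Theorem \ref{T:ramsey} actually delivers. Bear in mind that Theorem \ref{T:ramsey} does not hand you a monochromatic subsequence (the classical reading you rightly disavow); it only asserts that the intersection of two $k$-almost-full decidable sets is $k$-almost-full. It is far from clear how to pass from that to a one-sided subsequence along which a climbing step closes --- in effect you would have to reconstruct the content of $(\ast)$ and $(\ast\ast)$ inside the progressivity proof, at which point the paper's two-step route is doing the same work more transparently. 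So: right tools, plausible alternative skeleton, but the central step is missing rather than merely unwritten, and the missing step is the whole theorem.
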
 
	Note that the condition: `$\forall \zeta \in [\omega]^\omega\exists n [ \frac{1}{2^n}<|q_{\gamma\circ\zeta(n} -q_{\gamma\circ\zeta(n+1)}|]$' says: every subsequence of the sequence $q_{\gamma(0)}, q_{\gamma(1)}, \ldots$ positively fails to be convergent.
	
	Also note that, once one sees how to obtain the conclusion `$\exists n[ 1 <|q_{ \gamma(n)}|]$' , one will also see how to obtain the conclusion:  `$\forall M\in \mathbb{Q}\exists n[ M<|r_{\gamma(n)}|]$', i.e. the sequence $q_{\gamma(0)}, q_{\gamma(1)}, \ldots$  grows beyond all bounds.
	
	Our proof uses both Dedekind's Theorem and Ramsey's Theorem. 
	\begin{proof}Let $\gamma$ be given such that $\forall \zeta \in [\omega]^\omega\exists n [ \frac{1}{2^n}<|q_{\gamma\circ\zeta(n+1)} -q_{\gamma\circ\zeta(n)}|]$.

	\smallskip

	We first prove:\begin{quote} $(\ast)$ $\forall \zeta \in [\omega]^\omega\exists n[q_{\gamma\circ\zeta(n+1)}< q_{\gamma\circ\zeta(n)} \;\vee \;  1<q_{\gamma(n)}]$. \end{quote}
	
	Let $\zeta$ in $[\omega]^\omega$ be given.  Define $\delta=\gamma\circ\zeta$.
	
	Define $\delta^\ast$ such that $\delta^\ast(0)=\delta(0)$ and, for each $n$, {\it if} $\forall i \le n[q_{\delta(i)}\le q_{ \delta(i+1)}]$, then $\delta^\ast(n+1)=\delta(n+1)$ and, {\it if not}, then $q_{\delta^\ast(n+1)}= q_{\delta^\ast(n)}+1$.
	
	Note: $\forall n[q_{\delta^\ast(n)}\le q_{\delta^\ast(n+1)}]$.
	
	We now prove: $\forall \eta \in [\omega]^\omega \exists n[q_{\delta^\ast\circ\eta(n)}+\frac{1}{2^n} < q_{\delta^\ast\circ \eta(n+1)}]$. 
	
	Let $\eta$ in $[\omega]^\omega$ be given. Find $n$ such that $\frac{1}{2^n}<|q_{\gamma\circ\zeta\circ\eta(n+1)}-q_{\gamma\circ \zeta \circ \eta(n)}|$. {\it Either} $q_{\delta^\ast\circ \eta(n)}=q_{\gamma\circ\zeta\circ\eta(n)}$ and $q_{\delta^\ast\circ \eta(n+1)}=q_{\gamma\circ\zeta\circ\eta(n+1)}$ and $q_{\delta^\ast\circ\eta(n)}+\frac{1}{2^n} < q_{\delta^\ast\circ \eta(n+1)}$, {\it or} $\exists m<\zeta\circ\eta(n+1)[q_{\delta(m+1)}< q_{\delta(m)}]$. In the latter case, for all sufficiently large $i$, $q_{\delta^\ast\circ \eta(i)} + 1\le q_{\delta^\ast\circ\eta(i+1)}$. 
	
	\smallskip Using Theorem \ref{T:dedekind}, we find $n$ such that $1<q_{\delta^\ast(n)}$. \\{\it If} $q_{\delta^\ast(n)} = q_{\delta(n)}$, we conclude: $\exists m[1<q_{\gamma(m)}]$, and {\it if not}, \\we conclude: $\exists i\le n[q_{\delta(i+1)}<q_{\delta(i)}]$, i.e.  $\exists i\le n[q_{\gamma\circ\zeta(i+1)}<q_{\gamma\circ\zeta(i)}]$.
	
	 This concludes our proof of $(\ast)$. 
	 
	 \smallskip One may also prove: 
	 \begin{quote} $(\ast\ast)$ $\forall \zeta \in [\omega]^\omega\exists n[q_{\gamma\circ\zeta(n)}< q_{\gamma\circ\zeta(n+1)} \;\vee \;  q_{\gamma(n)}<-1]$. \end{quote}
	 
	 (Find $\gamma^\ast$ such that $\forall n[q_{\gamma^\ast(n)}=-q_{\gamma(n)}]$ and use $(\ast)$, but now for $\gamma^\ast$ rather than for $\gamma$ itself.)
	 
	 \smallskip Using both $(\ast)$ and $(\ast\ast)$ and Theorem \ref{T:ramsey}, conclude: \begin{quote} $\exists n[q_{\gamma(n)}<-1\;\vee\; 1<q_{\gamma(n)}]$, i.e. $\exists n[1<|q_{\gamma(n)}|]$. \end{quote}

	 \end{proof}

\subsection{The Paris-Harrington-Ramsey  Theorem}\label{SS:phr}
F.P.  Ramsey proved the Infinite Ramsey Theorem, in \cite{ramsey}, in order to make his reader  gain experience before attacking the Finite Ramsey Theorem. Later, it turned out that one may prove the Finite Ramsey Theorem from the Infinite Ramsey Theorem  by a so-called `{\it compactness argument}', see \cite{debruijnerdos}.  Paris and Harrington then saw that one may prove also certain strengthenings of the Finite Ramsey Theorem from the Infinite Ramsey Theorem, see \cite[Sections 1.5 and 6.3]{graham}.  One such statement turned out to be expressible in the language of first-order arithmetic but unprovable from Peano's axioms. We want to show that the `compactness argument' works also intuitionistically, thanks to the intuitionistic version of the Infinite Ramsey Theorem proven in Subsection \ref{SS:ramsey} and the Fan Theorem. 

We need some terminology in order to introduce the Finite Ramsey Theorems.

\begin{definition}  
     For all $r>0$, $r^{<\omega}:=\{c\mid \forall i<length(s)[c(i)<r]\}$.
 
 \smallskip For all positive integers $m,k$, $[m]^k:=\{s\in [\omega]^k\mid\forall i<k[s(i) < m]\}$. 
 
 \emph{One may consider elements of $[m]^k$ as $k$-element subsets of $m=\{0,1,\ldots, m-1\}$}. 
   
   \smallskip For all positive integers  $c,m,k,r$, \\$c:[m]^k\rightarrow r$ if and only if $c\in r^{<\omega}$  and $\forall s \in [m]^k[s <\mathit{length}(c)]$. 
   
  \emph{One may consider $c:[m]^k\rightarrow r$ as an $r$-colouring of the $k$-element subsets of $m$.}
   
   \smallskip For all positive integers $c,k,m,r,t,n$, if $c:[m]^k\rightarrow r$ and $n\le m$ and $t\in [m]^n$, then \emph{$t$ is $c,k$-monochromatic} if and only if $\exists j<r\forall u \in [n]^k[c(t\circ u)=j]$.

   \smallskip
   For all positive integers $k,r,n,M$, $M\rightarrow (n)^k_r$ if and only if,\\ for every $c:[M]^k\rightarrow r$, there exists    $t$ in $[M]^n$  such that  $t$ is $c,k$-monochromatic. 
   
   \emph{If $M\rightarrow (n)^k_r$, then, for every $r$-colouring $c$ of the $k$-element subsets of $M=\{0,1, \ldots, M-1\}$ there exists an $n$-element subset $A=\{t(0), t(1), \ldots, t(n-1)\}$ of $M$ such that all $k$-element subsets of $A$ obtain, from $c$, one and the same colour}. 
   
   \smallskip $t\in\bigcup_n[\omega]^n$ is \emph{relatively large} if and only if $n>0$ and $length(t)\ge t(0)$.
   
   \emph{The expression \emph{relatively large} is used in \cite{parisharrington}. A finite subset $A$ of $\omega$ is relatively large if the number of elements of $A$ is at least as big as the smallest member of $A$.}
   
   \smallskip
   For all positive integers $k,r,n,M$, $M\rightarrow_{\ast} (n)^k_r$ if and only if,\\ for every $c:[M]^k\rightarrow r$,  there exist   $t$  in $[M]^{<\omega}$ such that $length(t)\ge n$ and $t$ is relatively large and  $c,k$-monochromatic.

   \end{definition}

	 We want to call a collection $A$ of finite subsets of $\omega$ \emph{omnipresent} if and only if every infinite subset of $\omega$  has a subset in $A$.  With our terminology, the definition takes the following form.
	 
	 \begin{definition}\label{D:omnipresent} $A\subseteq [\omega]^{<\omega}$  is called \emph{ omnipresent} if and only if \\$\forall \zeta\in[\omega]^\omega\exists s \in [\omega]^{<\omega}[\zeta\circ s \in A]$. 
	 
	 \smallskip For every positive integer $r$, $r^\omega:=\{\chi\mid\forall i[\chi(i)<r]\}$. 
	 
	 \emph{Note that $r^\omega$ is a fan}. 
	 
	 \emph{One may consider an element $\chi$ of $r^\omega$ as an 
	 $r$-colouring of $\omega$}.  \end{definition}
	 \begin{theorem}\label{T:omnipresent} Let $A\subseteq [\omega]^{<\omega}$ be a decidable subset of $\omega$ and omnipresent. \begin{enumerate}[\upshape (i)] \item For all positive integers $k,r$, for all $\chi$ in $r^\omega$, the set \\ $B=B(A, k,r, \chi):=\{s\in A\mid s \;is\; (\chi, k)$-$monochromatic\}$ is omnipresent. \item For all positive integers $k,r$, the set $C:= C(A,k,r):=\\\{s \in [\omega]^{<\omega}\mid \forall \chi\in r^\omega\exists t\in [\omega]^{<\omega}[s\circ t \in A \;\wedge\;s\circ t\;is \;(\chi,k)$-$monochromatic ]\}$ \\is a bar in $[\omega]^\omega$. \end{enumerate} \end{theorem}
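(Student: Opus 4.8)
```latex
\textbf{Proof proposal for Theorem \ref{T:omnipresent}.}

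The plan is to prove (i) by the same ``substitution into a canonical proof'' device used for the Almost-Fan Theorem (Theorem \ref{T:aft}), applied dimension by dimension in $k$, and to derive (ii) from (i) together with the Almost-Fan Theorem and an Erd\H os--Rado style argument modelled on the proof of Theorem \ref{T:ramsey}. For (i), first I would handle $k=1$: given $\chi$ in $r^\omega$, the sets $D_j:=\{n\mid \chi(n)=j\}$ for $j<r$ cover $\omega$, so at least one of them must be ``frequent'' along any $\zeta$; more precisely, since $A$ is omnipresent and $r$ is finite, given $\zeta$ in $[\omega]^\omega$ one thins out $\zeta$ to a subsequence $\eta$ on which $\chi\circ\zeta$ is constant (here I use that one may, along $\zeta$, pick an infinite monochromatic subsequence because the colouring takes only finitely many values — this is precisely the kind of statement that the $k=1$ case of Ramsey's Theorem, Theorem \ref{T:ramsey}, supplies, intersecting the finitely many $D_{\chi=j}$-complements), and then uses omnipresence of $A$ on that subsequence. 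For $k>1$, I would run the induction on $k$: to make $s$ be $(\chi,k)$-monochromatic it suffices to first make a longer initial block $(\chi,k)$-pre-monochromatic (so that the $k$-th colour depends only on the first $k-1$ coordinates of the $k$-subset) and then apply the induction hypothesis for $k-1$ to the induced $(k-1)$-colouring. The pre-monochromatic thinning is again a $k=1$-type argument applied finitely many times, and omnipresence is preserved at each stage by Lemma \ref{L:almostfull} (intersections of almost-full/omnipresent sets along a sequence).

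For (ii), fix positive integers $k,r$. The set $C=C(A,k,r)$ is a decidable subset of $\omega$ (for a given $s$ and $\chi$, being $(\chi,k)$-monochromatic is decidable, and quantification over $\chi$ in the fan $r^\omega$ can be handled once we have a bar, which is the point), and it is clearly a tree closed under extension in the relevant sense. The key claim is $Bar_{[\omega]^\omega}(C)$, i.e.\ every $\zeta$ in $[\omega]^\omega$ has an initial segment $\overline\zeta n$ lying in $C$. To see this, fix $\zeta$; I want $n$ such that for every $\chi$ in $r^\omega$ there is $t$ with $(\overline\zeta n)\circ t\in A$ and $(\overline\zeta n)\circ t$ being $(\chi,k)$-monochromatic. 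Suppose not, for each $n$; then by the Fan Theorem applied to $r^\omega$ (Theorem \ref{T:fantheorem}), I get, for each $n$, a single $\chi_n$ witnessing the failure up to level $n$, and by compactness of $r^\omega$ a single $\chi$ that fails at every level — contradicting part (i) applied to $\chi$ and to the subsequence $\zeta$ (which produces infinitely many monochromatic $t$'s inside $A$ along $\zeta$, and hence one inside some finite $\overline\zeta n$). Spelling this out: by (i), $B(A,k,r,\chi)$ is omnipresent, so $\exists s[\zeta\circ s\in B(A,k,r,\chi)]$; take $n$ larger than $\max(\zeta\circ s)$'s index; then $\overline\zeta n\in C$ witnessed by $\chi$ — wait, we need it for \emph{all} $\chi$ simultaneously, which is exactly where the Fan Theorem over $r^\omega$ does its work, converting ``for each $\chi$ some $n$'' into ``some $n$ for all $\chi$''. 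So the structure is: (i) gives, for each $\chi$, a bar of $C_\chi:=\{s\mid \exists t[s\circ t\in A\wedge s\circ t\ (\chi,k)\text{-monochromatic}]\}$ in $[\omega]^\omega$; the continuity of the map $\chi\mapsto$ (witnessing $n$) together with the Fan Theorem on $r^\omega$ yields a uniform bound, i.e.\ $C=\bigcap_{\chi}C_\chi$ is itself a bar in $[\omega]^\omega$.

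The main obstacle I anticipate is the uniformity step in (ii): turning the family of bars $\{C_\chi\}_{\chi\in r^\omega}$ into a single bar for their intersection. Pointwise (for each fixed $\chi$) this is just part (i); the issue is that the level $n$ at which $\overline\zeta n$ enters $C_\chi$ depends on $\chi$, and one must extract a bound uniform in $\chi\in r^\omega$. This is exactly a situation where Brouwer's Continuity Principle (the relevant instance is essentially Theorem \ref{T:bcpext} for the fan $r^\omega$, or directly the reasoning behind the Fan Theorem, Theorem \ref{T:fantheorem}): from $\forall \chi\in r^\omega\exists n[\overline\zeta n\in C_\chi]$ one gets, by continuity, a neighbourhood-uniform $n$, and since $r^\omega$ is a fan, finitely many such neighbourhoods cover $r^\omega$, giving a single $n$ that works for every $\chi$. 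Everything else — decidability of $A$, $B$, $C$; closure of omnipresence under finite intersection via Lemma \ref{L:almostfull}; the pre-monochromatic reduction in the induction on $k$ — is routine once this uniformity is in place.
```
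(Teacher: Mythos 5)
Your plan for part (ii) is essentially the paper's: for a fixed $\zeta$, part (i) yields, for each $\chi\in r^\omega$, a level $n$ at which $\overline\zeta n$ has a monochromatic element of $A$ below it; because this depends only on a finite initial segment $\overline\chi m$ of $\chi$ (once $n$ is found, only finitely many $\chi$-values matter), the relevant set of approximations is a bar in the fan $r^\omega$, and the Fan Theorem produces a uniform $m$ with $\overline\zeta m\in C$. Your middle paragraph momentarily drifts into a classical-sounding reductio (``suppose not, \dots by compactness a single $\chi$ that fails at every level''), but you recover in your final paragraph with the correct positive formulation, and the paper does exactly this.

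Part (i), however, has a genuine gap. You propose to ``thin out $\zeta$ to a subsequence $\eta$ on which $\chi\circ\zeta$ is constant'' and attribute this to ``the $k=1$ case of Ramsey's Theorem, Theorem~\ref{T:ramsey}.'' Extracting an infinite monochromatic subsequence from a finite colouring is exactly the infinite pigeonhole principle, and it fails constructively: deciding which of the $r$ colours recurs infinitely often is an instance of LPO. Theorem~\ref{T:ramsey} does not supply it --- its conclusion is only that an intersection of $k$-almost-full decidable sets is again $k$-almost-full, a much weaker, existence-free statement, and the Brouwerian counterexample given just before Theorem~\ref{T:ramsey} (with $k_{99}$) is precisely a case where the usual ``extract a monochromatic piece'' conclusion is unavailable. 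Your induction on $k$ via pre-monochromatic thinning inherits this defect at the base case, and in any event it amounts to re-proving a large part of Theorem~\ref{T:ramsey} rather than using it.

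The paper's route for (i) circumvents this by a re-colouring trick and uses Theorem~\ref{T:ramsey} once, as a black box. Given $\zeta$, define $\chi^\ast:\omega\to r+1$ by putting $\chi^\ast(u):=r$ whenever some $s\le u$ already satisfies $\zeta\circ s\in A$ and $\zeta\circ s$ is $(\chi,k)$-monochromatic, and $\chi^\ast(u):=\chi(u)$ otherwise. For each $j<r$ one checks, using omnipresence of $A$, that $\{u\in[\omega]^k\mid \chi^\ast(u)=r\;\vee\;\chi^\ast(u)\neq j\}$ is $k$-almost-full: given $\eta$, find $s$ with $\zeta\circ\eta\circ s\in A$; either it is already monochromatic (and then $\chi^\ast$ takes the value $r$ from some point on), or two of its $k$-subsets have different $\chi$-colours, one of which is $\neq j$. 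Intersecting these $r$ sets via Theorem~\ref{T:ramsey} gives $u$ with $\chi^\ast(u)=r\vee\forall j<r[\chi^\ast(u)\neq j]$, forcing $\chi^\ast(u)=r$, which unwinds to the desired $s$. This turns a (constructively false) extraction into a (constructively sound) positive almost-fullness statement, which is exactly the move you need to make.
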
 
	 
	 \begin{proof} (i) Let $A\subseteq [\omega]^{<\omega}$ be a decidable subset of $\omega$ and  omnipresent. \\Let $k>0, r>0$ and $\chi$ in $r^\omega$ be given. \\We are going to prove that $B:=\{s\in A\mid s \;is\; (\chi, k)$-$monochromatic\}$ \\is omnipresent.
	 
	 \smallskip Let $\zeta \in [\omega]^\omega$ be given.
	 
	  We want to prove $QED:= \exists s[\zeta\circ s \in A\;\wedge\;\zeta\circ s \;is\; (\chi,k)$-$monochromatic]$.
	  
	 Let $j<c$ and $\eta$ in $[\omega]^\omega$ be given. Find $s$ such that $\zeta\circ \eta \circ s  \in A$.\\ Note: either $\zeta\circ\eta\circ s \;is\; (\chi,k)$-$monochromatic]$ or $\exists t \in [\omega]^k[\chi(\zeta\circ \eta\circ s\circ t)\neq j]$.
	
	Define $\chi^\ast:\omega\rightarrow r+1$ such that, for  all $u$ in $[\omega]^k$, \\{\it if} $\exists s\le u[\zeta \circ s \in A\;\wedge\; \zeta\circ s \; is\; (\chi,k)$-$monochromatic]$, then $\chi^\ast(u)=r$ and, \\{\it if not}, then $\chi^\ast(u)=\chi(u)$. 
	
	Note: $\forall j<r\forall \eta \in [\omega]^\omega \exists u \in [\omega]^k[\chi^\ast (u)= r \;\vee\; \chi^\ast(u) \neq j]$.
	
	 Ramsey's Theorem, the infinite case, (Theorem \ref{T:ramsey}), implies that the intersection of $r$ decidable subsets of $\omega$ each of which is $k$-almost-full, is $k$-almost-full itself. 
	 
	 Conclude:  $\forall \eta \in [\omega]^\omega \exists u \in [\omega]^k\forall j<r[\chi^\ast (u)= r \;\vee\; \chi^\ast(u) \neq j]$. 
	
	Conclude: $\exists u \in [\omega]^k[\chi^\ast(u)=r]$ and $QED$. 
	\\We thus see that the set $B=B(A,k,r,\chi)$  is omnipresent.

	\smallskip (ii) Let positive integers $k,r$ and $\zeta$ in $[\omega]^\omega$ be given.
	
	By (i), for each $\chi$ in $r^\omega$, one may find $n$ such that \\$\exists t \in [n]^{<n+1}[\overline \zeta n \circ t \in A \;\wedge\; \overline \zeta n \circ t \;is\;(\chi, k)$-$monochromatic]$. 
	
	Conclude that, for each $\chi$ in $r^\omega$, one may find $m$ such that, for some $n<m$, \\$\exists t \in [n]^{<n+1}[\overline \zeta n \circ t \in A \;\wedge\; \overline \zeta n \circ t \;is\;(\overline \chi m, k)$-$monochromatic]$.
	
	Applying the Fan Theorem, one may find $m$ such that for each $\chi$ in $r^\omega$, for some $n<m$, $\exists t \in [n]^{<n+1}[\overline \zeta n \circ t \in A \;\wedge\; \overline \zeta n \circ t \;is\;(\overline \chi m, k)$-$monochromatic]$.
	
	Clearly, $\overline \zeta m \in C(A,k,r)$. 
	\\We thus see that the set $C=C(A,k,r)$ is a bar in $[\omega]^\omega$.  \end{proof}
	
	\begin{corollary}\label{C:finiteramsey}$\;$ \begin{enumerate}[\upshape (i)] \item The Finite Ramsey Theorem: $\forall k \forall r \forall n \exists M[M\rightarrow (n)^k_r]$. \item The Paris-Harrington-Ramsey Theorem: $\forall k \forall r \forall n \exists M[M\rightarrow_\ast (n)^k_r]$. \end{enumerate} \end{corollary}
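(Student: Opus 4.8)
The plan is to read both statements off Theorem \ref{T:omnipresent}(ii) by a simple coding argument; the Fan Theorem and the Infinite Ramsey Theorem are already absorbed into that result. Fix positive integers $k,r,n$ (the cases $k=0$, $r=0$ or $n=0$ being immediate). For (i) I would work with $A_0:=\{s\in[\omega]^{<\omega}\mid \mathit{length}(s)\ge n\}$, and for (ii) with $A_1:=\{s\in[\omega]^{<\omega}\mid \mathit{length}(s)\ge n \;\wedge\; \mathit{length}(s)\ge s(0)\}$, i.e.\ the relatively large finite strictly increasing sequences of length at least $n$; note $A_1\subseteq A_0$. Both are patently decidable subsets of $\omega$, and both are omnipresent: given $\zeta$ in $[\omega]^\omega$, putting $l:=\max(n,\zeta(0))$ we get $\overline\zeta l=\zeta\circ\langle 0,1,\ldots,l-1\rangle\in A_1\subseteq A_0$.

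By Theorem \ref{T:omnipresent}(ii), the sets $C(A_0,k,r)$ and $C(A_1,k,r)$ are bars in $[\omega]^\omega$. I would next invoke the bar property for the particular sequence $\iota:=\langle 0,1,2,\ldots\rangle$, which is an element of $[\omega]^\omega$, obtaining $M$ with $\overline\iota M=\langle 0,1,\ldots,M-1\rangle\in C(A_1,k,r)$; since $A_1\subseteq A_0$ implies $C(A_1,k,r)\subseteq C(A_0,k,r)$, this same $M$ lies in $C(A_0,k,r)$ as well. Then I would claim $M\rightarrow_\ast(n)^k_r$, and hence also $M\rightarrow(n)^k_r$. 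Given a colouring $c:[M]^k\rightarrow r$, extend it to $\chi\in r^\omega$ by setting $\chi(s)=c(s)$ for $s\in[M]^k$ and $\chi(s)=0$ otherwise. From $\overline\iota M\in C(A_1,k,r)$, applied to this $\chi$, I obtain $t$ with $\overline\iota M\circ t\in A_1$ and $\overline\iota M\circ t$ being $(\chi,k)$-monochromatic. For the composition $\overline\iota M\circ t$ to be defined, every entry of $t$ is $<M$, and then $\overline\iota M\circ t=t$; hence $t\in[M]^{<\omega}$ is relatively large of length $\ge n$ and is $(\chi,k)$-monochromatic.

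The remaining step is transferring monochromaticity from $\chi$ back to $c$. Writing $l:=\mathit{length}(t)$, for every $u\in[l]^k$ the sequence $t\circ u$ is strictly increasing with all entries $<M$, so $t\circ u\in[M]^k$ and therefore $\chi(t\circ u)=c(t\circ u)$; consequently $\exists j<r\,\forall u\in[l]^k[c(t\circ u)=j]$, i.e.\ $t$ is $c,k$-monochromatic, which is precisely what $M\rightarrow_\ast(n)^k_r$ asks. For (i) I would truncate $t$ to its first $n$ terms: since $(\overline t n)\circ u=t\circ u$ for every $u\in[n]^k$, the sequence $\overline t n\in[M]^n$ is $c,k$-monochromatic, giving $M\rightarrow(n)^k_r$. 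Since $k,r,n$ were arbitrary, both parts follow. The whole argument is bookkeeping once Theorem \ref{T:omnipresent}(ii) is in hand; the only points I expect to require care are checking that $A_1$ is omnipresent (done via the initial segments $\overline\zeta l$) and noticing that all the codes $t\circ u$ entering the monochromaticity clause lie inside $[M]^k$, where $\chi$ was arranged to agree with $c$.
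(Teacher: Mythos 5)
Your proof is correct and follows essentially the same route as the paper: choose a decidable omnipresent $A$, apply Theorem \ref{T:omnipresent}(ii), and evaluate the resulting bar at the identity sequence $Id_\omega$ to extract $M$. The only differences are cosmetic bookkeeping (you take length $\ge n$ rather than length exactly $n$, and you spell out the extension of $c$ to $\chi$ and the truncation of $t$, which the paper leaves implicit); your $A_1$ for part (ii) even builds in the requirement $length(s)\ge n$ that the paper's ``repeat the argument'' glosses over.
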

	
	\begin{proof} (i) Define $A:=[\omega]^n$. Note that $A$ is omnipresent. Let positive integers $k,r$ be given. Using Theorem \ref{T:omnipresent}, conclude that the set \\$C=\{s\in \omega]^{<\omega}\mid \forall \chi \in r^\omega\exists t\in [\omega]^{<\omega}[s\circ t \in A\;\wedge\; s\circ t\; is\; (\chi$-$k)$-$monochromatic]\}$ is a bar in $[\omega]^\omega$. Consider $Id_\omega$, the element of $[\omega]^\omega$ such that $\forall n[Id_\omega(n)=n]$. Find $M$ such that $\overline{Id_\omega}M \in C$ and note: $M\rightarrow (n)^k_r$.
	
	(ii) Start with $A:=\bigcup_n \{s\in [\omega]^{n+1}\mid s(0) \le n\}$. Note that $A$ is omnipresent  and repeat the argument given for (i). \end{proof}
	
	\section{Notation and conventions}\label{S:notat}
\subsection{} $\omega$ denotes the set of the natural numbers $0,1,2,\ldots$.
 
 \smallskip We use $a, b, \ldots m,n, \ldots p,q,r,s, \ldots$ as variables over $\omega$.
 
 We assume a  bijective function $J:\omega\times\omega\rightarrow \omega$ has been defined with inverse functions $K,L:\omega\rightarrow\omega$ such that $\forall n[J\bigl(K(n), L(n)\bigr)=n]$.
 
 $\forall m\forall n[(m,n):=J(m,n)]$ and $\forall n[n':=K(n)]$ and $\forall n[n'':=L(n)]$. 
 
 \smallskip We assume a function $<\;>:\bigcup_k \omega^k \rightarrow \omega$ has been defined that is a bijection.
 
 If  $\langle m_0, m_1, \ldots,m_{k-1}\rangle =s$ then $length(s) =k$, and for each $i<k$, $s(i)=m_i$.

 $\omega^k:=\{s\mid length(s) = k\}$.

\smallskip $\omega^{<\omega}:=\bigcup_k \omega^k$.

\smallskip For all $k,l$, for all $s$ in $\omega^k$, for all $t$ in $\omega^l$, $s\ast t$ is the element $u$ of $\omega^{k+l}$ satisfying $\forall i<k[u(i)=s(i)]$ and $\forall j<l[u(k+j)=t(j)]$.

\smallskip
For every $s$ in $\omega$, for every  $A\subseteq\omega$, 
$s\ast A := \{s\ast t \mid t \in A\}.$

\smallskip For all $s,t$ such that $\forall n<length(t)[t(n)<length(s)]$, $s\circ t$ is the number $u$ satisfying $length(u)=length(t)$ and\\ $\forall n<length(u)[u(n)=s\bigl(t(n)\bigr)]$. 

\smallskip $\langle\;\rangle$ denotes the \textit{empty sequence}, that is, the unique $s$ such that \\$length(s)=0$.

\smallskip $s\sqsubseteq t\leftrightarrow \exists u[t=s\ast u]$.

\smallskip $s\sqsubset t\leftrightarrow (s\sqsubseteq t \;\wedge\; s\neq t)$. 

\smallskip $s\perp t\leftrightarrow \neg(s\sqsubseteq t \;\vee\; t\sqsubseteq s)$.

\smallskip
For all $s$, for all $n\le length(s)$, $\overline s n$ is the unique $u$ in $\omega^n$ such that $u\sqsubseteq s$.

\smallskip For all $s$, $n$, $s^n$ is the largest $u$ such that \\$\forall m<length(u)[\langle n \rangle\ast m<length(s) \;\wedge\; u(m)=s(\langle n \rangle \ast m)]$. 

\smallskip $2^{<\omega}:=\{s\mid\forall n<length(s)[s(i)<2]\}$. 

\smallskip 
$Bin_n:=\{s\in 2^{<\omega}\mid length(s)=n\}$. 

\smallskip
 $[\omega]^k:=\{s\in \omega^k\mid\forall n<k-1[s(n)<s(n+1)]\}$. 

\smallskip $[\omega]^{<\omega}:=\bigcup_k [\omega]^k$.

 \smallskip $\mu n[P(n)]=k\;\;\leftrightarrow \;\;\bigl(P(k)\;\wedge\;\forall n<k[\neg P(n)]\bigr)$. 

\subsection{}

$\omega^\omega=\mathcal{N}$ is the set of all functions from $\omega$ to $\omega$.

\smallskip We use $\alpha, \beta, \ldots, \zeta, \eta, \ldots, \varphi, \psi, \ldots$ as variables over $\omega^\omega$.

\smallskip $Id$ is the element of $\omega^\omega$ satisfying $\forall n[Id(n)=n]$.

\smallskip $\mathcal{C}=2^\omega$ is the set of all $\alpha$ in $\omega^\omega$ such that 
$\forall n[\alpha(n)<2]$.

\smallskip
For each $n$, $\underline n$ is the element $\beta$ of $\omega^\omega$ such that $\forall m[\beta(m)=n]$. 

\smallskip $\alpha\;\#\:\beta \leftrightarrow \alpha\perp\beta\leftrightarrow \exists n[\alpha(n)\neq \beta(n)]$.

\smallskip $\overline \alpha k:=\langle  \alpha(0), \alpha(1), \ldots \alpha(k-1)\rangle$.

\smallskip $s\sqsubset\alpha \leftrightarrow \overline \alpha\bigl(length(s)\bigr)= s$.

\smallskip For all $\mathcal{X}\subseteq \omega^\omega$, for all $s$, $\mathcal{X}\cap s:=\{\alpha\in\mathcal{X}\mid s\sqsubset \alpha\}$.

\smallskip $s\perp \alpha\leftrightarrow \alpha\perp s\leftrightarrow \neg(s\sqsubset \alpha)$. 

\smallskip $(\alpha\upharpoonright s)(t):=\alpha(s\ast t)$.

\smallskip $(\alpha\upharpoonright\langle n \rangle)(m) :=\alpha^n(m) :=\alpha(\langle n \rangle \ast m)$. 

\smallskip For every $X\subseteq \omega$, for every $s$, $X\upharpoonright s:=\{t\mid s\ast t \in X\}$.
 
\smallskip $[\omega]^\omega$ is the set of all $\alpha$ in $\omega^\omega$ such that $\forall n[\alpha(n)<\alpha(n+1)]$.

\subsection{}$\mathcal{F}\subseteq\omega^\omega$ is a spread if and only if \\$\exists \beta[\forall s[\beta(s)=0\leftrightarrow\exists n[\beta(s\ast\langle n \rangle )=0]]\;\wedge\;\forall\alpha[\alpha \in \mathcal{F}\leftrightarrow \forall n[\beta(\overline\alpha n)=0]]]$.

\smallskip Let $\mathcal{F}\subseteq \omega^\omega$ be a spread.

 $\varphi:\mathcal{F}\rightarrow\omega\;\;\leftrightarrow\;\; \forall \alpha\in \mathcal{F}\exists n[\varphi(\overline \alpha n)\neq 0]$. 

If $\varphi:\mathcal{F}\rightarrow\omega$ then, for each $\alpha$ in $\mathcal{F}$, $\varphi(\alpha)$ is the number $q$ such that $\exists n[\varphi(\overline \alpha n)=q+1\;\wedge\;\forall m<n[\varphi(\overline \alpha m)=0]]$.

\medskip
$\varphi:\mathcal{F}\rightarrow \omega^\omega\;\;\leftrightarrow \;\;\forall n[\varphi^n:\mathcal{F}\rightarrow \omega]$.

If $\varphi:\mathcal{F}\rightarrow \omega^\omega$, then, for each $\alpha$ in $\mathcal{F}$, $\varphi|\alpha$ is the element $\beta$ of $\omega^\omega$ such that $\forall n[\beta(n)=\varphi^n(\beta)]$. 

\smallskip If $\varphi:\mathcal{F}\rightarrow \omega^\omega$, then, for each $s$ such that $\exists \delta \in \mathcal{F}[a\sqsubset\delta]$, \\$\varphi|s$ is the greatest number $t\le s$ such that \\$\forall i<length(t)\exists n<length(s) [\varphi^i(\overline s n) =t(i)+1\;\wedge\;\forall j<n[\varphi^i(\overline s j)=0]]$. 

Note: if $\varphi:\mathcal{F}\rightarrow \omega^\omega$, then, for all $\alpha$ in $\mathcal{F}$, for all $ \beta$,\\ $\varphi|\alpha =\beta \;\;\leftrightarrow \;\;\forall n \exists m[\overline \beta n\sqsubseteq \varphi|\overline \alpha m]$. 

\smallskip
For all $\varphi:\omega^\omega\rightarrow\omega^\omega$, for all $\mathcal{X}\subseteq \omega^\omega$, $\varphi|\mathcal{X}:=\{\varphi|\alpha\mid\alpha\in \mathcal{X}\}$.

\smallskip
For each $V\subseteq\omega^\omega$, $\varphi:\mathcal{F}\hookrightarrow V$ if and only if $\varphi:\mathcal{F}\rightarrow \omega^\omega$ and \\$\forall \alpha \in \mathcal{F}\forall \beta\in\mathcal{F}[\alpha\;\#\;\beta\rightarrow \varphi|\alpha\;\#\;\varphi|\beta]$ and $\forall \alpha \in \mathcal{F}[\varphi|\alpha \in V]$.

\smallskip For each $\mathcal{X}\subseteq \omega^\omega$, $\mathcal{X}^\neg:=\{\alpha\mid\alpha\notin\mathcal{X}\}$. 

\smallskip $\mathcal{R}$, the set of the real numbers, may be defined as a subset of $\omega^\omega$. 

\smallskip For $x,y$ in $\mathcal{R}$, $x\;\#_\mathcal{R}\;y\leftrightarrow \exists n[|x-y|>_\mathcal{R} \frac{1}{2^n}$.

\end{document}